\numberwithin{equation}{section}
\newcommand{\pdr}[2]{\frac{\partial{#1}}{\partial{#2}}}
\newcommand{\Rm}{{\mathbb R}}
\newcommand{\eps}{\varepsilon}
\newcommand{\commentout}[1]{}
\newcommand{\E}{{\mathbb E}}
\newcommand{\bbE}{\mathbb{E}}
\newcommand{\bbP}{\mathbb P}
\newcommand{\farc}{\frac}
\newcommand{\la}{\lambda}
\newcommand{\Sd}{{\cal S}_{\rm div}(\bbR^d;\bbR^d)}
\newtheorem{theorem}{Theorem}[section]
\newtheorem{lemma}[theorem]{Lemma}
\newtheorem{corollary}[theorem]{Corollary}
\newtheorem{proposition}[theorem]{Proposition}
\theoremstyle{remark}
\newcommand{\bes}{\begin{displaymath}}
\newcommand{\ees}{\end{displaymath}}
\newcommand{\be}{\begin{equation}}
\newcommand{\ee}{\end{equation}}
\newcommand{\ba}{\begin{eqnarray}}
\newcommand{\ea}{\end{eqnarray}}
\newcommand{\bas}{\begin{eqnarray*}}
\newcommand{\eas}{\end{eqnarray*}}
\newcommand{\B}{{\@Bbb B}}
\newcommand{\C}{{\@Bbb C}}
\newcommand{\F}{{\@Bbb F}}
\renewcommand{\P}{{\@Bbb P}}
\newcommand{\Q}{{\@Bbb Q}}
\newcommand{\bQ}{{\@Bbb Q}}
\newcommand{\N}{{\@Bbb N}}
\newcommand{\R}{{\@Bbb R}}
\newcommand{\W}{{\@Bbb W}}
\newcommand{\al}{\alpha}
\newcommand{\si}{\sigma}
\newcommand{\Om}{\Omega}
\newcommand{\ep}{\varepsilon}
\newcommand{\cA}{\@s A}
\newcommand{\cB}{\@s B}
\newcommand{\cC}{\@s C}
\newcommand{\cD}{\@s D}
\newcommand{\cE}{\@s E}
\newcommand{\cF}{\@s F}
\newcommand{\cG}{\@s G}
\newcommand{\cH}{\@s H}
\newcommand{\cI}{\@s I}
\newcommand{\cJ}{\@s J}
\newcommand{\cK}{\@s K}
\newcommand{\cL}{\@s L}
\newcommand{\cN}{\@s N}
\newcommand{\cM}{\@s M}
\newcommand{\cO}{\@s O}
\newcommand{\cP}{\@s P}
\newcommand{\cR}{\@s R}
\newcommand{\cS}{\@s S}
\newcommand{\cT}{\@s T}
\newcommand{\cV}{\@s V}
\newcommand{\cW}{\@s W}
\newcommand{\cX}{\@s X}
\newcommand{\cY}{\@s Y}
\newcommand{\cZ}{\@s Z}
\newcommand{\bma}{\@bm a}
\newcommand{\bmb}{\@bm b}
\newcommand{\bmc}{\@bm c}
\newcommand{\bmd}{\@bm d}
\newcommand{\bme}{\@bm e}
\newcommand{\bmf}{\@bm f}
\newcommand{\bmg}{\@bm g}
\newcommand{\bmh}{\@bm h}
\newcommand{\bmi}{\@bm i}
\newcommand{\bmj}{\@bm j}
\newcommand{\bmk}{\@bm k}
\newcommand{\bml}{\@bm l}
\newcommand{\bmm}{\@bm m}
\newcommand{\bmn}{\@bm n}
\newcommand{\bmo}{\@bm o}
\newcommand{\bmp}{\@bm p}
\newcommand{\bmq}{\@bm q}
\newcommand{\bmr}{\@bm r}
\newcommand{\bms}{\@bm s}
\newcommand{\bmt}{\@bm t}
\newcommand{\bmu}{\@bm u}
\newcommand{\bmw}{\@bm w}
\newcommand{\bmv}{\@bm v}
\newcommand{\bmx}{\@bm x}
\newcommand{\bx}{\@bm x}
\newcommand{\bmy}{\@bm y}
\newcommand{\bmz}{\@bm z}
\newcommand{\by}{\@bm y}
\newcommand{\bmzero}{\@bm 0}
\newcommand{\ga}{\gamma}
\newcommand{\gA}{\@g A}
\newcommand{\gD}{\@g D}
\newcommand{\gJ}{\@g J}
\newcommand{\gF}{\@g F}
\newcommand{\gM}{\@g M}
\newcommand{\gR}{\@g R}
\newcommand{\bbR}{\mathbb R}
\author{Yu Gu\thanks{Department of Mathematics, Carnegie Mellon University, Pittsburgh, PA 15213, USA (yug2@andrew.cmu.edu)} \and Tomasz Komorowski\thanks{Institute of Mathematics, Polish Academy of Sciences, ul. \'{S}niadeckich 8, 00-656 Warsaw, Poland (komorow@hektor.umcs.lublin.pl)} \and Lenya Ryzhik\thanks{Department of Mathematics, Building 380, Stanford University, Stanford, CA, 94305, USA (ryzhik@stanford.edu)}}
\date{}
\title{Fluctuations of random semi-linear advection equations}
 \date{\today}
\begin{document}

\maketitle

\begin{abstract}
We consider a semi-linear advection equation driven by a highly-oscillatory space-time Gaussian random field, with the randomness affecting both the drift and the nonlinearity. In the linear setting, classical results show that the characteristics converge in distribution to a homogenized Brownian motion, hence the point-wise law of the solution is  close to a functional of the Brownian motion.   Our main result is 
that the nonlinearity plays the role of a \emph{random diffeomorphism}, and the point-wise limiting distribution is obtained by applying the diffeomorphism to the limit in the linear setting.\end{abstract}

\section{Introduction}

In this paper, we consider solutions to the semi-linear advection equations with rapidly oscillating
random coefficients, 
of the form
\begin{equation}\label{e.maineq}
\begin{aligned}
&\partial_t u_\eps(t,x)+\frac{1}{\eps}V\Big(\frac{t}{\eps^2},\frac{x}{\eps}\Big)\cdot\nabla_x
u_\eps(t,x)= \frac{1}{\eps^\alpha}f\Big(t,x,u_\eps(t,x),V\Big(\frac{t}{\eps^2},\frac{x}{\eps}+\cdot\Big)\Big),  \\
&u(T,x)=u_0(x), \quad  t<T, x\in\bbR^d.
\end{aligned}
\end{equation}
Here, $V(t,x)$ is a zero-mean, incompressible, stationary Gaussian,
vector-valued random field, and
the nonlinear term $f$ depends on both $u_\eps$ and $V$. The parameter
$\alpha\geq0$ is to be chosen so that the nonlinearity plays a
non-trivial role as $\eps\to0$. {The linear problem
  with $f\equiv 0$ corresponds to the {\em passive scalar} model  that
  describes a particle drifting in a time-dependent, incompressible
  random  environment
  and has applications in both turbulent diffusion and stochastic homogenization, see e.g. \cite{kramer-majda,warhaft,sreenivasan} and the references therein.
The model has been extensively studied, both in the mathematics and physics  literature, under various assumptions
on the advection $V(t,x)$. 
A typical result shows that the underlying characteristics, i.e., the trajectory of the particle, converge to
a diffusion, see e.g. \cite{bp,cf,carmona-xu,fk,kom,kps}. 
The problem may also exhibit
a memory effect if the space-time correlations of $V(t,x)$ decay sufficiently
slowly so that the corresponding trajectory process converges to a non-Markovian
limit, see \cite{AM,fako,kola,koryz}. }

{
Homogenization problems for   quasilinear, stochastic Hamilton-Jacobi type  equations,
with or without the presence of a viscous term, were
extensively studied in the periodic, almost periodic and ergodic settings, starting with the work of \cite{LPV}, see
also \cite{cs,tr,s,k-v,k,as} and the references therein. These problems usually
involve the hyperbolic scaling, i.e., the time and spatial variables are
scaled as $t/\eps,x/\eps$, compared with the diffusive scaling
appearing in \eqref{e.maineq}. Concerning the diffusive scaling, the
question of homogenization for   some classes  of semi-linear (or even
of quasi-linear) 
  parabolic equations with periodic, or ergodc coefficients, using backward stochastic differential equation
  techniques, has been considered in  \cite{kom,delarue,pardoux,rhodes1,rhodes2}.}

 In this paper, we stay in the regime where $V$ decorrelates fast, and our goal is to understand the
interaction between the randomness and the nonlinearity, and  
the asymptotic behavior of $u_\eps$, as well as 
the multi-point statistics 
$u_\eps(t,x_1),\ldots,u_\eps(t,x_N)$ for any number of points
$(t,x_1),\ldots,(t,x_N)$.

As we have mentioned, when $f\equiv0$, the equation~\eqref{e.maineq}
is a classical problem  of
a passive scalar in an evolving  random environment, 
and the solution can be expressed as
\begin{equation}\label{e.character}
u_\eps(t,x)=u_0(X^{t,x}_\eps(T)).
\end{equation}
Here, $X^{t,x}_\eps(\cdot)$
is the  characteristic of \eqref{e.maineq}
starting from $(t,x)$:
\begin{equation}\label{e.characteristics}
\frac{d}{ds}X^{t,x}_\eps(s)=\frac{1}{\eps}V\Big(\frac{s}{\eps^2},\frac{X^{t,x}_\eps(s)}{\eps}\Big),\, s>t; \quad X^{t,x}_\eps(t)=x.
\end{equation}
It was shown in, e.g., \cite{carmona-xu,fk,koralov} that 
the process $(X_\eps^{t,x}(s))_{s\ge t}$ converges in law to 
$(x+\beta_{s-t})_{s\ge t}$. Here,~$(\beta_t)_{t\ge0}$ is
a Brownian motion
with a covariance matrix that can be computed through the statistics of
$V$, see \eqref{apq} below. As a result, for fixed $(t,x)$, $u_\eps(t,x)$ converges in
distribution to~$u_0(x+\beta_{T-t})$. For two different starting
points $x_1\neq x_2$, the trajectories $X^{t,x_1}_\eps$ and
$X^{t,x_2}_\eps$ experience random environments that are typically at
distances of order
$O(1/\eps)$ away from each other on the microscopic spatial scale. 
As a result, the two trajectories
become nearly independent, when $\eps\to0$, provided that the velocity field
$V(t,x)$ decorrelates fast in space. This happens even if
the spatial realizations of $V$ 
are analytic, which precludes the spatial mixing of the
field. Similarly, for an
arbitrary number of initial starting points,  
the random vector~$\left(u_\eps(t,x_1),\ldots,u_\eps(t,x_N)\right)$ converges
in law to~$(u_0(x_1+\beta_{T-t}^{(1)}),\ldots,
  u_0(x_N+\beta_{T-t}^{(N)}))$, where
$(\beta_{t}^{(j)})_{t\ge0}$ are i.i.d. copies of the
effective Brownian motion, see Theorem~\ref{thm} below.
In
particular, the above result implies that, after averaging in space
(i.e. taking the weak spatial limit), the randomness averages out and
the limit becomes deterministic. More precisely we  have 
\begin{equation}
\label{011301}
\lim_{\eps\to0}\int_{\bbR^d} u_\eps(t,x) g(x)dx= \int_{\bbR^d} \E[u_\eps(t,x)] g(x)dx\quad\mbox{ for $g\in L^1(\bbR^d)$,}
\end{equation}
 see Corollary \ref{thm-c}.  {This remains  in sharp contrast with
  the parabolic setting  (see \cite{kom}),
  where both the point-wise limit and  the limit measured weakly in the 
  spatial variable are both deterministic.}

In the non-linear setting, when $f\neq0$, 
the solution along the characteristics is not constant but rather satisfies 
\begin{equation}\label{e.ch1}
u_\eps(s,X^{t,x}_\eps(s))+\frac{1}{\eps^{\alpha}}\int_s^T f\Big(\sigma,X^{t,x}_\eps(\sigma), u_\eps(\sigma,X^{t,x}_\eps(\sigma)), V(\frac{\sigma}{\eps^2},\frac{X^{t,x}_\eps(\sigma)}{\eps}+\cdot)\Big)d\sigma = u_0(X^{t,x}_\eps(T)), \quad s\in[t,T].
\end{equation}
If the nonlinearity has a non-zero mean $\bar{f}=\E[f]$, we can roughly treat it as 
deterministic to the leading order, in light of the averaging induced 
by the $V$ variable in~\eqref{e.ch1}. This leads to
the choice $\alpha=0$. Replacing~$f\to \bar{f}$, we obtain
from (\ref{e.ch1}):
\begin{equation}\label{e.inte1}
u_\eps(s,X^{t,x}_\eps(s))+\int_s^T \bar{f}\Big(\sigma,X^{t,x}_\eps(\sigma), u_\eps(\sigma,X^{t,x}_\eps(\sigma)\Big)d\sigma = u_0(X^{t,x}_\eps(T)), \quad s\in[t,T],
\end{equation}
a ``deterministic'' integral equation in time, driven by
the random charateristics. Since $X^{t,x}_\eps$ converges to the
effective Brownian motion, it is not hard to see from \eqref{e.inte1},
at least formally, that~$u_\eps(s,X^{t,x}_\eps(s))$ converges to
the
solution $\mathcal{U}(t,x)$ of an integral equation driven by the effective Brownian
motion. 
This
argument can be also extended to
arbitrary points $x_1,\ldots,x_N$, showing that 
random vectors~$(u_\eps(t,x_1),\ldots,u_\eps(t,x_N))$ converge
in law to $\left(\mathcal{U}^{(1)}(t,x_1),\ldots,
  \mathcal{U}^{(N)}(t,x_N)\right)$, where  $\mathcal{U}^{(j)}(t,x)$
correspond to solutions driven by independent copies of the
effective Brownian motion. If the fluctuation is measured
weakly-in-space, it can also be shown that \eqref{011301} holds. 
The precise statement of the results can be found in
Theorem~\ref{thm-semilin} and Corollary \ref{thm-semilin-c}.

When $\bar{f}=0$, the random effect of $f$
comes up in the next order, and 
the standard central limit scaling suggests the choice $\alpha=1$. Due to the interaction between the two random sources, $X^{t,x}_\eps$ and~$V$, the asymptotic behavior of the integral
\[
\frac{1}{\eps}\int_s^T f\Big(\sigma,X^{t,x}_\eps(\sigma), u_\eps(\sigma,X^{t,x}_\eps(\sigma)), V(\frac{\sigma}{\eps^2},\frac{X^{t,x}_\eps(\sigma)}{\eps}+\cdot)\Big)d\sigma
\]
that appears in (\ref{e.ch1}) is much more complicated than that in \eqref{e.inte1}. 
We will in this case obtain in the limit a ``random'' integral equation driving by the effective Brownian motion. 
This is the objective of Theorem~\ref{thm011110}.

Let us briefly describe the principal ingredients of the proofs of our main
results and organization of the paper. Section~\ref{sec1.1} contains the main results of this paper, and the
assumptions on the random advection $V(t,x)$. The analysis of the solutions of semi-linear
advection equations is based on the method of characteristics that translates the asymptotics of $u_\eps$ into the study 
of the random spatial trajectories, together with
the evolution  of $u_\eps$ along the characteristics, described by (\ref{e.ch1}),
together with the inverse of the corresponding flow map
coming from (\ref{e.ch1}). An important tool
in this  approach is the process that describes the random velocity $V$
along the spatial characteristics~--~the so-called environment process, see Section~\ref{s.corrector}. 
The main technical novelty in the analysis here is the approach to the analysis of the environment process.
It is shown in Section \ref{s.markov} that the Gaussian velocity fields, considered in the present paper, are actually
Markovian. Fields  of this type
appeared quite frequently throughout the literature, see e.g. \cite[Chapter 12]{KLO} and
the references therein. What is novel in our present approach,
compared with that of \cite{KLO},
is  the use of the respective Cameron-Martin space  in the description of the dynamics of the field,
see  Section \ref{s.markov}. It allows us to find a simple
semimartingale representation  of the dynamics,
see the stochastic differential equation \eqref{sde1}, which leads to
the It\^o formula \eqref{f}. This in turn allows us to find  the semimartingale
description of the environment process and the respective  It\^o 
formula, see Section \ref{s.corrector}. Using these tools we present
the proofs of our main results in Sections \ref{sec4.3} -- \ref{sec8.2}.

{\bf Acknowledgment.} YG is partially supported by the NSF
grant DMS-1613301/1807748 and the Center for Nonlinear Analysis of CMU, TK by the NCN grant 2016/23/B/ST1/00492 and 
LR by the NSF grants DMS-1311903 and DMS-1613603, and by ONR grant N00014-17-1-2145.

\section{Main results}

\label{sec1.1}

\subsection{Gaussian incompressible vector fields}

\label{sec2.1}
Let us first make precise our assumptions on the random field $V(t,x)=(V_1,\ldots,V_d)$.
It is a mean-zero, space-time stationary $d$-dimensional Gaussian random field, defined
on a probability space~$(\Om,{\cal F},\bbP$), with a covariance matrix  of the form
\begin{equation}
\label{vel}
R_{ij}(t,x)=\bbE\left[V_{i}(s+t,y+x)V_{j}(s,y)\right]=\int_{\bbR^d}e^{ix\cdot
  k}e^{-\al(k)|t|}\Gamma_{ij}(k)\si(k) dk,~~\hbox{ $i,j=1,\ldots d$. }
\end{equation}
The factor
\[
\Gamma(k):=[\Gamma_{ij}(k)],~~\Gamma_{ij}(k):=\delta_{i,j}-k_ik_j/|k|^2,~~i,j=1,\ldots,d,
\] 
ensures that the realizations of the field are almost surely incompressible:
$$
\nabla_x\cdot V(t,x)=\sum_{j=1}^d\partial_{x_j}V_j(t,x)\equiv 0,\quad (t,x)\in\bbR^{1+d},\,\mbox{a.s.}
$$ 
The temporal factor taking the form of $e^{-\alpha(k)|t|}$ plays an important role for our construction of the underlying Markovian dynamics. The non-negative functions~$\al(k)\ge 0$ and $\sigma(k)\ge 0$  are assumed to be even and continuous.
We also assume that $\sigma(k)$ is compactly supported: $\sigma(k)=0$ for $|k|\ge K_0$, and the spectral gap
$\alpha(k)$ is uniformly positive: 
\begin{equation}\label{nov1202}
0<\al_*\le \al(k)\le A_*, \  \ k\in\bbR^d.
\end{equation}
This property implies uniform  mixing in time of the velocity field, see \eqref{022801a} and \eqref{022801b} below.

In order to specify the function space where $V(t,x)$ takes its values, 
given $m_1,m_2\in\bbR$, let
$
{\cal E}_{m_1,m_2}
$ be the real Hilbert
space  
of vector-valued functions $w:\bbR^d\to\bbR^d$ with the norm
$$
\|w\|^2_{m_1,m_2}:=\int_{\bbR^d}\theta_{-m_2}(x)
[{\cal F}^{-1}(\theta_{m_1}\hat w)(x)]^2dx,~~\theta_{m}(x) :=(1+|x|^2)^{m/2}. 
$$
Here, the Fourier transform  and its inverse are  defined as
$$
\hat w(k)=[{\cal F}w](k):=\int_{\bbR^d}e^{-ik\cdot x}w(x)dx,~~
[{\cal F}^{-1}u](x):=\frac{1}{(2\pi)^d}\int_{\bbR^d}e^{ik\cdot x}\hat u(k)dk,\quad
w,u\in{\cal S}(\bbR^d).
$$
It is straightforward to check that the dual space  $
{\cal E}_{m_1,m_2}'$ to ${\cal E}_{m_1,m_2}$ is 
${\cal  E}_{-m_1,-m_2}$.
Note that the Dirac function $\delta(x)$
belongs to $
{\cal E}_{m_1,m_2}'
$, provided that 
$m_1>d$ and $m_2\in\bbR$.





Under the above assumptions,  for a fixed
$t\in\bbR$, the realizations of the components of
$V(t,\cdot)$ 
belong
a.s. to any  $
{\cal E}_{m_1,m_2}
$, with $m_1\in\bbR$ and $m_2>d$. Let ${\cal E}$ be the Hilbert space
consisting of vector fields
$w=(w_1,\ldots,w_d):\bbR^d\to\bbR^d$ whose components belong to $ 
{\cal E}_{m_1,m_2}
$ for some $m_1\ge1$ and $m_2>d$ satisfying $\nabla_x\cdot
w(x)\equiv0$, and let ${\cal B}({\cal E})$ be its Borel $\si$-algebra.
We denote by $\pi$ the law of $V(0,\cdot)$ (which coincides with the law
of $V(t,\cdot)$ for any $t\in\bbR$, due to stationarity)
over the space $({\cal E},{\cal B}({\cal E}))$.
  
%

\subsection{The linear case}

\label{sec7.2}

Let us first consider 
\begin{equation}
\label{advec-11}
\partial_t u_\eps(t,x)+\dfrac1\eps V\left(\dfrac{t}{\eps^2},\dfrac{x}{\eps}\right)\cdot\nabla_x u_\eps(t,x)=0,
~~u_\eps(T,x)=u_0(x),~~0\le t\le T,
\end{equation}
with a terminal condition $u_0$ that belongs to $C_0^\infty(\bbR^d)$. 
The
solution of \eqref{advec-11} is
\begin{equation}
\label{100805}
u_\eps(t,x)=u_0\big(X_\eps^{t,x}(T)\big),
\end{equation}
where we recall that $X_\eps^{t,x}(s)$  is the characteristic curve defined in \eqref{e.characteristics}
\begin{equation}
\begin{aligned}
\label{ode}
&
\frac{d X_\eps^{t,x}(s)}{ds}=\frac{1}{\eps}
V \Big(\frac{s}{\eps^2},\frac{X^{t,x}_\eps(s)}{\eps}\Big), ~~s>t,\\
 &X_\eps^{t,x}(t)=x.
\end{aligned}
\end{equation} 
It is well known, see \cite{carmona-xu,fk,koralov}, that under our assumptions on $V(t,x)$, 
the laws of~$\left(X_\eps^{t,x}(s)\right)_{s\ge t}$ converge, as $\eps\to 0$, to the law of 
$(x+\beta_{s-t})_{s\ge t}$. Here,  $\beta_t=(\beta_t^1,\ldots, \beta_t^d)$, 
is a $d$-dimensional Brownian motion with the covariance
\begin{equation}
\label{cov-B}
\bbE[\beta_t^p \beta_s^q]=a_{pq}(t\wedge s),\quad p,q=1,\ldots,d,\,t,s\ge0,
\end{equation}
and the effective diffusivity matrix $a_{pq}$ given by \eqref{apq} below.

The above implies, in particular, that for each $(t,x)$ fixed, $t\le T$, 
the random variables
$u_\eps(t,x)$, converge in law to a random variable
$u_0( x+\beta_{T-t} )$.
In addition, $\bar u(t,x):=\bbE\left[ u_0\left( x+\beta_{T-t}\right)\right]$
is the bounded solution of the  backward heat equation
\begin{equation}
\label{bheat-11}
\begin{aligned}
&\partial_t \bar u(t,x)+\frac12\sum_{p,q=1}^d a_{pq}\partial_{x_p,x_q}^2\bar
u(t,x)=0,\quad t\le T,
\\
&\bar u(T,x)=u_0(x).
\end{aligned}
\end{equation}
{For the multi-point statistics we have the following.
\begin{theorem}
\label{thm} For a given positive integer $N$, mutually distinct points
$x_1,\ldots,x_N\in\bbR^d$, and a fixed $t\le T$, the random vectors
$(u_\eps(t,x_1),\ldots,u_\eps(t,x_N))$ converge in law, as $\eps\to 0$, to 
$$
\left(u_0\left( x_1+\beta^{(1)}_{T-t}\right),\ldots,u_0\left( x_1+\beta^{(N)}_{T-t}\right) \right),
$$
where $(\beta^{(j)}_t)_{t\ge0}$, $j=1,\ldots,N$ are i.i.d. $d$-dimensional
Brownian
motions, {each one} with the covariance matrix given by \eqref{cov-B}.
\end{theorem}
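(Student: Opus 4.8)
The plan is to deduce the joint convergence from the known single-trajectory convergence by showing that distinct characteristics become asymptotically independent. Concretely, fix the mutually distinct starting points $x_1,\dots,x_N$ and the terminal time $T$, and consider the joint law of the trajectory vector $\big(X_\eps^{t,x_1}(\cdot),\dots,X_\eps^{t,x_N}(\cdot)\big)$ in $C([t,T];\bbR^d)^N$. Since $u_\eps(t,x_j)=u_0(X_\eps^{t,x_j}(T))$ and $u_0\in C_0^\infty$, it suffices to prove that this vector of trajectories converges in law to $\big(x_1+\beta^{(1)},\dots,x_N+\beta^{(N)}\big)$ with independent effective Brownian motions, and then apply the continuous mapping theorem to the bounded continuous functional $(\gamma_1,\dots,\gamma_N)\mapsto\big(u_0(\gamma_1(T)),\dots,u_0(\gamma_N(T))\big)$.

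The convergence of the joint law will be established in two stages. First, tightness: each component $X_\eps^{t,x_j}(\cdot)$ is tight in $C([t,T];\bbR^d)$ by the single-trajectory result already quoted in the excerpt (from \cite{carmona-xu,fk,koralov}), and tightness of a finite vector follows coordinatewise, so the joint family is tight. Second, identification of limit points: one must show that along any convergent subsequence the limit is a vector of \emph{independent} effective Brownian motions, each with covariance \eqref{cov-B}. That the marginals are correct is again the quoted single-point result, so the entire issue is \emph{asymptotic independence}.

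For the independence I would use the mixing-in-time and fast-decorrelation-in-space properties of $V$ guaranteed by the uniform spectral gap \eqref{nov1202} and the compact support of $\sigma$. The heuristic, already described in the introduction of the excerpt, is that two characteristics started at $x_i\neq x_j$ are, on the microscopic spatial scale, separated by a distance of order $|x_i-x_j|/\eps\to\infty$; since $R_{ij}(t,x)\to 0$ as $|x|\to\infty$ (the integrand in \eqref{vel} is a continuous compactly-supported-in-$k$ function of $k$, so $R$ decays in $x$, in fact rapidly because $\sigma$ is compactly supported and one may differentiate under the integral), the two characteristics see essentially independent environments. I would make this rigorous either by (a) a direct estimate showing that the joint characteristic function / finite-dimensional distributions of $\big(X_\eps^{t,x_i},X_\eps^{t,x_j}\big)$ factorize in the limit, controlling the error by the spatial decay of $R$ along the (random, but a priori Hölder-bounded) trajectories combined with a stopping-time argument to handle the unlikely event that two trajectories come within $O(\eps^{1-\delta})$ of each other; or (b) the environment/martingale-problem machinery developed later in the paper (Section~\ref{s.corrector}), running the joint environment process and showing the cross-covariance of the martingale parts vanishes. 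Either way, once the limit of every subsequence is identified as the same Gaussian vector with block-diagonal covariance, Prokhorov's theorem upgrades subsequential convergence to full convergence.

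The main obstacle is precisely step (b)/(a): controlling the rare event that the characteristics $X_\eps^{t,x_i}$ and $X_\eps^{t,x_j}$ approach each other to within microscopic distance $O(1)$ (i.e. macroscopic distance $O(\eps)$), where the decorrelation estimate fails. One needs an a priori bound showing this happens with vanishing probability — typically by a Hölder-continuity estimate on $X_\eps^{t,x}$ uniform in $\eps$ (which comes from $\|V\|_\infty<\infty$ a.s., itself a consequence of the Gaussianity and the function-space setup, together with Borell–TIS type tail bounds) combined with the fact that the difference process $X_\eps^{t,x_i}-X_\eps^{t,x_j}$ starts at the macroscopic distance $|x_i-x_j|>0$ and, being a homogenizing diffusion in the limit, spends negligible time near the diagonal. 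Handling this genuinely requires quantitative input on $V$ rather than soft arguments, and is where the compact support of $\sigma$ and the spectral gap \eqref{nov1202} do the essential work.
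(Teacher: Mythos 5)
Your option (b) is exactly the route the paper takes: Section~\ref{sec4.3} reduces Theorem~\ref{thm} to the joint weak convergence of the trajectories (Theorem~\ref{thm011105}), writes each trajectory as $x_k+\eps Y_{k,\eps}+\sqrt 2 M^x_{\eps,k}$ via the corrector, and shows that the cross-variation $\langle M^x_{\eps,k},M^0_{\eps,\ell}\rangle_s$ of the martingale parts vanishes once the trajectories are macroscopically separated, using a finite-rank truncation of the Malliavin derivative (Lemma~\ref{lm012105}) together with the decorrelation statement of Proposition~\ref{prop012705}. Two points in your write-up deserve correction, one minor and one substantive.

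The minor one: your parenthetical claim that $R(t,x)$ decays \emph{rapidly} because $\sigma$ is compactly supported is backwards. Compact support of the spectral density in $k$ makes the realizations (and $R$) analytic in $x$; it gives no quantitative decay at all beyond the Riemann--Lebesgue lemma, since $\sigma$ is only assumed continuous. This is precisely why the paper stresses that the usual finite-range/mixing arguments are unavailable and why the decorrelation input is the soft limit \eqref{012705} for the coefficients $v_{nm}(x)=\langle A{\mathfrak e}_n^x,{\mathfrak e}_m\rangle_H$ rather than any decay rate of $R$. The argument survives because only $o(1)$ decorrelation is needed, but the reasoning you give for it is not valid.

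The substantive gap is in your treatment of the close-encounter event. You propose to argue that the difference $X^{t,x_i}_\eps-X^{t,x_j}_\eps$, ``being a homogenizing diffusion in the limit, spends negligible time near the diagonal.'' As stated this is circular: the limiting law of the pair is exactly what is being established, and you cannot invoke properties of that limit to control the event on which the identification of the limit fails. The paper resolves this with the concatenation device: it stops the martingales at the first time $\tilde T_\eps$ the trajectories come within $\eps^\gamma$ of each other and continues them with \emph{exactly} independent Brownian motions, so that the martingale-problem identification of the modified laws $\hat{\cal Q}^\eps_x$ as ${\mathfrak Q}_{x,0}$ goes through unconditionally (Proposition~\ref{prop706041}); only \emph{then}, using \eqref{nov2804}--\eqref{010306b} and the fact that two independent nondegenerate Brownian motions in $d\ge2$ started at distinct points a.s. do not meet before time $T$, does one conclude that the stopping time is asymptotically never triggered and the modification is immaterial. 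Without this (or an equivalent a priori bound on the occupation time near the diagonal that does not presuppose the joint limit), your proof of asymptotic independence is incomplete.
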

Our result in the linear case allows, in particular,  to contrast the point-wise convergence 
of~$u_\eps(t,x)$ to a random limit, with the convergence of~$u_\eps(t,\cdot)$ in the weak
topology in $L^2(\bbR^d)$ to a deterministic limit. We use the notation 
\[
\langle
f,g\rangle:=\int_{\bbR^d} f(x)g(x) dx
\]
for $f\in L^p(\bbR^d)$, $g\in
L^{p'}(\bbR^d)$, with $1/p+1/p'=1$, $p\in[1,+\infty]$.
\begin{corollary}\label{c.deterlimit}
\label{thm-c} For a given $\varphi\in L^1(\bbR^d)$ and $t\le T$, the random variables
$$
\lim_{\eps\to0}\langle u_\eps(t),\varphi\rangle=\langle \bar u(t),\varphi\rangle \quad \mbox{in the $L^2(\Omega)$ sense}.
$$
\end{corollary}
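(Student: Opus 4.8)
\textbf{Proof plan for Corollary \ref{thm-c}.}

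The plan is to prove convergence in $L^2(\Omega)$ by showing that the first moment converges to the right deterministic quantity and the second moment converges to the square of that quantity, so that the variance of $\langle u_\eps(t),\varphi\rangle - \langle\bar u(t),\varphi\rangle$ tends to zero. Since $u_0\in C_0^\infty(\bbR^d)$, the solutions $u_\eps$ are uniformly bounded, $\|u_\eps(t,\cdot)\|_{L^\infty}\le\|u_0\|_{L^\infty}$, because $u_\eps(t,x)=u_0(X_\eps^{t,x}(T))$ and the characteristic flow is volume-preserving (incompressibility of $V$). Hence for $\varphi\in L^1(\bbR^d)$ the random variable $\langle u_\eps(t),\varphi\rangle$ is bounded by $\|u_0\|_{L^\infty}\|\varphi\|_{L^1}$, uniformly in $\eps$ and $\omega$, which gives us the uniform integrability needed to pass moments to the limit.

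First I would treat the mean. By Fubini, $\bbE\langle u_\eps(t),\varphi\rangle = \int_{\bbR^d}\bbE[u_\eps(t,x)]\varphi(x)\,dx = \int_{\bbR^d}\bbE[u_0(X_\eps^{t,x}(T))]\varphi(x)\,dx$. The single-point convergence in law of $X_\eps^{t,x}(T)$ to $x+\beta_{T-t}$ (cited from \cite{carmona-xu,fk,koralov}, and subsumed in Theorem \ref{thm} with $N=1$), together with boundedness and continuity of $u_0$, gives $\bbE[u_0(X_\eps^{t,x}(T))]\to\bbE[u_0(x+\beta_{T-t})]=\bar u(t,x)$ for each fixed $x$. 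Since this sequence of functions of $x$ is uniformly bounded and $\varphi\in L^1$, dominated convergence yields $\bbE\langle u_\eps(t),\varphi\rangle\to\langle\bar u(t),\varphi\rangle$.

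Next, the second moment. We write
\[
\bbE\big[\langle u_\eps(t),\varphi\rangle^2\big]=\int_{\bbR^d}\int_{\bbR^d}\bbE\big[u_\eps(t,x_1)u_\eps(t,x_2)\big]\,\varphi(x_1)\varphi(x_2)\,dx_1\,dx_2.
\]
The key point is that for $x_1\neq x_2$ the two-point function factorizes in the limit: by Theorem \ref{thm} with $N=2$, the pair $(u_\eps(t,x_1),u_\eps(t,x_2))$ converges in law to $(u_0(x_1+\beta^{(1)}_{T-t}),u_0(x_2+\beta^{(2)}_{T-t}))$ with \emph{independent} Brownian motions, so $\bbE[u_\eps(t,x_1)u_\eps(t,x_2)]\to\bar u(t,x_1)\bar u(t,x_2)$ for every fixed pair with $x_1\neq x_2$, again using boundedness and continuity of $u_0$. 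The integrand is bounded by $\|u_0\|_{L^\infty}^2$ and the diagonal $\{x_1=x_2\}$ is Lebesgue-null in $\bbR^{2d}$, so dominated convergence gives $\bbE[\langle u_\eps(t),\varphi\rangle^2]\to\langle\bar u(t),\varphi\rangle^2$. Combining with the first-moment computation, $\bbE[(\langle u_\eps(t),\varphi\rangle-\langle\bar u(t),\varphi\rangle)^2]=\bbE[\langle u_\eps(t),\varphi\rangle^2]-2\langle\bar u(t),\varphi\rangle\,\bbE\langle u_\eps(t),\varphi\rangle+\langle\bar u(t),\varphi\rangle^2\to0$, which is the claim.

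I expect the main obstacle to be the pointwise two-point convergence with the \emph{decorrelation} into independent Brownian motions, i.e. Theorem \ref{thm} itself rather than the $L^2$ bookkeeping above; in this corollary it is invoked as a black box, but its content — that two characteristics started at macroscopically separated points feel essentially independent copies of the environment as $\eps\to0$, even without spatial mixing of $V$ — is where the real work lies. A minor technical point worth stating carefully is the reduction from $\varphi\in L^1$: if one first proves the corollary for $\varphi$ in a dense subclass (say $C_0^\infty$, or simple functions), the uniform $L^\infty$ bound on $u_\eps$ and $\bar u$ lets one extend to all of $L^1$ by a standard $3\eps$ argument, since $|\langle u_\eps(t),\varphi_1\rangle-\langle u_\eps(t),\varphi_2\rangle|\le\|u_0\|_{L^\infty}\|\varphi_1-\varphi_2\|_{L^1}$ uniformly in $\eps$ and $\omega$; but in fact the dominated-convergence argument above already works directly for $\varphi\in L^1$, so this reduction is optional.
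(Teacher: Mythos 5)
Your proposal is correct and follows essentially the same route as the paper: the paper also reduces the claim to convergence of the first and second moments, obtains the first from the one-point convergence of $u_\eps(t,x)=u_0(X_\eps^{t,x}(T))$, and obtains the second by passing the limit inside the double integral and invoking the two-point decorrelation result (Theorem \ref{thm011105}). Your additional remarks on the null diagonal and the $L^1$ extension are correct refinements of details the paper leaves implicit.
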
}
The proofs of the above results are presented in Section
\ref{sec4.3}. 

{\bf Remark.} The reason why we obtain a deterministic limit in Corollary~\ref{c.deterlimit} is due to the spatial averaging, which removes the local fluctuations after testing with $\varphi$. If one is interested in the local fluctuation, i.e., the fluctuations averaged on a ``small scale'' $\delta^{-d}\int_{\bbR^d}  \varphi(x/\delta)u_\eps(t,x)dx$ with $\delta\ll1$,  then the pointwise quantity $u_\eps(t,x)$ might be a more relevant object ($\delta\to0$) compared to the global fluctuation described by $\int u_\eps(t,x)\varphi(x)dx$ ($\delta=1$).

\bigskip

 Let us also comment that when $u_\eps(t,x)$ satisfies an advection-diffusion equation rather than an advection equation,
as in (\ref{advec-11}),
\begin{equation}
\label{advec-11a}
\begin{aligned}
&\partial_t u_\eps(t,x)+\dfrac1\eps V\left(\dfrac{t}{\eps^2},\dfrac{x}{\eps}\right)\cdot\nabla_x u_\eps(t,x)+\kappa\Delta_xu_\eps(t,x)=0,\\
&u_\eps(T,x)=u_0(x),
\end{aligned}
\end{equation}
with $\kappa>0$, one can prove, see \cite{kom}, that for any $t\le T$ both  
$u_\eps(t,x)$
and  $\langle
u_\eps(t),\varphi\rangle$ converge in probability to  deterministic limits $\bar u(t,x)$
and  $\langle
\bar u(t),\varphi\rangle$, respectively. In that case, 
$\bar u(t,x)$ is  the solution of the Cauchy problem for the backward heat equation
\begin{equation}
\label{bheat-11a}
\begin{aligned}
&\partial_t \bar u(t,x)+\frac12\sum\limits_{p,q=1}^d a_{pq}\partial_{x_p,x_q}^2\bar
u(t,x)+\kappa\Delta_x\bar u(t,x)=0,\quad t\le T,\\
&\bar u(T,x)=u_0(x).
\end{aligned}
\end{equation}
In other words, the diffusion term in (\ref{advec-11a}) provides enough extra averaging so that even the point-wise limit is deterministic.

\subsection{The semi-linear case}

Let $D_T:=[0,T]\times \bbR^{d+1}$ and
$C^{0,m}(D_T)$ be the
space of continuous functions $g(t,x,u)$ on $D_T$, that are
of the class $C^m$ in the $(x,u)$ variables for some non-negative integer
$m$:
$$
\|g\|_{C^{0,m}(D_T)}:=\sum_{|k|=0}^m\sup_{(t,x,u)\in D_T}
|D^k_{x,u}g(t,x,u)|.
$$
We consider semi-linear equations of the form  
\begin{equation}
\label{advec-11-semi-bis}
\begin{aligned}
&\partial_t u_\eps(t,x)+\dfrac1\eps
V\Big(\dfrac{t}{\eps^2},\dfrac{x}{\eps}\Big)\cdot\nabla_x
u_\eps(t,x)=  f_\eps(t,x,u_\eps(t,x)),\quad t< T,\\
&u_\eps(T,x)=u_0(x),
\end{aligned}
\end{equation}
with
\begin{equation}
\label{012806}
f_\eps(t,x,u):=f\Big(t,x,u,V\Big(\frac{t}{\eps^2},\frac{x}{\eps}+\cdot\Big)\Big),
\end{equation}
and $f(\cdot,w)\in C^{0,m}(D_T)$ for some $m>(d+1)/2$ and $\pi$-a.s. $w\in  {\cal E}$, and
\begin{equation}
\label{H1}
\mathop{\rm esssup}\limits_{w\in{\cal E}}\|f(\cdot,w)\|_{C^{0,m}(D_T)}<+\infty.
\end{equation}
Note that we omitted the dependence of $f_\eps$ on the random realization $\omega$ to simplify the notation. As  $f_\eps(t,x,u)$ is now random, the results will depend on whether it has 
a zero or non-zero mean
\begin{equation}
\label{021906}
\bar f(t,x,u):=\bbE f\left(t,x,u,V(0,\cdot)\right),\quad (t,x,u)\in D_T,
\end{equation} 
and we will consider these two cases separately. In the non-centered
case we have the following result, proved in Section \ref{sec9}.

{\begin{theorem}
\label{thm-semilin} Assume that $\bar f(t,x,u)\not\equiv 0$ in $D_T$.
Fix $(t,x)\in [0,T]\times\bbR^d$ and the realization of the Brownian motion $\beta_t$ with the covariance matrix \eqref{cov-B}, 
and let $\{\mathcal{U}(s;t,x)\}_{t\leq s\leq T}$ satisfy the integral equation 
\begin{equation}
\label{advec-homog-semi}
u_0\left(x+\beta_{T-t}\right)-\mathcal{U}(s;t,x)= \int_s^T\bar f\left(\si,x+\beta_{\si-t},\mathcal{U}(\si;t,x)\right)d\si,\quad t\le s\le T.
\end{equation}
Then  $u_\eps(t,x)$ converges in
law, as~$\eps\to 0$, 
to  $ \mathcal{U}(t,x):=\mathcal{U}(t;t,x)$. Moreover, for
any positive integer $N$, mutually distinct $x_1,\ldots,x_N\in \bbR^d$
and $t\le T$ the random vectors
$(u_\eps(t,x_1),\ldots,u_\eps(t,x_N))$ converge in law, as $\eps\to 0$, to 
$ (\mathcal{U}^{(1)}(t,x_1),\ldots, \mathcal{U}^{(N)}(t,x_N))$,
where $\mathcal{U}^{(1)},\ldots, \mathcal{U}^{(N)}$ correspond to the
solutions of \eqref{advec-homog-semi} with $\beta$ replaced by i.i.d. copies of $d$-dimensional
Brownian
motions $(\beta^{(j)}_t)_{t\ge0}$, $j=1,\ldots,N$,  whose  covariance
matrix is given by \eqref{cov-B}.
\end{theorem}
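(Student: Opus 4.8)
The plan is to run the method of characteristics and then homogenize the integral equation the solution satisfies along them. Fix $(t,x)$, abbreviate $X_\eps(\cdot):=X^{t,x}_\eps(\cdot)$ for the characteristic \eqref{ode}, set $\psi_\eps(s):=u_\eps(s,X_\eps(s))$, and introduce the environment process $\eta_\eps(s):=V(s/\eps^2,X_\eps(s)/\eps+\cdot)\in{\cal E}$ along it. With $\alpha=0$, equation \eqref{e.ch1} becomes
\[
\psi_\eps(s)=u_0\big(X_\eps(T)\big)-\int_s^T f\big(\sigma,X_\eps(\sigma),\psi_\eps(\sigma),\eta_\eps(\sigma)\big)\,d\sigma,\qquad t\le s\le T .
\]
Existence and uniqueness of $u_\eps$ itself follows from Picard iteration in this equation since $f$ is bounded and Lipschitz in $u$. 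Because $u_0$ is bounded and $f$ obeys the uniform bound \eqref{H1}, $\psi_\eps$ is bounded and equi-Lipschitz in $s$, uniformly in $\eps$; together with the convergence $X_\eps\Rightarrow x+\beta_{\cdot-t}$ recalled above, this yields tightness of $(X_\eps,\psi_\eps)$ in $C([t,T];\bbR^{d+1})$, so it suffices to identify every subsequential limit.

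The \emph{core step} is to show that the random driving integral homogenizes, i.e. that
\[
\Delta_\eps(s):=\int_s^T\Big[f\big(\sigma,X_\eps(\sigma),\psi_\eps(\sigma),\eta_\eps(\sigma)\big)-\bar f\big(\sigma,X_\eps(\sigma),\psi_\eps(\sigma)\big)\Big]\,d\sigma
\]
converges to $0$, uniformly in $s\in[t,T]$, in probability. I would prove this by freezing the slow variables on a mesoscopic time grid $t=\sigma_0<\dots<\sigma_J=T$ of mesh $\delta$ with $\eps^2\ll\delta\ll1$. On the $j$-th cell, introduce the corrector $\chi_j:=-\mathcal{L}^{-1}\big(f(\sigma_j,X_\eps(\sigma_j),\psi_\eps(\sigma_j),\cdot)-\bar f(\sigma_j,X_\eps(\sigma_j),\psi_\eps(\sigma_j))\big)$, where $\mathcal{L}$ is the generator of the time-unscaled environment dynamics; the uniform spectral gap \eqref{nov1202} makes $\mathcal{L}$ boundedly invertible on the subspace of $\pi$-mean-zero functionals (note $\int f(\cdot,w)\,d\pi(w)=\bar f$ by \eqref{021906}), so $\chi_j$ is well defined and bounded in terms of $\|f\|$. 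Applying the Itô formula for the environment process established in Section~\ref{s.corrector} to $\chi_j$ on $[\sigma_j,\sigma_{j+1}]$, and using that the fast clock $\sigma/\eps^2$ makes the generator appear with prefactor $\eps^{-2}$, turns the cell integral into $\eps^2\big[\chi_j(\eta_\eps(\sigma_j))-\chi_j(\eta_\eps(\sigma_{j+1}))\big]$ plus $\eps^2$ times a martingale increment. Summing over $j$: the boundary terms contribute $O(\eps^2/\delta)$; the summed martingale is a martingale with quadratic variation $O(\eps^4\cdot\eps^{-2})=O(\eps^2)$; and the error of replacing $(\sigma,X_\eps(\sigma),\psi_\eps(\sigma))$ by $(\sigma_j,X_\eps(\sigma_j),\psi_\eps(\sigma_j))$ in $f$ and in $\bar f$ is controlled, via \eqref{H1}, by the modulus of continuity of $(X_\eps,\psi_\eps)$ on scale $\delta$, which is $o(1)$ as $\delta\to0$ uniformly in $\eps$ by the equi-Lipschitz bound. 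Sending $\eps\to0$ and then $\delta\to0$ gives the claim. I expect this to be the main obstacle: the slow factor $\psi_\eps(\sigma)$ couples the environment average to the unknown itself, and keeping the freezing error small while the corrector exists only for frozen parameters is exactly what forces the regularity hypothesis \eqref{H1} and the mesoscopic time-splitting; one also has to check that the corrector terms are adapted, since $\psi_\eps$ is a functional of the whole trajectory.

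Granting the claim, $\psi_\eps(s)=u_0(X_\eps(T))-\int_s^T\bar f(\sigma,X_\eps(\sigma),\psi_\eps(\sigma))\,d\sigma+\Delta_\eps(s)$ with $\Delta_\eps\to0$. Via Skorokhod's representation I would pass to a probability space on which, along a subsequence, $(X_\eps,\psi_\eps,\Delta_\eps)\to(x+\beta_{\cdot-t},\Psi,0)$ almost surely in $C([t,T];\bbR^{d+1})$, and let $\eps\to0$ using the continuity and boundedness of $\bar f$ and dominated convergence to obtain
\[
\Psi(s)=u_0\big(x+\beta_{T-t}\big)-\int_s^T\bar f\big(\sigma,x+\beta_{\sigma-t},\Psi(\sigma)\big)\,d\sigma,\qquad t\le s\le T .
\]
Since $\bar f$ is Lipschitz in its last argument (as $f\in C^{0,m}$, $m\ge1$), this integral equation has a unique continuous solution, so $\Psi=\mathcal{U}(\cdot;t,x)$; in particular $u_\eps(t,x)=\psi_\eps(t)\Rightarrow\mathcal{U}(t;t,x)$.

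For the $N$-point statement, I would first observe that for mutually distinct $x_1,\dots,x_N$ the rescaled characteristics $X^{t,x_i}_\eps(\sigma)/\eps$ stay at mutual distances of order $1/\eps$, so, $\sigma(k)$ being compactly supported in $k$, the processes $(X^{t,x_1}_\eps,\dots,X^{t,x_N}_\eps)$ converge jointly in law to $N$ independent Brownian motions with covariance \eqref{cov-B}; this is part of the linear analysis of Section~\ref{sec4.3}. One then applies the averaging step and the limit passage above simultaneously to the tuple $(\psi^{(1)}_\eps,\dots,\psi^{(N)}_\eps)$: the environment processes $\eta^{(i)}_\eps$ are mutually dependent at finite $\eps$, but $\bar f$ carries no dependence on the environment, so this coupling is washed out in the limit and the limiting system decouples into $N$ copies of \eqref{advec-homog-semi} driven by $\beta^{(1)},\dots,\beta^{(N)}$; uniqueness for each then gives the claimed joint convergence.
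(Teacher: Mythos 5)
Your proposal is correct and follows essentially the same route as the paper: characteristics, the corrector equation $-\mathcal{L}\Theta=\tilde f$ with frozen slow variables, the It\^o decomposition of the fluctuating integral into an $O(\eps^2)$ boundary term plus an $O(\eps)$ martingale, time-discretization to handle the slowly varying $(X_\eps,\psi_\eps)$, tightness, and identification of the limit via uniqueness of the integral equation, with the multi-point case reduced to the joint independence of the limiting characteristics. The only cosmetic difference is that the paper avoids the adaptedness issue you flag by bounding the supremum of the frozen-parameter integrals over a compact $(y,u)$-set via Sobolev embedding (Lemma \ref{lm022006}), which is exactly where the hypothesis $m>(d+1)/2$ in \eqref{H1} is used.
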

From the above result, we conclude an analogue of Corollary \ref{thm-c}:
\begin{corollary}
\label{thm-semilin-c}
Suppose that
$\varphi\in L^1(\bbR^d)$. Then
\begin{equation}
\label{020901}
\lim_{\eps\to0}\langle u_\eps(t),\varphi\rangle=\langle \bbE\,
\mathcal{U}(t),\varphi\rangle\quad \mbox{in the $L^2(\Omega)$ sense}.
\end{equation}  
\end{corollary}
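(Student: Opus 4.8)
The plan is to derive Corollary~\ref{thm-semilin-c} from Theorem~\ref{thm-semilin} by the standard route of turning the weak-in-space integral into a sum of nearly independent pieces and then invoking a second-moment computation. Write $\langle u_\eps(t),\varphi\rangle = \int_{\bbR^d} u_\eps(t,x)\varphi(x)\,dx$. By Theorem~\ref{thm-semilin} applied to a single point $x$, together with the uniform bound $\|u_\eps(t,\cdot)\|_{L^\infty}\le C$ that follows from \eqref{e.ch1}, \eqref{H1} and a Gronwall argument, we get convergence of the first moment:
\begin{equation}
\label{plan1}
\lim_{\eps\to0}\bbE\langle u_\eps(t),\varphi\rangle = \int_{\bbR^d}\bbE\,\mathcal{U}(t,x)\varphi(x)\,dx = \langle \bbE\,\mathcal{U}(t),\varphi\rangle,
\end{equation}
using dominated convergence in $x$ (the integrand is bounded by $C|\varphi(x)|\in L^1$) and in $\omega$. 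So the corollary reduces to showing that the variance of $\langle u_\eps(t),\varphi\rangle$ tends to $0$, which is equivalent to $\bbE|\langle u_\eps(t),\varphi\rangle|^2 \to |\langle\bbE\,\mathcal{U}(t),\varphi\rangle|^2$.

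The second-moment term is $\bbE|\langle u_\eps(t),\varphi\rangle|^2 = \int_{\bbR^d}\int_{\bbR^d}\bbE[u_\eps(t,x_1)u_\eps(t,x_2)]\varphi(x_1)\varphi(x_2)\,dx_1\,dx_2$. The two-point statement in Theorem~\ref{thm-semilin} (with $N=2$) gives, for \emph{fixed distinct} $x_1\ne x_2$, that $(u_\eps(t,x_1),u_\eps(t,x_2))$ converges in law to $(\mathcal{U}^{(1)}(t,x_1),\mathcal{U}^{(2)}(t,x_2))$ with \emph{independent} driving Brownian motions, so $\bbE[u_\eps(t,x_1)u_\eps(t,x_2)]\to \bbE\,\mathcal{U}^{(1)}(t,x_1)\cdot\bbE\,\mathcal{U}^{(2)}(t,x_2) = \bbE\,\mathcal{U}(t,x_1)\cdot\bbE\,\mathcal{U}(t,x_2)$ for Lebesgue-a.e. $(x_1,x_2)$ (the diagonal $\{x_1=x_2\}$ has measure zero). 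Again the integrand is bounded by $C^2|\varphi(x_1)\varphi(x_2)|\in L^1(\bbR^d\times\bbR^d)$, so by dominated convergence
\begin{equation}
\label{plan2}
\lim_{\eps\to0}\bbE|\langle u_\eps(t),\varphi\rangle|^2 = \int_{\bbR^d}\int_{\bbR^d}\bbE\,\mathcal{U}(t,x_1)\,\bbE\,\mathcal{U}(t,x_2)\,\varphi(x_1)\varphi(x_2)\,dx_1\,dx_2 = |\langle\bbE\,\mathcal{U}(t),\varphi\rangle|^2.
\end{equation}
Combining \eqref{plan1} and \eqref{plan2} gives $\mathrm{Var}\langle u_\eps(t),\varphi\rangle\to 0$, and together with \eqref{plan1} this yields $\langle u_\eps(t),\varphi\rangle\to\langle\bbE\,\mathcal{U}(t),\varphi\rangle$ in $L^2(\Omega)$, which is \eqref{020901}.

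The only genuine subtlety — and the step I would be most careful about — is justifying the interchange of limit and integral in passing from pointwise (in $x$, or in $(x_1,x_2)$) convergence in law to convergence of the spatially integrated moments. This needs (i) a deterministic, $\eps$-uniform $L^\infty$ bound on $u_\eps(t,\cdot)$, which comes from the representation \eqref{e.ch1} and hypothesis \eqref{H1} via Gronwall, so that the dominating function $C^2|\varphi\otimes\varphi|$ is integrable; and (ii) the observation that convergence in law of a \emph{bounded} sequence of random variables implies convergence of expectations, applied at a.e. fixed configuration of points. One should note that Theorem~\ref{thm-semilin} is stated for mutually distinct points, so it applies off the diagonal, which is all that is needed since the diagonal is Lebesgue-null. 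For $\varphi\in L^1$ only (not compactly supported), a routine truncation argument, approximating $\varphi$ by $\varphi\mathbf{1}_{|x|\le R}$ and using the uniform $L^\infty$ bound to control the tails uniformly in $\eps$, reduces to the compactly supported case; this is where I would spend a couple of lines rather than leaving it to the reader.
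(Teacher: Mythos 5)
Your proposal is correct and follows essentially the same route as the paper: the paper also reduces \eqref{020901} to convergence of the first and second moments of $\langle u_\eps(t),\varphi\rangle$, using the deterministic $L^\infty$ bound on $u_\eps$ (which in the paper comes directly from \eqref{011906} and the boundedness of $f$, no Gronwall needed) together with the one- and two-point convergence in law from Theorem~\ref{thm-semilin} and dominated convergence off the null diagonal.
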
}

If
$\bar f(t,x,u)\equiv0$ for $(t,x,u)\in D_T$, the leading order effect of $f$
in (\ref{advec-homog-semi}) vanishes, so, to have the nonlinearity play a non-trivial role,
we consider instead of (\ref{advec-11-semi-bis}) the problem
\begin{equation}
\label{advec-11-semi2}
\begin{aligned}
&\partial_t u_\eps(t,x)+\dfrac1\eps
V\left(\dfrac{t}{\eps^2},\dfrac{x}{\eps}\right)\cdot\nabla_x
u_\eps(t,x)=  \dfrac{1}{\eps}f_\eps(t,x,u_\eps(t,x)),\quad t< T,
\\
&u_\eps(T,x)=u_0(x).
\end{aligned}
\end{equation}
 Here, $f_\eps$ is as in \eqref{012806}. We will, however, require slightly more
regularity on $f_\eps$. Let $C^{m}(D_T)$ be the
space of  continuous functions $g:D_T\times {\cal E}\to\bbR$, that  are
of the class $C^m$ in the $(s,x,u)$ variables for some non-negative integer
$m$:
$$
\|g\|_{C^{m}(D_T)}:=\sum_{|k|=0}^m\sup_{(s,y,u)\in D_T}|D^k_{s,y,u}g(s,y,u)|.
$$
We assume that $f(t,x,u,\omega)$  
is such that 
$f(\cdot,w)\in C^{m}(D_T)$ for some 
  $m>(d+1)/2$ and 
  $\pi$-a.s.~$w\in  {\cal E}$, and
  \begin{equation}
  \label{H10}
  \mathop{\rm esssup}\limits_{w\in{\cal E}}\|f(\cdot,w)\|_{C^{m}(D_T)}<+\infty.
\end{equation}

In order to state the result, let  $U^{t,x,u}_\eps(s)$ be the solution of  \eqref{advec-11-semi2} along the characteristics 
$X_\eps^{t,x}(s)$ given by (\ref{ode}), satisfying  $U^{t,x,u}_\eps(t)=u$. In other words, it is the solution of the equation
\begin{equation}
\label{011906aa}
U^{t,x,u}_\eps(s) =u+\frac{1}{\eps}\int_t^s f_\eps(\si)d\si,\quad t\le s \le T,
\end{equation}
where for $f:\bbR^{d+2}\times {\cal E}\to\bbR$ we simply write
$$
f_\eps(\sigma):=f\Big(\sigma,X_\eps^{t,x}(\sigma),U_\eps^{t,x,u}(\sigma),V\Big(\frac{\sigma}{\eps^2},\frac{X_\eps^{t,x}(\sigma)}{\eps}+\cdot\Big)\Big).
$$
For a fixed pair $(t,x)$, we define a random field 
\begin{equation}
{\mathfrak S}_{\eps}^{t,x}(s,u):=U_\eps^{t,x,u}(s), \    \   t\le s\le T, \  \ u\in\Rm.
\end{equation}
In order to define the limit of ${\mathfrak S}_{\eps}^{t,x}(s,u)$, let us introduce the solution of 
the following system of It\^o stochastic differential equations
\begin{equation}\label{020307}
\begin{aligned}
&
U^{t,x,u}(s)=u+\int_t^sb(\si,X^{t,x}(\si),U^{t,x,u}(\si))d\si+\sum_{j=0}^d\int_t^s\tilde c_{j}(\si,X^{t,x}(\si),U^{t,x,u}(\si))d\tilde\beta_j(\si),\\
&
X^{t,x}_j(s)=x_j+\sum_{k=1}^d\int_t^sS_{jk} d\tilde\beta_{k}(\si),\quad j=1,\ldots,d.
\end{aligned}
\end{equation}
Here, the coefficients $b(s,x,u)$ and $\tilde c_{j}(s,x,u)$ are defined in Section~\ref{sec7.1} below, $\tilde\beta_j(\sigma)$, 
$j=0,\ldots,d$, are
i.i.d. standard Brownian motions, and
$S_{jk}$ is the square root of the $d\times d$ matrix
$a_{jk}$, given by~\eqref{cov-B}. 
The limiting dynamics (\ref{020307}) has the following property proved
in Section~\ref{sec7.1}.
 {\begin{proposition}
\label{prop010507} Given $(t,x)$, and $s\in[t,T]$, let ${\mathfrak s}_{s}^{t,x}(u):=U^{t,x,u}(s)$.
The mapping ${\mathfrak s}_{s}^{t,x}
:\bbR\to\bbR$ is a.s. a diffeomorphism. 
\end{proposition}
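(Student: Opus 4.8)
The plan is to exhibit ${\mathfrak s}_{s}^{t,x}$ as the time-$s$ flow map of an SDE with smooth coefficients that have bounded derivatives, and then invoke the classical theory of stochastic flows of diffeomorphisms. First I would observe that in the system \eqref{020307} the $X$-component does not depend on $u$, so for a fixed realization of $(\tilde\beta_0,\ldots,\tilde\beta_d)$ the path $s\mapsto X^{t,x}(s)$ is determined, and the equation for $U^{t,x,u}(\cdot)$ becomes a (random) one-dimensional SDE in $u$ driven by the fixed Brownian paths, with coefficients $\si\mapsto b(\si,X^{t,x}(\si),\cdot)$ and $\si\mapsto \tilde c_j(\si,X^{t,x}(\si),\cdot)$. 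The key input is that $b$ and $\tilde c_j$, constructed in Section~\ref{sec7.1} from $f$ and the statistics of $V$, inherit from the regularity assumption \eqref{H10} (namely $f(\cdot,w)\in C^m(D_T)$ with $m>(d+1)/2$, uniformly in $w$) enough smoothness in the $u$ variable — at least $C^{1}$ with bounded, globally Lipschitz derivative, and better if one wants higher regularity of the diffeomorphism. I would record this as a preliminary lemma (or cite the explicit formulas from Section~\ref{sec7.1}).

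Given that, the main step is a direct application of Kunita's theorem on stochastic flows (see e.g. Kunita, \emph{Stochastic Flows and Stochastic Differential Equations}): for an SDE $dU = b\,d\si + \sum_j \tilde c_j\,d\tilde\beta_j$ on $\bbR$ whose coefficients are $C^{k,\delta}$ in the spatial variable uniformly in time, the solution map $u\mapsto U^{t,x,u}(s)$ is a.s. a $C^k$-diffeomorphism of $\bbR$ for every $s$, jointly continuous in $(s,u)$. I would verify the three ingredients this needs: (i) existence and uniqueness of a global (non-exploding) solution for each fixed $u$ — which follows from the linear growth of $b,\tilde c_j$ in $u$, itself a consequence of boundedness of $f$ and its derivatives in \eqref{H10}, via a Gronwall/Burkholder–Davis–Gundy estimate; (ii) the injectivity and surjectivity of ${\mathfrak s}_s^{t,x}$, which for a one-dimensional SDE follows from the comparison principle (two solutions started at $u_1<u_2$ stay ordered, so the map is strictly increasing) together with the fact that $U^{t,x,u}(s)\to\pm\infty$ as $u\to\pm\infty$ (again from the linear-growth a priori bounds, using that for large $|u|$ the drift cannot overwhelm the initial condition on a bounded time interval); (iii) smoothness of the inverse, which is automatic once one knows $u\mapsto U^{t,x,u}(s)$ is a $C^1$ bijection with nowhere-vanishing derivative — and the derivative $\partial_u U^{t,x,u}(s)$ solves the linearized SDE $dY = (\partial_u b)\,Y\,d\si + \sum_j(\partial_u \tilde c_j)\,Y\,d\tilde\beta_j$ with $Y(t)=1$, hence $Y(s)=\exp\{\int_t^s(\partial_u b - \tfrac12\sum_j(\partial_u\tilde c_j)^2)d\si + \sum_j\int_t^s \partial_u\tilde c_j\,d\tilde\beta_j\}>0$ a.s., so it never vanishes.

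I would then assemble these: (i) gives a well-defined map ${\mathfrak s}_s^{t,x}:\bbR\to\bbR$; the comparison argument in (ii) makes it strictly monotone; the growth bounds make it onto; and (iii) gives continuous differentiability of ${\mathfrak s}_s^{t,x}$ and of its inverse, so it is a diffeomorphism, for each fixed $s$ almost surely — and, after choosing a continuous modification in $(s,u)$, simultaneously for all $s\in[t,T]$ on a single full-measure event.

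The main obstacle I anticipate is bookkeeping rather than conceptual: one must check that the coefficients $b,\tilde c_j$ produced by the homogenization/environment-process computation in Section~\ref{sec7.1} really are spatially $C^1$ (in $u$) with bounded derivatives, since they involve averaging $f$ and certain correctors against the invariant measure $\pi$ of the environment process — the differentiation in $u$ has to pass through those averages, which is where the uniform-in-$w$ bound \eqref{H10} on $m>(d+1)/2$ derivatives of $f$ is used. A secondary point to be careful about is that the randomness in the $U$-equation is not just the Brownian motions $\tilde\beta_j$ but also the (frozen) random path $X^{t,x}(\cdot)$; since $X^{t,x}$ is continuous and independent of nothing we need here, we may simply work conditionally on the whole driving path and apply the flow theorem pathwise, then integrate out — so this introduces no real difficulty, but should be stated cleanly.
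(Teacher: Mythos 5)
Your proposal is correct and follows essentially the same route as the paper: the paper likewise writes the linearized SDE for $\xi_s^{t,x}(u)=\partial_u U^{t,x,u}(s)$, solves it explicitly as the exponential \eqref{020307b} to get a.s. positivity of the derivative, and combines this with $\lim_{u\to\pm\infty}U^{t,x,u}(s)=\pm\infty$ to conclude the map is a diffeomorphism. The Kunita-flow framing and the comparison-principle remark are extra packaging around the same core argument.
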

This leads to the main result concerning the convergence of the solution of
\eqref{advec-11-semi2}.
\begin{theorem}
\label{thm011110}
(i) The joint laws of $\left(X_\eps^{t,x}(\cdot),{\mathfrak
    S}_{\eps}^{t,x}(\cdot)\right)$, over $C([t,T])\times
C([t,T]\times\bbR)$, equipped with the standard Frechet metric
metrizing uniform convergence on compact sets, converge weakly
to the law of $\left(X^{t,x}(\cdot),{\mathfrak
    S}^{t,x}(\cdot)\right)$, with ${\mathfrak
    S}^{t,x}(s,u):=U^{t,x,u}(s)$.\\ 
%
(ii) For each $(t,x)\in\bbR^{1+d}$ fixed, the random variables $u_\eps(t,x)$ converge in law,
as $\eps\to 0$, to  
\begin{equation}
\label{011001}
\mathscr{U}(t,x):=({\mathfrak
    s}_{T}^{t,x})^{-1}(u_0(X^{t,x}(T))).
\end{equation}
In addition, for
any positive integer $N$, mutually distinct $x_1,\ldots,x_N\in \bbR^d$
and $t\le T$, the random vectors
$(u_\eps(t,x_1),\ldots,u_\eps(t,x_N))$ converge in law, as $\eps\to 0$, to 
$ (\mathscr{U}^{(1)}(t,x_1),\ldots, \mathscr{U}^{(N)}(t,x_N))$,
where~$\mathscr{U}^{(1)},\ldots, \mathscr {U}^{(N)}$ correspond, via \eqref{011001}, to 
$({\frak s}^{t,x_j}_T(\cdot),X^{t,x_j}_j)$, $j=1,\ldots,N$ driven by i.i.d. copies of $d$-dimensional standard 
Brownian
motions as in \eqref{020307}. 
\end{theorem}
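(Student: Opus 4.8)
The plan is to deduce part (ii) from part (i) by an essentially deterministic continuous‑mapping argument, and to prove part (i) by the method of the environment process combined with a corrector‑based identification of the limiting martingale problem. \textbf{Reduction of (ii) to (i).} Along the characteristic issuing from $(t,x)$, the function $s\mapsto u_\eps(s,X_\eps^{t,x}(s))$ solves \eqref{011906aa} with the initial value $u$ replaced by $u_\eps(t,x)$, and evaluating \eqref{e.ch1} at $s=T$ gives
$$
\mathfrak S_\eps^{t,x}\big(T,u_\eps(t,x)\big)=u_\eps(t,x)+\frac1\eps\int_t^T f_\eps(\sigma)\,d\sigma=u_0\big(X_\eps^{t,x}(T)\big).
$$
Differentiating \eqref{011906aa} in $u$ shows $\partial_u U_\eps^{t,x,u}(s)=\exp\big(\tfrac1\eps\int_t^s\partial_u f_\eps(\sigma)\,d\sigma\big)>0$, and \eqref{H10} forces $U_\eps^{t,x,u}(T)=u+O(1/\eps)$ to be onto $\bbR$; hence $\mathfrak s_{\eps,T}^{t,x}(u):=\mathfrak S_\eps^{t,x}(T,u)$ is a diffeomorphism of $\bbR$ and
$$
u_\eps(t,x)=\big(\mathfrak s_{\eps,T}^{t,x}\big)^{-1}\big(u_0(X_\eps^{t,x}(T))\big).
$$
Granted (i) and Proposition \ref{prop010507}, the functional $(g,y)\mapsto g^{-1}(u_0(y))$ is continuous at $(\mathfrak s_T^{t,x},X^{t,x}(T))$ in the relevant topologies: since $u_0\in C_0^\infty(\bbR^d)$ the point $u_0(X_\eps^{t,x}(T))$ stays in the fixed compact $\overline{u_0(\bbR^d)}$, on a neighbourhood of which $\mathfrak s_T^{t,x}$ has a well‑defined continuous inverse by Proposition \ref{prop010507}. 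Passing to a Skorokhod realization of (i) and applying the continuous mapping theorem yields the one‑point statement of (ii); the $N$‑point statement follows identically once (i) is proved jointly for the $N$ characteristics.

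\textbf{Proof of (i).} Introduce the ${\cal E}$‑valued environment process $\eta_\eps(s):=V\big(s/\eps^2,\cdot+X_\eps^{t,x}(s)/\eps\big)$, so that the velocity along the characteristic and the forcing $f_\eps(\sigma)$ are functionals of $\big(\sigma,X_\eps^{t,x}(\sigma),U_\eps^{t,x,u}(\sigma),\eta_\eps(\sigma)\big)$. By the Markovian semimartingale structure of $V$ built in Sections \ref{s.markov}--\ref{s.corrector} (the stochastic differential equation \eqref{sde1} and the It\^o formula \eqref{f}), the triple $(X_\eps,U_\eps^{t,x,u},\eta_\eps)$ is a semimartingale whose drift carries singular terms of order $\eps^{-2}$ (the generator $L$ of the environment dynamics) and $\eps^{-1}$ (the advection of the observation point and the forcing $\eps^{-1}f_\eps$). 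We remove them with two correctors, both supplied with good bounds by the uniform spectral gap \eqref{nov1202}: a vector‑valued $\chi$ with $L\chi=-V(\cdot)$, exactly as in the linear case, and a scalar $\psi=\psi(s,x,u,w)$ solving the Poisson equation $L\psi(s,x,u,\cdot)=-f(s,x,u,\cdot)$, solvable precisely because the centering hypothesis $\bar f\equiv0$ makes $f(s,x,u,\cdot)$ orthogonal to the invariant law $\pi$. Using the decompositions $X_\eps(s)+\eps\chi(\eta_\eps(s))$ and $U_\eps^{t,x,u}(s)+\eps\psi(s,X_\eps(s),U_\eps^{t,x,u}(s),\eta_\eps(s))$ one checks that, up to $O(\eps)$ remainders, these are martingales with explicitly computable brackets; the usual moment and Aldous‑type estimates then give tightness of $(X_\eps^{t,x}(\cdot),\mathfrak S_\eps^{t,x}(\cdot))$ in $C([t,T])\times C([t,T]\times\bbR)$. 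For tightness in the $u$‑direction one again uses $\bar f\equiv0$, hence $\partial_u\bar f\equiv0$: the integral $\tfrac1\eps\int_t^s\partial_u f_\eps\,d\sigma$ is itself a homogenizing, mean‑zero‑in‑environment functional and stays $O(1)$ in probability with uniform moment bounds, so $\partial_u U_\eps^{t,x,u}$ and its higher $u$‑derivatives are bounded on compacts and Kolmogorov's criterion applies. To identify the limit, apply the It\^o formula \eqref{f} to test functions of $(X_\eps,U_\eps^{t,x,u},\eta_\eps)$, absorb the $\eps^{-2}$‑ and $\eps^{-1}$‑order terms into $\chi,\psi$ up to vanishing remainders, and read off the limiting generator: its coefficients are $a_{jk}$ from \eqref{cov-B} in the $X$‑variable and the drift $b$ and diffusion coefficients $\tilde c_j$ of Section \ref{sec7.1} in the $U$‑variable — $b$ collecting the It\^o correction and the $X$--$f$ cross‑correlation produced by $\nabla_x\psi$, and the $\tilde c_j$ the effective noise from $\psi$ and from $f$ along the characteristic. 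Thus every subsequential limit of $(X_\eps^{t,x},\mathfrak S_\eps^{t,x})$ solves the martingale problem for \eqref{020307}; since \eqref{H10} makes the coefficients of \eqref{020307} Lipschitz, this problem is well posed, the limit is unique, and (i) follows. The same derivative equation, obtained by differentiating \eqref{020307} in $u$ and solved by a positive exponential, gives Proposition \ref{prop010507}.

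\textbf{The multi‑point statement and the main obstacle.} For mutually distinct $x_1,\dots,x_N$ one runs the same scheme for the vector of environment processes $(\eta_\eps^{(j)})_{j=1}^N$, the only new input being asymptotic independence: on the microscopic scale the observation points $X_\eps^{t,x_j}(s)/\eps$ stay at mutual distances of order $|x_j-x_k|/\eps\to\infty$, so the cross‑correlations of the fields $V(s/\eps^2,\cdot+X_\eps^{t,x_j}(s)/\eps)$ for $j\ne k$ — governed by $R_{ij}$ from \eqref{vel} — tend to $0$, and the limiting martingale problem for the $N$‑fold system decouples into $N$ independent copies of \eqref{020307}; the continuous‑mapping argument of the first paragraph, applied coordinatewise, then concludes. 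The delicate points I expect are: (a) constructing $\psi$ with enough joint regularity in $(s,x,u,w)$ to absorb the singular forcing $\eps^{-1}f_\eps$ and to control all remainders uniformly for $u$ in compact sets; (b) establishing the $u$‑direction tightness, so that $\mathfrak S_\eps^{t,x}$ is genuinely tight as a $C([t,T]\times\bbR)$‑valued random element; and (c) the passage through the inverse map in (ii), which hinges on Proposition \ref{prop010507} guaranteeing that the limit $\mathfrak s_T^{t,x}$ is a true diffeomorphism, so that inversion is continuous where we evaluate it.
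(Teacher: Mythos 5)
Your overall strategy coincides with the paper's: part (i) is proved via the environment process, a corrector for the velocity and a corrector $\Theta$ for the forcing (solvable because $\bar f\equiv0$), followed by a semimartingale/martingale-problem identification of the limit \eqref{020307}; part (ii) is deduced by inverting ${\mathfrak s}_{T,\eps}^{t,x}$ and applying the continuous mapping theorem to $(S,X)\mapsto S^{-1}(T,u_0(X(T)))$, exactly the functional ${\cal H}$ of Section 7.3. Two small imprecisions: the corrector equation is $-{\cal L}\Theta=f$ with the \emph{environment-process} generator ${\cal L}=L+\sum_j v_jD_j$, not $L$ alone (the spectral gap \eqref{cal-L1} for ${\cal L}$ is what makes it solvable); and the well-posedness of \eqref{020307} is not simply "Lipschitz coefficients from \eqref{H10}" — $\tilde c_0$ is defined through a square root in \eqref{tc}, so this deserves more care than you give it.

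The genuine gap is that the three points you flag as "delicate" at the end — uniform control of the corrector terms for $u$ in compacts, tightness of ${\mathfrak S}_\eps^{t,x}$ as a $C([t,T]\times\bbR)$-valued element together with the $O(1)$ bound on $\partial_u U_\eps^{t,x,u}$, and the continuity of the inversion in (ii) — are precisely where all the work of Sections 7.2--7.5 lies, and your proposal offers no mechanism to close them. The paper's device is a localization: it truncates the nonlinearity to $f^{(M)}=\phi_M f$, proves tightness and convergence for the truncated field ${\mathfrak S}_{\eps,M}^{t,x}$ (Proposition \ref{prop012509}), where the Sobolev-embedding estimates of Lemmas \ref{lm012509}--\ref{lm022509} are taken over the compact $\tilde D_{M+2}$ and the derivative bound \eqref{011609d} comes from the exponential representation \eqref{011906f} involving a further corrector $\Theta'_{M|u}$ — your assertion that $\frac1\eps\int\partial_u f_\eps$ "stays $O(1)$ with uniform moment bounds" is exactly the content of that lemma and is not automatic. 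The truncation is then removed via the exit-time estimate of Proposition \ref{prop013009} and the convergence ${\mathfrak Q}_{M}\Rightarrow{\mathfrak Q}$. The same localization is what makes the inversion in (ii) rigorous: the truncated laws are supported on ${\cal CM}_+(M_*)$, where ${\mathfrak s}_{T,\eps,M}^{t,x}(u)\equiv u$ for $|u|\ge M_*$, so the preimage of $u_0(X_\eps^{t,x}(T))$ is confined to a fixed compact and ${\cal H}$ is continuous there; your Skorokhod argument needs, and does not supply, the analogous control that ${\mathfrak s}_{T,\eps}^{t,x}(\pm N)$ straddles $\pm\|u_0\|_\infty$ with high probability (the events $A_{N,\eps}$, $B_{N,M,\eps}$ of Section 7.5). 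Without some substitute for this localization the proposal remains a roadmap rather than a proof.
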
}

\section{Some preliminaries on Gaussian, Markovian fields}
\label{s.markov}

In this section, we give a Markovian representation for the field $V(t,x)$, starting from the assumptions in Section~\ref{sec2.1}. 
To this end, let ${\cal H}_1$ be the $L^2$-closure of the linear space
spanned by the random variables 
\begin{equation}
\label{w-phi}
W(\varphi;w)=\sum_{j=1}^d \int_{\bbR^d}w_j(x)\varphi_j(x)dx,\quad \varphi\in\Sd,\quad w\in {\cal E},
\end{equation}
defined over the probability space $\left({\cal E},
  {\cal B}({\cal E}),\pi\right)$. Here, $\Sd$ is the space 
of divergence free vector fields $\varphi=(\varphi_1,\ldots,\varphi_d)$
with components in  ${\cal S}(\bbR^d)$.
By an approximation
argument,~$W$ extends to a unitary mapping
$W:H\to {\cal H}_1$, where 
$H$ is the (real) Hilbert space,  the closure of $\Sd$ in
the norm $\|\cdot\|_{H}$, with 
\begin{equation}
\label{l2s}
\langle\varphi_1,\varphi_2\rangle_{H}:=\int_{\bbR^d}\hat\varphi_1(k)\cdot\hat\varphi_2^*(k)\si(k)dk,\quad
\varphi_1,\varphi_2 \in \Sd.
\end{equation}
Here, $\sigma(k)$ is as in (\ref{vel}).
In addition, by \eqref{vel} and the fact that $\varphi_1,\varphi_2$ are divergence free, we have
\begin{equation}
\label{unitary}
\langle
W(\varphi_1;w),W(\varphi_2;w)\rangle_{L^2(\pi)}=\langle\varphi_1,\varphi_2\rangle_H,\quad
\varphi_1,\varphi_2\in H.
\end{equation}
Note that the shift 
\[
\tau_x\varphi(\cdot):=\varphi(x+\cdot)
\] is an isometry on  $H$,  for each $x\in\bbR^d$. In the following, we will simply write
\[
W(\varphi)=W(\varphi;w).
\]

\subsection{The Gaussian chaos expansion}

Let ${\cal P}_n$ be the space of the $n$-th
degree polynomials, the $L^2$-closure of the linear 
span of
$$
\prod_{j=1}^mW(\varphi_j),\quad 1\le m\le n ,\quad
\varphi_1,\ldots,\varphi_n\in H,
$$
and ${\cal H}_n:={\cal P}_n\ominus{\cal P}_{n-1}$, $n\ge1$ be the space of the $n$-th degree Hermite polynomials,
with the convention~${\cal H}_0={\cal P}_0=\bbR$.  It is well known, see
e.g. Theorem 2.6, p. 18 of \cite{janson}, that
\[
L^2(\pi)=\bigoplus_{n=0}^{+\infty}{\cal H}_n.
\]
Denote by ${\mathfrak p}_n$ the
orthogonal 
projection of $L^2(\pi)$ onto ${\cal H}_n$. Given $s\in[0,+\infty)$, the Hilbert space~${\mathfrak H}_s$
is made of   $F\in L^2(\pi)$ with ${\mathfrak p}_0F=0$ and the norm
\begin{equation}
\label{Hs}
\|F\|_{{\mathfrak H}_s}:=\Big\{\sum_{n=1}^{+\infty}(1+n)^{s}\|{\mathfrak p}_n F\|_{L^2(\pi)}^2\Big\}^{1/2}<+\infty.
\end{equation}
We set
\begin{equation}
\label{Hinf}
{\mathfrak H}_\infty:=\bigcap_{s\ge0}{\mathfrak H}_s.
\end{equation}

The homogeneity assumption on $\pi$ amounts to the fact that
$\pi\tau_x=\pi$ for each $x\in\bbR^d$. 
Therefore, the operators~$T_x F(w):=F(\tau_x w)$, $w\in{\cal E}$, $x\in\bbR^d$,
form  a strongly continuous group of isometries
 on~$L^p(\pi)$ for any $p\in[1,+\infty)$. Denote by  $D=(D_1,\ldots,D_d)$ the
generators of
$(T_x)_{x\in\bbR^d}$.
 Let ${\cal W}_{k,p}$  be the Banach space consisting of $F\in L^p(\pi)$ that 
belong to the domain of $D^m=\prod_{j=1}^dD_j^{m_j}$, for a
non-negative integer
multi-index $m=(m_1,\dots,m_d)$, with $|m|:=\sum_{i=1}^dm_i\le
k$, equipped with the norm
\begin{equation}
\label{wkp}
\|F\|_{k,p}:=\Big\{\sum_{|m|\le k}\|D^mF\|_{L^p(\pi)}^p\Big\}^{1/p}.
\end{equation}
The space ${\cal W}_{k,\infty}$ is defined with the help of $L^\infty$
norm.

We let
$
{\cal W}_\infty:=\bigcap_{p>1}{\cal W}_{1,p}.$
It follows from the definition of $T_x$   that
$$
T_x\Big(\prod_{j=1}^nW(\varphi_j)\Big)=\prod_{j=1}^nW(\tau_{-x}\varphi_j),\quad
\varphi_1,\ldots,\varphi_n\in H.
$$
Therefore,
$T_x({\cal P}_n)={\cal P}_n$, and since $T_x$ is unitary on
$L^2(\pi)$, we also get $T_x({\cal H}_n)={\cal H}_n$ for all $n\ge0$.
Due to the assumption that $\si$ is compactly supported, we
conclude easily that  ${\cal P}:=\bigcup_{n\ge0}{\cal P}_n\subset
{\cal W}_\infty$.

Finally,   we define the linear functionals
$v_p:{\cal E}\to\bbR$ as
\begin{equation}
\label{v}
v_p(w):=w_p(0),\quad
w\in {\cal E},\,p=1,\ldots,d.
\end{equation}
They are bounded  and can be
written as
\begin{equation}
\label{vp}
v_p=W(f_p),\quad p=1,\ldots,d,
\end{equation}
with $f_p\in H$ given by
$$
f_p(x):=
\int_{\bbR^d}e^{i k\cdot x}\Gamma(k) {\rm e}_pdk,~~
\hbox{${\rm
   e}_p=\mathop{\underbrace{(0,\ldots,1,\ldots,0)}}\limits_{\mbox{$p$-th
     position}}$.}
$$


\subsection{Markovian dynamics of the velocity field}

Here, we formulate the Markov property of the
${\cal E}$-valued
process $V_t:=V(t,\cdot)$, $t\in\bbR$.  We represent
the random field $V(t,x)$ in the form
\begin{equation}
\label{V}
V(t,x)=v(\tau_xV_t),\quad (t,x)\in\bbR^{1+d},
\end{equation}
with   $v=(v_1,\ldots,v_d)$ as in \eqref{v}.
Given $t\ge0$, let $S_t:H\to H$  be the continuous extension of 
\begin{equation}
\label{012801}
\widehat{S_t\varphi}(k):=e^{-\al(k)t}\widehat\varphi(k),\quad \varphi\in
\Sd.
\end{equation}
The family $(S_t)_{t\ge0}$ is 
a $C_0$-semigroup of symmetric contractions on $H$, with the
  generator $(-A)$ and 
\begin{equation}\label{nov1202bis}
\widehat{A\varphi}(k):=\al(k)\hat\varphi(k),  \quad \varphi\in \Sd.
\end{equation}
One can easily verify that $P_t$, defined via
\begin{equation}
\label{022801}
P_tW(\varphi):=W(S_t\varphi),\quad t\ge0,\quad \varphi \in H,
\end{equation}
forms a semigroup of 
contractions on ${\cal H}_1$.
Similarly, for
$F={\mathfrak p}_n\left(\prod_{j=1}^n W(\varphi_j)\right)$, we set
\begin{equation}
\label{hn}
P_tF:={\mathfrak p}_n\left(\prod_{j=1}^n W(S_t\varphi_j)\right).
\end{equation}
According to Theorem 4.5 of \cite{janson}, $\left(P_t\right)_{t\ge0}$ is a
contraction semigroup on ${\cal H}_n$ for each $n$, hence
a semigroup of contractions on the entire $L^2(\pi)$. 
It can be easily checked  (using e.g.
Theorem~3.9,~p.~26 of \cite{janson}) that $P_t$ forms a strongly
continuous semigroup of symmetric operators on $L^2(\pi)$.

Let ${\mathfrak V}_s$ be the $L^2$ closure of the linear span
of $W(\varphi;V_u)$ for any $u\le s$ and $\varphi\in H$. 
For any $t\ge s$ and
$\varphi\in H$, the orthogonal projection of
$W(\varphi;V_t)$ onto ${\mathfrak V}_s$ is 
$$
W(S_{t-s}\varphi;V_s)=P_{t-s}W(\varphi)(V_s).
$$
Therefore, according to Theorem 4.9, p. 46 of \cite{janson},
for any $F\in L^2(\pi)$  and $t\ge s$ we have
$$
\bbE\left[F(V_t)\left|\right.{\cal V}_s\right]=P_{t-s}F(V_s),
$$
where $\left({\cal V}_s\right)$ is the natural filtration of $\left(V_t\right)_{t\ge0}$.
Note that for any $x\in \bbR^d$, $t\ge0$ we have
\[
S_t(\tau_x(\varphi))=\tau_x(S_t\varphi).
\]
Using \eqref{hn},
we can conclude also that
\begin{equation}
\label{PT}
P_tT_x=T_xP_t,\quad t\ge0,\,x\in\bbR^d.
\end{equation}

It follows from \eqref{012801} and \eqref{022801} that
\begin{equation}
\label{022801a}
\|P_tF\|_{L^2(\pi)}\le e^{-\al_*t}\|F\|_{L^2(\pi)},\quad t\ge0,\quad F
\in {\cal H}_1.
\end{equation}
Using Theorem 4.5, p. 46 of \cite{janson} we conclude that
\eqref{022801a} actually holds for any $F\in L^2(\pi)$ such that~$\langle
F,1\rangle_{L^2(\pi)}=0$. We denote by $L:{\cal D}(L)\to L^2(\pi)$ the 
$L^2$-generator of $P_t$, which, due to the symmetry of the semigroup, is self-adoint. Since ${\cal P}$ is  dense in $L^2(\pi)$ and
invariant under $(P_t)_{t\ge0}$, it is a core of~$L$, see
e.g. Proposition 3.3, p. 17 of \cite{ethier-kurtz}.  As a consequence
of \eqref{022801a}
we have an estimate for the Dirichlet form
\begin{equation}
\label{022801b}
{\cal E}_L(F):=-\langle  LF,F\rangle_{L^2(\pi)}=-\lim_{t\to0} \frac{1}{t} \left(\langle P_t F,F\rangle -\langle F,F\rangle\right)\ge \al_*\|F\|_{L^2(\pi)}^2,\quad F
\in {\cal D}(L),\,F\perp 1.
\end{equation}
In fact, we have an estimate that allows us to compare
the Dirichlet form with the $L^2$ and $\|\cdot\|_{1,2}$ norms on the
space of the $n$-th degree Hermite polynomials.

\begin{theorem}
\label{thm013101}
The following estimates hold:
\begin{itemize}
\item[(i)]
\begin{equation}
\label{043101}
\al_*n\|F\|_{L^2(\pi)}^2 \le  {\cal E}_L(F)\le A_* n\|F\|_{L^2(\pi)}^2,\quad F\in {\cal
  H}_n,\quad n\ge0.
\end{equation}
\item[(ii)]
There exists a constant $C>0$ such that
\begin{equation}
\label{033101}
\sum_{j=1}^d\|D_jF\|_{L^2(\pi)}^2\le Cn {\cal E}_L(F),\quad F\in {\cal
  H}_n,\quad n\ge0.
\end{equation}
\item[(iii)]
There exists a constant $C>0$ such that
\begin{equation}
\label{042101}
\|v_p F\|_{L^2(\pi)}\le C{\cal E}_{L}^{1/2}(F),\quad
F\in {\cal H}_n,\quad n\ge 1,\quad p=1,\ldots,d.
\end{equation}
\end{itemize}
\end{theorem}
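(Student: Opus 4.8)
The plan is to reduce all three estimates to elementary pointwise bounds on the Fourier symbols $\alpha(k)$ and $|k|^2$ over the region $\{\sigma>0\}$, which by hypothesis is contained in $\{|k|\le K_0\}$, using the Wiener chaos decomposition of $L^2(\pi)$ and the second-quantization structure of $(P_t)_{t\ge0}$. For part (i) — the case $n=0$ being trivial — I would first record that, under the canonical isometry of ${\cal H}_n$ with the $n$-fold symmetric tensor power of $H$, formula \eqref{hn} identifies $P_t|_{{\cal H}_n}$ with the restriction of $S_t^{\otimes n}$; since $S_t=e^{-tA}$ by \eqref{012801}--\eqref{nov1202bis}, this gives $L|_{{\cal H}_n}=-\sum_{i=1}^n A^{(i)}$, where $A^{(i)}$ denotes $A$ acting in the $i$-th tensor slot and, by \eqref{nov1202bis}, $A$ is the Fourier multiplier by $\alpha(k)$. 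In particular $L|_{{\cal H}_n}$ is bounded, so ${\cal H}_n\subset{\cal D}(L)$ and ${\cal E}_L(F)=\big\langle\sum_i A^{(i)}F,F\big\rangle_{L^2(\pi)}$; since \eqref{nov1202} gives $\alpha_* I\le A\le A_* I$ on $H$, we have $n\alpha_* I\le\sum_i A^{(i)}\le nA_* I$ on the tensor power, and restriction to symmetric tensors yields \eqref{043101}.

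For part (ii), the generators $D_j$ of the translation group $(T_x)$ act on ${\cal H}_1$ by $D_jW(\varphi)=-W(\partial_{x_j}\varphi)$, and hence on ${\cal H}_n$ they are $D_j|_{{\cal H}_n}=-\sum_{i=1}^n\partial_{x_j}^{(i)}$, where $\partial_{x_j}$ is the skew-adjoint Fourier multiplier by $ik_j$ on $H$. By Cauchy--Schwarz, $\|\sum_i\partial_{x_j}^{(i)}F\|^2\le n\sum_i\|\partial_{x_j}^{(i)}F\|^2$, and $\sum_j\partial_{x_j}^*\partial_{x_j}$ is the Fourier multiplier by $|k|^2$, which is $\le K_0^2 I$ on $H$ because $\sigma$ is compactly supported — this is where the compact support hypothesis enters decisively. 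Combining, $\sum_j\|D_jF\|_{L^2(\pi)}^2\le n^2 K_0^2\|F\|^2$, and inserting $\|F\|^2\le{\cal E}_L(F)/(n\alpha_*)$ from part (i) (for $n\ge1$; $n=0$ is trivial) gives \eqref{033101} with $C=K_0^2/\alpha_*$.

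For part (iii), write $v_p=W(f_p)$ as in \eqref{vp}, with $f_p\in H$ (indeed $\|f_p\|_H^2=c_d\int_{\bbR^d}(1-k_p^2/|k|^2)\sigma(k)\,dk<+\infty$ for a dimensional constant $c_d$). Multiplication by $W(f_p)$ carries ${\cal H}_n$ into ${\cal H}_{n-1}\oplus{\cal H}_{n+1}$, decomposing there as the orthogonal sum of an annihilation and a creation operator, for which the standard bounds $\|a(f_p)F\|\le\sqrt n\,\|f_p\|_H\|F\|$ and $\|a^*(f_p)F\|\le\sqrt{n+1}\,\|f_p\|_H\|F\|$ hold on ${\cal H}_n$. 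Hence $\|v_pF\|_{L^2(\pi)}^2\le(2n+1)\|f_p\|_H^2\|F\|^2$, and substituting $\|F\|^2\le{\cal E}_L(F)/(n\alpha_*)$ from part (i) — which is where $n\ge1$ is needed — yields \eqref{042101}.

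The one point requiring genuine care is to make rigorous the identification of ${\cal H}_n$ with the symmetric tensor power of $H$ and the resulting formulas $L|_{{\cal H}_n}=-\sum_i A^{(i)}$, $D_j|_{{\cal H}_n}=-\sum_i\partial_{x_j}^{(i)}$ — equivalently, to organize the Wick-product combinatorics behind them; this is classical but somewhat tedious. Once it is in place, all three inequalities become the pointwise bounds $\alpha_*\le\alpha(k)\le A_*$ and $|k|\le K_0$ on $\{\sigma>0\}$. Since the quadratic forms involved are all bounded on each ${\cal H}_n$ (the operators $L$, $D_j$, and multiplication by $v_p$ being bounded on ${\cal H}_n$), it suffices to prove the estimates on the dense set of finite linear combinations of Wick products, where the computations are transparent.
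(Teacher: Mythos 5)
Your proposal is correct. For part (i) it is, at bottom, the same argument as the paper's: the paper computes the Dirichlet form explicitly in the Hermite basis via the It\^o formula and arrives at the identity ${\cal E}_L(F)=\int_{\cal E}\langle A{\cal D}F,{\cal D}F\rangle_H\,d\pi$ together with $\sum_j\|{\cal D}_jF\|_{L^2(\pi)}^2=n\|F\|_{L^2(\pi)}^2$ on ${\cal H}_n$, which is precisely your statement $L|_{{\cal H}_n}=-\sum_iA^{(i)}$ read through the Malliavin derivative rather than through the second-quantization functor; both then invoke $\al_*I\le A\le A_*I$. The real difference is in parts (ii) and (iii): the paper does not prove these at all, but cites \cite{KLO} (estimate (12.115) and Lemma 12.25), whereas you supply short self-contained arguments. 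Your proof of (ii) — $D_j|_{{\cal H}_n}=-\sum_i\partial_{x_j}^{(i)}$, Cauchy--Schwarz in $i$, the bound $\sum_j\partial_{x_j}^*\partial_{x_j}\le K_0^2I$ on $H$ from the compact support of $\si$, and then part (i) to convert $\|F\|^2$ into ${\cal E}_L(F)/(n\al_*)$ — and your proof of (iii) via the creation/annihilation decomposition of multiplication by $W(f_p)$ (with the orthogonality of the ${\cal H}_{n\pm1}$ components and $\|f_p\|_H<\infty$ because $\si$ is continuous with compact support) are both sound, and they correctly isolate where the standing hypotheses \eqref{nov1202} and the compact support of $\si$ enter. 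The one step you rightly flag as needing care — the identification of ${\cal H}_n$ with the symmetric tensor power intertwining $P_t$ with $S_t^{\otimes n}$, which is exactly the content of \eqref{hn} and Theorem 4.5 of \cite{janson} — is the same bookkeeping the paper performs by hand with the products $h_{\bf n}$ of one-dimensional Hermite polynomials, so nothing is missing. What your route buys is a uniform, reference-free treatment of all three estimates from a single structural fact; what the paper's route buys for part (i) is the explicit formula \eqref{021610bis} for ${\cal E}_L$, which it needs again later (e.g.\ in \eqref{010609} and in Section \ref{sec8.2}).
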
 
The proof of part (i)  is presented in Section \ref{sec4.1}. The proofs
of parts (ii) and (iii) can be found in~\cite{KLO}, 
see the estimate~(12.115), p. 413  and Lemma 12.25,
p. 405, respectively.
As a direct conclusion from the above result, we obtain the following
(cf \eqref{Hs}).
\begin{corollary}
\label{cor011204}
We have ${\cal D}({\cal E}_L)={\mathfrak H}_{1}$. 
\end{corollary}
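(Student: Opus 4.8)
The plan is to reduce the statement to the chaos-by-chaos estimate of Theorem~\ref{thm013101}(i) by showing that the Dirichlet form ${\cal E}_L$ is additive along the orthogonal decomposition $L^2(\pi)=\bigoplus_{n\ge0}{\cal H}_n$. The starting point is that each Hermite chaos ${\cal H}_n$ is invariant under $(P_t)_{t\ge0}$ — this is exactly \eqref{hn} together with the fact, recorded right after it, that $P_t$ restricts to a contraction semigroup on ${\cal H}_n$ (Theorem~4.5 of \cite{janson}). Since $P_t$ is symmetric on $L^2(\pi)$, the orthogonal complement of ${\cal H}_n$ is invariant as well, so the projections ${\mathfrak p}_n$ commute with $P_t$, hence with the generator $L$ and with the closed form ${\cal E}_L$.

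Concretely, for $F\in L^2(\pi)$ with ${\mathfrak p}_0F=0$ I would set $F_n:={\mathfrak p}_nF$ and use ${\mathfrak p}_nP_t=P_t{\mathfrak p}_n$ to write, for every $t>0$,
\[
\frac1t\big(\|F\|_{L^2(\pi)}^2-\langle P_tF,F\rangle_{L^2(\pi)}\big)=\sum_{n\ge1}\frac1t\big(\|F_n\|_{L^2(\pi)}^2-\langle P_tF_n,F_n\rangle_{L^2(\pi)}\big).
\]
By the spectral theorem applied to the symmetric contraction $P_t|_{{\cal H}_n}$, each summand on the right is nonnegative and nondecreasing as $t\downarrow0$ (the scalar function $\la\mapsto(1-e^{-\la t})/t$ increases to $\la$ as $t\downarrow0$), with limit ${\cal E}_L(F_n)\in[0,+\infty]$. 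A monotone-convergence interchange of the limit with the sum, using the representation of ${\cal E}_L$ in \eqref{022801b}, then gives ${\cal E}_L(F)=\sum_{n\ge1}{\cal E}_L(F_n)$, with $F\in{\cal D}({\cal E}_L)$ precisely when this sum is finite. (Here ${\cal E}_L$ is read, as in the statement, on the orthogonal complement of the constants, since ${\cal E}_L$ vanishes on ${\cal H}_0$ while ${\mathfrak H}_1$ excludes it by definition.)

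It then remains to insert Theorem~\ref{thm013101}(i): for each $n\ge1$ one has $\al_*n\|F_n\|_{L^2(\pi)}^2\le{\cal E}_L(F_n)\le A_*n\|F_n\|_{L^2(\pi)}^2$, so after summation
\[
\al_*\sum_{n\ge1}n\|F_n\|_{L^2(\pi)}^2\ \le\ {\cal E}_L(F)\ \le\ A_*\sum_{n\ge1}n\|F_n\|_{L^2(\pi)}^2 .
\]
Since $\sum_{n\ge1}\|F_n\|_{L^2(\pi)}^2=\|F\|_{L^2(\pi)}^2<\infty$ automatically, finiteness of ${\cal E}_L(F)$ is equivalent to $\sum_{n\ge1}(1+n)\|F_n\|_{L^2(\pi)}^2<\infty$, which is exactly the condition $\|F\|_{{\mathfrak H}_1}<\infty$ from \eqref{Hs}. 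This yields ${\cal D}({\cal E}_L)={\mathfrak H}_1$, with equivalence of the form norm and $\|\cdot\|_{{\mathfrak H}_1}$.

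The only genuinely delicate point is the additivity step — making sure the \emph{closed} form ${\cal E}_L$, not merely $-\langle L\cdot,\cdot\rangle$ on ${\cal D}(L)$, decomposes along the ${\cal H}_n$. The monotone-limit description \eqref{022801b} handles this cleanly; alternatively one can argue that each ${\cal H}_n$ reduces the self-adjoint operator $(-L)^{1/2}$ and use $\|(-L)^{1/2}F\|_{L^2(\pi)}^2=\sum_n\|(-L)^{1/2}F_n\|_{L^2(\pi)}^2$ with ${\cal D}({\cal E}_L)={\cal D}((-L)^{1/2})$. Once that is in place, everything else is bookkeeping with Theorem~\ref{thm013101}(i).
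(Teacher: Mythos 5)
Your argument is correct, and it reaches the paper's conclusion by a somewhat different route. The paper does not treat the corollary abstractly: it first derives, via the It\^o computation \eqref{010202}--\eqref{12221}, the explicit carr\'e-du-champ identity ${\cal E}_L(F)=\int_{\cal E}\langle A{\cal D}F,{\cal D}F\rangle_H\,d\pi$ (equation \eqref{021610bis}), and then combines the spectral bounds $\al_*\|\varphi\|_H^2\le\langle A\varphi,\varphi\rangle_H\le A_*\|\varphi\|_H^2$ with the identity $\sum_j\|{\cal D}_jF\|_{L^2(\pi)}^2=\sum_n n\|{\mathfrak p}_nF\|_{L^2(\pi)}^2$ to get the two-sided estimate \eqref{010601-18}, from which both \eqref{043101} and the corollary drop out simultaneously. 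You instead take \eqref{043101} as a black box and supply the missing additivity ${\cal E}_L(F)=\sum_n{\cal E}_L({\mathfrak p}_nF)$ by purely spectral means: the chaoses reduce the symmetric semigroup $P_t$ by \eqref{hn}, so they reduce the closed form, and the monotone-limit representation \eqref{022801b} lets you interchange the sum over $n$ with the limit $t\downarrow0$. Your version has the merit of making explicit a step the paper leaves implicit — namely that the \emph{closed} form, not just $-\langle L\cdot,\cdot\rangle$ on a core, decomposes along the chaos expansion — and of requiring only the statement of Theorem~\ref{thm013101}(i) rather than its proof; the paper's computation, on the other hand, produces the explicit Dirichlet-form formula, which it needs again later (e.g.\ in \eqref{021610bis2} and \eqref{apq}). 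Both arguments are sound, and your final bookkeeping (replacing $n$ by $1+n$ using $\sum_n\|{\mathfrak p}_nF\|_{L^2(\pi)}^2=\|F\|_{L^2(\pi)}^2<\infty$) correctly matches the definition \eqref{Hs} of ${\mathfrak H}_1$.
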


\subsection{A stochastic convolution representation for the velocity field} 
\label{sec2.6}

In order to obtain a more explicit representation for $V_t$, note that
given $\varphi\in H$, 
the process~$V_t(\varphi):=W(\varphi;V_t)$ 
is a Gaussian 
 semimartingale satisfying
\begin{equation}
\label{sde1}
dV_t(\varphi)=-V_t(A\varphi)dt+\sqrt{2}dB_t(\varphi),\quad t\ge
s,\,\varphi\in H,
\end{equation}
for any $s\in\bbR$.
Here, the  process $B:\bbR\times H
\times\Om\to\bbR$ is
such that the process
$
\left((B_t(\varphi_1),\ldots,B_t(\varphi_n)\right)_{t\in\bbR}$
is an $n$-dimensional, two sided, Brownian motion, with zero mean 
and covariance
\begin{equation}
\label{cB}
\E[B_t(\varphi_i)B_s(\varphi_j)]=(t\wedge s)\langle A \varphi_i,\varphi_j\rangle_H,\quad i,j=1,\ldots,n,\ \ t,s\in\bbR,
\end{equation}
for any $\varphi_1,\ldots,\varphi_n\in H$.
In addition, for any $s\in\bbR$ the process
$\left(B_t-B_s\right)_{t\ge s}$ is independent of ${\cal V}_s$ - the
$\si$-algebra generated by $V_u$, $u\le s$.

Suppose that an $H$-valued process $\left(\varphi_t\right)_{t\ge s}$
is progressively measurable w.r.t. the filtration ${\cal V}_t$ and satisfies
$$
\int_s^{t}\bbE\|\varphi_u\|_H^2 du<+\infty.
$$
By the standard procedure, we can define the It\^o integral
$$
\int_s^tdB_u(\varphi_u),
$$
sometimes also denoted by $$\int_s^t\langle \varphi_u,dB_u\rangle_H.$$
It is a square integrable, zero mean, continuous trajectory martingale that satisfies
$$
\bbE \left[\int_s^tdB_u(\varphi_u)\right]^2=\int_s^{t}\bbE\langle A\varphi_u,\varphi_u\rangle_H du.
$$
{Stationary, Gaussian and Markovian process $\left(V_t\right)$ can be
represented as a stochastic convolution.
\begin{proposition}
For any $\varphi\in H$ we can write
\begin{equation}
\label{011606}
V_t(\varphi)=\sqrt{2}\int_{-\infty}^t\langle {S_{t-s}}\varphi,dB_s\rangle_H,\quad
t\in\bbR.
\end{equation}
\end{proposition}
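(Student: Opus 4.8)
The plan is to verify \eqref{011606} by checking that the stochastic convolution on the right-hand side solves the same linear stochastic evolution equation \eqref{sde1} as $V_t(\varphi)$, and that both have the same (stationary) distribution, so they must agree. First I would fix $\varphi\in H$ and define $Z_t:=\sqrt2\int_{-\infty}^t\langle S_{t-s}\varphi,dB_s\rangle_H$. The integral makes sense because $\|S_{t-s}\varphi\|_H\le\|\varphi\|_H$ uniformly, and more importantly because of the exponential decay: $\langle A S_{t-s}\varphi, S_{t-s}\varphi\rangle_H=\int\al(k)e^{-2\al(k)(t-s)}|\hat\varphi(k)|^2\sigma(k)\,dk$ is integrable in $s$ over $(-\infty,t]$ thanks to the uniform spectral gap \eqref{nov1202}, so the It\^o isometry gives $\bbE Z_t^2=\sqrt2^2\int_{-\infty}^t\langle A S_{t-s}\varphi,S_{t-s}\varphi\rangle_H\,ds=\langle\varphi,\varphi\rangle_H<\infty$ — which, as a sanity check, is exactly $\bbE V_t(\varphi)^2$ since $W$ is unitary.

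Next I would show $Z_t$ satisfies \eqref{sde1}. Write, for $s\le t$,
\[
Z_t=\sqrt2\int_{-\infty}^s\langle S_{t-u}\varphi,dB_u\rangle_H+\sqrt2\int_s^t\langle S_{t-u}\varphi,dB_u\rangle_H=\sqrt2\int_{-\infty}^s\langle S_{t-s}S_{s-u}\varphi,dB_u\rangle_H+\sqrt2\int_s^t\langle S_{t-u}\varphi,dB_u\rangle_H.
\]
Using the semigroup property $S_{t-s}S_{s-u}=S_{t-u}$ and that $S_{t-s}$ is a bounded operator commuting with the integral, the first term equals $S_{t-s}$ applied inside, i.e. it is $\langle S_{t-s}\cdot, dB\rangle$ of the past increment; differentiating in $t$ via the generator identity $\frac{d}{dt}S_{t-s}=-AS_{t-s}$ (valid on the appropriate domain, and extended by density using the decay) produces the drift term $-Z_\cdot(A\varphi)$-type contribution, while differentiating the second term produces the martingale increment $\sqrt2\,dB_t(\varphi)$. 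Making this rigorous is cleanest by verifying the integrated/mild form: one checks directly from the definition of $Z$ and Fubini for stochastic integrals that $Z_t=S_{t-s}$-"transported" $Z_s$ minus $\int_s^t (\text{drift})\,du$ plus $\sqrt2(B_t(\varphi)-B_s(\varphi))$, which is precisely the integral form of \eqref{sde1}. I expect this manipulation — interchanging the deterministic semigroup/generator with the It\^o integral and justifying the stochastic Fubini — to be the main technical obstacle, though it is standard for stochastic convolutions of analytic (here, self-adjoint contraction) semigroups.

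Finally I would conclude. Both $(V_t(\varphi))_{t\in\bbR}$ and $(Z_t)_{t\in\bbR}$ are Gaussian processes (the latter being a linear functional of the Gaussian noise $B$) solving the linear equation \eqref{sde1} driven by the same $B$, and both are adapted to $({\cal V}_t)$ and stationary. Since \eqref{sde1} is a linear SDE in the single unknown $V_t(\varphi)\in\bbR$ of Ornstein--Uhlenbeck type — solving $dY_t=-\alpha Y_t\,dt+\sqrt2\,dB_t(\varphi)$ in each spectral mode — the stationary solution adapted to the driving filtration is unique: the difference of two solutions satisfies $d(\text{diff})=-A(\text{diff})\,dt$ with no noise, decays like $e^{-\alpha_*(t-s)}$ backward from any $s$, and being stationary must vanish. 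Hence $Z_t=V_t(\varphi)$ a.s. for each $t$, and by continuity of both trajectories, as processes. This gives \eqref{011606}. Alternatively, and perhaps more simply for the write-up, I would just compute the covariance $\bbE[Z_tZ_{t'}]$ for $t'\le t$: by the It\^o isometry it equals $2\int_{-\infty}^{t'}\langle A S_{t-s}\varphi, S_{t'-s}\varphi\rangle_H\,ds=\int\sigma(k)|\hat\varphi(k)|^2 e^{-\alpha(k)(t-t')}\,dk\cdot(\text{from }2\alpha(k)\int_{-\infty}^{t'}e^{-\alpha(k)(t+t'-2s)}ds)$, which reduces to $\int e^{-\alpha(k)|t-t'|}|\hat\varphi(k)|^2\sigma(k)\,dk$, matching $\bbE[V_t(\varphi)V_{t'}(\varphi)]$ obtained from \eqref{vel} and \eqref{l2s}; since both processes are centered Gaussian with identical covariances they have the same law, and the adaptedness/martingale structure pins down the pathwise identification.
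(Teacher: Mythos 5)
Your argument is correct and complete; note, however, that the paper itself states this proposition \emph{without proof}, treating it as a standard fact about Ornstein--Uhlenbeck-type stochastic convolutions once the semimartingale representation \eqref{sde1} and the covariance \eqref{cB} of $B$ are in place, so there is no in-paper argument to compare against. Your two ingredients are both sound: the It\^o-isometry computation correctly reproduces $\bbE[Z_tZ_{t'}]=\int e^{-\al(k)|t-t'|}|\hat\varphi(k)|^2\si(k)\,dk$, matching \eqref{vel}--\eqref{l2s}, and the uniqueness step works because the difference $Y_t(\varphi):=V_t(\varphi)-Z_t(\varphi)$ satisfies the noiseless equation, hence $Y_t(\varphi)=Y_s(S_{t-s}\varphi)$, and $\bbE[Y_s(S_{t-s}\varphi)^2]\le 4\|S_{t-s}\varphi\|_H^2\le 4e^{-2\al_*(t-s)}\|\varphi\|_H^2\to0$ as $s\to-\infty$ by the uniform spectral gap \eqref{nov1202}. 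One point to keep straight in the write-up: the covariance computation alone yields only equality in law, whereas \eqref{011606} is a pathwise identity with the specific $B$ extracted from $V$ via \eqref{sde1}; so the SDE-plus-decay argument is the essential one, and the "alternative" covariance route should be presented as a consistency check rather than a substitute. Also be careful that the drift in \eqref{sde1} is $-V_t(A\varphi)$ (the process evaluated at $A\varphi$), so the uniqueness argument must be run for the whole family indexed by $\varphi\in H$ using the semigroup, exactly as you do in the mild-form step.
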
}

\subsection{Proof of (\ref{043101})} \label{sec4.1}

Recall 
that the $n$-th degree, $L^2$-normalized, 
Hermite polynomial $h_n(x)$ is
$$
h_0(x)\equiv 1,\qquad
h_n(x):=\frac{(-1)^n}{\sqrt{n!}}e^{x^2/2}\frac{d^n}{dx^n}\left(e^{-x^2/2}\right),\quad x\in\bbR.
$$
It is well known that
\begin{equation}
\label{hermite}
xh_n(x)=(n+1)^{1/2}h_{n+1}(x)+n^{1/2}h_{n-1}(x),\quad
h_n'(x)=\sqrt{n}h_{n-1}(x),\quad n\ge1.
\end{equation}
Suppose that $\left({\mathfrak e}_j\right)_{j\ge1}$ is an orthonormal base
in $H$.  Let ${\bf n}=(n_j)_{j\ge1}$ be a sequence
of non-negative integers, 
and  $|{\bf n}|:=\sum_{j=1}^{+\infty}n_j$. 
According to Proposition 1.1.1 of \cite{nualart}, the vectors 
$$
h_{\bf n}:=\prod_{j=1}^{+\infty}h_{n_j}\left(W({\mathfrak
    e}_j)\right),\quad |{\bf n}|=n,
$$
form an orthonormal base in ${\cal H}_n$.
Suppose that $F=\sum_{n=0}^{+\infty}{\mathfrak p}_nF\in L^2(\pi)$ and 
$
{\mathfrak p}_n(F)=\sum_{|{\bf n}|=n}\al_{{\bf n}}h_{{\bf n}}
$
 for some real coefficients $\left(\al_{\bf n}\right)$ satisfying
$$
\sum_{n=0}^{+\infty}\sum_{|{\bf n}|=n}\al_{\bf n}^2=\|F\|_{L^2(\pi)}^2<+\infty.
$$

 {The operator ${\cal D}:{\mathfrak H}_1\to L^2(\pi;H)$ defined as 
\begin{equation}
\label{D}
{\cal D} F:=\sum_{j=1}^{+\infty} {\cal D}_j F{\mathfrak e}_j
\end{equation}
with ${\cal D}_j:{\mathfrak H}_1\to L^2(\pi)$, $j=1,2,\ldots$  given by
\begin{equation}
\label{Dj}
{\cal D}_j F:=\sum_{n=0}^{+\infty}\sum_{|{\bf n}|=n}\sqrt{n_j}\al_{{\bf n}}h_{{\bf n}_j},\quad F\in {\mathfrak H}_1,
\end{equation}
is the Malliavin derivative, see Definition 1.2.1, p. 25 of
\cite{nualart}. Here, ${\bf n}_j=(n_{j'}')$ is given by 
$$
n_{j'}':=
\left\{
\begin{aligned}
&n_{j'},\quad j'\not=j,\\
& (n_{j'}-1)_+,\quad j'=j.
\end{aligned}
\right.
$$
Note that, (cf Proposition 1.2.2, p. 28 of \cite{nualart})
\begin{equation}
\label{021610}
\sum_{j=1}^{+\infty}\|{\cal D}_j F\|_{L^2(\pi)}^2=\sum_{n=0}^{+\infty}n\sum_{|{\bf n}|=n}\al^2_{\bf n}=\sum_nn\|{\mathfrak p}_n F\|_{L^2(\pi)}^2<+\infty, \quad  F\in {\mathfrak H}_1.
\end{equation}}

 \noindent
 {{\bf Remark.}  For $F$ of the form
$
F:=\Phi(W(h_1),\ldots,W(h_N)),
$ 
where $h_1,\ldots,h_N\in H$ and~$\Phi\in C^\infty(\bbR^N)$ with
both $\Phi$ and its partial derivatives of polynomial
growth, we have
$
{\cal D} F=\sum_{p=1}^{N} \partial_{x_p} \Phi h_p.
$}

Denote by
$
h_{\bf n}(t):=h_{\bf n}(V_t)
$, and~$B_j(t):=B_t({\mathfrak e}_j)$, where $B_t$ was 
defined in Section \ref{sec2.6}.
Recall that (see \eqref{cB})
 $$
 \E[B_i(t)B_j(s)]=c_{i,j}(t\wedge s),\quad i,j=1,2,\ldots,
 $$
with  $c_{i,j}$ given by  
\begin{equation}\label{aij} 
c_{i,j}:=\langle A{\mathfrak e}_i,{\mathfrak e}_{j}\rangle_{H},\quad i,j=1,2,\ldots.
\end{equation}
Using the It\^o formula, \eqref{sde1} and \eqref{hermite}, one can show by a direct calculation that
\begin{align}
\label{010202}
dh_{\bf n}(t)=\Big\{&-\sum_{j}\sqrt{n_j}h_{{\bf n}_j}(t)V_t(A{\mathfrak
    e}_j)+\sum_{j_1\not=j_2} c_{j_1,j_2}\sqrt{n_{j_1}n_{j_2}}h_{{\bf
      n}_{j_1,j_2}}(t)\nonumber\\
&
+\sum_{j}c_{j,j}\sqrt{n_{j}(n_{j}-1)_+}h_{{\bf n}_{j}'}(t)\Big\}dt+\sqrt{2}\sum_{j}\sqrt{n_j}h_{{\bf n}_j}(t)dB_j(t).
\end{align}
Here, ${\bf n}_{j_1,j_2}=(m_{j'}')$, ${\bf n}_{j}'=(\ell_{j'}')$ are
multi-indices given by 
$$
m_{j'}':=
\left\{
\begin{aligned}
&n_{j'},\quad j'\not\in\{j_1,j_2\},\\
& (n_{j'}-1)_+,\quad j'\in\{j_1,j_2\},
\end{aligned}
\right.
\ell_{j'}':=
\left\{
\begin{aligned}
&n_{j'},\quad j'\not=j,\\
& (n_{j'}-2)_+,\quad j'=j. 
\end{aligned}
\right.
$$
%
 {For $F=\sum_n\sum_{|{\bf n}|=n}\al_{{\bf n}}h_{{\bf n}}$, by \eqref{hermite},  \eqref{aij} and \eqref{010202}, we have 
\begin{equation}\label{12221}
\begin{aligned}
\mathcal{E}_L(F)
=&\sum_{n,j}\sum_{|{\bf n}|=|{\bf m}|=n}\sqrt{n_j}\al_{{\bf
  n}}\al_{\bf m}\langle W(A{\mathfrak e}_j) h_{{\bf n}_j},h_{\bf m}\rangle_{L^2(\pi)} \\
 =&\sum_{n,j,j'} \sum_{|{\bf n}|=|{\bf m}|=n}c_{j,j'}\sqrt{n_j}\al_{{\bf
  n}}\al_{\bf m}\langle h_{{\bf n}_j},  W({\mathfrak e}_{j'}) h_{\bf m}\rangle_{L^2(\pi)}\\
  =&\sum_{n,j,j'}\sum_{|{\bf n}|=|{\bf m}|=n}c_{j,j'}\sqrt{n_j  m_{j'}}\al_{{\bf
  n}}\al_{\bf m}\langle h_{ {\bf
  n}_{j}},h_{{\bf m}_{j'}}\rangle_{L^2(\pi)}.
\end{aligned}
\end{equation}
Comparing with \eqref{D} and \eqref{Dj} we conclude the formula
\begin{equation}
\label{021610bis}
{\cal E}_L(F)=\int_{\cal E}\langle A{\cal D} F,{\cal D} F\rangle_{H}d\pi, \quad
F\in D({\cal E}_L).
\end{equation}}
%
 {Thanks to the inequality
\begin{align}
\label{020202}
\al_*\|\varphi\|_H^2\le 
\langle A\varphi,\varphi\rangle_H\le
A_*\|\varphi\|_H^2,\quad \forall\,\varphi\in H,
\end{align}
(following directly from \eqref{nov1202}) and identity \eqref{021610} we conclude that 
\begin{equation}
\label{010601-18}
\al_*\sum_nn\|{\mathfrak p}_n F\|_{L^2(\pi)}^2\le {\cal
  E}_L(F)\le A_*\sum_nn\|{\mathfrak p}_n F\|_{L^2(\pi)}^2,\quad F\in {\cal
  D}({\cal E}_L).
\end{equation}
 Hence $F$ belongs to ${\cal
  D}({\cal E}_L)$ -- the domain of the form ${\cal E}_L(\cdot)$ iff
$F\in {\frak H}_1$, i.e.
$$
\sum_nn\|{\mathfrak p}_n F\|_{L^2(\pi)}^2<+\infty.
$$
Thus, in particular \eqref{043101} follows.\qed}

\subsection{Some corollaries of Theorem \ref{thm013101}}

Note that $F\in {\cal D}(L)$ iff $G:=L^{1/2}F\in {\cal D}({\cal
  E}_L)$, that is $G\in {\mathfrak H}_1$. However, according
to  part~(i)~of Theorem \ref{thm013101} then $\|{\mathfrak
  p}_n G\|_{L^2(\pi)}^2\asymp n\|{\mathfrak p}_nF\|_{L^2(\pi)}^2$.  {The
symbol $a_n\asymp b_n$ used for two non-negative sequences
$(a_n)_{n\ge1}$ and $(b_n)_{n\ge1}$ means that there exists $C>0$ such
that $C a_n\le b_n \le  a_n/C$ for all $n$.} We conclude the following.
\begin{corollary}
\label{L} 
We have
${\cal D}(L)={{\mathfrak H}}_2$.
\end{corollary}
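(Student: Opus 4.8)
The statement to prove is Corollary \ref{L}: $\mathcal{D}(L) = \mathfrak{H}_2$.

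The plan is to characterize the domain of the $L^2$-generator $L$ of the semigroup $(P_t)_{t\ge 0}$ via its quadratic form. Recall the general spectral-theoretic fact that for a nonnegative self-adjoint operator, $F \in \mathcal{D}(L)$ if and only if $L^{1/2}F \in \mathcal{D}(L^{1/2}) = \mathcal{D}(\mathcal{E}_L)$, which by Corollary \ref{cor011204} equals $\mathfrak{H}_1$. So the first step is to write $G := L^{1/2}F$ and translate the condition $G \in \mathfrak{H}_1$ into a condition on the chaos components of $F$.

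The key computation is on each Hermite chaos $\mathcal{H}_n$. Since $P_t$ maps $\mathcal{H}_n$ to itself and, by part (i) of Theorem \ref{thm013101}, the Dirichlet form satisfies $\alpha_* n \|F\|_{L^2(\pi)}^2 \le \mathcal{E}_L(F) \le A_* n \|F\|_{L^2(\pi)}^2$ for $F \in \mathcal{H}_n$, the generator $L$ restricted to $\mathcal{H}_n$ is a bounded, nonnegative self-adjoint operator that is comparable to $n \cdot \mathrm{Id}$ in the sense that $\langle -LF, F\rangle \asymp n\|F\|^2$. Consequently $L^{1/2}$ restricted to $\mathcal{H}_n$ satisfies $\|L^{1/2}F\|_{L^2(\pi)}^2 = \mathcal{E}_L(F) \asymp n \|F\|_{L^2(\pi)}^2$. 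Since $L$ (hence $L^{1/2}$) commutes with the projections $\mathfrak{p}_n$, writing $G = L^{1/2}F$ gives $\mathfrak{p}_n G = L^{1/2}\mathfrak{p}_n F$ and therefore $\|\mathfrak{p}_n G\|_{L^2(\pi)}^2 \asymp n \|\mathfrak{p}_n F\|_{L^2(\pi)}^2$, with constants uniform in $n$.

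From here the conclusion is a two-line bookkeeping argument. The condition $G \in \mathfrak{H}_1$ reads $\sum_n (1+n)\|\mathfrak{p}_n G\|_{L^2(\pi)}^2 < \infty$, which by the uniform comparison is equivalent to $\sum_n (1+n)\, n \,\|\mathfrak{p}_n F\|_{L^2(\pi)}^2 < \infty$, and since $(1+n)n \asymp (1+n)^2$, this is exactly the statement $F \in \mathfrak{H}_2$. One must also note that $\mathfrak{p}_0 F$ plays no role: $L$ annihilates constants, $\mathfrak{p}_0 G = 0$ automatically, and the $n=0$ term contributes nothing to either norm, so the finiteness conditions genuinely match. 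The only mild subtlety — and the one place to be careful rather than a real obstacle — is justifying the identity $\mathfrak{p}_n(L^{1/2}F) = L^{1/2}(\mathfrak{p}_n F)$ and the reduction $\mathcal{D}(L) = \{F : L^{1/2}F \in \mathcal{D}(\mathcal{E}_L)\}$; both follow from the functional calculus for the self-adjoint operator $L$ together with the fact, established in the construction of $(P_t)$, that each $\mathcal{H}_n$ is an invariant subspace. No new estimates beyond Theorem \ref{thm013101}(i) and Corollary \ref{cor011204} are needed.
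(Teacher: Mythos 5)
Your proof is correct and follows essentially the same route as the paper: reduce $F\in{\cal D}(L)$ to $L^{1/2}F\in{\cal D}({\cal E}_L)={\mathfrak H}_1$ (Corollary \ref{cor011204}) and then use Theorem \ref{thm013101}(i) together with the invariance of each ${\cal H}_n$ to get $\|{\mathfrak p}_n(L^{1/2}F)\|_{L^2(\pi)}^2\asymp n\|{\mathfrak p}_nF\|_{L^2(\pi)}^2$, which converts the ${\mathfrak H}_1$ condition on $L^{1/2}F$ into the ${\mathfrak H}_2$ condition on $F$. The extra care you take with the commutation of $L^{1/2}$ and ${\mathfrak p}_n$ and with the $n=0$ term is exactly what the paper leaves implicit.
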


Next, we write down an It\^o formula
for the process $V_t$ that will also be of great use for us.
 {From~\eqref{010202} we obtain that for any $F\in {\cal D}(L)$
\begin{equation}
\label{f}
F(V_t)=F(V_0)+\int_0^tLF(V_s)ds+\sqrt{2}M_t(F),
\end{equation}
where
$M_t(F)$ is a continuous, square integrable
martingale given by
\begin{equation}
\label{mf}
M_t(F)=\sum_{j=1}^{+\infty}\int_0^t{\cal D}_j
F(V_s)dB_j(s),
\end{equation}
where ${\cal D}F$ is the Malliavin derivative defined in \eqref{D}.}

\section{The environment process and the corrector fields}
\label{s.corrector}

Let $X^{t,x}(s)$ be the solution of \eqref{ode} corresponding to $\eps=1$.
The (${\cal E}$-valued)
 {\em
  environment process} is
\begin{equation}
\label{eta}
\eta_s^{t,x}:=\tau_{X^{t,x}(s)}V_s,  \quad s\geq t,
\end{equation}
so that
$$
X^{t,x}(s)=x+\int_t^sv(\eta^{t,x}_\si)d\si.
$$
We shall write $X(s)$, $\eta_s$ instead of $X^{0,0}(s)$ and
$\eta_s^{0,0}$, respectively. 

\subsection{Properties of the environment process and the corrector}
 
\label{sec4.1a} 

Let ${\cal V}_{t,s}$ be the $\si$-algebra generated by $V_u$, $t\le
u\le s$ and $B_b({\cal E})$ be the space of 
bounded Borel
measurable  functions
$F:{\cal E}\to\bbR$. 
The following  is a consequence of  the results  
in~\cite[Section~12.10]{KLO}.
\begin{proposition}
\label{prop012908}
For a given $(t,x)$, 
the natural filtration of $(\eta_s^{t,x})_{s\ge t}$  coincides with 
$\left({\cal V}_{t,s}\right)_{s\ge t}$.
The process  $(\eta_s^{t,x})_{s\ge t}$ is Markovian and
stationary, that is, for any  $s\ge t$ and $h\ge0$ we have
$$
\bbE\left[F(\eta_{s+h}^{t,x})\left|\right.{\cal
    V}_{t,s}\right]=Q_hF(\eta_{s}^{t,x}),\quad\mbox{ a.s.},\, \mbox{where }F\in B_b({\cal E}),
$$
 and 
$$
Q_sF(w)=\bbE\left[F(\eta_s)\left|\right.\eta_0=w\right],\quad \,s\ge0,\,\pi \mbox{ a.s. in }w\in {\cal E}, \, \mbox{where } F\in
L^2(\pi).
$$
In addition, $\pi$ is invariant under $(Q_s)_{s\ge0}$:
\begin{equation}
\label{011610}
\int_{\cal E}Q_s Fd\pi=\int_{\cal E} Fd\pi,\quad s\ge0 ,\,F\in B_b({\cal E}),
\end{equation}
and $(Q_s)_{s\ge0}$ extends to
 $C_0$-continuous semigroup of contractions on $L^2(\pi)$. 
\end{proposition}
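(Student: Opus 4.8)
The plan is to establish the four assertions in the order they are stated, using only the Markovian structure of $(V_t)$ from Section~\ref{s.markov} and the flow of the characteristic equation. Since $\sigma$ has compact support, the realizations $x\mapsto V(t,x)$ are a.s.\ real-analytic, so \eqref{ode} (with $\eps=1$) is well posed for every starting point, depends measurably on the field, and — because $\nabla_x\cdot V\equiv0$ — generates a volume-preserving flow. With this in hand the filtration identity is immediate: on one hand $X^{t,x}(s)$ is a measurable functional of $\{V_u:\,t\le u\le s\}$, hence $\mathcal V_{t,s}$-measurable, so $\eta^{t,x}_s=\tau_{X^{t,x}(s)}V_s$ is $\mathcal V_{t,s}$-measurable; on the other hand $X^{t,x}(u)=x+\int_t^u v(\eta^{t,x}_\sigma)\,d\sigma$ depends only on the environment path up to time $u$, so $V_u=\tau_{-X^{t,x}(u)}\eta^{t,x}_u$ is $\sigma(\eta^{t,x}_r:\,t\le r\le u)$-measurable, and letting $u$ run over $[t,s]$ gives the reverse inclusion.

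For the Markov property, fix $s\ge t$, $h\ge0$ and set $Y(r):=X^{t,x}(s+r)-X^{t,x}(s)$, $W_r:=\tau_{X^{t,x}(s)}V_{s+r}$. Differentiating \eqref{ode} and using $\tau_a\tau_b=\tau_{a+b}$ one finds $\dot Y(r)=v(\tau_{Y(r)}W_r)$ with $Y(0)=0$, so $\eta^{t,x}_{s+h}=\tau_{Y(h)}W_h=:\Psi_h\big((W_r)_{r\ge0}\big)$ is a fixed measurable functional of the shifted velocity path alone. Now $X^{t,x}(s)$ is $\mathcal V_{t,s}$-measurable, the conditional law of $(V_{s+r})_{r\ge0}$ given $\mathcal V_{t,s}$ is — by the Markov property of $(V_t)$ — the law of the velocity process issued from $V_s$, and that process is shift-covariant (equivalently $\pi\tau_x=\pi$ together with \eqref{PT}); hence the conditional law of $(W_r)_{r\ge0}$ given $\mathcal V_{t,s}$ is the law of the velocity process issued from $\tau_{X^{t,x}(s)}V_s=\eta^{t,x}_s$. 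Applying $\Psi_h$ gives $\bbE[F(\eta^{t,x}_{s+h})\mid\mathcal V_{t,s}]=Q_hF(\eta^{t,x}_s)$ with $Q_hF(w)=\bbE[F(\eta_h)\mid\eta_0=w]$, which does not depend on $s$, and Chapman--Kolmogorov — hence the semigroup property of $(Q_h)$ — follows by further conditioning.

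The crux is the invariance of $\pi$. I would identify the generator of $(Q_h)$, on a core such as ${\cal P}\subset{\cal W}_\infty$, as $\mathcal L=L+v\cdot D$ — heuristically the It\^o formula for $\eta_h=\tau_{X(h)}V_h$, with \eqref{f} producing the $V_h$-part and the chain rule for $\tau_{X(h)}$ contributing $\dot X_j(h)D_j=v_j(\eta_h)D_j$. Granting this, for $F\in{\cal P}$ one has $\int_{\cal E}\mathcal LF\,d\pi=\int LF\,d\pi+\sum_j\int v_jD_jF\,d\pi$; the first term vanishes by self-adjointness of $L$ and $L1=0$, and writing $\int v_jD_jF\,d\pi=\int D_j(v_jF)\,d\pi-\int(D_jv_j)F\,d\pi$, the first piece vanishes by translation invariance of $\pi$ (so $\int D_jG\,d\pi=0$) and the second because $\sum_jD_jv_j(w)=(\nabla\cdot w)(0)=0$, which is precisely incompressibility. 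Hence $\int\mathcal LG\,d\pi=0$, first for $G\in{\cal P}$ and then, by core approximation, for all $G\in D(\mathcal L)$; since $\tfrac{d}{dh}Q_hF=\mathcal LQ_hF$ in $L^2(\pi)$, integrating yields $\int Q_hF\,d\pi=\int F\,d\pi$ for $F\in{\cal P}$, and \eqref{011610} extends to $B_b({\cal E})$ by density and a monotone-class argument. (An alternative avoiding the generator: by time reversal of $(V_t)$ — legitimate since its covariance depends on $|t|$ only — together with the symmetry $V\stackrel{d}{=}-V$, one checks that the $L^2(\pi)$-adjoint $Q_h^*$ is the transition semigroup of an environment process of the same kind, whence $\pi$ is invariant as well.)

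Finally, once \eqref{011610} holds, Jensen's inequality gives $\|Q_hF\|_{L^2(\pi)}\le\|F\|_{L^2(\pi)}$ for $F\in B_b({\cal E})$, so $(Q_h)$ extends uniquely to a contraction semigroup on $L^2(\pi)$; strong continuity follows from $Q_hF(w)\to F(w)$ $\pi$-a.s.\ as $h\downarrow0$ for bounded continuous $F$ — a consequence of right-continuity of $s\mapsto\eta_s$ in ${\cal E}$ and dominated convergence — together with the uniform contraction bound and density of bounded continuous functions in $L^2(\pi)$. I expect Step~3 to be the main obstacle: making the identification $\mathcal L=L+v\cdot D$ fully rigorous genuinely needs the It\^o formula for the environment process developed later in Section~\ref{s.corrector}, or else a careful execution of the time-reversal argument; the other three steps are routine once the well-posedness and flow property of \eqref{ode} are in place, and the whole argument parallels \cite[Section~12.10]{KLO}.
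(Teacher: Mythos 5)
The paper offers no proof of this proposition at all: it is stated as ``a consequence of the results in \cite[Section~12.10]{KLO}'', so there is nothing in the text to compare against except that citation. Your reconstruction follows exactly the standard route of that reference (flow well-posedness and the filtration identity, the shift-covariance argument for the Markov property, invariance of $\pi$ via $\int \mathcal{L}F\,d\pi=0$ from translation invariance plus incompressibility, then Jensen for the $L^2$ contraction), and it is essentially sound. The one wrinkle worth noting is a mild circularity in your Step~3 as written: you invoke $\tfrac{d}{dh}Q_hF=\mathcal{L}Q_hF$ ``in $L^2(\pi)$'' to integrate up the invariance, but the existence of $(Q_h)$ as a $C_0$-semigroup of $L^2(\pi)$-contractions is only established in your Step~4, which in turn uses invariance. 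This is fixable in standard ways --- either prove invariance directly by the volume-preservation/ergodic-averaging argument (incompressible flow plus space-time stationarity of $V$), or first get dissipativity of $\mathcal{L}=L+v\cdot D$ on $\mathcal{P}$ from the antisymmetry of $v\cdot D$ with respect to $\pi$ (which is the same integration by parts you already carry out) and build the $L^2$ semigroup before integrating; your time-reversal alternative also works --- and you flag the issue yourself, so I would not count it as a fatal gap.
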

Denote by ${\cal L}$ the generator of the semigroup $Q_s$ on $L^2(\pi)$. Recall that ${\cal P}$ is the set of all polynomials. The following is proved in Section \ref{sec4.2}. 
\begin{proposition}
\label{core}
The set ${\cal P}$
 is a common core of both  ${\cal L}$ and  $L$. In addition, we have
\begin{equation}
\label{cal-L}
{\cal L}F=LF+\sum_{j=1}^dv_jD_j F,\quad F\in {\cal P}.
\end{equation}
\end{proposition}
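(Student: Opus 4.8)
The plan is to establish the formula \eqref{cal-L} first on the ``nice'' set $\mathcal P$ of polynomials, and then to argue that $\mathcal P$ is a core for both generators. For the identity itself, I would start from the semimartingale description of the environment process that is already available from Section \ref{s.markov}. Combining the It\^o formula \eqref{f} for $V_t$ with the chain rule applied through the shift $\tau_{X^{t,x}(s)}$, and using that $X^{t,x}(s) = x + \int_t^s v(\eta^{t,x}_\sigma)\,d\sigma$ is of bounded variation (so no It\^o correction terms arise from the $X$-variable), one gets for $F\in\mathcal P$ a decomposition
\begin{equation}
\label{e.env-ito-plan}
F(\eta_s) = F(\eta_0) + \int_0^s \Big( L F + \sum_{j=1}^d v_j D_j F\Big)(\eta_\sigma)\, d\sigma + \sqrt 2\, M_s(F),
\end{equation}
where the $L F(\eta_\sigma)\,d\sigma$ piece and the martingale part $M_s(F)$ come from the dynamics of $V_s$ via \eqref{f}--\eqref{mf} (here one uses $T_xP_t = P_tT_x$ and the corresponding commutation of $D$ with $P_t$, i.e.\ \eqref{PT}, to see that differentiating along $\tau_{X(s)}$ is compatible with the Markov semigroup), and the extra drift term $\sum_j v_j D_j F(\eta_\sigma)$ is exactly $\tfrac{d}{ds}$ of $F$ transported along the characteristic, since $\dot X_j = v_j(\eta)$. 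Taking expectations in \eqref{e.env-ito-plan}, dividing by $s$, and letting $s\to 0$ identifies the generator of $Q_s$ on $\mathcal P$ with $LF + \sum_j v_j D_j F$; thus $\mathcal P\subset\mathcal D(\mathcal L)$ and \eqref{cal-L} holds on $\mathcal P$.

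Next I would verify that $\mathcal P$ is a core for $\mathcal L$. The standard route (Proposition 3.3, p.\ 17 of \cite{ethier-kurtz}, already cited in the excerpt) is to show that $\mathcal P$ is dense in $L^2(\pi)$ — which is immediate from the Wiener chaos decomposition — and that $\mathcal P$ is invariant under the semigroup $(Q_s)_{s\ge 0}$. Invariance is the point that needs care: unlike $P_t$, which preserves each chaos $\mathcal H_n$ and hence $\mathcal P_n$, the environment semigroup $Q_s$ mixes chaoses because of the transport term $\sum_j v_j D_j$, which maps $\mathcal P_n$ into $\mathcal P_{n+1}$ (multiplication by $v_j = W(f_j)$ raises degree by one, while $D_j$ does not lower it below $\mathcal P_{n-1}$ in a controlled way, and in fact $v_j D_j: \mathcal P_n \to \mathcal P_{n+1}$). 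So $Q_s$ does not leave any finite $\mathcal P_n$ invariant. I would instead argue invariance of the whole algebra $\mathcal P = \bigcup_n \mathcal P_n$ by a perturbation/Duhamel argument: writing $\mathcal L = L + \mathcal B$ with $\mathcal B F = \sum_j v_j D_j F$, one shows $\mathcal B$ is relatively bounded with respect to $L$ on each $\mathcal H_n$ using the estimates in Theorem \ref{thm013101} — in particular \eqref{042101}, $\|v_p F\|_{L^2(\pi)} \le C\,\mathcal E_L^{1/2}(F)$, and \eqref{033101}, $\sum_j\|D_j F\|^2 \le C n\,\mathcal E_L(F)$ — so that the Duhamel series $Q_s = \sum_{k\ge 0} \int_{0<s_1<\dots<s_k<s} P_{s-s_k}\mathcal B P_{s_k - s_{k-1}}\mathcal B\cdots\mathcal B P_{s_1}\,ds_1\cdots ds_k$ converges and maps $\mathcal P$ into $\overline{\mathcal P}^{\|\cdot\|_{\mathcal D(\mathcal L)}}$; combined with density this gives the core property, and simultaneously reconfirms \eqref{cal-L} on $\mathcal D(\mathcal L)\cap$ (the relevant domain). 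For the statement that $\mathcal P$ is a core of $L$ as well, this is easier and was essentially already noted after \eqref{022801b}: $\mathcal P$ is dense in $L^2(\pi)$ and invariant under $(P_t)_{t\ge 0}$ because $P_t$ acts on each $\mathcal H_n$ via \eqref{hn}, so Proposition 3.3 of \cite{ethier-kurtz} applies directly.

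The main obstacle I anticipate is precisely the non-invariance of the finite chaos spaces under $Q_s$, i.e.\ making the perturbation argument rigorous: one needs uniform-in-$n$ control of the relative bound of $\mathcal B$ against $L$ on $\mathcal H_n$, and the constants in \eqref{033101} and \eqref{042101} grow with $n$ (there is a factor $\sqrt n$ lurking, since $v_j D_j$ raises degree and $\mathcal E_L \sim n\|\cdot\|^2$ on $\mathcal H_n$ by \eqref{043101}), so the naive Duhamel estimate may only converge for small $s$ and one then iterates, or — more cleanly — one works in the scale of spaces $\mathfrak H_s$ from \eqref{Hs}--\eqref{Hinf} and shows $\mathcal B: \mathfrak H_{s+1}\to\mathfrak H_s$ is bounded, so that $\mathfrak H_\infty$ (which contains $\mathcal P$) is $Q_s$-invariant and a core. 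A secondary technical point is justifying the It\^o computation \eqref{e.env-ito-plan} rigorously, i.e.\ that the chain rule through the time-dependent shift $\tau_{X^{t,x}(s)}$ is legitimate and produces no extra cross-terms; this should follow from the fact that $s\mapsto X^{t,x}(s)$ has $C^1$, and in particular finite-variation, paths, together with the regularity of $F\in\mathcal P$ as a cylindrical smooth functional and the smoothness of the $\tau_x$-action on $\mathcal E$.
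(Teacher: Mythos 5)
Your derivation of \eqref{cal-L} on ${\cal P}$ via the It\^o formula \eqref{f} and the finite-variation character of $s\mapsto X^{t,x}(s)$ matches the paper, and your treatment of the core property for $L$ (density plus $P_t$-invariance of ${\cal P}_n$) is also the paper's. The gap is in the core argument for ${\cal L}$, and it is exactly at the point you flag as "the main obstacle": neither of your two proposed resolutions goes through as stated. The Duhamel route fails not just "for small $s$" but for every $s>0$: the perturbation ${\cal B}F=\sum_j v_jD_jF$ has norm on ${\cal H}_n$ of order $n^{3/2}$ (by \eqref{043101}, \eqref{033101}, \eqref{042101}), and the $k$-fold composition starting from ${\cal P}_n$ picks up a factor of order $\bigl((n+k)!/n!\bigr)^{3/2}$, which overwhelms $s^k/k!$; so the series diverges on every fixed polynomial, and iterating in time cannot repair a series that never converges. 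The alternative route ("${\cal B}:{\mathfrak H}_{s+1}\to{\mathfrak H}_s$ bounded, hence ${\mathfrak H}_\infty$ is $Q_t$-invariant") asserts rather than proves the semigroup invariance (the bounded perturbation statement alone does not yield it, for the same reason), and even granting that ${\mathfrak H}_\infty$ is a core you would still need to approximate its elements by polynomials in the graph norm of ${\cal L}$ — which is the substantive estimate in the proof and requires the hypercontractivity/Gaussian-moment bounds showing $\|\sum_j v_jD_j(F_{n'}-F_n)\|_{L^2(\pi)}\le C\|F_{n'}-F_n\|_{{\mathfrak H}_3}$ for the chaos truncations $F_n$.

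The paper avoids semigroup invariance entirely and works at the resolvent level via a Lumer--Phillips-type generation theorem (Theorem 2.12 of \cite{ethier-kurtz}): (a) extend \eqref{cal-L} to ${\mathfrak H}_4$ by the truncation estimate above, so that \eqref{cal-L1} gives dissipativity of ${\cal L}$ restricted to ${\cal P}$; (b) show $(\la-{\cal L})({\cal P})$ is dense in $L^2(\pi)$, using Lemma 2.21 of \cite{KLO} to solve the resolvent equation $(\la-{\cal L})F=G$ in ${\mathfrak H}_\infty$ for $G\in{\cal P}$ and then the same truncation estimate to get $F_n\to F$, ${\cal L}F_n\to{\cal L}F$. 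The generation theorem then says the closure of ${\cal L}|_{\cal P}$ generates a $C_0$-semigroup; since ${\cal L}$ is itself closed and extends it, the two resolvents coincide and ${\cal P}$ is a core. If you want to salvage your outline, you should replace the Duhamel/invariance step by this dissipativity-plus-range-density argument, and in either case you must supply the graph-norm polynomial approximation, which is where the real work lies.
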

We know from \eqref{cal-L}, \eqref{043101} and the fact that $v$ is divergence free (cf. \cite[Corollary~12.22]{KLO})
\begin{equation}
\label{cal-L1}
-\langle {\cal L}F,F\rangle_{L^2(\pi)}=-\langle
LF,F\rangle_{L^2(\pi)}\ge \al_*\|F\|_{L^2(\pi)}^2,\quad F\in {\mathfrak
  H}_\infty,\quad F\perp 1.
\end{equation}
This implies the exponential stability of the semigroup in $L^2(\pi)$: 
\begin{equation}
\label{sg1}
\Big\|Q_t F-\int_{\cal E}Fd\pi\Big\|_{L^2(\pi)}\le e^{-\al_*
  t}\Big\| F-\int_{\cal E}Fd\pi\Big\|_{L^2(\pi)},\quad F\in L^2(\pi),\,t\ge0.
\end{equation}
Combining \eqref{011610} with \eqref{sg1} we conclude, via an
interpolation  between $L^2(\pi)$ and $L^1(\pi)$, that
\begin{equation}
\label{sg1a}
\Big\|Q_t F-\int_{\cal E}Fd\pi\Big\|_{L^p(\pi)}\le 
e^{-2\al_*(1-1/p)t}\Big\| F-\int_{\cal E}Fd\pi\Big\|_{L^2(\pi)},
\quad F\in L^p(\pi),\,t\ge0,\,p\in[1,2],
\end{equation}
and by interpolation between $L^2(\pi)$ and $L^\infty(\pi)$  
also that
\begin{equation}
\label{sg1b}
\Big\|Q_t F-\int_{\cal E}Fd\pi\Big\|_{L^p(\pi)}\le e^{-2\al_*t/p}
\Big\| F-\int_{\cal E}Fd\pi\Big\|_{L^2(\pi)},\quad 
F\in L^p(\pi),\,t\ge0,\,p\in[2,+\infty).
\end{equation}
 {\begin{theorem}
\label{prop013101}
Suppose that 
$F_*\in L^p(\pi)$ for some $p\in(1,+\infty)$ and $\int_{\cal
  E}Fd\pi=0$. Then,
the equation
\begin{equation}
\label{corr1}
- {\cal L}\theta=F_*
\end{equation}
admits a unique zero mean solution $\theta$ that belongs to the domain of
the $L^p$-generator ${\cal L}$. In addition, if $F_*\in {\frak H}_s$ for some
$s>0$, then $\theta\in  {\frak H}_{s+2}$.
\end{theorem}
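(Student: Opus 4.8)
The plan is to solve \eqref{corr1} first in the Hilbert space $L^2(\pi)$ and then bootstrap to $L^p$ and to the chaos-regularity scale $\mathfrak{H}_s$. For the $L^2$ theory, the key point is the coercivity estimate \eqref{cal-L1}: on the orthogonal complement of the constants, $-\mathcal{L}$ is (after the obvious closure) an operator whose symmetric part is bounded below by $\alpha_* I$ in the sense of Dirichlet forms, since $-\langle \mathcal{L}F,F\rangle = \mathcal{E}_L(F) \ge \alpha_*\|F\|_{L^2(\pi)}^2$ (the drift term $\sum_j v_j D_j$ being skew-symmetric because $v$ is divergence free, \cite[Corollary~12.22]{KLO}). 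I would therefore invoke Lax--Milgram on the form domain $\mathcal{D}(\mathcal{E}_L) = \mathfrak{H}_1$ (Corollary~\ref{cor011204}), or equivalently note that $0$ is in the resolvent set of the $L^2$-generator restricted to mean-zero functions, to get a unique $\theta\in L^2(\pi)$, $\int_{\mathcal E}\theta\,d\pi=0$, with $-\mathcal{L}\theta = F_*$ whenever $F_*\in L^2(\pi)$ has mean zero. Uniqueness is immediate from the same coercivity: a mean-zero solution of $\mathcal{L}\theta=0$ in the domain of the generator satisfies $\alpha_*\|\theta\|_{L^2}^2 \le -\langle\mathcal{L}\theta,\theta\rangle = 0$.

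Next I would upgrade to $L^p$. The representation $\theta = \int_0^\infty Q_t F_*\,dt$ (valid since $Q_t F_*$ has mean zero and decays) together with the $L^p$ decay estimates \eqref{sg1a}--\eqref{sg1b} shows that the integral converges in $L^p(\pi)$ for every $p\in(1,\infty)$: the exponents $2\alpha_*(1-1/p)$ for $p\in[1,2]$ and $2\alpha_*/p$ for $p\in[2,\infty)$ are strictly positive, so $\|Q_t F_*\|_{L^p} \lesssim e^{-ct}\|F_*\|_{L^2}$ and $\|\theta\|_{L^p} \lesssim \|F_*\|_{L^2}$ when $F_*\in L^2$. For general $F_*\in L^p$ one first has to check $F_*\in L^2$ is not automatic; here I would instead run the resolvent/semigroup construction directly in $L^p$, using that $(Q_t)$ is a strongly continuous contraction semigroup on $L^p(\pi)$ with spectral gap (which again follows from \eqref{sg1a}--\eqref{sg1b} applied with the roles adjusted, or by density of $L^2\cap L^p$), so that $\theta := \int_0^\infty Q_t F_*\,dt$ defines an element of $L^p$ lying in the domain of the $L^p$-generator, with $-\mathcal{L}\theta = F_*$. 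Uniqueness in $L^p$ follows because two solutions differ by a mean-zero element of $\ker\mathcal{L}$ in the $L^p$ domain, which the spectral gap forces to vanish.

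Finally, for the regularity claim $F_*\in\mathfrak{H}_s \Rightarrow \theta\in\mathfrak{H}_{s+2}$, the decomposition \eqref{cal-L} gives $\mathcal{L}=L+\sum_j v_j D_j$, and I would exploit that $L$ acts diagonally on the chaos decomposition with $-L = \mathcal{E}_L \asymp n$ on $\mathcal{H}_n$ (part (i) of Theorem~\ref{thm013101}), so $-L$ behaves like the "number operator" and gains two orders on the scale $\mathfrak{H}_s$ defined by the weights $(1+n)^s$. The perturbation $\sum_j v_j D_j$ raises/lowers the chaos degree by one; using Theorem~\ref{thm013101}(ii)--(iii) — namely $\|D_j F\|_{L^2}^2 \lesssim n\,\mathcal{E}_L(F)$ and $\|v_p F\|_{L^2}\lesssim \mathcal{E}_L^{1/2}(F)$ on $\mathcal{H}_n$ — one shows this first-order term is controlled by $\mathcal{E}_L$ and hence does not destroy the two-order gain. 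Concretely, from $-L\theta = F_* + \sum_j v_j D_j\theta$ and $\|\theta\|_{\mathfrak{H}_1}\lesssim \|F_*\|_{L^2}$, I would iterate: the right-hand side is in $\mathfrak{H}_{\min(s,1)}$-type spaces, so $\theta\in\mathfrak{H}_{\min(s,1)+2}$, and bootstrapping raises the index until $\theta\in\mathfrak{H}_{s+2}$. The main obstacle I anticipate is this last step: making the perturbative argument rigorous requires care because $v_j D_j$ is not bounded relative to $L$ in the naive sense on all of $L^2$, only on each fixed chaos with an $n$-dependent constant, so the bootstrap must be set up chaos-by-chaos (or via the commutator identities behind \eqref{010202}) and the constants tracked so that the geometric series in the iteration converges; the clean statements \eqref{033101} and \eqref{042101} are exactly what is needed to close this.
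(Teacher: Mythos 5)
Your construction of $\theta$ and the uniqueness argument follow the paper's own route: the paper also sets $\theta=\int_0^{+\infty}Q_tF_*\,dt$ and justifies convergence by the exponential decay of the semigroup (\eqref{sg1}, \eqref{sg1a}, \eqref{sg1b}); your remark that for $p<2$ one should work directly in $L^p(\pi)$ using density of $L^2\cap L^p$ is a legitimate, if anything slightly more careful, version of the same step. The problem is the regularity claim $F_*\in{\mathfrak H}_s\Rightarrow\theta\in{\mathfrak H}_{s+2}$.

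The bootstrap you sketch does not close as written. Write $-L\theta=F_*+{\cal A}\theta$ with ${\cal A}=\sum_jv_jD_j$. The operator-norm information available on each chaos is
$\|{\mathfrak p}_m(v_jD_j{\mathfrak p}_{m\pm1}\theta)\|_{L^2(\pi)}\le C\,m^{3/2}\|{\mathfrak p}_{m\pm1}\theta\|_{L^2(\pi)}$,
which is exactly what \eqref{033101}, \eqref{042101} and hypercontractivity yield (it is the estimate used in the proof of Proposition \ref{core}, where it gives $\|{\cal A}F\|_{L^2(\pi)}\le C\|F\|_{{\mathfrak H}_3}$). Hence ${\cal A}$ maps ${\mathfrak H}_{r+3}$ into ${\mathfrak H}_r$, losing \emph{three} orders on the chaos scale, while $(-L)^{-1}$ gains only two. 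The iteration $\theta\mapsto(-L)^{-1}(F_*+{\cal A}\theta)$ therefore sends ${\mathfrak H}_r$ into ${\mathfrak H}_{\min(s+2,\,r-1)}$: each pass \emph{loses} an order rather than gaining one, and no tracking of constants or geometric series can repair this. What actually rescues the two-order gain is not an operator bound but the bilinear ``graded sector'' bound
$|\langle{\cal A}F,G\rangle_{L^2(\pi)}|\le C(n+1)^{1/2}{\cal E}_L^{1/2}(F)\,{\cal E}_L^{1/2}(G)$
for $F,G$ in adjacent chaoses, used together with the antisymmetry of ${\cal A}$. The paper then invokes Lemma 2.21, p.~67 of \cite{KLO}, which converts the weighted dual-norm bound $\sum_n(n+1)^{s+1}\|\!|{\mathfrak p}_nF_*\|\!|_{-1}^2<+\infty$ directly into $\sum_n(n+1)^{s+1}{\cal E}_L({\mathfrak p}_n\theta)<+\infty$; part (i) of Theorem \ref{thm013101} then gives $\theta\in{\mathfrak H}_{s+2}$. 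That lemma is a nontrivial induction over the chaos degree, not a perturbative iteration. Your proposal correctly identifies the relevant estimates but asserts, rather than proves, that the first-order term ``does not destroy the two-order gain'' --- and the naive form of that assertion is false.
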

The proof of this theorem is  in Section \ref{sec4.2}.
Since each $v_j\in {\frak H}_{\infty}$, as an immediate consequence of
Theorem \ref{prop013101}, we conclude the following.
\begin{corollary}
\label{cor11a}
The equation
\begin{equation}
\label{cor1}
- {\cal L}\chi_j=v_j,\quad j=1,\ldots,d,
\end{equation}
admits a unique solution $\chi_j\in {\cal D}({\cal L})\cap {\mathfrak
  H}_\infty$ and $\chi_j\perp1$ for each $j=1,\ldots,d$.  
\end{corollary}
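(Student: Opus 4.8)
The plan is to check that each linear functional $v_j$ meets the hypotheses of Theorem \ref{prop013101} and then invoke that theorem directly. By \eqref{vp} we have $v_j = W(f_j)$ with $f_j \in H$, and since $W$ maps $H$ unitarily onto $\mathcal{H}_1$, it follows that $v_j \in \mathcal{H}_1$. In particular $\mathfrak{p}_n v_j = 0$ for all $n \neq 1$, so $\langle v_j, 1\rangle_{L^2(\pi)} = \mathfrak{p}_0 v_j = 0$; this simultaneously gives $\int_{\mathcal{E}} v_j \, d\pi = 0$ and $v_j \perp 1$, and shows $v_j \in L^2(\pi)$. From the definition \eqref{Hs} of the norms $\|\cdot\|_{\mathfrak{H}_s}$ one reads off $\|v_j\|_{\mathfrak{H}_s}^2 = (1+1)^s \|\mathfrak{p}_1 v_j\|_{L^2(\pi)}^2 = 2^s \|v_j\|_{L^2(\pi)}^2 < \infty$ for every $s \ge 0$. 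Hence $v_j \in \mathfrak{H}_s$ for all $s > 0$, and in fact $v_j \in \mathfrak{H}_\infty$.

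Now I would apply Theorem \ref{prop013101} with $F_* = v_j$ and $p = 2$: since $v_j \in L^2(\pi)$ has zero $\pi$-mean, the equation $-\mathcal{L}\chi_j = v_j$ has a unique zero-mean solution $\chi_j$ in the domain of the $L^2$-generator $\mathcal{L}$, i.e. $\chi_j \in \mathcal{D}(\mathcal{L})$, and $\chi_j \perp 1$ because it has zero mean. The regularity statement in the same theorem, used with $s$ arbitrary in $(0,\infty)$ (legitimate since $v_j \in \mathfrak{H}_s$ for all such $s$), yields $\chi_j \in \mathfrak{H}_{s+2}$ for every $s > 0$; as the scale $\{\mathfrak{H}_s\}$ is decreasing in $s$, intersecting over $s$ and using \eqref{Hinf} gives $\chi_j \in \mathfrak{H}_\infty$. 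This establishes existence, uniqueness, the membership $\chi_j \in \mathcal{D}(\mathcal{L}) \cap \mathfrak{H}_\infty$, and $\chi_j \perp 1$, for each $j = 1,\ldots,d$, which is precisely the assertion of the corollary.

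Since the corollary is obtained simply by feeding $v_j$ into Theorem \ref{prop013101}, there is no genuine obstacle here; the only point worth a separate line is the verification that $v_j \in \mathfrak{H}_\infty$, which is immediate once one observes that $v_j$ sits in the single Wiener chaos $\mathcal{H}_1$ by \eqref{vp}. The substantive work --- unique solvability of $-\mathcal{L}\theta = F_*$ for centered $F_* \in L^p(\pi)$, together with the two-degree gain of Hermite regularity $\mathfrak{H}_s \to \mathfrak{H}_{s+2}$ --- is the content of Theorem \ref{prop013101} itself, proved in Section \ref{sec4.2} via the coercivity \eqref{cal-L1} and the identifications of the form domain and generator domain in Corollaries \ref{cor011204} and \ref{L}.
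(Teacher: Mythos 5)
Your proposal is correct and follows exactly the paper's route: the paper likewise observes that each $v_j\in{\mathfrak H}_\infty$ (being an element of the first chaos ${\cal H}_1$ via \eqref{vp}) and deduces the corollary as an immediate consequence of Theorem \ref{prop013101}. Your explicit verification that $v_j$ is centered and lies in every ${\mathfrak H}_s$ is a correct unpacking of the step the paper leaves implicit.
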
}
The solutions of  \eqref{cor1} are known as the correctors. They can be used to express the 
effective diffusivity matrix appearing  in  the homogenized
equation \eqref{bheat-11}: 
\begin{equation}
\label{apq}
a_{ij}:=\langle v_i,\chi_{j}\rangle_{L^2(\pi)}={\cal
  E}_L(\chi_i,\chi_{j}),\quad i,j,=1,\ldots,d,
\end{equation}
with
\begin{equation}
\label{021610bis2}
{\cal E}_L(F,G):=-\langle LF,G\rangle_{L^2(\pi)}=
\int_{\cal E}\langle A{\cal D} F,{\cal D} G\rangle_{H}d\pi.
\end{equation}
We define the corrector fields as 
stationary in $(t,x)$ random fields
$\tilde\chi_j:\bbR^{1+d}\times\Om\to\bbR$, given by
\begin{equation}
\label{cor-fields1}
\tilde\chi_j(t,x;w):=\chi_j(\tau_xw_t),\quad (t,x)\in\bbR^{1+d},\,j=1,\ldots,d.
\end{equation}
Combining the results 
of Theorems \ref{prop013101} and \ref{thm013101}, we
conclude the following
\begin{corollary}
\label{cor013101}
The fields $\tilde\chi_j$, $\nabla_x\tilde\chi_j$ are square
integrable for each $j=1,\ldots,d$:
\begin{equation}
\label{053101}
\sum_{i=1}^d\Big\{\bbE\tilde\chi_i^2(0,0)+\bbE\Big[|\nabla_x\tilde\chi_i(0,0)|^2\Big]\Big\}<+\infty.
\end{equation}
\end{corollary}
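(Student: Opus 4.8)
The plan is to express both quantities in \eqref{053101} through the Wiener--It\^o chaos expansion of the correctors $\chi_i$ and to invoke the spectral estimates of Theorem~\ref{thm013101}. First I would record that, by the very definition of the corrector fields, $\tilde\chi_i(0,x;w)=\chi_i(\tau_xw)$, so that $\tilde\chi_i(0,0)=\chi_i(V_0)$ and, since $(D_1,\dots,D_d)$ are the generators of the shift group $(T_x)$, the $L^2(\pi)$-gradient is $\partial_{x_k}\tilde\chi_i(0,0)=(D_k\chi_i)(V_0)$. As $V_0$ has law $\pi$, this turns \eqref{053101} into the statement
$$\sum_{i=1}^d\Big(\|\chi_i\|_{L^2(\pi)}^2+\sum_{k=1}^d\|D_k\chi_i\|_{L^2(\pi)}^2\Big)<+\infty,$$
i.e.\ it suffices to check $\chi_i\in{\cal W}_{1,2}$ for every $i$. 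The first term is immediate: by Corollary~\ref{cor11a}, $\chi_i\in{\cal D}({\cal L})\cap{\mathfrak H}_\infty\subset L^2(\pi)$.

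For the gradient term, write $\chi_i=\sum_{n\ge1}{\mathfrak p}_n\chi_i$ (the $n=0$ component vanishes since $\chi_i\perp1$). Since $T_x$ leaves each chaos space ${\cal H}_n$ invariant, so does its generator $D_k$, hence $D_k({\mathfrak p}_n\chi_i)\in{\cal H}_n$ and these vectors are mutually orthogonal across $n$. Applying part~(ii) and then part~(i) of Theorem~\ref{thm013101} to $F={\mathfrak p}_n\chi_i\in{\cal H}_n$ gives
$$\sum_{k=1}^d\|D_k({\mathfrak p}_n\chi_i)\|_{L^2(\pi)}^2\le Cn\,{\cal E}_L({\mathfrak p}_n\chi_i)\le CA_*\,n^2\|{\mathfrak p}_n\chi_i\|_{L^2(\pi)}^2,$$
and summing over $n$,
$$\sum_{k=1}^d\|D_k\chi_i\|_{L^2(\pi)}^2\le CA_*\sum_{n\ge1}n^2\|{\mathfrak p}_n\chi_i\|_{L^2(\pi)}^2\le CA_*\|\chi_i\|_{{\mathfrak H}_2}^2<+\infty,$$
the finiteness coming from $\chi_i\in{\mathfrak H}_\infty\subset{\mathfrak H}_2$ (Corollary~\ref{cor11a}; alternatively, Theorem~\ref{prop013101} applied to $-{\cal L}\chi_i=v_i$ with $v_i\in{\mathfrak H}_\infty$). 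Since $D_k$ is a closed operator, the finiteness of $\sum_n\|D_k({\mathfrak p}_n\chi_i)\|_{L^2(\pi)}^2$ also shows that the partial sums $\sum_{n\le N}{\mathfrak p}_n\chi_i$ converge in the graph norm of $D_k$, so $\chi_i\in{\cal D}(D_k)$ with $D_k\chi_i=\sum_n D_k({\mathfrak p}_n\chi_i)$; hence $\chi_i\in{\cal W}_{1,2}$ and \eqref{053101} follows.

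I do not expect a genuine difficulty here: the corollary is essentially a bookkeeping consequence of the already-proven quantitative bounds. The only two points needing a sentence of justification are (a) that $D_k$ restricts to a closed, densely defined operator on each chaos ${\cal H}_n$ --- so that Theorem~\ref{thm013101}(ii) may be applied chaos-by-chaos and the squared norms add up by orthogonality --- and (b) that the resulting finite bound certifies membership in ${\cal D}(D_k)$, which is exactly closedness of $D_k$. Both follow at once from the group-invariance of the chaos decomposition established in Section~\ref{s.markov}.
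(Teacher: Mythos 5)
Your proof is correct and is precisely the argument the paper intends (the paper merely says "combining the results of Theorems \ref{prop013101} and \ref{thm013101}"): you identify $\tilde\chi_i(0,0)=\chi_i(V_0)$ and $\nabla_x\tilde\chi_i(0,0)=D\chi_i(V_0)$, then control $\sum_k\|D_k\chi_i\|_{L^2(\pi)}^2$ chaos-by-chaos via parts (ii) and (i) of Theorem~\ref{thm013101} together with $\chi_i\in{\mathfrak H}_\infty$ from Corollary~\ref{cor11a}. Your added care about orthogonality across chaoses and closedness of $D_k$ is a correct filling-in of details the paper leaves implicit.
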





\subsection{The ``far away'' independence }

In order to deal with the spatial decorrelation properties of the velocity field,
 note that for each $x\in\bbR^d$
fixed,  the set $\{{\mathfrak e}_n^x:=\tau_{-x}{\mathfrak e}_n\}$ is an
orthonormal base on $H$,
and we can write
$$
{\mathfrak e}_n^x=\sum_{m=1}^{+\infty}u_{nm}(x){\mathfrak e}_m.
$$
Here, $[u_{nm}(x)]$ is an infinite orthogonal matrix with
\begin{equation}
\label{010901}
u_{nm}(x)=\langle {\mathfrak e}_n^x,{\mathfrak
  e}_m\rangle_H=\int_{\bbR^d}e^{-ik\cdot x}\hat {\mathfrak
  e}_n(k)\cdot\hat{\mathfrak e}_{m}^*(k)\si(k)dk,\quad n,m\ge1.
\end{equation}
As $\si(k)$ is compactly supported, each $u_{nm}$ is bounded and analytic. We also have
\[
u_{nm}(0)=\delta_{m,n},~u_{nm}(-x)=u_{mn}(x),
\]
and 
\begin{equation}
\label{021105}
\sum_{k=1}^{+\infty}u_{nk}(x)u_{km}(y)=u_{nm}(x+y),\quad x,y\in\bbR^d,\,m,n\ge1.
\end{equation}
We will use the following ``decorrelation lemma''.
\begin{proposition}
\label{prop012705}
Suppose that $\al$, $\si$ satisfy the assumptions in Section \ref{sec1.1}, for any $n,m\geq1$, let 
\begin{equation}
\label{vj}
v_{nm}(x):=\langle A {\mathfrak e}_n^x,{\mathfrak e}_m\rangle_H,
\end{equation}
then
\begin{equation}
\label{012705}
\lim_{|x|\to+\infty}v_{nm}(x)=0.
\end{equation}
\end{proposition}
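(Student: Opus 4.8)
The plan is to compute $v_{nm}(x)$ explicitly as a Fourier integral and show it is the Fourier transform (evaluated at $x$) of a fixed integrable function, whence the Riemann–Lebesgue lemma gives the decay. Recall that $A$ acts as multiplication by $\alpha(k)$ in Fourier space, see \eqref{nov1202bis}, so by the definition of the inner product \eqref{l2s} and the expansion ${\mathfrak e}_n^x = \tau_{-x}{\mathfrak e}_n$, with $\widehat{\tau_{-x}{\mathfrak e}_n}(k) = e^{-ik\cdot x}\hat{\mathfrak e}_n(k)$, we have
\begin{equation}
\label{eq.vnmFourier}
v_{nm}(x)=\langle A{\mathfrak e}_n^x,{\mathfrak e}_m\rangle_H
=\int_{\bbR^d}e^{-ik\cdot x}\,\alpha(k)\,\hat{\mathfrak e}_n(k)\cdot\hat{\mathfrak e}_m^*(k)\,\si(k)\,dk.
\end{equation}

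The key observation is that the integrand $g_{nm}(k):=\alpha(k)\si(k)\,\hat{\mathfrak e}_n(k)\cdot\hat{\mathfrak e}_m^*(k)$ is an $L^1(\bbR^d)$ function independent of $x$: indeed $\alpha$ is continuous and bounded above by $A_*$ on $\bbR^d$ by \eqref{nov1202}, $\si$ is continuous and compactly supported in $\{|k|\le K_0\}$, and $\hat{\mathfrak e}_n,\hat{\mathfrak e}_m\in L^2$ with $\si^{1/2}\hat{\mathfrak e}_j\in L^2$ (since ${\mathfrak e}_j\in H$ and $\|{\mathfrak e}_j\|_H^2=\int|\hat{\mathfrak e}_j|^2\si\,dk=1$); hence by Cauchy–Schwarz on the support of $\si$,
\[
\int_{\bbR^d}|g_{nm}(k)|\,dk\le A_*\int_{|k|\le K_0}\si(k)\,|\hat{\mathfrak e}_n(k)|\,|\hat{\mathfrak e}_m(k)|\,dk
\le A_*\Big(\int|\hat{\mathfrak e}_n|^2\si\Big)^{1/2}\Big(\int|\hat{\mathfrak e}_m|^2\si\Big)^{1/2}=A_*<\infty.
\]
Therefore \eqref{eq.vnmFourier} exhibits $v_{nm}(x)$ as the Fourier transform of $g_{nm}\in L^1(\bbR^d)$, and the Riemann–Lebesgue lemma yields $v_{nm}(x)\to 0$ as $|x|\to+\infty$, which is \eqref{012705}.

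I do not expect any serious obstacle here; the only point requiring a little care is justifying that ${\mathfrak e}_n^x$ genuinely lies in the domain of $A$ so that the pairing $\langle A{\mathfrak e}_n^x,{\mathfrak e}_m\rangle_H$ is given by the stated integral rather than merely a quadratic-form expression. Since $\alpha$ is bounded (by \eqref{nov1202}), the operator $A$ defined by \eqref{nov1202bis} is in fact bounded on $H$, so $A{\mathfrak e}_n^x$ is well defined and its Fourier transform is $\alpha(k)e^{-ik\cdot x}\hat{\mathfrak e}_n(k)$; the inner-product formula \eqref{l2s} then gives \eqref{eq.vnmFourier} directly. One may alternatively note that $v_{nm}(x)=\sum_k u_{nk}(x)\,c_{km}$ with $c_{km}=\langle A{\mathfrak e}_k,{\mathfrak e}_m\rangle_H$ as in \eqref{aij}, but the direct Fourier computation is cleaner and makes the Riemann–Lebesgue argument transparent. $\qed$
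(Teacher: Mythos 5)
Your proof is correct and follows exactly the paper's argument: write $v_{nm}(x)$ as the Fourier transform of $\alpha(k)\sigma(k)\hat{\mathfrak e}_n(k)\cdot\hat{\mathfrak e}_m^*(k)$ and invoke the Riemann--Lebesgue lemma. The only difference is that you spell out the $L^1$ integrability of the integrand (via boundedness of $\alpha$, compact support of $\sigma$, and Cauchy--Schwarz), which the paper leaves implicit.
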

\proof
We have
$$
v_{nm}(x)=\langle A\tau_{-x}{\mathfrak e}_n,{\mathfrak e}_m\rangle_H=\int_{\bbR^d}e^{-ik\cdot x}\hat{\mathfrak e}_n(k)\hat{\mathfrak e}^*_m(k)\al(k)\si(k)dk.
$$
The result is an immediate consequence of  the Riemann-Lebesgue lemma.
\qed


\subsection{The It\^o formula for the environment process} 

To obtain the It\^o formula for $\eta_t$, suppose that $\varphi\in H$ and $(t,x)\in\bbR^{1+d}$ and let
$$
\hat B^{t,x}_s(\varphi):=
\int_t^s\langle\tau_{-X^{t,x}(\si)}\varphi, d B_\si\rangle_H.
$$
Define
\begin{align}
\label{hat-B1bis}
\hat B_j^{t,x}(s):=\hat B^{t,x}_s({\mathfrak e}_j)=
\sum_{k=1}^{+\infty}\int_t^{s}u_{jk}\left(X^{t,x}(\sigma)\right)dB_{k}(\si),\quad j=1,2,\ldots,
\end{align}
where, as we recall $ B_j(t):=B_t({\mathfrak e}_j)$.
The following result holds.
 {\begin{corollary}
\label{L-cl}
The space of polynomials ${\cal P}$  is a common core of ${\cal D}( L)$ and  ${\cal D}({\cal
  L})$. In addition, for any $F\in {\cal D}({\cal
  L})$ we have $F\in {\cal D}({\cal
  E}_L)$ and
\begin{equation}
\label{f1x}
\langle (-{\cal L})F,F\rangle_{L^2(\pi)}={\cal E}_L(F).
\end{equation}
In addition, for any 
$F\in {\cal D}({\cal
  L})$
 and~$(t,x)\in \bbR^{1+d}$  the following It\^o formula holds
\begin{equation}
\label{f1}
F(\eta_s^{t,x})=F(\tau_xV_t)+\int_t^s{\cal
  L}F(\eta_\si^{t,x})d\si+\sqrt{2}\hat M_s^{t,x}(F),
\end{equation}
where
$\left(M_s^{t,x}(F)\right)_{s\ge t}$ is a continuous square integrable
martingale given by
\begin{equation}
\label{mf1bis}
\hat M_s^{t,x}(F):=\sum_{j=1}^{+\infty}\int_t^s{\cal D}_j F(\eta_\si^{t,x}) d\hat
B_j^{t,x}(\si)=\int_{t}^{s} \left\langle \tau_{-X^{t,x}(\si)}{\cal D} F(\eta_\si^{t,x}),dB_{\si}\right\rangle_H,
\end{equation}
with ${\cal D}_j$, $j=1,2,\ldots$ and ${\cal D}$ given by \eqref{Dj}
and \eqref{D}, respectively.
\end{corollary}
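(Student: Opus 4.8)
The plan is to transfer the It\^o formula \eqref{f} for the velocity process $V_t$ to the environment process $\eta_s^{t,x}=\tau_{X^{t,x}(s)}V_s$ by treating $\eta$ as a joint function of the pair $(X^{t,x}(s),V_s)$ and applying the multidimensional It\^o calculus. First I would establish the core claim for $L$ and $\mathcal L$: by Proposition~\ref{core} we already know $\mathcal P$ is a common core of both $L$ and $\mathcal L$ and that $\mathcal L F = LF+\sum_j v_jD_jF$ on $\mathcal P$; to upgrade $\langle(-\mathcal L)F,F\rangle=\mathcal E_L(F)$ from $F\in\mathcal P$ to all $F\in\mathcal D(\mathcal L)$, I would use that $v$ is divergence free (so $\sum_j\langle v_jD_jF,F\rangle_{L^2(\pi)}=0$ for $F$ in the core, by integration by parts against the invariant measure $\pi$, cf.\ \cite[Corollary~12.22]{KLO}), hence $\langle(-\mathcal L)F,F\rangle_{L^2(\pi)}=\langle(-L)F,F\rangle_{L^2(\pi)}=\mathcal E_L(F)$ on $\mathcal P$, and then pass to the limit using the closability of $\mathcal E_L$ together with Corollary~\ref{cor011204} which identifies $\mathcal D(\mathcal E_L)=\mathfrak H_1$; the resolvent identity and the a priori bound \eqref{cal-L1} give the needed convergence $F_n\to F$ in the form norm for a sequence $F_n\in\mathcal P$ approximating $F\in\mathcal D(\mathcal L)$.

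For the It\^o formula \eqref{f1}, I would argue first for $F\in\mathcal P$, where everything is smooth and finite-dimensional. Write $F(\eta_s^{t,x})=F(\tau_{X^{t,x}(s)}V_s)=(T_{-X^{t,x}(s)}F)(V_s)$, and note that $T_{-x}F\in\mathcal P$ depends smoothly on $x$ with $\partial_{x_j}(T_{-x}F)=-T_{-x}(D_jF)$. Since $X^{t,x}$ has finite variation (it solves the ODE \eqref{ode} with $dX^{t,x}_j(\sigma)=v_j(\eta^{t,x}_\sigma)d\sigma$) and $V_s$ is the semimartingale from \eqref{sde1}–\eqref{f}, I apply the product/It\^o rule to the composition. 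The drift picks up three contributions: the generator term $LF(\eta^{t,x}_\sigma)$ coming from $dV$; the transport term $\sum_j v_j(\eta^{t,x}_\sigma)D_jF(\eta^{t,x}_\sigma)$ coming from $dX^{t,x}$; and no extra bracket term because $X^{t,x}$ has zero quadratic variation. Their sum is exactly $\mathcal LF(\eta^{t,x}_\sigma)$ by \eqref{cal-L}. The martingale part comes solely from the $\sqrt2\,dB$ term in \eqref{sde1}, and the chain rule replaces the test functions $\mathfrak e_j$ by their shifts $\tau_{-X^{t,x}(\sigma)}\mathfrak e_j=\mathfrak e_j^{X^{t,x}(\sigma)}$; expanding $\mathfrak e_j^{X^{t,x}(\sigma)}=\sum_k u_{jk}(X^{t,x}(\sigma))\mathfrak e_k$ and using \eqref{mf}, \eqref{D}–\eqref{Dj}, \eqref{hat-B1bis} gives precisely the martingale $\hat M_s^{t,x}(F)$ in \eqref{mf1bis}, with the two displayed expressions there agreeing by the definition of the stochastic integral $\int\langle\cdot,dB\rangle_H$. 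Square-integrability of $\hat M_s^{t,x}(F)$ follows from $\bbE\int_t^s\|\tau_{-X^{t,x}(\sigma)}\mathcal DF(\eta^{t,x}_\sigma)\|_H^2\,d\sigma<\infty$, which holds since $\mathcal DF\in L^2(\pi;H)$ for $F\in\mathcal P$ (recall $\mathcal P\subset\mathcal W_\infty$) and $\pi$ is invariant for $(Q_s)$, so the law of $\eta^{t,x}_\sigma$ is $\pi$.

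Finally I would remove the restriction $F\in\mathcal P$ by a density/closure argument: given $F\in\mathcal D(\mathcal L)$, choose $F_n\in\mathcal P$ with $F_n\to F$ and $\mathcal LF_n\to\mathcal LF$ in $L^2(\pi)$ (possible since $\mathcal P$ is a core), and simultaneously, by part~(ii)–(iii) of Theorem~\ref{thm013101} and \eqref{021610bis}, arrange $\mathcal DF_n\to\mathcal DF$ in $L^2(\pi;H)$ (this is where the Malliavin-derivative bounds \eqref{033101} are used, controlling $\|\mathcal D(F_n-F)\|_{L^2(\pi;H)}^2$ by $\mathcal E_L(F_n-F)\to0$). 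Stationarity of $\eta^{t,x}_\sigma$ under $\pi$ then turns these $L^2(\pi)$ and $L^2(\pi;H)$ convergences into: $F_n(\eta^{t,x}_s)\to F(\eta^{t,x}_s)$ in $L^2(\Omega)$, $\int_t^s\mathcal LF_n(\eta^{t,x}_\sigma)d\sigma\to\int_t^s\mathcal LF(\eta^{t,x}_\sigma)d\sigma$ in $L^2(\Omega)$, and $\hat M_s^{t,x}(F_n)\to$ a limiting martingale in $L^2(\Omega)$ by It\^o's isometry in the $B$-noise; identifying this limit as $\hat M_s^{t,x}(F)$ and passing to the limit in \eqref{f1} for $F_n$ yields \eqref{f1} for $F$. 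The main obstacle I anticipate is the bookkeeping in the chain-rule computation for $F\in\mathcal P$ — correctly tracking how the shift $\tau_{-X^{t,x}(\sigma)}$ interacts with the infinite-series representations of $\mathcal D F$ and of the noise $B$, and verifying that the resulting infinite sum of stochastic integrals converges and equals the compact expression $\int_t^s\langle\tau_{-X^{t,x}(\sigma)}\mathcal DF(\eta^{t,x}_\sigma),dB_\sigma\rangle_H$; once the polynomial case is pinned down cleanly, the closure step is routine given Theorems~\ref{thm013101} and \ref{prop013101}.
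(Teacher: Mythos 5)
Your proposal is correct and follows essentially the same route as the paper, whose proof is just a terse version of your argument: the core claim and \eqref{f1x} come from Proposition~\ref{core} plus \eqref{cal-L} and the divergence-free property of $v$, the It\^o formula is obtained on $\mathcal P$ from \eqref{f} and the definition \eqref{eta} of $\eta^{t,x}_s$, and everything is extended to $\mathcal D(\mathcal L)$ by approximation. Only two cosmetic slips: with the paper's convention $T_xF(w)=F(\tau_xw)$ one has $F(\eta^{t,x}_s)=(T_{X^{t,x}(s)}F)(V_s)$ (not $T_{-X}$), and the bound $\|\mathcal D(F_n-F)\|^2_{L^2(\pi;H)}\le\alpha_*^{-1}\mathcal E_L(F_n-F)$ comes from \eqref{021610bis} and \eqref{020202} rather than from \eqref{033101}, which concerns the translation generators $D_j$.
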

\proof
The first part follows from
Proposition \ref{core}. 
Formula \eqref{f1x} holds for $F\in{\cal P}$, as can be easily seen by
an application of \eqref{cal-L}. The extension to $D({\cal L})$ can be
done by an approximation.}

 {For any $F\in {\cal P}$ the formula \eqref{mf1bis} follows from \eqref{mf} and the definition of the process
$\left(\eta_s^{t,x}\right)_{s\ge t}$, see \eqref{eta}. The extension
to an arbitrary $F\in D({\cal L})$ can, again,  be
achieved by an approximation argument.\qed}

\subsection{Proofs of Proposition \ref{core} and Theorem \ref{prop013101}}

\label{sec4.2}


\subsubsection*{Proof of Proposition \ref{core}}

Since ${\cal P}$ is dense in $L^2(\pi)$ and  invariant under the semigroup $P_t$,
it is a core of ${\cal D}(L)$.
By a direct
calculation using the It\^o formula \eqref{f},  it can be checked that
${\cal P}\subset {\cal D}({\cal L})$ and the action of ${\cal L}$ on~$F\in{\cal P}$ 
is given by \eqref{cal-L}.
In what follows, we verify that in fact ${\mathfrak H}_4 \subset {\cal D}({\cal L})$
and  \eqref{cal-L} holds also for any $F\in {\mathfrak H}_4$.
Then, \eqref{cal-L1} also holds for all $F\in{\mathfrak H}_4$, so in particular
${\cal L}$ is dissipative on~${\cal P}$, i.e.   { for any $\la>0$ we have $\|(\la-{\cal
  L})F\|_{L^2(\pi)}\ge {\la}\|F\|_{L^2(\pi)}$, $F\in {\cal P}$.}
Using Theorem 2.12, p. 16
of \cite{ethier-kurtz} we conclude that $\tilde{\cal
  L}$, the  closure of  ${\cal L}$, restricted to ${\cal P}$,
is a generator of a strongly continuous semigroup on $L^2(\pi)$. But
${\cal L}$ itself is closed (as a generator of a $C_0$-semigroup)
therefore $\tilde{\cal L}\subset {\cal L}$. The latter in turn implies that
$\tilde{\cal L}={\cal L}$, as then we have~$
(\la-\tilde{\cal L})^{-1}=(\la-{\cal L})^{-1}
$
for any $\la>0$. In particular, the above means that 
${\cal P}$ is a  core of ${\cal L}$, which ends the proof of
Proposition \ref{core}. It remains to show that~\eqref{cal-L} holds for
$F\in{\mathfrak H}_4$ and the density of  $(\la-{\cal L})({\cal P})$  in  $L^2(\pi)$.

Recall that  $F\in {\mathfrak H}_4$ iff
\begin{equation}
\label{021204}
\sum_{n=0}^{+\infty} (n+1)^4\|{\mathfrak p}_n F\|_{L^2(\pi)}^2<+\infty.
\end{equation}
Thanks to \eqref{043101}, we conclude that 
\begin{equation}
\label{013108}
\|L{\mathfrak p}_n
F\|_{L^2(\pi)}\leq A_*n \|{\mathfrak p}_n
F\|_{L^2(\pi)}, \quad n=1,2,\ldots
\end{equation}
Let
\begin{equation}
\label{F-n}
F_n:=\sum_{k=0}^n{\mathfrak p}_kF,
\end{equation}
 then
$F_n  \in {\cal D}({\cal L})$ and 
$$
{\cal L}F_n=L F_n+\sum_{j=1}^2 v_jD_j F_n.
$$ 
Using the fact that $F\in {\mathfrak
  H}_4$ and \eqref{013108}, we conclude that  $LF_n\to LF$, as
$n\to+\infty$. 
Next, we show that $v_jD_j F_n$ converges in $L^2(\pi)$ for each $j=1,\ldots,d$.
Thanks to the first formula in \eqref{hermite} we have
$$
{\mathfrak p}_m\left(v_jD_j {\mathfrak p}_k F_n\right)=0,\quad |k-m|\not=1.
$$
Hence, for $n'>n$, using orthogonality we have
\begin{align}
\label{031204}
\Big\|\sum_{m,k=0}^{+\infty}{\mathfrak p}_m\left(v_jD_j {\mathfrak p}_k (F_{n'}-F_n)\right)\Big\|_{L^2(\pi)}\le &\Big\{\sum_{m=0}^{+\infty}\left\|{\mathfrak p}_m\left(v_jD_j {\mathfrak p}_{m+1} (F_{n'}-F_n)\right)\right\|_{L^2(\pi)}^2\Big\}^{1/2}\nonumber\\
&
+\Big\{\sum_{m=1}^{+\infty}\left\|{\mathfrak p}_{m+1}\left(v_jD_j {\mathfrak p}_{m} (F_{n'}-F_n)\right)\right\|_{L^2(\pi)}^2\Big\}^{1/2}.
\end{align}
Using the H\"older inequality, we conclude that for $m>1$
$$
\left\|{\mathfrak p}_m\left(v_jD_j {\mathfrak p}_{m+1} (F_{n'}-F_n)\right)\right\|_{L^2(\pi)}\le \|v_j\|_{L^{2m}(\pi)}\left\|D_j {\mathfrak p}_{m+1} (F_{n'}-F_n)\right\|_{L^{2m/(m-1)}(\pi)}.
$$
Since $v_p$ is Gaussian, we have
$
\|v_p\|_{L^{2m}(\pi)}\sim (m!)^{1/(2m)},
$ which, by virtue of Stirling's formula, is of the order $\sqrt{m}$. 
On the other hand, by the hypercontractivity of $L^p$ norms with respect 
to a Gaussian  measure, see e.g. Theorem 5.10 of \cite{janson}, we have 
$$
\left\|D_j {\mathfrak p}_{m+1} (F_{n'}-F_n)\right\|_{L^{2m/(m-1)}(\pi)}\le \left(\frac{m+1}{m-1}\right)^{m/2}\left\|D_j {\mathfrak p}_{m+1} (F_{n'}-F_n)\right\|_{L^2(\pi)}.
$$
Therefore, there exists $C>0$ such that
\begin{align*}
&\left\|{\mathfrak p}_m\left(v_jD_j {\mathfrak p}_{m+1}
    (F_{n'}-F_n)\right)\right\|_{L^2(\pi)}\le C\sqrt{m}\left\|D_j
  {\mathfrak p}_{m+1} (F_{n'}-F_n)\right\|_{L^2(\pi)}\\
&
\le C\sqrt{m}(m+1)\left\| {\mathfrak p}_{m+1}
  (F_{n'}-F_n)\right\|_{L^2(\pi)},\quad m\ge1,
\end{align*}
by virtue of parts (i) and (ii) of Theorem \ref{thm013101}. A similar estimate 
holds for the second term in the right hand side of \eqref{031204}. 
As a result,  there exists $C>0$ such that
$$
\Big\|\sum_{j=1}^d v_jD_j (F_{n'}-F_n)\Big\|_{L^2(\pi)}\le 
C\|F_{n'}-F_n\|_{{\mathfrak H}_3},\quad n'>n.
$$
The above implies that ${\cal L}F_n$ converges in $L^2(\pi)$. Thus, the right
side of \eqref{cal-L} makes sense
for any~$F\in {\mathfrak H}_4$, so that $F\in {\cal
  D}({\cal L})$ and the action of ${\cal L}$ on ${\mathfrak H}_4$ is given by \eqref{cal-L}.

To show the density of $(\la-{\cal L})({\cal P})$ in $L^2(\pi)$ we
observe first that $(\la-{\cal L})({\mathfrak H}_\infty)$ is dense in $L^2(\pi)$.
Indeed,  Lemma~2.21, p.~63 of~\cite{KLO} implies that
 given any~$G\in {\cal P}$, there exists $F\in {\mathfrak H}_\infty$ such that it satisfies the resolvent
 equation $(\la- {\cal L})F=G$. Given $F\in {\mathfrak H}_\infty$ we let
 $F_n\in {\cal P}$ be defined by~\eqref{F-n}. The previous argument shows that
 $F_n\to F$ and ${\cal L}F_n\to{\cal L}F$, as $n\to+\infty$, in
 $L^2(\pi)$ (it even holds for $F\in {\mathfrak H}_4$).  This proves that
 the closure of $(\la-{\cal L})({\cal P})$ equals $L^2(\pi)$. The
 proof of Proposition \ref{core} is therefore complete.\qed


\subsubsection*{Proof of Theorem \ref{prop013101}}

 {The zero mean solution of  \eqref{corr1} is given by 
\[
\theta=\int_0^{+\infty}Q_tF_*dt,
\]
and the integral in the right side converges,   thanks to \eqref{sg1}.}

 {
In light of the already proved Proposition~\ref{core},  the generator ${\cal L}$ can be written  on its core ${\cal P}$ as~$L+{\cal A}$, 
where $L$ is the generator of~$V_t$, that is essentially self-adjoint on ${\cal P}$ (which is its core), and ${\cal A}F=\sum_{p=1}^d v_p D_p F$, 
$F\in {\cal P}$, is antisymmetric. We see from \eqref{033101} and \eqref{042101} that there exists a constant $C>0$ such that
$$
 |\langle {\cal A}F,G \rangle_{L^2(\pi)} |\le \sum_{p=1}^d\|D_pF\|_{L^2(\pi)}\|v_pG\|_{L^2(\pi)}\le C(n+1)^{1/2}{\cal E}_L^{1/2}(F){\cal E}_L^{1/2}(G)
$$
for any $F\in {\cal H}_n$ and $G\in {\cal H}_{n+1}$, or $G\in {\cal
  H}_n$ and $F\in {\cal H}_{n+1}$, and $n=0,1,\ldots$. }

 {If $F_*\in {\frak H}_s$ for some $s>0$ then
$$
\sum_{n=0}^{+\infty}(n+1)^{s+1}\|\!|{\frak p}_nF_*\|\!|_{-1}^2<+\infty,
$$
where
$$
\|\!|{\frak p}_nF_*\|\!|_{-1}^2:=\sup_{G}\left[2\langle {\frak
    p}_nF_*,G\rangle_{L^2(\pi)}-{\cal E}_L(G)\right]\asymp
\frac{\|{\frak p}_nF_*\|_{L^2(\pi)}^2}{n+1},\quad n=1,2,\ldots,
$$
by virtue of part (i) of Theorem \ref{thm013101}.
By virtue of
Lemma 2.21, p. 67 of \cite{KLO}, for any $s\ge 1$ the zero-mean solution of
\eqref{corr1} satisfies
$$
\sum_{n=0}^{+\infty}(n+1)^{s+1}{\cal E}_L({\frak p}_n\theta)<+\infty,
$$
which, by another application of  Theorem \ref{thm013101}, shows that
 $\theta\in {\mathfrak H}_{s+2}$ (as ${\cal E}_L({\frak p}_nF_*)\asymp
 n \|{\frak p}_nF_*\|_{L^2(\pi)}^2$), which ends the proof of the theorem.\qed}

\section{Proof of Theorem \ref{thm}}

\label{sec4.3}



 To avoid cumbersome notations we consider only the case $N=2$, as the
  general case can be argued using the same proof as  
 below. Theorem \ref{thm} is an immediate corollary of  the
following result.
\begin{theorem}
\label{thm011105}
For any $(t,x,y)\in\bbR\times \bbR^{2d}$ with $x\not=y$, the processes
$\left(X_\eps^{t,x}(s),X_\eps^{t,y}(s)\right)_{s\geq t}$ converge weakly over $C([t,+\infty);\bbR^{2d})$ to $\left(x+\beta_{s-t},y+\beta'_{s-t}\right)_{s\ge t}$, where
$(\beta_t)_{t\ge0}$, $(\beta_t')_{t\ge0}$ are two independent copies of Brownian motion with the covariance matrix as in \eqref{cov-B}.
\end{theorem}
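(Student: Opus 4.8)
The plan is to use the Markovian structure of the environment process, together with the corrector fields $\tilde\chi_j$ from Corollary~\ref{cor013101}, to produce a martingale decomposition of each characteristic $X_\eps^{t,x}(s)$, and then to prove joint convergence to independent Brownian motions via a two-component environment process whose two coordinates decouple asymptotically because the spatial separation diverges. Concretely, for a single characteristic one writes, for $\eps=1$ and then by scaling, $X^{t,x}(s)-x = \int_t^s v(\eta_\sigma^{t,x})\,d\sigma$. Using the corrector equation $-\mathcal L\chi_j = v_j$ (Corollary~\ref{cor11a}) and the It\^o formula \eqref{f1} applied to $F=\chi_j$, one gets
\begin{equation}
\label{pp-mart}
X^{t,x}_j(s)-x_j = \chi_j(\tau_x V_t)-\chi_j(\eta_s^{t,x}) + \sqrt 2\,\hat M_s^{t,x}(\chi_j),
\end{equation}
so after the diffusive rescaling $s\mapsto s/\eps^2$, $x\mapsto x/\eps$, the boundary terms $\eps\,\chi_j(\cdot)$ vanish in $L^2$ (by Corollary~\ref{cor013101}) and $X_\eps^{t,x}(s)-x$ is, to leading order, a rescaled martingale. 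Its quadratic variation is an additive functional of $\eta^{t,x}_\sigma$; by the ergodic theorem for the stationary Markov process $\eta$ (exponential mixing \eqref{sg1}), the quadratic variation converges to $a_{ij}(s-t)$ with $a_{ij}$ as in \eqref{apq}, and the martingale central limit theorem gives convergence of the single characteristic to $x+\beta_{s-t}$.

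For the two-point statement I would run the same argument on the \emph{pair}. Let $\xi_\sigma := (\eta_\sigma^{t,x},\eta_\sigma^{t,y}) = (\tau_{X^{t,x}(\sigma)}V_\sigma,\ \tau_{X^{t,y}(\sigma)}V_\sigma)$, which is again Markov (driven by the single field $V$), and apply \eqref{pp-mart} to each coordinate, obtaining two martingales $\hat M^{t,x}(\chi_j)$ and $\hat M^{t,y}(\chi_j)$. By \eqref{mf1bis} their cross-bracket is $\int_t^s \langle \tau_{-X^{t,x}(\sigma)}\mathcal D\chi_j,\ \tau_{-X^{t,y}(\sigma)}\mathcal D\chi_k\rangle_H\,d\sigma$, which in the orthonormal basis is controlled by the quantities $v_{nm}(X^{t,x}(\sigma)-X^{t,y}(\sigma))$ of Proposition~\ref{prop012705}. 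After rescaling, $X_\eps^{t,x}(\sigma)-X_\eps^{t,y}(\sigma) = (x-y)/\eps + O(1)$ on the microscopic scale, which tends to infinity; hence by the decorrelation Proposition~\ref{prop012705} the rescaled cross-bracket tends to $0$ in probability, while each diagonal bracket converges to $a_{jk}(s-t)$ as before. The multidimensional martingale CLT then yields joint convergence of $(X_\eps^{t,x},X_\eps^{t,y})$ to $(x+\beta_{s-t},\,y+\beta'_{s-t})$ with $\beta,\beta'$ independent and each having covariance \eqref{cov-B}. Tightness in $C([t,+\infty);\bbR^{2d})$ follows from the martingale representation \eqref{pp-mart} together with the uniform bound on quadratic variations (the drift-type boundary terms are uniformly small), e.g.\ via Aldous' criterion or a fourth-moment estimate on increments.

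The main obstacle is making the decorrelation step quantitative and uniform enough to pass to the limit: one needs that $\sup_\sigma |X_\eps^{t,x}(\sigma)-X_\eps^{t,y}(\sigma)|$ stays large with high probability (so that Proposition~\ref{prop012705}, which is only a pointwise limit as $|z|\to\infty$, can be leveraged), and that the infinite sum over the basis indices in the cross-bracket is summable so that the vanishing of individual $v_{nm}$ transfers to the whole Hilbert–Schmidt pairing. The first point is handled by noting that $|X_\eps^{t,x}(\sigma)-X_\eps^{t,y}(\sigma)-\eps^{-1}(x-y)|$ is itself a small martingale-plus-corrector term of order $O(1)$ uniformly on compact $\sigma$-intervals (again using Corollary~\ref{cor013101} and a maximal inequality), so the separation is $\eps^{-1}|x-y|(1+o(1))\to\infty$. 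The second point uses the regularity of $\chi_j$: since $\chi_j\in\mathfrak H_\infty$ and hence $\mathcal D\chi_j\in L^2(\pi;H)$ with rapidly decaying chaos components (Theorem~\ref{prop013101}), the pairing is dominated by a convergent series of the $v_{nm}$'s with a summable envelope, so dominated convergence applies. The rest — identifying the limiting diffusivity with \eqref{apq} and verifying the Lindeberg/conditional-variance hypotheses of the martingale CLT — is routine given the exponential mixing \eqref{sg1} and the uniform boundedness furnished by Theorem~\ref{thm013101}.
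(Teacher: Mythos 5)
Your overall architecture (corrector/martingale decomposition of each characteristic, cross-bracket of the two martingales expressed through the pairings $v_{nm}$, finite-rank truncation of $\mathcal D\chi_k$ so that Proposition~\ref{prop012705} can be summed, martingale CLT) is exactly the paper's, and the finite-rank/dominated-convergence step you describe is precisely Lemmas~\ref{lm012105} and \ref{lm022105}. But there is a genuine gap in the step where you argue that the two trajectories stay separated.

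You claim that $\sup_\sigma\bigl|X_\eps^{t,x}(\sigma)-X_\eps^{t,y}(\sigma)-(x-y)\bigr|$ is small, so that the microscopic separation is $\eps^{-1}|x-y|(1+o(1))\to\infty$. This is false: each trajectory fluctuates around its starting point by an amount of order $\sqrt{\sigma-t}$ on the \emph{macroscopic} scale (that is the content of the single-trajectory invariance principle), and these fluctuations are, in the limit, independent --- which is the very statement being proved. Hence the macroscopic difference is $(x-y)+O(1)$ with a two-sided $O(1)$ term comparable to $|x-y|$, and the pair can come macroscopically close on $[t,T]$ with positive probability for any fixed threshold. The correct statement --- that with high probability the macroscopic separation stays above $\eps^{\gamma}$, $\gamma\in(0,1)$ --- can only be extracted from the limiting law of the \emph{pair}, which is circular. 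The paper breaks this circularity with the concatenated process: it stops the pair at $\tilde T_\eps=\inf\{s:|X^{0,x}_\eps(s)-X_\eps(s)|\le\eps^\gamma\}$, glues on independent Brownian motions after $\tilde T_\eps$, proves via a martingale-problem argument (Proposition~\ref{prop706041}, using the auxiliary correctors $\theta^{(N_2)}_{k,\ell}$ of \eqref{cor2} to control the quadratic variations) that the modified pair converges to two independent Brownian motions, and only then invokes \eqref{010306b} --- the difference of two independent nondegenerate Brownian motions in $d\ge2$ never vanishes --- together with the monotonicity \eqref{nov2804} to conclude that $\tilde T_\eps>T$ with probability tending to one, so the modification is immaterial. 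Without this (or an equivalent bootstrap), your Lemma-\ref{lm022105}-type bound only controls the cross-bracket on the event of separation, and you have no a priori control of the complementary event.
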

This result is not very surprising -- two particles starting at two different positions will see ``nearly independent'' environments. 
However, as the realizations of the velocity field in our case are analytic in space, the argument is slightly more delicate than,
say, for velocity fields with finite range dependence, and relies on Proposition~\ref{prop012705} rather than the usual mixing properties.  
As we have mentioned,
convergence of each individual trajectory to a Brownian path is well known under our assumptions.
 
\subsubsection*{Decomposition of the trajectory}
   
Let 
\[
\eta_{\eps,s}^{t,x}:=\tau_{X^{t,x}_\eps(s)/\eps}V_{s/\eps^2},
\]
 then, using
\eqref{cor1} and \eqref{f1}, we can decompose the $k-$th component of $X^{t,x}_\eps(s)$, denoted by $X^{t,x}_{k,\eps} (s)$, as 
\begin{eqnarray}
\label{022704}
&&X^{t,x}_{k,\eps} (s)
=x_k+\frac{1}{\eps}\int_t^s v_k (\eta_{\eps,\sigma}^{t,x})d\sigma
=x_k-\frac{1}{\eps}\int_t^s{\cal L}\chi_k
\big(\eta_{\eps,\sigma}^{t,x}\big)d\sigma \\
&&~~~~~~~~~~~
=x_k+\eps Y_{k,\eps}(t,s)+\sqrt{2}\int_t^s\langle \tau_{-X^{t,x}_\eps(\si)/\eps}{\cal D}\chi_k(\eta_{\eps,\sigma}^{t,x}),dB^\eps_\si\rangle_H
=x_k+\eps Y_{k,\eps}(t,s)+\sqrt{2}M_{\eps,k}^x(t,s),\nonumber
\end{eqnarray}
where
$$
Y_{k,\eps}(t,s):=\chi_k
(\tau_{x/\eps} V_{t/\eps^2})-{\chi_k}
(\eta_{\eps,s}^{t,x}),~~M_{\eps,k}^x(t,s):=
 {\sum_{j=1}^{+\infty}\int_t^s
  {\cal D}_j\chi_k
 (\eta_{\eps,\sigma}^{t,x})
 d\hat  B_{j,\eps}^{t,x}(\sigma)}, 
$$
and  $\hat B_{j,\eps}$ are defined using the change of variables \eqref{010901} and \eqref{hat-B1bis} 
\begin{align}
\label{hat-B}
\hat B_{j,\eps}^{t,x}(s):=
\sum_{k=1}^{+\infty}\int_t^{s}u_{jk}\Big(\frac{X_\eps^{t,x}(\sigma)}{\eps}\Big)dB_{k}^{\eps}(\si),~
B_j^{\eps}(t):=\eps B_j(t/\eps^2),\quad j=1,2,\ldots
\end{align}
By Corollary~\ref{cor013101}, the main contribution to $X^{t,x}_{k,\eps} (s)$ in (\ref{022704})
comes from $M_{\eps,k}^x(t,s)$.  In fact, one can show the following.
 {\begin{proposition}
\label{prop010601}
For any $(t,x)\in [0,T]\times\bbR^d$ and $\delta>0$ we have
\begin{equation}
\label{020601}
\lim_{\eps\to0+}\bbP\left[\eps\sup_{t\le s\le T}|{\chi_k}
(\eta_{\eps,s}^{t,x})|\ge \delta\right]=0,\quad k=1,\ldots,d.
\end{equation}
\end{proposition}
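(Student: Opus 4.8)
The plan is to show that $\eps\,\chi_k(\eta_{\eps,\cdot}^{t,x})$ is a negligible remainder in the Kipnis--Varadhan decomposition~\eqref{022704}, while avoiding the circularity coming from the fact that the three summands of~\eqref{022704} are linearly dependent. Rearranging~\eqref{022704} and differencing between two times, one gets the exact identity, valid for $t\le s_1\le s_2\le T$,
\[
\eps\,\chi_k(\eta_{\eps,s_2}^{t,x})-\eps\,\chi_k(\eta_{\eps,s_1}^{t,x})=-\big(X^{t,x}_{k,\eps}(s_2)-X^{t,x}_{k,\eps}(s_1)\big)+\sqrt2\,\big(M_{\eps,k}^x(t,s_2)-M_{\eps,k}^x(t,s_1)\big).
\]
Hence, for any partition $t=s_0<s_1<\cdots<s_N=T$ with mesh $h:=\max_i(s_{i+1}-s_i)$, one has $\eps\sup_{t\le s\le T}|\chi_k(\eta_{\eps,s}^{t,x})|\le T_1+T_2+T_3$, with $T_1:=\eps\max_{0\le i\le N}|\chi_k(\eta_{\eps,s_i}^{t,x})|$, with $T_2$ the modulus of continuity $\sup\{|X^{t,x}_{k,\eps}(s)-X^{t,x}_{k,\eps}(s')|:\,t\le s'\le s\le s'+h\}$ of the characteristic at scale $h$, and with $T_3:=\sqrt2\,\max_{0\le i<N}\sup_{s_i\le s\le s_{i+1}}|M_{\eps,k}^x(t,s)-M_{\eps,k}^x(t,s_i)|$. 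I will bound $T_1,T_2,T_3$ and then let the mesh $h=h(\eps)\to0$ in the window $\eps^2\ll h(\eps)\ll1$ (for instance $h(\eps)=\eps$), which makes all three vanish in probability.

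For $T_1$ a crude union bound suffices: since $(\eta_{\eps,s}^{t,x})_{s\ge t}$ is stationary with one-dimensional marginal $\pi$ (Proposition~\ref{prop012908}) and $\chi_k\in{\mathfrak H}_\infty\subset L^2(\pi)$ (Corollary~\ref{cor11a}), we get $\bbE\,T_1^2\le(N+1)\eps^2\|\chi_k\|_{L^2(\pi)}^2$, and $(N+1)\eps^2\le\eps^2+(T-t)\eps^2/h(\eps)\to0$. The quantity $T_2$ tends to $0$ in probability as $\eps\to0$ because $h(\eps)\to0$ and the family $\{X^{t,x}_{\eps}\}_{\eps>0}$ is tight in $C([t,T];\bbR^d)$; this tightness is part of the classical invariance principle for the single characteristic recalled in the introduction (\cite{carmona-xu,fk,koralov}), and if a self-contained route is preferred it follows from the uniform-in-$\eps$ bound $\bbE|X^{t,x}_{k,\eps}(s)-X^{t,x}_{k,\eps}(s')|^{2p}\le C_p|s-s'|^p$ for every $p\ge1$, obtained by writing $X^{t,x}_{k,\eps}(s)-X^{t,x}_{k,\eps}(s')=\eps^{-1}\int_{s'}^s v_k(\eta_{\eps,\sigma}^{t,x})\,d\sigma$, expanding the $2p$-th moment, ordering the time variables, and bounding the resulting correlation by successive conditioning via the Markov property, the exponential mixing estimate~\eqref{sg1} and the finite Gaussian moments of $v_k$; the nested mixing factors produce a factor $(\eps^2)^p$ that cancels $\eps^{-2p}$.

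The term $T_3$ requires the most care, and it is here that the martingale structure, rather than the identity above, does the work. I would use that $M_{\eps,k}^x(t,\cdot)$ is a continuous $L^2$-martingale with
\[
\langle M_{\eps,k}^x(t,\cdot)\rangle_s=\int_t^s g(\eta_{\eps,\sigma}^{t,x})\,d\sigma,\qquad g(w):=\langle A\,{\cal D}\chi_k(w),{\cal D}\chi_k(w)\rangle_H\ge0,
\]
which follows from the It\^o formula~\eqref{f1}, the definition of $M_{\eps,k}^x$ in~\eqref{022704} and the scaling~\eqref{hat-B}; by~\eqref{020202},~\eqref{021610bis2} and~\eqref{apq}, $g$ is non-negative, $\pi$-integrable, and $\int_{\cal E}g\,d\pi=a_{kk}$. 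By the Dambis--Dubins--Schwarz theorem, $M_{\eps,k}^x(t,s)=W^{(\eps)}\!\big(\langle M_{\eps,k}^x(t,\cdot)\rangle_s\big)$ for a standard Brownian motion $W^{(\eps)}$, so $T_3$ is at most $\sqrt2$ times the oscillation of $W^{(\eps)}$, over the random bounded interval $[0,\int_t^T g(\eta_{\eps,\sigma}^{t,x})\,d\sigma]$, at the scale $\Delta_\eps:=\max_{0\le i<N}\int_{s_i}^{s_{i+1}}g(\eta_{\eps,\sigma}^{t,x})\,d\sigma$. The spectral gap~\eqref{nov1202},~\eqref{sg1} makes $(\eta_u)$ mixing, hence ergodic, so the $L^1$-ergodic theorem gives $\tfrac{\eps^2}{\ell}\int_0^{\ell/\eps^2}g(\eta_u)\,du\to a_{kk}$ in $L^1$ whenever $\ell/\eps^2\to\infty$, uniformly in the starting time by stationarity; applied with $\ell=h(\eps)$ (recall $h(\eps)/\eps^2\to\infty$) and summed over the $N\asymp(T-t)/h(\eps)$ cells this yields $\bbE\,\Delta_\eps\le h(\eps)\,a_{kk}+(T-t)\,o(1)\to0$, while the same theorem gives $\int_t^T g(\eta_{\eps,\sigma}^{t,x})\,d\sigma\to(T-t)a_{kk}$, hence this clock is $O_{\bbP}(1)$. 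Since the modulus of continuity of a Brownian motion on a bounded interval tends to $0$ with the scale, $T_3\to0$ in probability. Combining the three estimates proves the proposition, the case of general $k$ being identical.

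The main obstacle is exactly the near-circular structure of~\eqref{022704}: one cannot deduce the smallness of $\eps\,\chi_k(\eta_{\eps,\cdot}^{t,x})$ from the known tightness of $X^{t,x}_{\eps}$ alone, because $X^{t,x}_{k,\eps}(\cdot)-x_k$ and $\sqrt2\,M_{\eps,k}^x(t,\cdot)$ differ precisely by $\eps\,Y_{k,\eps}(t,\cdot)=\eps\,\chi_k(\tau_{x/\eps}V_{t/\eps^2})-\eps\,\chi_k(\eta_{\eps,\cdot}^{t,x})$, which is the object we want to control. The device that breaks the circle is the mesoscopic partition: there the cell-oscillations of $X^{t,x}_{k,\eps}$ and of $M_{\eps,k}^x$ can be estimated by genuinely independent means --- tightness of $X^{t,x}_\eps$ for the former, and the ergodic behaviour of the quadratic-variation clock $\int g(\eta_{\eps,\sigma})\,d\sigma$ (which only needs $g\in L^1(\pi)$, i.e. $\chi_k\in{\mathfrak H}_1$) for the latter --- while the explicit $\eps$-prefactor absorbs the $N\asymp(T-t)/h$ partition points.
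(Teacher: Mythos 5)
Your argument is correct, but it takes a genuinely different route from the paper's, which is much shorter. The paper first uses stationarity to reduce the claim to showing that $\eps\sup_{0\le s\le T/\eps^2}|\chi_k(\eta_s)|\to0$ almost surely for the environment process started from equilibrium; it then sets ${\cal X}_j:=\sup_{j\le s\le j+1}|\chi_k(\eta_s)|^2$, notes that $({\cal X}_j)_{j\ge1}$ is stationary and ergodic with $\bbE{\cal X}_1<+\infty$ (the integrability coming from the It\^o formula \eqref{f1} applied to $\chi_k$, i.e.\ Doob's inequality on a unit time interval), and concludes from the elementary Birkhoff-type fact that $N^{-1}\max_{1\le j\le N}{\cal X}_j\to0$ a.s.\ for any such sequence. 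You instead invert the decomposition \eqref{022704}, work on a mesoscopic partition of mesh $\eps^2\ll h(\eps)\ll1$, and control the three resulting terms by a union bound, by tightness of the characteristics, and by a Dambis--Dubins--Schwarz time change combined with the $L^1$ ergodic theorem for the quadratic-variation clock. Both arguments are sound, and your partition device does break the circularity you point out. What the paper's approach buys is brevity, a stronger a.s.\ conclusion, and self-containedness: it needs no input about $X_\eps^{t,x}$ at all, whereas your main route imports tightness of the characteristics from \cite{carmona-xu,fk,koralov} (legitimate, since the paper also treats the single-trajectory invariance principle as known, but your self-contained alternative via $2p$-th moment bounds on increments is only sketched, as is the uniformity in the starting time of the $L^1$ ergodic averages for the clock). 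What your approach buys is that it isolates exactly which ingredients are used --- only $\chi_k\in{\mathfrak H}_1$ and the martingale structure --- and it runs parallel to the partition arguments the paper deploys later (e.g.\ in the proof of Proposition \ref{prop706041}); for this particular statement, however, it is heavier machinery than necessary.
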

\proof Due to stationarity, it suffices only to show that for each
$k=1,\ldots,d$
\begin{equation}
\label{020601a}
\lim_{\eps\to0+}\eps\sup_{0\le s\le T/\eps^2}|{\chi_k}
(\eta_{s})|=\lim_{\eps\to0+}\left\{\eps^2\max_{0\le k\le [T/\eps^2]}{\cal X}_k\right\}^{1/2}
=0,\quad \bbP\quad \mbox{a.s., }
\end{equation}
where
${\cal X}_k:=\sup_{k\le s\le k+1}|{\chi_k}(\eta_{s})|^2$. The sequence
is stationary and ergodic, thanks to the results of Section \ref{sec4.1a}. 
The Ito formula \eqref{f1} applied to $\chi_k$ implies that
\begin{equation}
\label{050601}
\bbE {\cal X}_1<+\infty.
\end{equation}
We claim that
\begin{equation}
\label{020601aa}
\lim_{N\to+\infty}\frac{1}{N}\max_{1\le k\le N}{\cal X}_k=0, \quad \bbP\quad \mbox{a.s., }
\end{equation}
which in turn yields \eqref{020601a}.}

 {
Indeed,
note first that ${\cal X}_N/N\to0$, $\bbP$ a.s. Indeed, by the stationarity
and ergodicity of the sequence $({\cal X}_N)_{N\ge1}$ and the  Birkhoff
individual ergodic theorem we have
$$
\frac{{\cal X}_N}{N} =\frac{1}{N}\sum_{k=1}^N{\cal X}_k-\frac{N-1}{N}\left(\frac{1}{N-1}\sum_{k=1}^{N-1}{\cal X}_k\right)\to0, \quad \bbP\quad \mbox{a.s. }
$$
Then
\begin{equation}
\label{020601aaa}
\lim_{N\to+\infty}\frac{1}{N}\max_{1\le k\le N}{\cal
  X}_k=\lim_{N\to+\infty}\max_{1\le k\le N}\left[\frac{k}{N}\cdot \frac{{\cal X}_k}{k}\right] =0, \quad \bbP\quad \mbox{a.s. }
\end{equation}}
\qed


\subsubsection*{Decorrelation properties for separated trajectories}

Next, we show that if trajectories are ``slightly separated'' then they have a small co-variation in a certain sense.
We assume without loss of generality that 
\[
y=0, \   \  t=0,
\] 
and set 
\[
M_{\eps}^x(s)=(M_{\eps,1}^x(s),\ldots,M_{\eps,d}^x(s)),~~M_{\eps,k}^x(s):=M_{\eps,k}^x(0,s).
\] 
Let ${\cal Q}_{x}^\eps$ be the joint law of   
$(X_\eps^{0,x}(s),X_\eps(s))_{s\ge 0}$ over
${\cal C}_{2d}:=C([0,+\infty);\bbR^{2d})$, where $X_\eps(s)=X_\eps^{0,0}(s)$. 
We know that each of the components $X_\eps^{0,x}(s) $ and $X_\eps(s)$
converges to a Brownian motion, so that the  marginals of   ${\cal Q}_{x}^\eps$  form a tight
family of measures on ${\cal C}_d$, thus ${\cal Q}_{x}^\eps$ is also a
tight family.  {In
light of~\eqref{022704} and Proposition \ref{prop010601},  the family $\tilde {\cal Q}_{x}^\eps$ of the laws of
$\left(M_\eps^{x}(s),M_\eps(s)\right)_{s\ge 0}$ are also tight, as~$\eps\downarrow 0$, and the families ${\cal
  Q}_{x}^\eps$ and $\tilde{\cal
  Q}_{x}^\eps$  have the same limiting points as $\eps\downarrow 0$, so that we can focus on  $\tilde{\cal
  Q}_{x}^\eps$.}

The processes $\hat B_{j,\eps}^{t,x}(s)$ are square integrable, continuous trajectory
martingales. Thanks to the expressions 
\[
c_{i,j}= \langle A{\mathfrak e}_i,{\mathfrak e}_{j}\rangle_{H},\quad v_{n,m}(x)=\langle A {\mathfrak e}_n^x,{\mathfrak e}_m\rangle_H,
\]
as well as stationarity in space,  their co-variations are
\begin{align}
\label{hat-B1}
\langle \hat B_{j_1,\eps}^{t,x},\hat B_{j_2,\eps}^{t,y}\rangle_s &=
\sum_{k,m=1}^{+\infty}c_{k,m}\int_t^{s}u_{j_1,k}\Big(\frac{X_\eps^{t,x}(\sigma)}{\eps}\Big)u_{j_2,m}
\Big(\frac{X_\eps^{t,y}(\sigma)}{\eps}\Big)d\si\nonumber\\
&
=\int_t^s v_{j_1,j_2}\Big(\frac{X_\eps^{t,x}(\sigma)-X_\eps^{t,y}(\sigma)}{\eps}\Big)d\si,\quad s\ge t,\,x,y\in\bbR^d,
\end{align}
so we have
\begin{align*}
&
\langle M_{\eps,k}^x, M_{\eps,\ell}^0\rangle_s
=\sum_{j,m=1}^{+\infty}\int_0^{s}
 v_{j,m}\Big( \frac{X_\eps^{0,x}(\sigma)-X_\eps(\sigma)}{\eps}\Big) {\cal D}_j\chi_k (\eta_{\eps,\sigma}^{0,x})
  {\cal D}_{m}\chi_\ell
(\eta_{\eps,\sigma})d\sigma
 =\int_0^{s}m_{k,\ell}^{\eps,x,0}(\si)d\si,
 \end{align*}
 with 
 \begin{align}
 \label{042705a}
m_{k,\ell}^{\eps,x,y}(\si):=\left\langle A{\cal D}\chi_k (\eta_{\eps,\sigma}^{0,x}),
  \tau_{[X^{0,x}_\eps(\sigma)-X^{0,y}_\eps(\sigma)]/\eps}{\cal D}\chi_\ell
 \left(\eta_{\eps,\sigma}^{0,y}\right)\right\rangle_{H} ,\,\,k,\ell=1,\ldots,d.
\end{align}

We now perform a finite-dimensional approximation: given $N\in{\mathbb N}$, let
$
{\cal D}\chi_k^N=\sum_{j=1}^N{\cal D}_j\chi_k{\mathfrak e}_j.
$
As 
$$
\sum_{k=1}^d\int_{\cal E}\|{\cal D}\chi_k\|_H^2d\pi<+\infty,
$$
we have
\begin{equation}
\label{012905}
\lim_{N\to+\infty}\sum_{k=1}^d\int_{\cal E}\|{\cal D}\chi_k-{\cal D}\chi_k^N\|_H^2d\pi
=\lim_{N\to+\infty}\sum_{k=1}^d\sum_{j=N+1}^{+\infty}\int_{\cal E}|{\cal D}_j\chi_k|^2d\pi=0.
\end{equation}
Define $m_{k,\ell,N}^{\eps,x,y}$ by \eqref{042705a}, with ${\cal D}\chi_k$, ${\cal D}\chi_\ell$ 
replaced by ${\cal D}\chi_k^N$, ${\cal D}\chi_\ell^N$ correspondingly. Recall that
\[
\eta^{t,x}_s=\tau_{X^{t,x}(s)}V_s, \quad \eta^{t,x}_{\eps,s}=\tau_{X^{t,x}_\eps(s)/\eps}V_{s/\eps^2},
\]
so we have 
\[
\eta^{t,x}_{\eps,s}=\eta^{t/\eps^2,x/\eps}_{s/\eps^2}.
\]
The following approximation property holds.
\begin{lemma}
\label{lm012105}
For any $(s,x)\in[0,+\infty)\times \bbR^d$, we have
\begin{eqnarray}
\label{022705}
\lim_{N\to+\infty}\sup_{\eps\in(0,1]}\bbE\left|m_{k,\ell}^{\eps,x,0}(s)-m_{k,\ell,N}^{\eps,x,0}(s)\right|=0,\,\,k,\ell=1,\ldots,d.
\end{eqnarray}
\end{lemma}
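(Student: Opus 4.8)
The plan is to reduce the quantity $m_{k,\ell}^{\eps,x,0}(s)-m_{k,\ell,N}^{\eps,x,0}(s)$ to an inner product in $H$ involving the tail of the Malliavin derivatives, and then use the uniform bound $\|A\|\le A_*$ on $H$ together with the $L^2(\pi)$-stationarity of the environment process to make the estimate $\eps$-uniform. Writing $g_k^{\eps,x}(\si):=\tau_{-X_\eps^{0,x}(\si)/\eps}{\cal D}\chi_k(\eta_{\eps,\si}^{0,x})$ and $g_{k,N}^{\eps,x}$ for the analogous object built from ${\cal D}\chi_k^N$, one has
\[
m_{k,\ell}^{\eps,x,0}(\si)-m_{k,\ell,N}^{\eps,x,0}(\si)=\langle A g_k^{\eps,x},g_\ell^{\eps,0}\rangle_H-\langle A g_{k,N}^{\eps,x},g_{\ell,N}^{\eps,0}\rangle_H,
\]
and by a standard telescoping (add and subtract $\langle A g_{k,N}^{\eps,x},g_\ell^{\eps,0}\rangle_H$) this is bounded in absolute value, using $|\langle A\varphi,\psi\rangle_H|\le A_*\|\varphi\|_H\|\psi\|_H$, by
\[
A_*\big(\|g_k^{\eps,x}-g_{k,N}^{\eps,x}\|_H\,\|g_\ell^{\eps,0}\|_H+\|g_{k,N}^{\eps,x}\|_H\,\|g_\ell^{\eps,0}-g_{\ell,N}^{\eps,0}\|_H\big).
\]

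Next I would take expectations and apply Cauchy--Schwarz in $L^2(\Omega)$, so that the right-hand side is controlled by cross terms of the form $\big(\bbE\|g_k^{\eps,x}-g_{k,N}^{\eps,x}\|_H^2\big)^{1/2}\big(\bbE\|g_\ell^{\eps,0}\|_H^2\big)^{1/2}$ and similarly with $N$ on the other factor. Since $\tau_{-X_\eps^{0,x}(\si)/\eps}$ is an isometry on $H$, we have $\|g_k^{\eps,x}-g_{k,N}^{\eps,x}\|_H=\|{\cal D}\chi_k(\eta_{\eps,\si}^{0,x})-{\cal D}\chi_k^N(\eta_{\eps,\si}^{0,x})\|_H$ and $\|g_\ell^{\eps,0}\|_H=\|{\cal D}\chi_\ell(\eta_{\eps,\si})\|_H$. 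Now the crucial point: by the stationarity of the environment process (Proposition~\ref{prop012908}), $\eta_{\eps,\si}^{0,x}$ and $\eta_{\eps,\si}$ have law $\pi$ for every $\si$ and every $\eps$, so
\[
\bbE\|{\cal D}\chi_k(\eta_{\eps,\si}^{0,x})-{\cal D}\chi_k^N(\eta_{\eps,\si}^{0,x})\|_H^2=\int_{\cal E}\|{\cal D}\chi_k-{\cal D}\chi_k^N\|_H^2\,d\pi,\qquad \bbE\|{\cal D}\chi_\ell(\eta_{\eps,\si})\|_H^2=\int_{\cal E}\|{\cal D}\chi_\ell\|_H^2\,d\pi,
\]
both independent of $\eps$ and of $\si$. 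Corollary~\ref{cor013101} (or the summability $\sum_k\int_{\cal E}\|{\cal D}\chi_k\|_H^2d\pi<\infty$ noted just before the lemma) gives that the second integral is finite, and \eqref{012905} gives that the first one tends to $0$ as $N\to\infty$. Hence the whole bound is $\le C\,\big(\sum_k\int_{\cal E}\|{\cal D}\chi_k-{\cal D}\chi_k^N\|_H^2d\pi\big)^{1/2}$ with $C$ independent of $\eps$, and taking the supremum over $\eps\in(0,1]$ and then letting $N\to\infty$ yields \eqref{022705}.

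The only mild subtlety — and the step I would be most careful about — is justifying the exchange of expectation and $H$-inner product and the identification $\bbE\|{\cal D}\chi_k(\eta_{\eps,\si})\|_H^2=\int_{\cal E}\|{\cal D}\chi_k\|_H^2d\pi$; this is where one needs that $\eta_{\eps,\si}$ is genuinely $\pi$-distributed (not merely a.s.\ valued in ${\cal E}$) and that ${\cal D}\chi_k\in L^2(\pi;H)$, both of which are available from Section~\ref{s.corrector}. Everything else is Cauchy--Schwarz and the operator bound $\|A\|_{H\to H}\le A_*$. No decorrelation input (Proposition~\ref{prop012705}) is needed here; that enters later when one computes the actual limit of the co-variations, not in this uniform-in-$\eps$ finite-rank approximation.
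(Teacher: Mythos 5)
Your argument is correct and is essentially the paper's own proof: both decompose the difference by adding and subtracting a mixed term, apply Cauchy--Schwarz (with the operator bound on $A$ absorbed into a constant), and then use the unitarity of the shift together with the $\pi$-stationarity of the environment process to reduce everything to the $\eps$-independent integrals $\int_{\cal E}\|{\cal D}\chi_k-{\cal D}\chi_k^N\|_H^2\,d\pi$ and $\int_{\cal E}\|{\cal D}\chi_\ell\|_H^2\,d\pi$, concluding via \eqref{012905} and Corollary~\ref{cor013101}. Your closing remarks about where stationarity is genuinely needed and why Proposition~\ref{prop012705} is not used here are also accurate.
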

\proof 
The expression under the limit in \eqref{022705} can be estimated using the Cauchy-Schwarz inequality as 
\begin{align*}
&C\Big\{\bbE\Big\|\left({\cal D}\chi_k-{\cal D}\chi_k^N\right) \big(\eta^{0,x/\eps}_{s/\eps^2}\big)
\Big\|_H^2\Big\}^{1/2}\Big\{\bbE\Big\|
 \tau_{X^{0,x/\eps}(s/\eps^2)-X(s/\eps^2)} {\cal D}\chi_\ell(\eta_{s/\eps^2})\Big\|_H^2\Big\}^{1/2}\\
 &
 +C\Big\{\bbE\Big\| \tau_{X^{0,x/\eps}(s/\eps^2)-X(s/\eps^2)} {\cal D}\chi_k^N(\eta_{s/\eps^2})\Big\|_H^2\Big\}^{1/2}
 \Big\{\bbE\Big\| \left({\cal D}\chi_\ell-{\cal D}\chi_\ell^N\right)(\eta^{0,x/\eps}_{s/\eps^2})\Big\|_H^2\Big\}^{1/2},
\end{align*}
with a constant $C>0$, independent of $\eps>0$ and $N$.
The group $\tau_x$ is unitary on $H$ and the processes~$\eta^{0,x/\eps}_{t/\eps^2} $ are stationary in $t$ for
each $x$ fixed. Therefore, the above expression equals
\begin{align*}
&C\Big\{\!\int_{\cal E}\big\| {\cal D}\chi_k-{\cal D}\chi_k^N
\big\|_H^2d\pi\Big\}^{1/2}\Big\{\!\int_{\cal E}\left\|
 {\cal D}\chi_\ell\right\|_H^2d\pi\Big\}^{1/2}\!\!\!
 +C\Big\{\!\int_{\cal E}\left\|
 {\cal D}\chi_k^N\right\|_H^2d\pi\Big\}^{1/2}\Big\{\!\int_{\cal E}\big\| {\cal D}\chi_\ell-{\cal D}\chi_\ell^N
\big\|_H^2d\pi\Big\}^{1/2}.
\end{align*}
The claim of the lemma can be now concluded directly from \eqref{012905} and Corollary~\ref{cor013101}.
\qed
%
%
%
\bigskip

The next lemma shows that if the trajectories are sufficiently far apart, their co-variation is small. For any measurable set $A\subset \Omega$ and random variable $X$, we write $\E[X, A]=\E[X 1_A]$.
\begin{lemma}
\label{lm022105}
For any $\ga\in(0,1)$, $x\not=0$ and  $s,N>0$ we have
\begin{eqnarray}
\label{022705a}
\lim_{\eps\to0}
\bbE\left[\left|m_{k,\ell,N}^{\eps,x,0}(s)\right|, \,|X^{0,x}_\eps(s)-X_\eps(s)|\ge \eps^{\ga}\right]=0,\,\,k,\ell=1,\ldots,d.
\end{eqnarray}
\end{lemma}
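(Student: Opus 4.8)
The plan is to use the truncation index $N$ to rewrite $m_{k,\ell,N}^{\eps,x,0}(s)$ -- defined via \eqref{042705a} with ${\cal D}\chi_k,{\cal D}\chi_\ell$ replaced by ${\cal D}\chi_k^N,{\cal D}\chi_\ell^N$ -- as a \emph{finite} double sum whose coefficients are exactly the functions $v_{j_1j_2}$ of Proposition~\ref{prop012705}, evaluated at the rescaled separation of the two characteristics. The separation hypothesis then drives those arguments to infinity, so the coefficients become uniformly small, while the remaining random factors stay bounded in $L^2(\pi)$ uniformly in $\eps$.

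First I would expand. With $z_\eps(s):=\eps^{-1}\big(X_\eps^{0,x}(s)-X_\eps(s)\big)$ and ${\cal D}\chi_k^N=\sum_{j_1=1}^N{\cal D}_{j_1}\chi_k\,{\mathfrak e}_{j_1}$ (similarly for ${\cal D}\chi_\ell^N$), bilinearity of $\langle A\,\cdot\,,\cdot\,\rangle_H$ gives
\[
m_{k,\ell,N}^{\eps,x,0}(s)=\sum_{j_1,j_2=1}^N{\cal D}_{j_1}\chi_k(\eta_{\eps,s}^{0,x})\,{\cal D}_{j_2}\chi_\ell(\eta_{\eps,s}^{0,0})\,\big\langle A{\mathfrak e}_{j_1},\tau_{z_\eps(s)}{\mathfrak e}_{j_2}\big\rangle_H .
\]
Since $\tau_z$ is unitary on $H$ with $\tau_z^{\ast}=\tau_{-z}$ and commutes with the Fourier multiplier $A$, and ${\mathfrak e}_{j_1}^{\,z}=\tau_{-z}{\mathfrak e}_{j_1}$, the matrix element equals $\langle A{\mathfrak e}_{j_1}^{\,z_\eps(s)},{\mathfrak e}_{j_2}\rangle_H=v_{j_1j_2}(z_\eps(s))$ by \eqref{vj}.

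Next I would exploit the separation. On $\{|X_\eps^{0,x}(s)-X_\eps(s)|\ge\eps^{\ga}\}$ we have $|z_\eps(s)|\ge\eps^{\ga-1}$, and $\eps^{\ga-1}\to+\infty$ as $\eps\downarrow0$ since $\ga<1$. Setting $\rho_{j_1j_2}(r):=\sup_{|w|\ge r}|v_{j_1j_2}(w)|$, Proposition~\ref{prop012705} yields $\rho_{j_1j_2}(r)\to0$ as $r\to+\infty$, so on that event $|v_{j_1j_2}(z_\eps(s))|\le\rho_{j_1j_2}(\eps^{\ga-1})$. Hence
\[
\bbE\Big[\big|m_{k,\ell,N}^{\eps,x,0}(s)\big|,\;|X_\eps^{0,x}(s)-X_\eps(s)|\ge\eps^{\ga}\Big]\le\Big(\max_{1\le j_1,j_2\le N}\rho_{j_1j_2}(\eps^{\ga-1})\Big)\sum_{j_1,j_2=1}^N\bbE\big[\,|{\cal D}_{j_1}\chi_k(\eta_{\eps,s}^{0,x})|\,|{\cal D}_{j_2}\chi_\ell(\eta_{\eps,s}^{0,0})|\,\big].
\]
By Cauchy--Schwarz and Proposition~\ref{prop012908} (both $\eta_{\eps,s}^{0,x}$ and $\eta_{\eps,s}^{0,0}$ have law $\pi$), the $\eps$-independent sum on the right is at most $\big(\sum_{j_1=1}^N\|{\cal D}_{j_1}\chi_k\|_{L^2(\pi)}\big)\big(\sum_{j_2=1}^N\|{\cal D}_{j_2}\chi_\ell\|_{L^2(\pi)}\big)<+\infty$, finiteness holding because $\chi_k,\chi_\ell\in{\mathfrak H}_\infty\subset{\mathfrak H}_1$ (use \eqref{021610}). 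As $N$ is fixed, the prefactor is a finite maximum of null sequences, so it tends to $0$ as $\eps\downarrow0$, which proves \eqref{022705a}.

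I do not expect a real obstacle; the content is structural. The essential point is that the truncation to finitely many modes must be done \emph{before} invoking the decay, because Proposition~\ref{prop012705} gives only mode-by-mode decay of the $v_{j_1j_2}$, with no uniformity in $(j_1,j_2)$ -- this is precisely the role of the approximation Lemma~\ref{lm012105} -- and the separation exponent $\ga<1$ is what makes $\eps^{\ga-1}\to+\infty$. The only mildly delicate steps are the unitarity/commutation identities on $H$ that identify $\langle A{\mathfrak e}_{j_1},\tau_z{\mathfrak e}_{j_2}\rangle_H$ with $v_{j_1j_2}(z)$, and the $\eps$-uniform $L^2(\pi)$ control of the ${\cal D}_j\chi$ factors, which is immediate from the stationarity of the environment process.
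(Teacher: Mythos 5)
Your proof is correct and follows essentially the same route as the paper: expand $m_{k,\ell,N}^{\eps,x,0}(s)$ as the finite double sum \eqref{032905} with coefficients $v_{j_1j_2}$ evaluated at the rescaled separation, bound those coefficients on the event $\{|X^{0,x}_\eps(s)-X_\eps(s)|\ge\eps^{\ga}\}$ by $\sup_{|w|\ge\eps^{\ga-1}}|v_{j_1j_2}(w)|$, control the remaining factors by Cauchy--Schwarz and stationarity of the environment process, and conclude from Proposition~\ref{prop012705} since $N$ is finite. Your additional verification of the identity $\langle A{\mathfrak e}_{j_1},\tau_z{\mathfrak e}_{j_2}\rangle_H=v_{j_1j_2}(z)$ is a detail the paper leaves implicit, but the argument is the same.
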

\proof  We write (\ref{042705a}) as
\begin{align}
 \label{032905}
m_{k,\ell,N}^{\eps,x,0}(s)=\sum_{j,p=1}^N{\cal D}_j\chi_k \big(\eta^{0,x/\eps}_{s/\eps^2}\big) 
{\cal D}_{p}\chi_\ell \big(\eta_{s/\eps^2}\big)v_{j,p} \Big(\frac{X^{0,x}_\eps(s)-X_\eps(s)}{\eps}\Big),
\end{align}
and estimate
\begin{eqnarray*}
&&
\bbE\big[|m_{k,\ell}^{\eps,x,N}(s)|,
  \,|X^{0,x}_\eps(s)-X_\eps(s)|\ge \eps^{\ga}\big]
\le
\sum_{j,p=1}^N \sup_{|x|\ge \eps^{\ga-1}}|v_{j,p}(x)|
\bbE\big|{\cal D}_j\chi_k (\eta^{0,x/\eps}_{s/\eps^2}) {\cal D}_{p}\chi_\ell
(\eta_{s/\eps^2})\big|\\
&&
\le
\sum_{j,p=1}^N \sup_{|x|\ge \eps^{\ga-1}}|v_{j,p}(x)|
\|{\cal D}_j\chi_k\|_{L^2(\pi)}\|{\cal D}_{p}\chi_\ell\|_{L^2(\pi)}.\nonumber
\end{eqnarray*}
Now, the conclusion of the lemma follows from Proposition \ref{prop012705} since $N$ is finite.
\qed

\subsubsection*{The concatenated process} 
 
Let ${\cal M}_s$ be the natural filtration corresponding
to the canonical process $(X(s),Y(s))_{s\ge0}$ on ${\cal C}_{2d}$, 
and~${\cal M}$ be the smallest $\si$-algebra  generated by all ${\cal M}_s$, $s\ge0$. 
Fix $\ga\in(0,1)$, and for any
 $(X,Y)\in {\cal C}_{2d}$ and $\eps\ge0$ let
$$
T_\eps(X,Y):=\inf\left\{s\ge0:\,|X(s)-Y(s)|\le \eps^{\ga}\right\}.
$$
We adopt the convention that the infimum of an empty set equals $+\infty$. Let us modify the processes~$M_\eps^x(s)$ 
and~$M_\eps(s)$ as follows:
$$
\tilde M^x_\eps(s):=\left\{
\begin{array}{ll}
M_\eps^{x}(s),& 0\le s\le  \tilde T_\eps ,
\\
M_\eps^{x}(\tilde T_\eps)+\beta_{s-\tilde T_\eps},&  \tilde T_\eps\le s,
\end{array}
\right.
$$ 
$$
\tilde M_\eps(s):=\left\{
\begin{array}{ll}
M_\eps(s),& 0\le s\le  \tilde T_\eps ,
\\
M_\eps(\tilde T_\eps)+\tilde \beta_{s-\tilde T_\eps},&  \tilde T_\eps\le s.
\end{array}
\right.
$$ 
Here $\tilde T_\eps :=T_\eps( X^x_\eps , X_\eps )$, and  $\beta_s$ and $\tilde \beta_s$  are two copies of the Brownian
motion  with the covariance given by \eqref{cov-B} that are independent
of each other  and of $(X_\eps^{x}(s),X_\eps(s))_{s\ge 0}$. 
We denote by
$\hat{\cal Q}^{\eps}_{x}$
the law of $(\tilde M^x_\eps(s), \tilde M_\eps(s))_{s\ge
  0}$ on $({\cal C}_{2d},{\cal M})$, and   the law of $(x+\beta_s,y+\tilde \beta_s)$ by $\mathfrak Q_{x,y}$.
The following proposition  shows
that the law $\hat{\cal Q}^{\eps}_{x}$  becomes close to  $\mathfrak
Q_{x,0}$,  as $\eps\to 0$.  To abbreviate the notation, we set
\[
N_t(G):=G(X(t),Y(t))- G(X(0),Y(0))-
\int\limits_0^t(A_x+A_y)G(X(\varrho),Y(\varrho)))\,d\varrho
\]
for any $G\in C^{2}_0(\bbR^{2d})$ and $t\ge0$. Here $A_x$, $A_y$ denote the
differential operators of the form 
\begin{equation}
\label{033005}
AG(x):=\frac{1}{2}\sum_{k,\ell=1}^{d}a_{k\ell}\partial^2_{x_k,x_\ell}G(x),
\end{equation}
acting on the $x$ and
$y$ variables respectively.
\begin{proposition}
\label{prop706041}
For any $x\in\bbR^d$, the family of laws $(\hat{\cal Q}^{\eps}_{x})_{\eps\in(0,1]}$ is tight. 
Suppose, in addition, that~$x\not=0$, $\zeta\in
C_b((\bbR^{2d})^{n})$, and
$0\leq t_1<\cdots< t_n\le t<v\le T$.
Then, we have
\begin{equation}
\label{73101}
\lim_{\eps\to0}E^{\eps}_{x}\left\{\left[N_v(G)-N_t(G)\right] \tilde\zeta\right\}=0
\end{equation}
for any $G\in C^{2}_0(\bbR^{2d})$.
Here $E^{\eps}_{x}$ denotes the expectation with respect to $\hat{\cal Q}^{\eps}_{x}$, and
$$
\tilde\zeta(X,Y):=\zeta(X(t_1),Y(t_1),\ldots,
X(t_n),Y(t_n)), \quad (X,Y)\in{\cal C}_{2d}.
$$
\end{proposition}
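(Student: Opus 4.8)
The plan is to establish tightness of $(\hat{\cal Q}^{\eps}_x)$ first, and then to identify the limit points via the martingale problem encoded in \eqref{73101}. For tightness, I would recall from \eqref{022704} and Proposition \ref{prop010601} that $X^{0,x}_\eps = x + M^x_\eps + o(1)$ in probability uniformly on compacts, so the laws of $(M^x_\eps, M_\eps)$ are tight; the modification to $(\tilde M^x_\eps, \tilde M_\eps)$ past the stopping time $\tilde T_\eps$ only grafts on genuine Brownian increments, which does not destroy tightness (one checks the Aldous-type modulus criterion separately on $\{s\le \tilde T_\eps\}$ and $\{s>\tilde T_\eps\}$; the grafting point is a stopping time so the concatenation is still adapted and has the right modulus). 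This part is routine once the decomposition \eqref{022704} is in hand.

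The heart of the matter is \eqref{73101}. I would apply the It\^o formula \eqref{f1} to $G$ viewed along the two environment processes $\eta^{0,x}_{\eps,\cdot}$ and $\eta_{\eps,\cdot}$, or more directly write, for each component, $dM^x_{\eps,k}$ and $dM_{\eps,\ell}$ as the continuous martingales from \eqref{022704}, with quadratic covariations computed in \eqref{hat-B1}. Then $N_v(G)-N_t(G)$, for the \emph{unmodified} process, equals a stochastic integral against these martingales plus the discrepancy between the true quadratic (co)variation densities $m^{\eps,x,0}_{k,\ell}(\sigma)$ and the constant effective coefficients $a_{k\ell}$. Testing against the bounded $\mathcal M_t$-measurable functional $\tilde\zeta$ kills the martingale part, so it remains to show
\begin{equation}
\label{e.plan1}
\lim_{\eps\to 0} E^\eps_x\Big[\Big(\int_t^v \big[m^{\eps,x,0}_{k,\ell}(\sigma) - a_{k\ell}\mathbf 1_{\{k=\ell\}}\,\text{(diag. part)}\big]\,d\sigma\Big)\tilde\zeta\Big]=0,
\end{equation}
together with the analogous statements for the pure $A_x$ and $A_y$ pieces (for those, the single-trajectory environment is stationary ergodic and the density $m^{\eps,x,x}_{k,\ell}(\sigma)$ time-averages to $a_{k\ell}$ by Proposition \ref{prop012908}, \eqref{sg1}, and the ergodic theorem — this is the classical one-particle homogenization). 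The genuinely new ingredient is the \emph{cross} term: I would split $[t,v]$ according to whether $|X^{0,x}_\eps(\sigma)-X_\eps(\sigma)|\ge \eps^\gamma$ or not. On the separated part, Lemma \ref{lm022105} (after the finite-$N$ truncation justified by Lemma \ref{lm012105}) forces $m^{\eps,x,0}_{k,\ell,N}\to 0$; on the complementary part, one uses that $\tilde T_\eps<\infty$ there and, after $\tilde T_\eps$, the modified process uses \emph{independent} Brownian motions, so the cross-covariation of $\tilde M^x_\eps$ and $\tilde M_\eps$ is exactly zero past $\tilde T_\eps$ — which is precisely why the modification was introduced. So on $\{s\le \tilde T_\eps\}$ the trajectories are $\ge\eps^\gamma$ apart and Lemma \ref{lm022105} applies, while on $\{s>\tilde T_\eps\}$ the cross term vanishes identically by construction.

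The diagonal blocks require one more observation: after $\tilde T_\eps$ the modified components carry genuine Brownian quadratic variation $a_{k\ell}\,ds$, matching $A_x$, $A_y$ exactly; before $\tilde T_\eps$ one invokes the single-particle ergodic averaging as above, uniformly controlling the error by the mixing bound \eqref{sg1a}--\eqref{sg1b} and the square-integrability in Corollary \ref{cor013101}. Assembling: write $N_v(G)-N_t(G)$ for the modified process as (martingale, dies under $E^\eps_x[\cdot\,\tilde\zeta]$) $+$ (diagonal error $\to 0$ by ergodicity $+$ exact match past $\tilde T_\eps$) $+$ (cross error $\to 0$ by Lemmas \ref{lm012105}, \ref{lm022105} before $\tilde T_\eps$ and $\equiv 0$ after). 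I expect the main obstacle to be the bookkeeping around the stopping time $\tilde T_\eps$: one must verify that $\tilde\zeta$, being $\mathcal M_t$-measurable with $t_n\le t<v$, still annihilates the martingale increment on $[t,v]$ even though the integrand has been modified at the random time $\tilde T_\eps$ (which may lie inside $[t,v]$) — this works because the grafted Brownian motions are independent of everything up to $\tilde T_\eps$ and the optional stopping / tower property still give a martingale increment orthogonal to $\mathcal M_t$. The remaining estimates are uniform-integrability arguments powered by the $L^p$-contractivity \eqref{sg1a}, \eqref{sg1b} and the bounds of Theorem \ref{thm013101}.
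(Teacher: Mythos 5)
Your plan reproduces the paper's proof almost step for step: tightness inherited from $\tilde{\cal Q}^{\eps}_{x}$; the martingale problem formulation with the modified quadratic-variation densities (which the paper calls $\hat m^{\eps,x}_{k,\ell}$ and $\tilde m^{\eps}_{k,\ell}$, set to $a_{k\ell}$ resp.\ $0$ past $\tilde T_\eps$); the martingale increment annihilated by the ${\cal M}_t$-measurable $\tilde\zeta$; the cross term killed before $\tilde T_\eps$ by Lemmas \ref{lm012105} and \ref{lm022105} and identically zero afterwards; and the diagonal blocks reduced to an averaging statement. This is exactly the decomposition \eqref{013005} in the paper.

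The one place where your sketch is thinner than the argument actually needs is the diagonal averaging ``before $\tilde T_\eps$''. A bare appeal to the ergodic theorem does not suffice there, for two reasons: the centered density $m^{\eps,x,x}_{k,\ell}(\si)-a_{k\ell}$ is integrated against the \emph{adapted, time-$\si$-dependent} weight $\partial^2_{x_k,x_\ell}G(\tilde M^x_\eps(\si),\tilde M_\eps(\si))\,\tilde\zeta_\eps$, and the integral is truncated at the random time $\tilde T_\eps$. The paper handles both by first using tightness to freeze the weight at the left endpoints of a fine partition $\{s_j\}$ of $[t,v]$ (and to truncate to finitely many chaoses), and then solving the auxiliary cell problem $-{\cal L}\theta^{(N_2)}_{k,\ell}=w^{(N_2)}_{k,\ell}$ and applying the It\^o formula \eqref{f1} to $\theta^{(N_2)}_{k,\ell}$ on $[s_{j-1}\wedge\tilde T_\eps,\,s_j\wedge\tilde T_\eps]$; optional stopping makes the stochastic-integral term a true martingale and the drift term is $O(\eps^2)$. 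Your ``mixing bound \eqref{sg1a}--\eqref{sg1b}'' is the right underlying mechanism (it is what makes the corrector exist and be bounded), but the discretization-plus-corrector (or an equivalent quantitative conditioning that respects the stopping time) is the step that turns it into a proof; you should make that explicit.
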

\proof
Tightness  is a direct consequence of the tightness of
$\tilde{{\cal Q}}^{\eps}_{x}$, $\eps\in(0,1]$, so we only need to show~(\ref{73101}). 
Denote
$$
\hat  m_{k,\ell}^{\eps,x}(s):=\left\{
\begin{array}{ll}
 m_{k,\ell}^{\eps,x,x}(s),&s\le \tilde T_\eps,\\
&\\
a_{k\ell},& \tilde T_\eps<s,
\end{array}
\right.~~~~~
\tilde  m_{k,\ell}^{\eps}(s):=\left\{
\begin{array}{ll}
 m_{k,\ell}^{\eps,x,0}(s),&s\le \tilde T_\eps,\\
&\\
0,& \tilde T_\eps<s,
\end{array}
\right.
$$
where $m_{k,\ell}^{\eps,x,y}$ were defined in \eqref{042705a}. Using the It\^o formula, we conclude that
 {\[
{\cal N}_t^\eps(G):=G(\tilde M_\eps^{x}(t),\tilde M_\eps(t))- G(\tilde{M}_\eps^{x}(0),\tilde{M}_\eps(0))-\int_0^t(A_\eps^{x}(s)G+A_\eps^{x,0}(s)G+A_\eps^{0}(s)G)(\tilde M_\eps^{x}(s),\tilde M_\eps(s))ds
\]}
is a martingale, where
$$
A_\eps^{x}(s)G(x,y):=\frac{1}{2}\sum_{k,\ell=1}^{d}\hat m_{k,\ell}^{\eps,x}(s)\partial^2_{x_k,x_\ell}G(x,y),
$$
$$
A_\eps^{0}(s)G(x,y):=\frac{1}{2}\sum_{k,\ell=1}^{d}\hat m_{k,\ell}^{\eps,0}(s)\partial^2_{y_k,y_\ell}G(x,y),
$$
and
$$
A_\eps^{x,0}(s)G(x,y):=\sum_{k,\ell=1}^{d}\tilde m_{k,\ell}^{\eps}(s)\partial^2_{x_k,y_\ell}G(x,y).
$$
Let
$$
\tilde\zeta_\ep:=\zeta(\tilde M_\eps^{x},\tilde M_\eps)\quad \mbox{and}\quad  
 \tilde\zeta_\eps'=\zeta(X_\eps^{0,x},X_\eps).
$$
Since
\begin{align*}
&\lim_{\eps\to0}E^{\eps}_{x}\left\{\left[N_v(G)-N_t(G)\right]
  \tilde\zeta\right\}\\
&
=
\lim_{\eps\to0}\left\{\vphantom{\int_0^1}\bbE \left\{\left[N_v(G; \tilde M_\eps^{x},\tilde M_\eps)-N_t(G; \tilde M_\eps^{x},\tilde M_\eps)\right]
  \tilde\zeta_\eps\right\}-\bbE\left\{[{\cal N}_v^\eps(G)-{\cal N}_t^\eps(G)]\tilde\zeta_\eps\right\}\right\},
\end{align*}
to prove \eqref{73101}, it suffices to show that
\begin{align}
\label{013005}
&
\lim_{\eps\to0}\bbE\left\{\left[\int_t^v\left(A_\eps^{x}(\si)-A_x\right)G(\tilde M_\eps^{x}(\si),\tilde M_\eps(\si))d\si\right]\tilde\zeta_\eps\right\}=0,\nonumber\\
&
\lim_{\eps\to0}\bbE\left\{\left[\int_t^v\left(A_\eps^{0}(\si)-A_y\right)G(\tilde M_\eps^{x}(\si),\tilde M_\eps(\si))d\si\right]\tilde\zeta_\eps\right\}=0,\\
&
\lim_{\eps\to0}\bbE\left\{\left[\int_t^vA_\eps^{x,0}(\si)G(\tilde M_\eps^{x}(\si),\tilde M_\eps(\si))d\si\right]\tilde\zeta_\eps\right\}=0.\nonumber
\end{align}
 {Choose an arbitrary $\delta>0$ and integers $N_1,N_2>0$, and divide the interval $[t,v]$ into
subintervals~$[s_{k-1},s_k]$, with $s_k:=t+k(v-t)/N_1$,
$k=0,\ldots,N_1$. As the laws of~$(\tilde M^{x}_\eps,\tilde M_\eps)$ are tight,
we can choose $N_1,N_2$ sufficiently large  so that the limit of the first
expression in \eqref{013005} differs only by $\delta$ from  
\begin{align}
\label{023005}
&
\lim_{\eps\to0}\sum_{j=1}^{N_1}\bbE\left\{\left[\int_{s_{j-1}\wedge \tilde T_\eps}^{s_j\wedge \tilde T_\ep}w_{k,\ell}^{(N_2)}
    (\eta^{s_{j-1},X^{0,x}_\eps(s_{j-1})/\eps}_{\eps,\si})\partial^2_{x_k,x_\ell}G(\tilde
    M_\eps^{x}(s_{j-1}),\tilde M_\eps(s_{j-1}))d\si\right]
    \tilde\zeta_\eps\right\},
\end{align}
where 
$$
w_{k,\ell}^{(N_2)}:=\langle A{\cal D}{\frak P}_{0,N_2} \chi_k,{\cal D}
{\frak P}_{0,N_2} \chi_\ell\rangle_H-\int_{\cal E}\langle A{\cal D}{\frak P}_{0,N_2} \chi_k,{\cal D} {\frak P}_{0,N_2} \chi_\ell\rangle_Hd\pi
$$
and ${\frak P}_{0,N_2}:=\sum_{n=0}^{N_2}{\frak p}_n$. Clearly $w_{k,\ell}^{(N_2)}\in{\frak H}_{\infty}$.
Let 
$\theta^{(N_2)}_{k,\ell}\in{\frak H}_{\infty}$ be the mean-zero solutions of 
\begin{equation}
\label{cor2}
- {\cal L}\theta^{(N_2)}_{k,\ell}=w_{k,\ell}^{(N_2)},\quad k,\ell=1,\ldots,d,
\end{equation}
that exist, thanks to Theorem \ref{prop013101}.
Using formula \eqref{f1} we get
\begin{align*}
&
d\theta^{(N_2)}_{k,\ell}(\eta_{\eps,\si}^{s,x})
 =-\frac{1}{\eps^2}
 {\cal
   L}\theta^{(N_2)}_{k,\ell}(\eta_{\eps,\si}^{s,x})
 d\sigma+
 \frac{1}{\eps}\Big\langle  \tau_{-X_\eps^{s,x}(\si)/\eps}{\cal D} \theta^{(N_2)}_{k,\ell}(\eta_{\eps,\si}^{s,x}),dB^\eps_\si\Big\rangle_H,
\end{align*}
with $B^\eps_\si:=\eps B_{\si/\eps^2}$. Substituting from  the above into \eqref{023005},  
we conclude  that  
$$
\bbE\left\{\left[\int_{s_{j-1}\wedge\tilde T_\eps}^{s_j \wedge\tilde T_\eps}{\cal
      L}
    \theta^{(N_2)}_{k,\ell}(\eta^{s_{j-1}/\eps^2,X^{0,x/\eps}(s_{j-1}/\eps^2)}_{\si/\eps^2})\partial^2_{x_k,x_\ell}G(\tilde
    M_\eps^{x}(s_{j-1}),\tilde M_\eps(s_{j-1}))d\si\right]\tilde\zeta_\eps \right\}=O(\eps^2).
$$
It follows that  
$$\lim_{\eps\to0}\sum_{j=1}^{N_1}\bbE\left\{\left[\int_{s_{j-1}\wedge
      \tilde T_\eps}^{s_j \wedge
      \tilde T_\eps}{\cal
      L}
    \theta^{(N_2)}_{k,\ell}(\eta^{s_{j-1}/\eps^2,X^{0,x/\eps}(s_{j-1}/\eps^2)}_{\si/\eps^2})\partial^2_{x_k,x_\ell}
    G(\tilde
    M_\eps^{x}(s_{j-1}),\tilde M_\eps(s_{j-1}))d\si\right]\tilde\zeta_\eps \right\}=0, 
$$ 
for any $N_1,N_2$ fixed, and the first equality in \eqref{013005} follows. The
second equality can be obtained in the same way. The third equality is
then a direct consequence of Lemma~\ref{lm012105} and  \ref{lm022105}.}
\qed

 \bigskip

It follows from Proposition \ref{prop706041} that  $\hat{\cal
    Q}^{\eps}_{x}$ converge weakly, as
$\eps\to 0$ to ${\mathfrak Q}_{x,0}$. 
We also have
\begin{equation}\label{nov2802}
\tilde{{\cal
    Q}}^{\eps}_{x}(A,T_{\eps}> T)=\hat{\cal
    Q}^{\eps}_{x}(A,T_{\eps}> T),\quad A\in{\cal
  M}_T, T>0,\,\eps\in(0,1].
\end{equation}
Therefore, for any $0<\eps<\eps'\le 1$ we have
\begin{equation}\label{nov2804}
 \tilde{{\cal
    Q}}^{\eps}_{x}(T_{\eps}\le T)=\hat{\cal
    Q}^{\eps}_{x}(T_{\eps}\le T)\le \hat{\cal
    Q}^{\eps}_{x}(T_{\eps'}\le T).
\end{equation}
Passing to the limit, as $\eps\to 0$, and using elementary properties of
weak convergence of probability measures, we see that 
\begin{equation}\label{nov2806}
\limsup_{\eps\to0}\hat{\cal
    Q}^{\eps}_{x}(T_{\eps'}\le T)\le {\mathfrak Q}_{x,0}(T_{\eps'}\le T),\quad \eps'\in(0,1].
\end{equation}
The last point is that, as $\beta_t$ and $\tilde\beta_t$ are two independent Brownian motions
with non-degenerate covariances,  and $d\ge 2$, we have
\begin{equation}
\label{010306b}
{\mathfrak Q}_{x,0}(T_0< T)=0,\quad \mbox{for any }T>0,\,x\not=0.
\end{equation}
The weak convergence of  $\hat{\cal Q}^{\eps}_{x}$ to ${\mathfrak Q}_{x,0}$ and
(\ref{nov2802})-(\ref{010306b}) imply
  the conclusion of Theorem~\ref{thm011105}.\qed

\subsection*{Proof of Corollary \ref{thm-c}}


It suffices to show that 
\begin{equation}
\label{020902}
\lim_{\eps\to0}\bbE \langle u_\eps(t),\varphi\rangle=\langle \bar
u(t),\varphi\rangle\quad\mbox{and}\quad  \lim_{\eps\to0}\bbE \langle u_\eps(t),\varphi\rangle^2=\langle \bar
u(t),\varphi\rangle^2.
\end{equation}
The first equality follows from the weak convergence of $u_\eps(t,x)=u_0\left(X^{t,x}_\eps(T)\right)$
to $u_0(x+\beta_{T-t})$. To prove the second equality, observe that
for any $\varphi\in C_0^\infty(\bbR^d)$
\begin{align*}
&\lim_{\eps\to0}\bbE \langle
u_\eps(t),\varphi\rangle^2=\int_{\bbR^{2d}}\left\{\lim_{\eps\to0}\bbE\left[u_0\left(X^{t,x}_\eps(T)\right)
  u_0\left(X^{t,y}_\eps(T)\right)\right]\right\}\varphi(x)\varphi(y)dx dy.
\end{align*}
Using Theorem \ref{thm011105} we conclude that the right side equals
$$
\int_{\bbR^{2d}}\bbE\left[u_0(x+\beta_{T-t})
  u_0(y+\tilde\beta_{T-t})\right]\varphi(x)\varphi(y)dx dy=\langle \bar
u(t),\varphi\rangle^2.
$$\qed


\section{Proofs of Theorem \ref{thm-semilin} and Corollary  \ref{thm-semilin-c}}

\label{sec9}

Let $\tilde f:D_T\times {\cal E}\to\bbR$ be given by
$$
\tilde f(t,x,u,w):=f(t,x,u,w)-\bar f(t,x,u),
$$
where $\bar f$ is defined by \eqref{021906},
and
$\Theta:D_T\times {\cal E}\to\bbR$ be the unique solution of 
\begin{equation}
\label{031906}
-{\cal L}\Theta(t,x,u,w)=\tilde f(t,x,u,w),\quad \int_{\cal E}\Theta(t,x,u,w)d\pi=0,\quad (t,x,u)\in D_T.
\end{equation}
We note that $(t,x,u)\in D_T$ is fixed in \eqref{031906}, and the ${\cal L}$ operator is acting only on the variable $w\in {\cal E}$. To simplify the notation, we will keep the dependence on $w$ implicit. By the It\^o formula (\ref{f1}), 
\begin{equation}
\label{012006}
\int_{s_1}^{s_2}\tilde
f(s,y,u,\eta^{t,x}_{\eps,\si})d\si=\eps^2\Delta_{s_1,s_2}^\eps\Theta(s,y,u)+\eps
\Delta_{s_1,s_2}^\eps M_\eps(s,y,u),
\end{equation}
where
$$
\Delta_{s_1,s_2}^\eps\Theta(s,y,u):=\Theta(s,y,u,\eta^{t,x}_{\eps,s_1})-\Theta(s,y,u,\eta^{t,x}_{\eps,s_2}),
$$
$$
\Delta_{s_1,s_2}^\eps M(s,y,u):=\int_{s_1}^{s_2}\langle
\tau_{-X^{t,x}_\eps(\si)}{\cal D} \Theta(s,y,u,\eta^{t,x}_{\eps,\si}),dB^\eps_{\si}\rangle_H,\quad
t\le s, s_1,s_2\le T,  \,s_1<s_2,
$$
and $B^\eps_t:=\eps B(t/\eps^2)$. To simplify the notation, we will omit the dependence of $\Theta$ on the $w$ variable. Given any $N>0$, we let 
\begin{equation}
\label{KM}
K_N:=[(y,u)\in \mathbb{R}^{d+1}:\,\max\{|y|,\,|u|\}\le N].
\end{equation}
%

\begin{lemma}
\label{lm022006}
For any $N>0$, $p\in(1,+\infty)$ and $t\le s_1<s_2\le T$, we have
\begin{equation}
\label{042006}
\sup_{s\in[t,T]}\Big\langle\sup\limits_{(y,u)\in K_N}\left|\Theta(s,y,u)\right|^p\Big\rangle_{\pi}<+\infty,
\end{equation}
and
\begin{equation}
\label{052006}
\limsup_{\eps\to 0}\sup_{s\in[t,T]}\Big\langle \sup\limits_{(y,u)\in K_N}|\Delta_{s_1,s_2}^\eps M(s,y,u)|^2\Big\rangle_\pi<+\infty.
\end{equation}
\end{lemma}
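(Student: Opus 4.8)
The plan is to reduce both estimates, via Sobolev embedding in the parameter variables $(y,u)$, to mean-square control of $(y,u)$-derivatives of $\Theta$ up to order $m$, and then to bound those using the resolvent representation of $\Theta$ together with the spectral-gap estimates \eqref{sg1}, \eqref{sg1b} and the Dirichlet-form identities \eqref{021610bis}, \eqref{f1x}. Fix a bounded smooth domain $\widetilde K_N\subset\bbR^{d+1}$ with $\overline{K_N}\subset\widetilde K_N$, say $\widetilde K_N:=\{(y,u):\max\{|y|,|u|\}<N+1\}$. Since $m>(d+1)/2$, the Sobolev embedding $H^m(\widetilde K_N)\hookrightarrow C(\overline{K_N})$ gives $\sup_{(y,u)\in K_N}|g(y,u)|\le C_N\|g\|_{H^m(\widetilde K_N)}$. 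By Theorem \ref{prop013101}, the solution of \eqref{031906} is $\Theta(s,y,u,\cdot)=\int_0^{+\infty}Q_\sigma\tilde f(s,y,u,\cdot)\,d\sigma$, with the integral converging in $L^2(\pi)$ by \eqref{sg1} because $\int_{\cal E}\tilde f(s,y,u,\cdot)\,d\pi=0$. Since $\tilde f=f-\bar f$ is $C^m$ in $(y,u)$ with, by \eqref{H1} and \eqref{021906}, $\|\partial^k_{y,u}\tilde f(s,y,u,\cdot)\|_{L^\infty(\pi)}\le 2\mathop{\rm esssup}_w\|f(\cdot,w)\|_{C^{0,m}(D_T)}=:2M_0$ for all $(s,y,u)$ and $|k|\le m$, one may differentiate under the $\sigma$-integral (dominated convergence in $L^2(\pi)$ using \eqref{sg1}) to get $\partial^k_{y,u}\Theta(s,y,u,\cdot)=\int_0^{+\infty}Q_\sigma\partial^k_{y,u}\tilde f(s,y,u,\cdot)\,d\sigma\in{\cal D}({\cal L})$, which by Theorem \ref{prop013101} is the unique zero-mean solution of $-{\cal L}[\,\cdot\,]=\partial^k_{y,u}\tilde f(s,y,u,\cdot)$. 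In particular $\Theta(s,\cdot,\cdot,w)\in H^m(\widetilde K_N)$ for $\pi$-a.e.\ $w$.

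For \eqref{042006} it suffices to treat $p\ge 2$, the case $p\in(1,2]$ following from $\langle\,\cdot\,\rangle_\pi\le\langle\,\cdot^{2/p}\,\rangle_\pi^{p/2}$ since $\pi$ is a probability measure. The $L^p$-decay estimate \eqref{sg1b} gives, uniformly in $(s,y,u)$ and $|k|\le m$,
\[
\|\partial^k_{y,u}\Theta(s,y,u,\cdot)\|_{L^p(\pi)}\le\int_0^{+\infty}e^{-2\alpha_*\sigma/p}\,d\sigma\;\|\partial^k_{y,u}\tilde f(s,y,u,\cdot)\|_{L^2(\pi)}\le\frac{p\,M_0}{\alpha_*}.
\]
Combining the Sobolev inequality with Minkowski's inequality in $L^{p/2}(\pi)$ to exchange the $\pi$-integral with the $(y,u)$-integral,
\[
\Big\langle\sup_{(y,u)\in K_N}|\Theta(s,y,u)|^p\Big\rangle_\pi\le C_N^p\Big(\sum_{|k|\le m}\int_{\widetilde K_N}\|\partial^k_{y,u}\Theta(s,y,u,\cdot)\|_{L^p(\pi)}^2\,dy\,du\Big)^{p/2}\le C_N^p\Big((m+1)^{d+1}|\widetilde K_N|\,\frac{p^2M_0^2}{\alpha_*^2}\Big)^{p/2},
\]
which is finite and independent of $s\in[t,T]$, proving \eqref{042006}.

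For \eqref{052006}, write $F^{(k)}_{s,y,u}:=\partial^k_{y,u}\Theta(s,y,u,\cdot)\in{\cal D}({\cal L})\subset{\cal D}({\cal E}_L)={\mathfrak H}_1$. Differentiating $\Delta^\eps_{s_1,s_2}M(s,y,u)$ in $(y,u)$ under the It\^o integral — justified by a mean-square difference-quotient argument using continuity in $(y,u)$ of the $H$-valued map ${\cal D}F^{(k)}_{s,y,u}$, together with the commutation ${\cal D}\partial^k_{y,u}=\partial^k_{y,u}{\cal D}$ (closedness of the Malliavin derivative, the maps being $C^m$ with values in ${\mathfrak H}_1$) — yields for $|k|\le m$
\[
\partial^k_{y,u}\Delta^\eps_{s_1,s_2}M(s,y,u)=\int_{s_1}^{s_2}\big\langle\tau_{-X^{t,x}_\eps(\sigma)}{\cal D}F^{(k)}_{s,y,u}(\eta^{t,x}_{\eps,\sigma}),\,dB^\eps_\sigma\big\rangle_H,
\]
so $\Delta^\eps_{s_1,s_2}M(s,\cdot,\cdot)\in H^m(\widetilde K_N)$ almost surely. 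Since the rescaled field $B^\eps$ has the same covariance as $B$, the It\^o isometry gives $\bbE|\partial^k_{y,u}\Delta^\eps_{s_1,s_2}M(s,y,u)|^2=\bbE\int_{s_1}^{s_2}\langle A\,\tau_{-X^{t,x}_\eps(\sigma)}{\cal D}F^{(k)}_{s,y,u}(\eta^{t,x}_{\eps,\sigma}),\,\tau_{-X^{t,x}_\eps(\sigma)}{\cal D}F^{(k)}_{s,y,u}(\eta^{t,x}_{\eps,\sigma})\rangle_H\,d\sigma$; using that $A$ commutes with the spatial shifts, which are unitary on $H$, and that $\eta^{t,x}_{\eps,\sigma}$ is stationary with one-dimensional law $\pi$ (Proposition \ref{prop012908}), this equals $(s_2-s_1)\int_{\cal E}\langle A{\cal D}F^{(k)}_{s,y,u},{\cal D}F^{(k)}_{s,y,u}\rangle_H\,d\pi=(s_2-s_1){\cal E}_L(F^{(k)}_{s,y,u})$ by \eqref{021610bis}. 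Finally, from $-{\cal L}F^{(k)}_{s,y,u}=\partial^k_{y,u}\tilde f(s,y,u,\cdot)$, identity \eqref{f1x}, Cauchy--Schwarz, and $\|F^{(k)}_{s,y,u}\|_{L^2(\pi)}\le\alpha_*^{-1}\|\partial^k_{y,u}\tilde f(s,y,u,\cdot)\|_{L^2(\pi)}$ (from \eqref{sg1}),
\[
{\cal E}_L(F^{(k)}_{s,y,u})=\big\langle\partial^k_{y,u}\tilde f(s,y,u,\cdot),F^{(k)}_{s,y,u}\big\rangle_{L^2(\pi)}\le\frac{1}{\alpha_*}\|\partial^k_{y,u}\tilde f(s,y,u,\cdot)\|_{L^2(\pi)}^2\le\frac{4M_0^2}{\alpha_*}.
\]
Applying the Sobolev inequality, integrating over $\widetilde K_N$ and summing over $|k|\le m$ gives $\langle\sup_{(y,u)\in K_N}|\Delta^\eps_{s_1,s_2}M(s,y,u)|^2\rangle_\pi\le C_N^2(s_2-s_1)(m+1)^{d+1}|\widetilde K_N|\,4M_0^2/\alpha_*$, a bound independent of $\eps\in(0,1]$ and of $s\in[t,T]$, which yields \eqref{052006}. (Here $\langle\,\cdot\,\rangle_\pi$ in \eqref{052006} is the expectation over the underlying probability space, and stationarity of the environment process is what reduces the $\sigma$-integral of $\eta$-functionals to integration against $\pi$.)

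The conceptual content is the observation that $m>(d+1)/2$ is exactly what makes the uniform sup-bounds over $K_N$ follow from $L^2$-in-$(y,u)$ control of order-$m$ derivatives; I expect the only real work to be the two differentiation-under-the-integral steps — pulling $\partial^k_{y,u}$ through $\int_0^{+\infty}Q_\sigma(\cdot)\,d\sigma$ for $\Theta$, and, harder, through the It\^o integral defining $\Delta^\eps_{s_1,s_2}M$ while interchanging $\partial^k_{y,u}$ with the Malliavin derivative ${\cal D}$ — all of which are handled by mean-square limiting arguments resting on the uniform bound $\sup_{(s,y,u)}{\cal E}_L(\partial^k_{y,u}\Theta(s,y,u,\cdot))<\infty$ proved above.
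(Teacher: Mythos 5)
Your proof follows essentially the same route as the paper's: differentiate the cell problem in $(y,u)$ so that $\nabla^k\Theta$ solves $-{\cal L}\nabla^k\Theta=\nabla^k\tilde f$, control the resulting $L^p(\pi)$ norms via the spectral gap \eqref{sg1}--\eqref{sg1b}, use Sobolev embedding in the parameters $(y,u)$ (with $m>(d+1)/2$) to convert $L^2$-in-$(y,u)$ control of derivatives into sup bounds over $K_N$, and for the martingale term commute $\nabla^k$ with ${\cal D}$ and the stochastic integral, then apply the It\^o isometry, stationarity of the environment process, and the Dirichlet-form identity \eqref{021610bis}/\eqref{f1x}. The only differences are cosmetic (the paper uses $W^{1,p}$ embedding with $p>d+1$ for \eqref{042006} where you use $H^m$ with $p=2$, and you spell out the differentiation-under-the-integral steps the paper asserts), so the argument is correct and matches the paper.
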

\proof  Given an integer multi-index $m=(m_0,\ldots,m_d)$, we denote by $|m|=\sum_{j=0}^d m_j$, and
by $\nabla^m$  
the mixed partial $\partial_u^{m_0}\partial_{y_1}^{m_1}\ldots\partial_{y_d}^{m_d}$.
Then $\nabla^m\Theta(t,y,u)$ is the unique 
$\pi$-zero-mean solution  of  
\begin{equation}
\label{031906m}
-{\cal L}\nabla^m\Theta(t,y,u)=\nabla^m\tilde f(t,y,u).
\end{equation}
In addition, thanks to \eqref{sg1a} and \eqref{sg1b}, for any $p\in(1,+\infty)$  there exists $C>0$ such that
\begin{equation}
\label{012306}
\|\nabla^m\Theta(t,y,u)\|_{L^p(\pi)}\le C \|\nabla^m\tilde
f(t,y,u)\|_{L^p(\pi)},\quad (t,y,u)\in D_T.
\end{equation}
The Sobolev embedding theorem implies that for $p>d+1$ there exists a constant $C>0$ such that
\begin{equation}
\label{012206a}
\sup\limits_{(y,u)\in K_N}|\Theta(t,y,u)|^p\le C\int_{K_N}\Big[|\Theta(t,y,u)|^p+\sum_{|m|=1}
\left|\nabla^m\Theta(t,y,u)\right|^p\Big]dy du.
\end{equation}
Integrating both sides of \eqref{012206a} in the $w$ variable and using \eqref{012306}, we obtain  \eqref{042006}.

Next, we proceed with the proof of \eqref{052006}. It is clear from the definition of the Malliavin derivative, see
\eqref{Dj} and \eqref{D}, that
$$
{\cal D}\nabla^m\Theta(t,y,u)=\nabla^m {\cal D}\Theta(t,y,u),
$$
and
$$
\nabla^m \big(\Delta_{s_1,s_2}^\eps M\big)(s,y,u)=\int_{s_1}^{s_2}\langle
\tau_{-X^{t,x}_\eps(\si)}\nabla^m{\cal D} \Theta(s,y,u;\eta^{t,x}_{\eps,\si}),dB^\eps_\si\rangle_H,\quad
t\le s, s_1,s_2\le T.
$$ 
Again, thanks to the Sobolev embedding theorem, there exists a constant $C>0$ such that
\begin{equation}
\label{012206}
\sup\limits_{(y,u)\in K_N}\big[\Delta_{s_1,s_2}^\eps M(s,y,u)\big]^2\le 
C\int_{K_N}\Big\{\big[\Delta_{s_1,s_2}^\eps M(s,y,u)\big]^2+
\!\sum_{|m|=n}\big[\nabla^m(\Delta_{s_1,s_2}^\eps M)(s,y,u)\big]^2\Big\}dy du,
\end{equation}
provided that $n>(d+1)/2$. Using the It\^o isometry and the above
estimate, we conclude that for any $s_1<s_2$ 
\begin{eqnarray}
\label{022206}
&&\!\!\!\!\!\!\!\!\!\!\!\!\bbE\Big\{\!\sup\limits_{(y,u)\in K_N}\big[\Delta_{s_1,s_2}^\eps M(s,y,u)\big]^2\Big\}
\le C(s_2-s_1)\int_{K_N}\!\!\!\Big\langle \left\|{\cal D}\Theta(s,y,u)\right\|_H^2+\!\!\sum_{|m|=n}
\left\|{\cal D}\nabla^m\Theta(s,y,u)\right\|^2_H\Big\rangle_{\pi}dy du\nonumber\\
&&
\leq C(s_2-s_1)\int_{K_N}\Big[{\cal E}_{\cal L}\left(\Theta(s,y,u)\right)+
\sum_{|m|=n}{\cal E}_{\cal L}\left(\nabla^m\Theta(s,y,u)\right)\Big]dy du. 
\end{eqnarray}
This proves \eqref{052006} in light of  (\ref{012306}) and~\eqref{H1}.
\qed

\bigskip

Next, let
\begin{equation}
\label{020107}
U_\eps(s):=u_\eps(s,X_\eps^{t,x}(s)),\quad F_\eps(s):=f(s,X_\eps^{t,x}(s),U_\eps(s),\eta_{\eps,s}^{t,x})
\end{equation}
and
$
\tilde F_\eps(s):=\tilde
f(s,X_\eps^{t,x}(s),U_\eps(s),\eta_{\eps,s}^{t,x})$,  $t\le s \le T.
$
We have
\[
u_\eps(t,x)=U_\eps(t)
\]
and
\begin{equation}
\label{011906}
U_\eps(T)=U_\eps(s)+\int_s^TF_\eps(\si)d\si,
\end{equation}
so that
\begin{equation}
\label{011906a}
U_\eps(T)-U_\eps(s)-\int_s^T\bar f(\si,X_\eps^{t,x}(\si),U_\eps(\si))d\si=\int_s^T\tilde F_\eps(\si)d\si.
\end{equation}
The following lemma shows that the random fluctuation in the r.h.s. of the above display is negligible in the limit.
\begin{proposition}
\label{cor012006}
For any $\delta>0$, $t\le s\le T$  we have
\begin{equation}
\label{062006}
\lim_{\eps\to0}\bbP\Big[\Big|\int_{s}^{T}\tilde F_\eps(\si)d\si\Big|\ge \delta\Big]=0.
\end{equation}
\end{proposition}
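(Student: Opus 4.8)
The plan is to view $\int_s^T\tilde F_\eps(\si)\,d\si$ as a fast additive functional of the environment process and to annihilate it with the corrector $\Theta$ of \eqref{031906}: since $\tilde f$ is $\pi$-centered in $w$, a single use of the It\^o formula \eqref{012006} gains a factor $\eps$. The obstruction is that the ``slow'' arguments $\si$, $X_\eps^{t,x}(\si)$, $U_\eps(\si)$ of $\tilde f$ inside $\tilde F_\eps$ move along the trajectory, so \eqref{012006} cannot be applied verbatim; the remedy is a time-discretization with frozen arguments, together with a priori control of those arguments.

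First I would record the a priori bounds. By \eqref{011906} and \eqref{H1}, $U_\eps$ is uniformly bounded, $\sup_{[t,T]}|U_\eps|\le\|u_0\|_{L^\infty}+T\|f\|_{C^{0,m}(D_T)}=:C_0$, and Lipschitz in time, $|U_\eps(\si_2)-U_\eps(\si_1)|\le\|f\|_{C^{0,m}(D_T)}|\si_2-\si_1|$, deterministically. Since each $X_\eps^{t,x}$ converges in law to a Brownian motion (Theorem~\ref{thm011105}), the family $\{X_\eps^{t,x}\}$ is tight in $C([t,T];\bbR^d)$; fix $\rho>0$, pick $N\ge C_0$ with $\limsup_{\eps\to0}\bbP[\sup_{[t,T]}|X_\eps^{t,x}|\ge N]<\rho$, a small $\delta'>0$, and then $h,\eps_0>0$ so that the event $G_\eps:=\{\sup_{[t,T]}|X_\eps^{t,x}|<N\}\cap\{\sup_{|\si_1-\si_2|\le h}|X_\eps^{t,x}(\si_1)-X_\eps^{t,x}(\si_2)|\le\delta'\}$ has probability at least $1-\rho$ for $\eps<\eps_0$. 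I would also use repeatedly that $\eta_{\eps,\si}^{t,x}=\eta_{\si/\eps^2}^{t/\eps^2,x/\eps}$ has law $\pi$ for each fixed $\si\ge t$, by stationarity of the environment process (Proposition~\ref{prop012908}).

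Next, partition $[s,T]$ by $s=s_0<\dots<s_n=T$, $s_j-s_{j-1}=h$, $n=(T-s)/h$, and compare $\int_s^T\tilde F_\eps(\si)\,d\si$ with $\mathcal I_\eps:=\sum_{j=1}^n\int_{s_{j-1}}^{s_j}\tilde f\big(s_{j-1},X_\eps^{t,x}(s_{j-1}),U_\eps(s_{j-1}),\eta_{\eps,\si}^{t,x}\big)\,d\si$. On $G_\eps$ the part of the error coming from moving $(X_\eps^{t,x},U_\eps)$ back to $s_{j-1}$ is bounded pathwise by $\|f\|_{C^{0,m}(D_T)}(\delta'+\|f\|_{C^{0,m}(D_T)}h)(T-s)$, while the part coming from moving the time variable is dominated by $\mathcal R_\eps:=\sum_j\int_{s_{j-1}}^{s_j}\sup_{(y,u)\in K_N}\big|\tilde f(\si,y,u,\eta_{\eps,\si}^{t,x})-\tilde f(s_{j-1},y,u,\eta_{\eps,\si}^{t,x})\big|\,d\si$; since $\eta_{\eps,\si}^{t,x}\sim\pi$, $\bbE\mathcal R_\eps\le(T-s)\Delta_N(h)$ with $\Delta_N(h):=\sup_{|a-b|\le h}\int_{\mathcal E}\sup_{(y,u)\in K_N}|\tilde f(a,y,u,w)-\tilde f(b,y,u,w)|\,d\pi(w)$, and $\Delta_N(h)\to0$ as $h\to0$ by dominated convergence, using joint continuity and boundedness of $\tilde f$ on the compact set $[0,T]\times K_N$. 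Hence, choosing $\delta'$ and then $h$ small, $\bbP[\,|\int_s^T\tilde F_\eps\,d\si-\mathcal I_\eps|\ge\delta/2\,]\le C\rho$ uniformly in small $\eps$.

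Finally, on each $[s_{j-1},s_j]$ I would apply \eqref{012006} with the constant, $\mathcal F_{s_{j-1}}$-measurable parameters $(s,y,u)=(s_{j-1},X_\eps^{t,x}(s_{j-1}),U_\eps(s_{j-1}))$ — legitimate since \eqref{012006} holds simultaneously in $(s,y,u)$ by continuity of both sides — obtaining $\mathcal I_\eps=\eps^2\mathcal B_\eps+\eps\mathcal M_\eps$, where $\mathcal B_\eps=\sum_j\big[\Theta(s_{j-1},X_\eps^{t,x}(s_{j-1}),U_\eps(s_{j-1}),\eta_{\eps,s_{j-1}}^{t,x})-\Theta(\cdots,\eta_{\eps,s_j}^{t,x})\big]$ and $\mathcal M_\eps=\sum_j\int_{s_{j-1}}^{s_j}\langle\tau_{-X_\eps^{t,x}(\si)}{\cal D}\Theta(s_{j-1},X_\eps^{t,x}(s_{j-1}),U_\eps(s_{j-1}),\eta_{\eps,\si}^{t,x}),dB^\eps_\si\rangle_H$ is a continuous $L^2$-martingale. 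Stopping at $\tau_N:=\inf\{\si\ge t:|X_\eps^{t,x}(\si)|\ge N\}\wedge T$, so that every frozen spatial argument lies in $K_N$, and combining $\eta_{\eps,\si}^{t,x}\sim\pi$ with Lemma~\ref{lm022006} and the Sobolev-embedding estimates in its proof, one gets $\bbE|\mathcal M_\eps^{\tau_N}|^2\le C_N(T-t)$ and $\bbE[\mathbf 1_{\{\tau_N=T\}}|\mathcal B_\eps|]\le 2nC_N'$, with $C_N,C_N'$ independent of $\eps$ and $h$. Since $n$ is fixed once $h$ is, Chebyshev's and Markov's inequalities give $\limsup_{\eps\to0}\bbP[|\mathcal I_\eps|\ge\delta/2]\le\bbP[\tau_N<T]<\rho$; together with the previous step this yields $\limsup_{\eps\to0}\bbP[|\int_s^T\tilde F_\eps\,d\si|\ge\delta]\le C\rho$ for arbitrary $\rho>0$, which is \eqref{062006}. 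The main obstacle is precisely the freezing step: no convergence rate for $X_\eps^{t,x}$ is available, so one must lean on tightness plus the deterministic regularity of $U_\eps$, and the first (time) argument forces the $L^1(\pi)$-continuity argument behind $\Delta_N(h)\to0$; once the coefficients are frozen, the remainder reduces to the quantitative corrector bounds already supplied by Lemma~\ref{lm022006}.
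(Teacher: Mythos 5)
Your proof is correct and follows essentially the same route as the paper: tightness of $(X_\eps^{t,x},U_\eps)$, freezing the slow arguments on a time grid, and then killing the frozen, $\pi$-centered integrand via the corrector decomposition \eqref{012006} together with the bounds of Lemma~\ref{lm022006}. The only cosmetic difference is that the paper reduces to the uniform-in-$(y,u)\in K_N$ statement \eqref{062006d} (sup inside the expectation) before inserting the random frozen arguments, whereas you insert the adapted frozen arguments into the stochastic integral directly and stop at $\tau_N$; both steps are justified by the same estimates.
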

\begin{proof}  
Since $f$ is bounded,   the laws of the processes $\left( U_\eps(s)\right)_{t\leq s\le T}$
are tight over $C[t,T]$, as $\eps\to 0$.
 In consequence, the laws of the  joint process $\left( X_\eps^{t,x}(s) ,U_\eps(s)\right)_{t\leq s\le T}$
are also tight.
Given any $\rho>0$, one
can choose~$N>0$ such that 
\begin{equation}
\label{042206}
 \bbP\Big[\sup_{s\in[t,T]}\max[|X_\eps^{t,x}(s)|, |U_\eps(s)|]\ge
   N\Big]<\rho,\quad \eps\in(0,\eps_0].
\end{equation}
Thanks to \eqref{042206}, we can find a sufficiently large $N$ so that
\begin{equation}
\label{062006a}
\limsup_{\eps\to 0}\bbP\Big[\Big|\int_{s}^{T}\tilde F_\eps(\si)d\si\Big|\ge \delta,\,\sup_{s\in[t,T]}\max[|X_\eps^{t,x}(s)|, |U_\eps(s)|]\ge
   N\Big]<\frac{\rho}{3}.
\end{equation}
Let $M$ be a non-negative integer and  $t_j:=s+j(T-s)/M$, $j=0,\ldots,M$. Using the tightness of $\left( X_\eps^{t,x}(s) ,U_\eps(s)\right)_{t\leq s\le T}$, we can choose a sufficiently large $M_0$ so that
\begin{equation}
\label{062006b}
\limsup_{\eps\to 0}\sup_{M\ge M_0}\bbP\Big[\int_{s}^{T}\left|\tilde F_\eps(\si)-\tilde F_{M,\eps}(\si)\right|d\si\ge \delta\Big]<\frac{\rho}{3},
\end{equation}
where
$$
\tilde F_{M,\eps}(s):=\tilde f\big(t_j,X_\eps^{t,x}(t_j),U_\eps(t_j),\eta_\eps^{t,x}(s)\big),\quad t_j\le s <t_{j+1},\,j=0,\ldots,M-1.
$$
To prove \eqref{062006}, it suffices to show that given  $M$, $N$ and $\rho>0$ we have 
\begin{equation}
\label{062006c}
\limsup_{\eps\to 0}\bbP\Big[\Big|\int_{s}^{T}\tilde F_{M,\eps}(\si)d\si\Big|\ge \delta,\,\sup_{s\in[t,T]}\max[|X_\eps^{t,x}(s)|, |U_\eps(s)|]\le
   N\Big]<\frac{\rho}{3}.
\end{equation}
Obviously, we have
$$
\Big|\int_{s}^{T}\tilde F_{M,\eps}(\si)d\si\Big|\le \sum_{j=1}^{M-1}\Big|\int_{t_j}^{t_{j+1}}\tilde f\left(t_j,X_\eps^{t,x}(t_j),U_\eps(t_j),\eta_{\eps,\si}^{t,x}\right)d\si\Big|.
$$
Estimate \eqref{062006c} holds, provided we prove that for any
$N>0$ and $t\le s\le s'\le T$:
\begin{equation}
\label{062006d}
\limsup_{\eps\to 0}\bbE\Big[\sup_{(y,u)\in K_N}\Big|\int_{s}^{s'}\tilde f\left(s,y,u,\eta_{\eps,\si}^{t,x}\right)d\si\Big|\Big]=0.
\end{equation}
The latter however is a direct consequence of the decomposition
\eqref{012006} and Lemma \ref{lm022006}.
\end{proof}

\bigskip

Given $ X\in C([t,T];\bbR^d)$ we let $U:=\Phi(X)\in C[t,T]$ be the unique solution of  
\begin{equation}\label{072006}
u_0(X(T))=U(s)+\int_s^T\bar f(\si,X(\si),U(\si))d\si,\quad
s\in[t,T].
\end{equation}
Suppose that ${\mathfrak Q}_\eps$ are the laws of $\left( X_\eps^{t,x}(s)
  ,U_\eps(s)\right)_{s\le T}$ over $C([t,T];\bbR^{d+1})$ for $\eps\in(0,1]$. Let
${\mathfrak Q}_*$ be the limiting law of ${\mathfrak Q}_{\eps_n}$ for some sequence $\eps_n\to 0$. Thanks to Proposition \ref{cor012006}, we know that ${\mathfrak Q}_*$ is supported on the set 
$$
{\cal C}:=\{(X,U): X\in C([t,T];\bbR^d),\,U=\Phi(X)\}.
$$
 As $\left(X_\eps^{t,x}(s)\right)_{s\in[t,T]}$  converges in law to
$\left(x+\beta_{s-t}\right)_{s\in[t,T]}$, as $\eps\to 0$, and $u_\eps(t,x)=U_\eps(t)$, we know that for fixed $(t,x)$, $u_\eps(t,x)$ converges in distribution to $\mathcal{U}(t;t,x)$ with $\mathcal{U}(s;t,x)$ solving 
\begin{equation}\label{e.12311}
u_0\left(x+\beta_{T-t}\right)-\mathcal{U}(s;t,x)= \int_s^T\bar f\left(\si,x+\beta_{\si-t},\mathcal{U}(\si;t,x)\right)d\si,\quad t\le s\le T.
\end{equation}

Concerning the convergence of the multi-point statistics the argument
from Section \ref{sec4.2} proves that for $N$ distinct points
$x_1,\ldots,x_N\in\bbR^d$ the process $(X_\eps^{t,x_1}(s),\ldots,
X_\eps^{t,x_N}(s))_{s\ge t}$ converges in law to
$ (x_1+\beta_{s-t}^{(1)},\ldots, x_1+\beta_{s-t}^{(N)})_{s\ge t}$, where $(\beta_{s}^{(j)})_{s\ge0}$,
$j=1,\ldots,N$ are i.i.d. Brownian motions with the covariance \eqref{apq}.
This, in turn implies  that  $(\mathcal{U}^{(1)}
(t,x_1),\ldots,\mathcal{U}^{(N)} (t,x_n))$, the respective limit 
of~$(u_\eps(t,x_1) ,\ldots,u_\eps(t,x_N))$ is determined 
by the solutions of \eqref{e.12311} based on
$(\beta_{s}^{(j)})_{s\ge0}$, $j=1,\ldots,N$, thus they are
independent. This ends the proof of Theorem \ref{thm-semilin}.\qed

\bigskip

The proof of Corollary \ref{thm-semilin-c} follows essentially from the same
argument as Corollary \ref{thm-c}. It suffices only to note that from
\eqref{011906} it follows that
$\|u_\eps(t,\cdot)\|_{L^\infty(\bbR^d)}$, is deterministically bounded
for $\eps\in(0,1)$. Thus the
random variables $\langle u_\eps(t),\varphi\rangle$ are also
deterministically bounded , for any test function $\varphi\in
L^1(\bbR^d)$. We can repeat then the argument used to show
\eqref{020902} to conclude \eqref{020901}.\qed



\section{Proof of Theorem \ref{thm011110}}
\label{sec8.2}

\subsection{The limiting dynamics and the
proof of Proposition \ref{prop010507}}
\label{sec7.1}

Let us first explain how the coefficients $b$ and $\tilde c_j$ 
in (\ref{020307}) are defined. We set
\begin{align}
\label{b}
& b\left(s,y,u\right):=\left\langle\nabla_y\Theta\left(s,y,u,\cdot\right)\cdot
v(\cdot)\right\rangle_{\pi}+
\left\langle\partial_u \Theta\left(s,y,u,\cdot\right) 
 f\left(s,y,u,\cdot\right)\right\rangle_{\pi},
\end{align}
where $\Theta:D_T\times{\cal E}\to\bbR$ is the solution of
\eqref{031906}, with $\tilde f$ in the right side replaced by $f$ -- recall that
now we assume $f$ has mean zero.

In order to define $\tilde c_j$, recall that the constant matrix $S=a^{1/2}$, with
$a_{ij}$ given by~\eqref{apq}: 
\[
a_{ij} :={\cal E}_L(\chi_i,\chi_{j}),~~i,j=1,\ldots,d.
\]
In particular, $a$ is non-singular.
Then $\tilde c_0(s,y,u)\ge 0$ and
$\tilde c^T(s,y,u)=[\tilde c_1(s,y,u),\ldots,\tilde c_d(s,y,u)]$ are determined by 
\begin{equation}
\label{tc}
S\tilde c=c,\quad \tilde c_0^2(s,y,u):=c_0(s,y,u)-\sum_{j=1}^d\tilde c^2_j(s,y,u)
\end{equation}
with $ c^T=[ c_1, \ldots,c_d]$ and $c_0$ given by
\begin{align}
\label{010609}
c_0(s,y,u)={\cal E}_L(\Theta(s,y,u)),~~~~
c_{j}(s,y,u) :={\cal E}_L(\Theta(s,y,u),\chi_j),\quad j=1,\ldots,d.
\end{align}
We can write
\[
\tilde c_j(s,y,u)=\sum_{k=1}^d S^{-1}_{jk}c_k=
{\cal E}_L(\Theta(s,y,u),\tilde\chi_j),
\]
with the functions
\[
\tilde \chi_j:=\sum_{k=1}^dS^{-1}_{jk}\chi_{k},\quad j=1,\ldots,d,
\]
that are orthonormal with respect to the inner product ${\cal E}_L(\cdot,\cdot)$:
\[
{\cal E}_L(\tilde\chi_j,\tilde\chi_m)=\sum_{k,p=1}^d S_{jk}^{-1}S_{mp}^{-1}
{\cal E}_L(\chi_k,\chi_p)=\sum_{k,p=1}^d S_{jk}^{-1}a_{kp}S_{mp}^{-1}=\delta_{jm}.
\]
Thus, we have
$$
\sum_{j=1}^d\tilde c_j^2(s,y,u)=\sum_{j=1}^d{\cal E}^2_L(\Theta(s,y,u),\tilde \chi_j)\le {\cal E}_L(\Theta(s,y,u))=c_0(s,y,u).
$$

%
%

Let now $(X^{t,x}(s),U^{t,x,u}(s))$, $s\le T$ be the solution of \eqref{020307}
with the above coefficients $b$ and $\tilde c_j$,  
fix $(t,x)\in \bbR^{1+d}$ and $T>t$, and let
$$
\xi_{s}^{t,x}(u):=({\mathfrak
    s}_{s}^{t,x})'(u)=\frac{\partial}{\partial
  u}U^{t,x,u}(s),\quad (s,u)\in[t,T]\times \bbR.  
$$
By the equation satisfied by $U^{t,x,u}(s)$, it is clear that for a fixed $u$, the process $\left(\xi_{s}^{t,x}(u)\right)_{t\le s\le T}$ is a semimartingale that satisfies 
\begin{align}
\label{020307a}
\xi_{s}^{t,x}(u)=1+\int_t^s\al(\si) \xi^{t,x}_{\si}(u) d\si
+\sum_{j=0}^d\int_t^s\ga_j(\si) \xi^{t,x}_{\si} (u)d\tilde\beta_j(\si),
\end{align}
with
$$
\al(\si):=\frac{\partial b}{\partial u}(\si,X^{t,x}(\si),U^{t,x,u}(\si)),
\quad \ga_j(\si):=\pdr{\tilde c_{j}}{u}(\si,X^{t,x}(\si),U^{t,x,u}(\si)).
$$
The unique solution of \eqref{020307a} is given by
$\xi_{s}^{t,x}(u)=\exp\left\{Z(s)\right\}$, $s\in[t,T]$, with
\begin{align}
\label{020307b}
Z(s):=\int_t^s\Big\{\al(\si)-\frac{1}{2}\sum_{j=0}^d\ga_j^2(\sigma)\Big\}d\si
+\sum_{j=0}^d\int_t^s\ga_j(\si) d\tilde\beta_j(\si).
\end{align}
Thus, $\xi_s^{t,x}(u)>0$ a.s., and since for any $s\in[t,T]$  we have
$$
\lim_{u\to\pm\infty}U^{t,x,u}(s)=\pm\infty,\quad \mbox{a.s.},
$$
we conclude from the above that ${\mathfrak
  s}_{s}^{t,x}(u)=U^{t,x,u}(s)$, $u\in\bbR$ is a diffeomorphism. This
ends the proof of Proposition \ref{prop010507}.\qed




\subsection{The truncated dynamics and convergence of the forward process}

Recall that 
we use the notation ${\mathfrak S}_{\eps}^{t,x}(s,u)=U_\eps^{t,x,u}(s)$, where~$U_\eps^{t,x,u}(s)$ is the solution of 
\begin{equation}
\label{U-eps}
U^{t,x,u}_\eps(s) =u+\frac{1}{\eps}\int_t^s f_\eps(\si)d\si,\quad t\le s \le T,
\end{equation}
where we used the simplified notation
\[
f_\eps(\sigma)=f\Big(\sigma,X_\eps^{t,x}(\sigma),U_\eps^{t,x,u}(\sigma),V\Big(\frac{\sigma}{\eps^2},\frac{X_\eps^{t,x}(\sigma)}{\eps}+\cdot\Big)\Big)=f\Big(\sigma,X_\eps^{t,x}(\sigma),U_\eps^{t,x,u}(\sigma),\eta^{t,x}_{\eps,\sigma}\Big).
\]
The mapping ${\mathfrak s}_{s,\eps}^{t,x}(u):={\mathfrak
  S}_{\eps}^{t,x}(s,u)$, 
  is a diffeomorphism of $\bbR$
onto itself for each $(t,x)$ and
$t\le s\le T$. 
Indeed, for each fixed $\eps>0$, the derivative process 
$$
\xi_{s,\eps}^{t,x}(u):=( {\mathfrak s}_{s,\eps}^{t,x})'(u)=
\frac{\partial}{\partial u}U^{t,x,u}_\eps(s)
$$
satisfies the linear equation
\begin{equation}
\label{011906abis}
\xi_{s,\eps}^{t,x}(u) =1+\frac{1}{\eps}\int_t^s 
 {\pdr{f_{\eps}}{u}}(\si)\xi_{\si,\eps}^{t,x}(u)d\si,
\end{equation}
thus $\xi_{s,\eps}^{t,x}(u)>0$ a.s., 
and since $ \lim_{u\to\pm\infty}{\mathfrak s}_{s,\eps}^{t,x}(u)=\pm\infty$ a.s.,
it is  a diffeomorphism. Therefore, for $u_\eps(t,x)$ to satisfy \eqref{advec-11-semi2},  
it is equivalent to
\begin{equation}
\label{010911}
{\mathfrak s}_{T,\eps}^{t,x}\left(u_\eps(t,x)\right) =u_0(X_\eps^{t,x}(T)).
\end{equation}

The first step in the proof of Theorem~\ref{thm011110} is to establish tightness of the family 
$(X_\eps^{t,x}(\cdot),{\mathfrak S}_{\eps}^{t,x}(\cdot))$. However, instead of proving this directly,
we will first prove tightness for a truncated family of processes $(X_\eps^{t,x}(\cdot),{\mathfrak S}_{\eps,M}^{t,x}(\cdot))$ --
note that only the  ${\mathfrak S}_{\eps}^{t,x}$ component is truncated -- and identify the corresponding limit as $\eps\to 0$.
Then, using the properties of the limit process, we will show that ``truncation does not matter'', and get the limit for the original,
un-truncated process. 
To this end, take $M>1$ and set
$$
f^{(M)}(s,y,v,w):=\phi_M(y,v) f(s,y,v,w),\quad
(s,y,v,w)\in D_T\times{\cal E}.
$$
Here, $\phi_M:\bbR^{1+d}\to[0,1]$ is a smooth cut-off function such that
\[
\hbox{$\phi_M \equiv 1$ on $K_M:=[(y,v):\,|y|\le M,\,|v|\le M]$},
\]
and $\phi_M$ is supported in $ K_{M+2}$,
with $\|\nabla\phi_M\|_\infty\le 1$.
We define  $U^{t,x,u}_{\eps,M}(s)$ as the solution of a modified equation
\eqref{011906a}: 
\begin{equation}
\label{011906amodif}
U_{\eps,M}^{t,x,u}(s)=u+\frac{1}{\eps}\int_t^sf^{(M)}_\eps(\si)d\si,~~~t\le s \le T,
\end{equation}
with 
$$
f^{(M)}_\eps(\si):=f^{(M)}(\si,X_\eps^{t,x}(\si),U_{\eps,M}^{t,x,u}(\si),\eta^\eps_{t,x}(\si)),
$$
where we write 
\[
\eta^\eps_{t,x}(\si)=\eta^{t,x}_{\eps,\sigma}
\]
 to emphasize its dependence on $\sigma$ as a process. We denote by ${\mathfrak S}_{\eps,M}^{t,x}(s,u)$ and
 ${\mathfrak s}_{s,\eps,M}^{t,x}(u)$ the  random field and family
 of diffeomorphisms corresponding to~$U^{t,x,u}_{\eps,M}(s)$, and by $u_{\eps,M}(t,x)$ the unique solution of 
\begin{equation}
\label{010911M}
{\mathfrak s}_{T,\eps,M}^{t,x}\left(u_{\eps,M}(t,x)\right) =u_0(X_\eps^{t,x}(T)).
\end{equation}
To define the limit of the truncated dynamics,
let $U^{t,x,u}_M(s)$ 
be the solution of  the SDE
\begin{align}
\label{020307M}
&
U^{t,x,u}_M(s)=u+\int_t^sb_M(\si,X^{t,x}(\si),U^{t,x,u}_M(\si))d\si+\sum_{j=0}^d\int_t^s\tilde c_{M,j}(\si,X^{t,x}(\si),U^{t,x,u}_M(\si))d\tilde\beta_j(\si),
\end{align}
with  $b_M$ and $\tilde c_M$ as in
\eqref{b}, \eqref{tc} and \eqref{010609} but with $\Theta$ and $f$
replaced by $\Theta_M$ and $f^{(M)}$, respectively. Here, $\Theta_M(s,y,v,w):=\phi_M(y,v) \Theta(s,y,v,w)$ is the solution to
the cell problem \eqref{031906} with $f^{(M)}$ in the
right side. This generates the random field 
$\{{\mathfrak S}_{M}^{t,x}(s,u)=U^{t,x,u}_M(s)\}_{(s,u)\in[t,T]\times \bbR}$
and the corresponding diffeomorphisms  ${\mathfrak
  s}_{T,M}^{t,x}(u):={\mathfrak S}_{M}^{t,x}(T,u)$.  We
let $u_{M}(t,x)$ be the unique solution of 
\begin{equation}
\label{010911MM}
{\mathfrak s}_{T,M}^{t,x}\left(u_{M}(t,x)\right) =u_0(X^{t,x}(T)).
\end{equation}
We will call $({\mathfrak
    S}_{\eps,M}^{t,x}(\cdot),X_\eps^{t,x}(\cdot))$ the ``forward process'', and the goal of this section is
\begin{proposition}
\label{prop012509}
Given $M>1$ and $(t,x)\in [0,T]\times \bbR^{d}$,  the processes $({\mathfrak
    S}_{\eps,M}^{t,x}(\cdot),X_\eps^{t,x}(\cdot))$ converge
weakly, as $\eps\to 0$, over $C([t,T]\times\mathbb{R})\times C([t,T])$, equipped with the standard Frechet metric, to 
$({\mathfrak
    S}_{M}^{t,x}(\cdot),X^{t,x}(\cdot))$.
\end{proposition}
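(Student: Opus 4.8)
The plan is to run the corrector plus martingale-problem argument used for Theorem~\ref{thm} in Section~\ref{sec4.3}, now carried out jointly for the pair $\big(X_\eps^{t,x},{\mathfrak S}_{\eps,M}^{t,x}\big)$. The point that makes it work is that, since $\bar f\equiv0$, the cut-off nonlinearity $f^{(M)}$ and \emph{each of its $(y,v)$-partial derivatives} have $\pi$-mean zero, so for every frozen $(s,y,v)\in D_T$ the cell problem $-{\cal L}\Theta_M(s,y,v,\cdot)=f^{(M)}(s,y,v,\cdot)$ has a unique mean-zero solution $\Theta_M(s,y,v,\cdot)\in{\mathfrak H}_\infty$ by Theorem~\ref{prop013101}, inheriting the $C^m$-regularity and compact support in $(y,v)$ of $f^{(M)}$; the bounds of Lemma~\ref{lm022006}, and their $(y,v)$-differentiated versions, hold with $\Theta$ replaced by $\Theta_M$. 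Applying the It\^o formula \eqref{f1} to $\sigma\mapsto\Theta_M\big(\sigma,X_\eps^{t,x}(\sigma),U_{\eps,M}^{t,x,u}(\sigma),\eta_{\eps,\sigma}^{t,x}\big)$ — combined with the ordinary chain rule in the bounded-variation arguments $X_\eps^{t,x}$, $U_{\eps,M}^{t,x,u}$ and the diffusive scaling $\eta_{\eps,\sigma}^{t,x}=\eta_{\sigma/\eps^2}^{t/\eps^2,x/\eps}$, which produces the term $\eps^{-2}{\cal L}\Theta_M=-\eps^{-2}f^{(M)}$ — and solving for $\eps^{-1}\int_t^sf^{(M)}_\eps$, one obtains a decomposition
\[
U_{\eps,M}^{t,x,u}(s)=u+\int_t^s b_{\eps,M}(\sigma)\,d\sigma+\sqrt2\,{\cal N}_\eps^M(s,u)+\eps\,r_\eps^M(s,u),
\]
where $b_{\eps,M}(\sigma)=\big[\nabla_y\Theta_M\cdot v+\partial_u\Theta_M\,f^{(M)}\big]\big(\sigma,X_\eps^{t,x}(\sigma),U_{\eps,M}^{t,x,u}(\sigma),\eta_{\eps,\sigma}^{t,x}\big)$ is the ``un-averaged'' drift appearing in \eqref{b}, ${\cal N}_\eps^M(\cdot,u)$ is a continuous martingale whose quadratic variation, and its cross-variation with the $X_\eps^{t,x}$-martingale of \eqref{022704}, are the time-integrals along the trajectory of $\langle A{\cal D}\Theta_M,{\cal D}\Theta_M\rangle_H$ and $\langle A{\cal D}\Theta_M,{\cal D}\chi_j\rangle_H$, and $r_\eps^M$ collects the boundary terms $\Theta_M(s,\cdot)-\Theta_M(t,\cdot)$ and $\int_t^s\partial_\sigma\Theta_M$, which are bounded in every $L^p(\Omega)$ uniformly in $\eps$ by Lemma~\ref{lm022006}. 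Running the same device for $\partial_u f^{(M)}$ shows that the derivative process $\xi_{s,\eps,M}^{t,x}(u)$ of \eqref{011906abis}, and inductively its higher $u$-derivatives, have moments bounded uniformly in $\eps$ and in $(s,u)$ on compact sets.

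Tightness of $\big(X_\eps^{t,x}\big)$ in $C([t,T])$ is classical (Section~\ref{sec4.3}). From the first step $b_{\eps,M}$ has uniformly bounded moments and ${\cal N}_\eps^M(\cdot,u)$ has uniformly bounded bracket, so $U_{\eps,M}^{t,x,u}(\cdot)$ is tight in $C([t,T])$ for each fixed $u$; the uniform moment bounds on $\xi_{s,\eps,M}^{t,x}$ and $\partial_u^2 U_{\eps,M}^{t,x,u}$ yield, via a Kolmogorov continuity estimate in $(s,u)$, equicontinuity of $(s,u)\mapsto{\mathfrak S}_{\eps,M}^{t,x}(s,u)$ uniform in $\eps$, whence $\big({\mathfrak S}_{\eps,M}^{t,x}(\cdot),X_\eps^{t,x}(\cdot)\big)$ is tight in $C([t,T]\times\bbR)\times C([t,T])$.

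To identify the limit, fix levels $u_1,\dots,u_k$ and pass to a weakly convergent subsequence of $\big(X_\eps^{t,x},U_{\eps,M}^{t,x,u_1},\dots,U_{\eps,M}^{t,x,u_k}\big)$ with limit $\big(X,U^{(1)},\dots,U^{(k)}\big)$. Using \eqref{022704} for the $X$-part and the above decomposition for each $U$-part, one checks this limit solves the martingale problem for the system \eqref{020307M} started from the levels $u_1,\dots,u_k$ with a common family $\tilde\beta_j$. The drift converges by the ergodic-averaging argument of Propositions~\ref{prop706041} and \ref{cor012006}: freeze the slow variables on a fine time mesh and, on each subinterval, replace $\langle\,\cdot\,,\eta_{\eps,\sigma}\rangle-\langle\,\cdot\,\rangle_\pi$ by $\eps^2$ times corrector boundary terms via \eqref{f1}, so that $\int_t^s b_{\eps,M}\to\int_t^s b_M\big(\sigma,X(\sigma),U(\sigma)\big)d\sigma$; the same averaging applied to the brackets and cross-brackets of ${\cal N}_\eps^M(\cdot,u_i)$ and of the $X_\eps^{t,x}$-martingale identifies the limiting (co)variations as the quantities built from ${\cal E}_L(\Theta_M)$, ${\cal E}_L(\Theta_M,\chi_j)$, $a_{ij}$ — precisely the structure encoded by $S\tilde c_M=c_M$, $\tilde c_{M,0}^2=c_{M,0}-\sum_j\tilde c_{M,j}^2$ of \eqref{tc}, \eqref{010609} — and $\eps r_\eps^M\to0$ uniformly on compacts. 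Since $f^{(M)}$, hence $\Theta_M$, hence $b_M$ and $\tilde c_{M,j}$, are bounded and Lipschitz in $(y,v)$ and the $X$-diffusion $a$ is non-degenerate, the martingale problem for \eqref{020307M} is well posed and the limit is its unique solution $\big(X^{t,x},U_M^{t,x,u_1},\dots,U_M^{t,x,u_k}\big)$. As this holds for every finite family of levels and $u\mapsto U_M^{t,x,u}(\cdot)$ is a.s.\ continuous, the unique limit point of $\big(X_\eps^{t,x},{\mathfrak S}_{\eps,M}^{t,x}\big)$ is $\big(X^{t,x},{\mathfrak S}_M^{t,x}\big)$, which is the assertion.

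The main obstacle is controlling the $u$-dependence uniformly in $\eps$: the derivative process obeys the $\eps^{-1}$-singular linear equation \eqref{011906abis}, and uniform moment bounds are available only because $\partial_u f^{(M)}$ is again $\pi$-centered (a consequence of $\bar f\equiv0$), which legitimizes the corrector trick; one must iterate it to control sufficiently many $u$-derivatives to invoke a Kolmogorov criterion in the $u$-variable on the non-compact line. The second, more routine but technical, point is the averaging of the drift and of the brackets while the slow variables $X_\eps^{t,x}$, $U_{\eps,M}^{t,x,u}$ themselves move, handled exactly as in Proposition~\ref{prop706041} by the freeze-on-a-mesh and replace-by-corrector device, together with the a priori bounds of Lemma~\ref{lm022006}.
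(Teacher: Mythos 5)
Your proposal follows essentially the same route as the paper: the cell problem for $f^{(M)}$ and the It\^o formula along the trajectory give exactly the paper's decomposition \eqref{011906b}--\eqref{052909} into boundary terms, an un-averaged drift, and a martingale; tightness comes from controlling the $s$-modulus uniformly over $|u|\le M+2$ together with the corrector-based bound on the derivative process $\xi^{t,x}_{s,\eps,M}$ (Lemma~\ref{lm022509}); and the limit is identified by the freeze-on-a-mesh martingale-problem argument, which the paper itself only sketches. The one step where you are too optimistic is the Kolmogorov-criterion treatment of the martingale part: since $\Theta_M$ is only known to lie in ${\mathfrak H}_\infty$, one controls $\bbE\|{\cal D}\Theta_M\|_H^2$ but not the higher moments of $\|{\cal D}\Theta_M\|_H$ needed for $p$-th moment increment bounds in the two-parameter field $(s,u)$; the paper circumvents this by the finite-rank approximation of Lemma~\ref{l.finiterank}, which yields a version with ${\cal D}\tilde\Theta\in L^\infty(\pi;H)$ (see \eqref{bound}) to which the fourth-moment Garsia--Rodemich--Rumsey estimate applies, and then uses Sobolev embedding in $u$ rather than iterated $u$-derivatives.
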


\begin{proof}
We will use the following notation: we set
$
g_{\eps,M}(s,u):=g(s,X_\eps^{t,x}(s),U_{\eps,M}^{t,x,u}(s),\eta^\eps_{t,x}(s))
$
for a given field $g:D_T\times {\cal E}\to\bbR$,  and also {use
$g_{\eps|u}'(s)$ and  $\nabla_xg_{\eps}(s)$ to denote the processes
corresponding to~$g_u:=\partial g/\partial u$ and $\nabla_x g$}, respectively.
Using the It\^o formula for  
\[
\Theta_\eps(s)=\Theta(s,X_\eps^{t,x}(s),U_{\eps}^{t,x,u}(s),\eta^\eps_{t,x}(s)),
\]
and recalling that 
\[
-{\cal L}\Theta(t,x,u,w)=f(t,x,u,w),
\]
we obtain
\begin{align}
\label{f-eps1}
d\Theta_\eps(s)
 =&\left\{\partial_{s}\Theta_\eps(s)+
\frac{1}{\eps}\left[\nabla_x\Theta_\eps\left(s\right)\cdot
v(\eta^\eps_{t,x}(s)) +
\Theta_{\eps|u}'\left(s\right) f_\eps\left(s\right) \right]-\frac{1}{\eps^2}
 f_\eps\left(s\right)
\right\}ds\\
&
+
 \frac{1}{\eps}\left\langle  \tau_{-X^{t,x}_\eps(s)/\eps}{\cal D}\Theta_\eps\left(s\right),dB^\eps_s\right\rangle_H,\nonumber
\end{align}
which, in turn, gives
\begin{align}
\label{f-eps}
&
\frac{1}{\eps}
 f_\eps\left(s\right)ds
 =-\eps d\Theta_\eps\left(s\right)+\left\{\vphantom{\int_0^1}\eps\partial_{s}\Theta_\eps\left(s\right) 
+
\nabla_x\Theta_\eps\left(s\right)\cdot
v(\eta^\eps_{t,x}(s))
+\Theta'_{\eps|u}\left(s\right) f_\eps\left(s\right) 
\vphantom{\int_0^1}\right\}ds\\
&
+
 \left\langle  \tau_{-X^{t,x}_\eps(s)/\eps}{\cal D}\Theta_\eps\left(s\right),dB^\eps_s\right\rangle_H.\nonumber
\end{align}
%
The obvious
analogs of~\eqref{f-eps1} and \eqref{f-eps} for $\Theta_{\eps,M}(s)$ and $f_{\eps}^{(M)}(s)$,
together with (\ref{011906amodif}), lead to a decomposition
\begin{align}
\label{011906b}
&
{\mathfrak S}^{t,x}_{\eps,M}(s,u) =\sum_{j=0}^2{\mathfrak S}^{t,x}_{\eps,M,j}(s,u),
\end{align}
with 
\begin{align}
\label{052909}
&
{\mathfrak S}^{t,x}_{\eps,M,0}(s,u):=\eps \Theta_{\eps,M}\left(t,u\right)-\eps
\Theta_{\eps,M}\left(s,u\right)+\eps \int_t^s\vphantom{\int_0^1}\partial_{\si}\Theta_{\eps,M}\left(\si,u\right) d\si,\nonumber\\
&
{\mathfrak S}^{t,x}_{\eps,M,1}(s,u):= u+\int_t^s\Big\{ 
\nabla_x\Theta_{\eps,M}\left(\si,u\right)\cdot
v(\eta^\eps_{t,x}(\si))
+\Theta'_{\eps,M|u}\left(\si,u\right) f_{\eps,M}\left(\si,u\right) 
 \Big\}d\si,\\
&
{\mathfrak S}^{t,x}_{\eps,M,2}(s,u):=\int_t^s
 \Big\langle  \tau_{-X^{t,x}_\eps(\si)/\eps}{\cal D}\Theta_{\eps,M}\left(\si,u\right),dB^\eps_\si\Big\rangle_H,\quad t\le s \le T.\nonumber
\end{align}

The terms in the right side of (\ref{011906b}) satisfy several estimates given by Lemmas~\ref{lm012509} and \ref{lm022509} below, which we will use to prove the 
tightness of~${\mathfrak S}^{t,x}_{\eps,M}(s,u) $.
%
Take arbitrary $\eta,\eta'>0$ and $N>1$. Since the
right side of \eqref{052909} vanishes for $|u|>M+2$, we may
assume that $N\le M+2$. According to  
Corollary \ref{cor012509} and Lemma \ref{lm022509}, we can choose
$\delta>0$ and~$L>1$ such that $\limsup_{\eps\to 0}\bbP[E_{\eps,L,\delta,\eta'}]<\eta$, where
\begin{equation}
\label{062509}
E_{\eps,L,\delta,\eta'}:=\Big[\mathop{\sup_{t\le
      s<s'\le T,|u|\le N}}_{s'-s<\delta}\left|{\mathfrak
      S}^{t,x}_{\eps,M}(s',u)-{\mathfrak
      S}^{t,x}_{\eps,M}(s,u)\right|\ge\frac{\eta'}{2},\,\mbox{ or }\sup_{s\in[t,T],|u|\le N}|\xi_{s,\eps,M}^{t,x} (u)|>L\Big]
\end{equation}
However, we have
$E_{\eps,L,\delta,\eta'}\supset E_{\eps,\delta',\eta'}$, with $\delta':=\min[\delta,\eta'(2L)^{-1}]$, and
\begin{equation}
\label{062509a}
E_{\eps,\delta',\eta'}:=\Big[\mathop{\sup_{s,s'\in[0,T],|u|,|u'|\le N}}_{|s'-s|+|u-u'|<\delta'}\left|{\mathfrak
      S}^{t,x}_{\eps,M}(s',u')-{\mathfrak
      S}^{t,x}_{\eps,M}(s,u)\right|\ge\eta'\Big].
\end{equation}
Hence, 
$\limsup_{\eps\to 0}\bbP[E_{\eps,\delta',\eta'}]<\eta$ and tightness
follows from  Theorem 2.7.3, p. 82 of \cite{billingsley}.

%
In order to identify the limit, it suffices to prove 
that for any $n\ge1$ and $(s_1',u_1),\ldots,(s_n',u_n)\in[t,T]\times
\bbR$, $s_1,\ldots,s_n\in [t,T]$, we have
\begin{equation}
\begin{aligned}\label{dec402}
&\left({\mathfrak
      S}^{t,x}_{\eps,M}(s_1',u_1),\ldots, {\mathfrak
      S}^{t,x}_{\eps,M}(s_n',u_n),X_\eps^{t,x}(s_1),\ldots,X_\eps^{t,x}(s_n)\right)
\\
&
\stackrel{\eps\to 0}{\Longrightarrow}\left({\mathfrak
      S}^{t,x}_M(s_1',u_1),\ldots,{\mathfrak
      S}^{t,x}_M(s_n',u_n),X^{t,x}(s_1),\ldots,X^{t,x}(s_n)\right).
\end{aligned} 
\end{equation}
To show (\ref{dec402}), we can use \eqref{022704} together with \eqref{011906b}
(recall that $\mathfrak{S}^{t,x}_{\eps,M}(u)=U^{t,x,u}_{\eps,M}(s)$) and
apply a weak convergence argument for semimartingales analogous to
the one used in Section \ref{sec9}. Since the argument is rather similar, we do not present the details. This finishes the proof of Proposition~\ref{prop012509}. \end{proof}

In the following, we present the technical lemmas used in the proof of Proposition~\ref{prop012509}.

\begin{lemma}
\label{lm012509}
For each $M>1$ we have
\begin{equation}
\label{032509}
\lim_{\eps\to 0}\bbE\Big[\sup_{s\in[t,T],|u|\le M+2}\left|{\mathfrak S}^{t,x}_{\eps,M,0}(s,u)\right|\Big]=0.
\end{equation}
In addition, we have
\begin{equation}
\label{042509}
\lim_{\delta\to 0}\limsup_{\eps\to 0}\bbE\Big[\mathop{\sup_{t\le
      s<s'\le T,|u|\le M+2}}_{s'-s<\delta}\left|{\mathfrak
      S}^{t,x}_{\eps,M,1}(s',u)-{\mathfrak
      S}^{t,x}_{\eps,M,1}(s,u)\right|\Big]=0,
\end{equation}
and, for any $\eta>0$ we have
\begin{equation}
\label{042509a}
\lim_{\delta\to 0}\limsup_{\eps\to 0}\bbP\Big[\mathop{\sup_{t\le
      s<s'\le T,|u|\le M+2}}_{s'-s<\delta}\left|{\mathfrak
      S}^{t,x}_{\eps,M,2}(s',u)-{\mathfrak
      S}^{t,x}_{\eps,M,2}(s,u)\right|>\eta\Big]=0.
\end{equation}
\end{lemma}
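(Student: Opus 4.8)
The whole lemma follows from one domination estimate, applied three times. Because $\phi_M$ is supported in $K_{M+2}$ and vanishes on $\partial K_{M+2}$, the velocity $f^{(M)}$ cannot push the $v$-coordinate of the truncated forward flow past $\pm(M+2)$, so $(X^{t,x}_\eps(\sigma),U^{t,x,u}_{\eps,M}(\sigma))\in K_{M+2}$ whenever $|u|\le M+2$, and each of the fields $\Theta_M,\nabla_x\Theta_M,\partial_u\Theta_M,\partial_s\Theta_M,{\cal D}\Theta_M$ evaluated along the flow vanishes once the flow leaves $K_{M+2}$. Combining this with the $L^p(\pi)$-bounds \eqref{012306} for $\Theta$ and its $(s,x,u)$-derivatives (from \eqref{sg1a}--\eqref{sg1b} and the regularity \eqref{H10} of $f$) and the Sobolev embedding in the $(x,u)$-variables exactly as in the proof of Lemma~\ref{lm022006}, one gets a single fixed $G\in\bigcap_{1\le p<\infty}L^p(\pi)$ with $\langle\sup_{r\in[0,1]}G(\eta_r)^2\rangle_\pi<\infty$ (by the It\^o formula \eqref{f1} applied to the relevant cell--problem solutions, as in \eqref{050601}) such that, for $|u|\le M+2$, $|\partial_s\Theta_{\eps,M}(\sigma,u)|$, $|\nabla_x\Theta_{\eps,M}(\sigma,u)\cdot v(\eta^\eps_{t,x}(\sigma))+\Theta'_{\eps,M|u}(\sigma,u)f_{\eps,M}(\sigma,u)|$, $\|{\cal D}\Theta_{\eps,M}(\sigma,u)\|_H^2$ and $|\Theta_{\eps,M}(\sigma,u)|$ are all $\le G(\eta^{t,x}_{\eps,\sigma})$. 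Finally, by Proposition~\ref{prop012908}, $(\eta^{t,x}_{\eps,\sigma})_{\sigma\in[t,T]}$ has the law of $(\eta_r)_{r\in[0,(T-t)/\eps^2]}$. For \eqref{032509} I treat the three terms of \eqref{052909} separately: the term at $\sigma=t$ is $\eps$ times something with expectation $\le\eps\langle G\rangle_\pi\to0$; the third term is $\le\eps\int_t^TG(\eta^{t,x}_{\eps,\sigma})d\sigma$, of expectation $\eps(T-t)\langle G\rangle_\pi\to0$ since the $\sigma$-marginal is $\pi$; and for the middle term $\eps\sup_{s,|u|}|\Theta_{\eps,M}(s,u)|\le\big(\eps^2\max_{0\le k\le\lceil(T-t)/\eps^2\rceil}\sup_{r\in[k,k+1]}G(\eta_r)^2\big)^{1/2}$, which tends to $0$ a.s.\ by the Birkhoff argument of Proposition~\ref{prop010601} (cf.\ \eqref{020601aa}) and has $L^2(\bbP)$-norm $\le\big(\eps^2\lceil(T-t)/\eps^2\rceil\,\langle\sup_{[0,1]}G(\eta_r)^2\rangle_\pi\big)^{1/2}=O(1)$, hence is uniformly integrable; a.s.\ convergence plus uniform integrability give \eqref{032509}.

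For \eqref{042509}: the increment of ${\mathfrak S}^{t,x}_{\eps,M,1}$ over any interval $I$ with $|I|<\delta$ is $\le\int_IG(\eta^{t,x}_{\eps,\sigma})d\sigma$. Covering $[t,T]$ by $\lceil(T-t)/\delta\rceil$ intervals $I_k$ of length $2\delta$, Jensen's inequality gives $\bbE\big[(\int_{I_k}G(\eta^{t,x}_{\eps,\sigma})d\sigma)^p\big]\le(2\delta)^p\langle G^p\rangle_\pi$, so a union bound over the $O(1/\delta)$ intervals yields $\bbE\big[\sup_{|I|<\delta}\int_IG(\eta^{t,x}_{\eps,\sigma})d\sigma\big]\le C_p\,\delta^{1-1/p}$ \emph{uniformly in $\eps$}; letting $\delta\to0$ proves \eqref{042509}.

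For \eqref{042509a}: fix $u$ with $|u|\le M+2$. Then $s\mapsto{\mathfrak S}^{t,x}_{\eps,M,2}(s,u)$ is a continuous $L^2$-martingale whose quadratic variation is $\int_t^{\cdot}\langle A{\cal D}\Theta_M(\sigma,X^{t,x}_\eps(\sigma),U^{t,x,u}_{\eps,M}(\sigma),\eta^{t,x}_{\eps,\sigma}),{\cal D}\Theta_M(\sigma,\cdots)\rangle_Hd\sigma$ (using $A\tau_x=\tau_xA$ and unitarity of $\tau_x$ on $H$), hence has increments over $|I|<\delta$ bounded by $A_*\int_IG(\eta^{t,x}_{\eps,\sigma})d\sigma$ and total value $\le A_*\int_t^TG(\eta^{t,x}_{\eps,\sigma})d\sigma=O_\bbP(1)$. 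By Dambis--Dubins--Schwarz, ${\mathfrak S}^{t,x}_{\eps,M,2}(s,u)=W(\tau^u_\eps(s))$ for a Brownian motion $W$ and time-change $\tau^u_\eps$ with these two bounds, so the fixed-$u$ time-modulus estimate of \eqref{042509a} follows from the previous paragraph (applied to $A_*G$) and the modulus of continuity of $W$ on $[0,L]$. To upgrade from a fixed $u$ to $\sup_{|u|\le M+2}$ I show that $\{u\mapsto{\mathfrak S}^{t,x}_{\eps,M,2}(\cdot,u)\}_{\eps\in(0,1]}$ is equicontinuous in $u$, uniformly in $\eps$, via Kolmogorov's criterion for the $u$-increments of the martingale; this reduces to controlling $|U^{t,x,u_1}_{\eps,M}(\sigma)-U^{t,x,u_2}_{\eps,M}(\sigma)|=\big|\int_{u_1}^{u_2}\xi^{t,x}_{\sigma,\eps,M}(u)\,du\big|$ together with the cell-problem bounds, and the Jacobian $\xi^{t,x}_{\sigma,\eps,M}$ is tight on $[t,T]\times[-N,N]$ (Corollary~\ref{cor012509}, Lemma~\ref{lm022509}) precisely because $\bar f\equiv0$ turns $\log\xi^{t,x}_{\sigma,\eps,M}(u)=\frac1\eps\int_t^\sigma\partial_uf^{(M)}_\eps(r,u)\,dr$ into a corrector-type quantity rather than a term of order $\eps^{-1}$. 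A $\rho$-net in $u$ then reduces \eqref{042509a} to finitely many values of $u$, modulo the equicontinuity error which $\to0$ as $\rho\to0$.

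\emph{Main obstacle.} Once the pointwise domination by $G(\eta^{t,x}_{\eps,\sigma})$ is set up, \eqref{032509} and \eqref{042509} are routine (ergodic/union-bound arguments), and \eqref{042509a} for each fixed $u$ is a standard time-change estimate. The genuinely delicate point is the uniform-in-$(\eps,u)$ equicontinuity in $u$ of ${\mathfrak S}^{t,x}_{\eps,M,2}(\cdot,u)$: the worst-case Lipschitz constant of $u\mapsto U^{t,x,u}_{\eps,M}(\sigma)$ is a priori only $e^{C/\eps}$, and it is only the centering $\bar f\equiv0$ that makes $\log\xi^{t,x}_{\sigma,\eps,M}$ a bounded-in-probability corrector expression -- making that precise (the content of Lemma~\ref{lm022509}) is the heart of the argument.
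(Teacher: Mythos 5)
Your treatment of \eqref{032509} and \eqref{042509} is sound and close to the paper's: both rest on dominating $\Theta_{\eps,M}$, $\nabla_x\Theta_{\eps,M}\cdot v$, $\Theta'_{\eps,M|u}f_{\eps,M}$, etc.\ uniformly over $(y,v)\in K_{M+2}$ by a fixed $L^p(\pi)$ function of the environment (Sobolev embedding in $(y,v)$ plus the resolvent bounds \eqref{012306}) and then using stationarity of $\eta^{t,x}_{\eps,\sigma}$. The paper takes the Sobolev embedding in all of $(s,y,v)$ so that $\bbE[\sup_{s,u}|\cdot|]$ is bounded \emph{uniformly in} $\eps$ and \eqref{032509} follows just from the prefactor $\eps$, whereas you split off the non-integrated term and control $\sup_s$ by the Birkhoff argument of Proposition~\ref{prop010601}; both routes work, yours needing the extra (plausible but unproved) claim $\langle\sup_{r\in[0,1]}G(\eta_r)^2\rangle_\pi<\infty$. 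Your covering/Jensen argument for \eqref{042509} is a fine substitute for the paper's one-line bound ``increment $\le\delta\times\sup$ of the integrand''.

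The gap is in \eqref{042509a}, precisely at the step you flag as the main obstacle: passing from fixed $u$ to $\sup_{|u|\le M+2}$. First, invoking Corollary~\ref{cor012509} there is circular — that corollary is deduced from the very lemma you are proving. Second, and more seriously, Kolmogorov's continuity criterion in $u$ requires moment bounds $\bbE\big[\sup_s|{\mathfrak S}^{t,x}_{\eps,M,2}(s,u_1)-{\mathfrak S}^{t,x}_{\eps,M,2}(s,u_2)|^p\big]\le C|u_1-u_2|^{1+\gamma}$ with $C$ \emph{uniform in} $\eps$, and these are not available from Lemma~\ref{lm022509}: that lemma only gives tightness (boundedness in probability) of $\xi^{t,x}_{s,\eps,M}$. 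Its moments are not uniformly controlled — by \eqref{011906f}, $\xi$ is $(1+\eps\Theta'_{\eps,M|u})e^{\tilde{\cal Z}_\eps}$, and $\tilde{\cal Z}_\eps$ contains drift terms such as $\nabla_x\Theta'_{M|u}\cdot v$ and $\Theta''_{M|u}f_M$, which are products of chaos variables lying in every $L^p(\pi)$ but not exponentially integrable, so $\bbE[\xi^p]$ can blow up as $\eps\to0$. Hence the chaining argument does not close as stated. The paper circumvents this entirely: it replaces the pointwise sup over $u$ by the Sobolev embedding in $u$ (so that Doob and Burkholder--Davis--Gundy apply after integrating $du$ over $[-M-2,M+2]$, as in \eqref{042709}--\eqref{042709a}), and it first performs the finite-rank approximation of Lemma~\ref{l.finiterank}, producing $\tilde\Theta=\sum_j\varphi_j\tilde\Theta_j$ with $\tilde\Theta_j$ and ${\cal D}\tilde\Theta_j$ essentially bounded (see \eqref{bound}); the approximation error is controlled in $L^1(\bbP)$ via the Dirichlet-form bound \eqref{052709}, and the time modulus of the bounded approximant is obtained from a fourth-moment estimate and the Garsia--Rodemich--Rumsey inequality. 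You would need to import both of these devices (or an equivalent maximal inequality over the $u$-family that does not require uniform moments of $\xi$) to complete your proof of \eqref{042509a}.
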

As a direct corollary, we conclude the following.
\begin{corollary}
\label{cor012509}
For each $M>1$ and $\eta>0$, we have
\begin{equation}
\label{042509abis}
\lim_{\delta\to 0}\limsup_{\eps\to 0}\bbP\Big[\mathop{\sup_{t\le
      s<s'\le T,|u|\le M+2}}_{s'-s<\delta}\left|{\mathfrak
      S}^{t,x}_{\eps,M}(s',u)-{\mathfrak
      S}^{t,x}_{\eps,M}(s,u)\right|>\eta\Big]=0.
\end{equation}
\end{corollary}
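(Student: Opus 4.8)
The plan is to deduce the claim immediately from the decomposition \eqref{011906b} together with the three estimates collected in Lemma \ref{lm012509}. For a function $g(s,u)$ on $[t,T]\times\bbR$ write
\[
\omega_\delta(g):=\mathop{\sup_{t\le s<s'\le T,\ |u|\le M+2}}_{s'-s<\delta}\big|g(s',u)-g(s,u)\big|.
\]
Since ${\mathfrak S}^{t,x}_{\eps,M}=\sum_{j=0}^2{\mathfrak S}^{t,x}_{\eps,M,j}$, the triangle inequality gives $\omega_\delta({\mathfrak S}^{t,x}_{\eps,M})\le\sum_{j=0}^2\omega_\delta({\mathfrak S}^{t,x}_{\eps,M,j})$, hence
\[
\bbP\big[\omega_\delta({\mathfrak S}^{t,x}_{\eps,M})>\eta\big]\le\sum_{j=0}^2\bbP\Big[\omega_\delta({\mathfrak S}^{t,x}_{\eps,M,j})>\frac{\eta}{3}\Big].
\]

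For the $j=0$ term I would use the crude bound $\omega_\delta({\mathfrak S}^{t,x}_{\eps,M,0})\le 2\sup_{s\in[t,T],|u|\le M+2}|{\mathfrak S}^{t,x}_{\eps,M,0}(s,u)|$ together with the Chebyshev inequality, so that this probability is at most $(6/\eta)\bbE\big[\sup_{s\in[t,T],|u|\le M+2}|{\mathfrak S}^{t,x}_{\eps,M,0}(s,u)|\big]$, which tends to $0$ as $\eps\to0$ by \eqref{032509}, uniformly in $\delta$. For $j=1$, Chebyshev together with \eqref{042509} yields $\lim_{\delta\to0}\limsup_{\eps\to0}\bbP[\omega_\delta({\mathfrak S}^{t,x}_{\eps,M,1})>\eta/3]\le\lim_{\delta\to0}\limsup_{\eps\to0}(3/\eta)\bbE[\omega_\delta({\mathfrak S}^{t,x}_{\eps,M,1})]=0$. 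For $j=2$, the statement $\lim_{\delta\to0}\limsup_{\eps\to0}\bbP[\omega_\delta({\mathfrak S}^{t,x}_{\eps,M,2})>\eta/3]=0$ is exactly \eqref{042509a} with $\eta$ replaced by $\eta/3$. Adding the three contributions and sending first $\eps\to0$ and then $\delta\to0$ gives \eqref{042509abis}.

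Since every ingredient is already supplied by Lemma \ref{lm012509}, there is no genuine obstacle here. The only point to be mildly careful about is the order of the limits: the $j=0$ contribution is controlled uniformly in $\delta$ as $\eps\to0$, whereas the $j=1$ and $j=2$ contributions require $\delta\to0$ taken after $\eps\to0$; taking $\limsup_{\eps\to0}$ first and then $\delta\to0$ handles all three terms simultaneously.
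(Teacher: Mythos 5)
Your argument is correct and is exactly the intended one: the paper states Corollary \ref{cor012509} as a ``direct corollary'' of Lemma \ref{lm012509} without writing out the details, and your proof supplies precisely those details (triangle inequality for the modulus of continuity applied to the decomposition \eqref{011906b}, Chebyshev for the two expectation bounds \eqref{032509} and \eqref{042509}, and \eqref{042509a} used verbatim for the martingale term). Your remark on the order of limits is also the right point of care, and it is handled correctly.
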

We will also need a bound on the derivative process 
$$
\xi_{s,\eps,M}^{t,x}(u):=( {\mathfrak s}_{s,\eps,M}^{t,x})'(u)=\frac{\partial}{\partial u}U^{t,x,u}_{\eps,M}(s),
$$
which 
satisfies an integral equation 
\begin{equation}
\label{011906m}
\xi_{s,\eps,M}^{t,x}(u) =1+\frac{1}{\eps}\int_t^s f'_{\eps,M|u}(\si)\xi_{\si,\eps,M}^{t,x}(u)d\si.
\end{equation}
 \begin{lemma}
\label{lm022509}
 For any $M> 1$, we have
\begin{equation}
\label{011609d}
\lim_{L\to+\infty}\limsup_{\eps\to 0}\bbP\Big[\sup_{s\in[t,T],|u|\le M+2}|\xi_{s,\eps,M}^{t,x} (u)|>L\Big]=0.
\end{equation}
\end{lemma}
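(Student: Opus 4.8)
The proof rests on the fact that \eqref{011906m} is a \emph{scalar} linear equation, so that $\xi^{t,x}_{s,\eps,M}(u)=\exp\{A_\eps(s,u)\}$ with $A_\eps(s,u):=\eps^{-1}\int_t^sf'_{\eps,M|u}(\sigma)\,d\sigma$. Since $\xi^{t,x}_{s,\eps,M}(u)>0$, the lemma is equivalent to $\sup_{s\in[t,T],\,|u|\le M+2}A_\eps(s,u)$ being bounded above in probability, uniformly in $\eps\in(0,1]$. Two structural facts will be used. Because $\bar f\equiv 0$ and $f^{(M)}=\phi_M f$, we have $\langle f^{(M)}(s,y,v,\cdot)\rangle_\pi=0$ for all $(s,y,v)$, hence also $\langle\partial_vf^{(M)}(s,y,v,\cdot)\rangle_\pi=0$, so that the integrand $f'_{\eps,M|u}$ averages out in $w$. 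Moreover $\phi_M$, and thus $f^{(M)}$ and $\partial_vf^{(M)}$, vanishes on $\{|v|=M+2\}$, so $U^{t,x,u}_{\eps,M}(\sigma)\in[-M-2,M+2]$ for all $\sigma\ge t$ whenever $|u|\le M+2$; together with the fact that everything is supported in $|y|\le M+2$, all ``slow'' arguments stay in the fixed compact set $K_{M+2}$.

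Let $\Psi:D_T\times{\cal E}\to\bbR$ be the zero-$\pi$-mean solution of $-{\cal L}\Psi(s,y,v,\cdot)=\partial_vf^{(M)}(s,y,v,\cdot)$, which exists by Theorem \ref{prop013101}; by \eqref{sg1a}, \eqref{sg1b}, \eqref{cal-L1} and the regularity assumption \eqref{H10}, $\Psi$, its $(s,y,v)$-derivatives and ${\cal D}\Psi$ are bounded in the relevant $L^p(\pi)$-norms uniformly over $D_T$, and, via the Sobolev embedding as in the proof of Lemma \ref{lm022006}, the corresponding suprema over $K_{M+2}$ are $\pi$-integrable. Applying the It\^o formula \eqref{f1} to $\Psi_\eps(s,u):=\Psi(s,X^{t,x}_\eps(s),U^{t,x,u}_{\eps,M}(s),\eta^{t,x}_{\eps,s})$ and using $-{\cal L}\Psi=\partial_vf^{(M)}$, exactly as in \eqref{f-eps1}--\eqref{f-eps}, yields $A_\eps(s,u)=R_\eps(s,u)+N_\eps(s,u)$, where $R_\eps$ collects $\eps[\Psi_\eps(t,u)-\Psi_\eps(s,u)]+\eps\int_t^s\partial_\sigma\Psi_\eps(\sigma,u)\,d\sigma$ and the drift $\int_t^s[\nabla_y\Psi_\eps(\sigma,u)\cdot v(\eta^{t,x}_{\eps,\sigma})+\partial_v\Psi_\eps(\sigma,u)\,f^{(M)}_\eps(\sigma,u)]\,d\sigma$, while $N_\eps(s,u):=\sqrt2\int_t^s\langle\tau_{-X^{t,x}_\eps(\sigma)/\eps}{\cal D}\Psi_\eps(\sigma,u),dB^\eps_\sigma\rangle_H$ is, for each fixed $u$, a continuous $L^2$-martingale in $s$. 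The term $R_\eps$ is handled just as in Lemma \ref{lm012509}: the $\eps$-prefactored terms tend to $0$ uniformly in $(s,u)$ in probability by the ergodic argument of Proposition \ref{prop010601} applied to $w\mapsto\sup_{K_{M+2}}|\Psi(\cdot,w)|$ and $w\mapsto\sup_{K_{M+2}}|\partial_s\Psi(\cdot,w)|$, and the drift is dominated by $\int_t^T[G_1(\eta^{t,x}_{\eps,\sigma})|v(\eta^{t,x}_{\eps,\sigma})|+\|f^{(M)}\|_\infty G_2(\eta^{t,x}_{\eps,\sigma})]\,d\sigma$ with $G_1(w):=\sup_{K_{M+2}}|\nabla_y\Psi(\cdot,w)|$, $G_2(w):=\sup_{K_{M+2}}|\partial_v\Psi(\cdot,w)|$, whose expectation is $\le C$ uniformly in $\eps$ since $\eta^{t,x}_{\eps,\sigma}$ has law $\pi$ and $G_1,G_2,v\in\bigcap_{p}L^p(\pi)$. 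Hence $\sup_{s,u}|R_\eps|$ is bounded in probability uniformly in $\eps$.

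The remaining, and main, difficulty is $N_\eps$: its $u$-dependence enters only through $U^{t,x,u}_{\eps,M}(\sigma)$, whose $u$-derivative is $\xi^{t,x}_{\sigma,\eps,M}(u)$ itself, so a naive equicontinuity estimate in $u$ loses a factor $\Xi_\eps:=\sup_{s,|u|\le M+2}\xi^{t,x}_{s,\eps,M}(u)$. I would close the loop by a stopping-time bootstrap: set $\tau^L_\eps:=\inf\{s\ge t:\sup_{|u|\le M+2}\xi^{t,x}_{s,\eps,M}(u)\ge L\}$, so $\{\Xi_\eps>L\}\subset\{\tau^L_\eps\le T\}$, and on $\{\tau^L_\eps\le T\}$ one has $\log L\le\sup_{s,u}|R_\eps|+\sup_{s\le T,|u|\le M+2}|N_\eps(s\wedge\tau^L_\eps,u)|$ by continuity of $\xi$. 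For \emph{fixed} $u$, $\langle N_\eps(\cdot,u)\rangle_T\le 2A_*\int_t^TG_3(\eta^{t,x}_{\eps,\sigma})\,d\sigma=:{\cal V}_\eps$ with $G_3(w):=\sup_{K_{M+2}}\|{\cal D}\Psi(\cdot,w)\|_H^2$ and $\bbE{\cal V}_\eps\le C$ uniformly in $\eps$ --- importantly this bound is \emph{independent of $u$}, because $U^{t,x,u}_{\eps,M}(\sigma)$ stays in $K_{M+2}$ where $\Psi$ is supported; whereas on $[t,\tau^L_\eps]$ one has $|U^{t,x,u}_{\eps,M}(\sigma)-U^{t,x,u'}_{\eps,M}(\sigma)|=|\int_{u'}^u\xi^{t,x}_{\sigma,\eps,M}(\tilde u)\,d\tilde u|\le L|u-u'|$, whence the $u$-increments of $N_\eps(\cdot\wedge\tau^L_\eps,\cdot)$ have quadratic variation $\le 2A_*L^2|u-u'|^2\int_t^TG_4(\eta^{t,x}_{\eps,\sigma})\,d\sigma$, $G_4(w):=\sup_{K_{M+2}}\|\partial_v{\cal D}\Psi(\cdot,w)\|_H^2$. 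Combining the exponential martingale bound $\bbP[\sup_{s\le T}|N|\ge K,\langle N\rangle_T\le V]\le 2e^{-K^2/(2V)}$ with a union bound over a net of mesh $\asymp L^{-1}$ in $u$ and a dyadic chaining between net points --- taking $K=\tfrac13\log L$ and threshold $V\asymp\log L$, so that the net's polynomially-many points are beaten by the sub-Gaussian tails while $\bbP[{\cal V}_\eps>V]+\bbP[{\cal V}'_\eps>V]\to 0$ at rate $1/\log L$, where ${\cal V}'_\eps:=2A_*\int_t^TG_4(\eta^{t,x}_{\eps,\sigma})\,d\sigma$ --- gives $\limsup_{\eps\to0}\bbP[\sup_{s\le T,|u|\le M+2}|N_\eps(s\wedge\tau^L_\eps,u)|\ge\tfrac13\log L]\to 0$ as $L\to+\infty$. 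With the estimate on $R_\eps$ this yields $\limsup_{\eps\to0}\bbP[\tau^L_\eps\le T]\to 0$, i.e. \eqref{011609d}. The two places requiring care are the uniform-in-$u$ quadratic-variation bound (which the cut-off makes possible) and the bookkeeping in the chaining step that keeps the $L^2$ loss from the $u$-increments under control against the exponential tails.
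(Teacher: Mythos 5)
Your proposal is correct and its backbone coincides with the paper's: both arguments remove the $\eps^{-1}$ singularity in the linear equation \eqref{011906m} by means of the corrector for $\partial_v f^{(M)}$ (your $\Psi$ is exactly the paper's $\Theta'_{M|u}$, satisfying $-{\cal L}\Theta'_{M|u}=f'_{M|u}$), write $\xi^{t,x}_{s,\eps,M}(u)$ as the exponential of a semimartingale, and reduce the lemma to a uniform-in-$(s,u)$ bound in probability on that semimartingale; the drift and the $O(\eps)$ boundary terms are controlled in both cases by stationarity of $\eta^{t,x}_{\eps,\cdot}$ together with Sobolev embedding over the compact set $K_{M+2}$ (the paper's \eqref{012809a}), and your observation that $U^{t,x,u}_{\eps,M}$ cannot leave $K_{M+2}$ is indeed what makes all bounds $u$-independent. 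The differences are, first, cosmetic: the paper multiplies $\xi$ by $1+\eps\Theta'_{\eps,M|u}$ so as to obtain a genuine stochastic exponential \eqref{011906f}--\eqref{020307c}, while you exponentiate $\log\xi$ directly and apply It\^o to the corrector; and, second, substantive: the supremum over $u$ of the martingale part. The paper handles it by the Sobolev embedding in $u$ combined with the uniform-in-$u$ quadratic-variation bound \eqref{012809}, whereas you point out that differentiating the stochastic integral in $u$ reintroduces $\xi$ itself and close this loop with a stopping-time bootstrap ($\tau^L_\eps$), an exponential martingale inequality, and chaining over a mesh-$L^{-1}$ net, using that the $\log L$ threshold beats the polynomially many net points. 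This is a legitimate, self-contained alternative that confronts head-on a point the paper treats tersely; its cost is the chaining bookkeeping (dyadic scales, summable thresholds, the choice of the constant in $V\asymp\log L$ against the cardinality of the net), which you correctly flag but leave sketched and which should be written out in full for the argument to be complete.
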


\subsubsection*{Proof of Lemma \ref{lm012509}}
{\em Proof of \eqref{032509} and \eqref{042509}.}
Using the Sobolev embedding, we can
estimate 
\begin{align}
\label{071409}
\sup_{s\in[t,T],|u|\le
M+2}\left|\Theta_{\eps,M}(s,u)\right|\le \sup_{(s,y,v)\in
\tilde
    D_{M+2}}|\Theta(s,y,v)|
\le C_M\left\{\|\Theta\|_{L^p(\tilde
    D_{M+2})}+\|\nabla\Theta\|_{L^p(\tilde D_{M+2})}\right\},
\end{align}
with  $p>d+2$,  constant $C_M>0$ independent of $\eps$  and 
\[
\tilde D_M:=[t,T]\times  K_M.
\]
Taking the expectation in 
both sides of  \eqref{071409}, we obtain that, for each $N,M\ge1$:
\begin{equation}
\label{061409}
\lim_{\eps\to 0}\bbE\Big[\eps \sup_{s\in[t,T],\,|u|\le M+2}|\Theta_{\eps,M}(s,u)|\Big]=0,
\end{equation}
and \eqref{032509} follows.

A similar argument shows that
\begin{align}
\label{051609}
&
\limsup_{\eps\to 0}\bbE\Big[\sup_{s\in[t,T],\,|u|\le M+2}|\nabla_x\Theta_{\eps,M}(s,u)\cdot
v(\eta^\eps_{t,x}(s))|\Big]<+\infty,\\
&
\limsup_{\eps\to 0}\bbE\Big[\sup_{s\in[t,T],\,|u|\le M+2}
|\Theta'_{\eps,M|u}(s,u) f_{\eps,M}(s,u)|\Big]<+\infty.\nonumber
\end{align}
These  estimates imply 
\eqref{042509}. 


{\em Proof of  \eqref{042509a}.} 
We start with the following ``finite-rank'' approximation. 
\begin{lemma}\label{l.finiterank}
 {Let $M>1$, $m,m_1\ge0$ and  $f:\tilde D_M\times{\cal E}\to\bbR$ be such
that 
$$
\max_{|k|\le m_1}{\rm esssup}_{w\in{\cal E}}\|D^k f\|_{C^m(\tilde D_M)}<+\infty.
$$
Then,  
for any $\delta>0$, there exist $\varphi_1,\ldots,\varphi_N\in
C^{m}(D_T)$ and $\Phi_1,\ldots,\Phi_N\in {\cal W}_{m_1,\infty}$ (cf \eqref{wkp}) such that
\begin{equation}
\label{012709}
\max_{|k|\le m_1}{\rm esssup}_{w\in{\cal E}}\|D^kf(\cdot,w)-
D^k \tilde  f(\cdot,w)\|_{C^{m}(\tilde D_M)}<\delta,
\end{equation}}
where
\begin{equation}
\label{022709}
\tilde f(s,y,u,w):=\sum_{j=1}^N\varphi_j(s,y,u)\Phi_j(w),\quad (s,y,u,w)\in D_T\times{\cal E}.
\end{equation}
\end{lemma}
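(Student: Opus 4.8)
The plan is to manufacture $\tilde f$ by a smooth partition of unity in the $(s,y,u)$–variable combined with a local Taylor expansion, so that the only $w$–dependent objects entering $\tilde f$ are finitely many frozen spatial derivatives of $f$. Write $\zeta=(s,y,u)$, so that $\tilde D_M=[t,T]\times K_M$ is a fixed compact set. Given a small radius $r>0$, I would cover $\tilde D_M$ by finitely many balls $B(\zeta_i,r)$, $i=1,\dots,N_0$, with $\zeta_i\in\tilde D_M$, and fix a smooth partition of unity $\{\psi_i\}_{i=1}^{N_0}$ subordinate to $\{B(\zeta_i,2r)\}$, with each $\psi_i\in C^\infty$ supported in $B(\zeta_i,2r)\cap\{\,|y|<M+2,\ |u|<M+2\,\}$, $\sum_i\psi_i\equiv1$ near $\tilde D_M$, and $\|\partial^\gamma\psi_i\|_\infty\le C_\gamma r^{-|\gamma|}$ with $C_\gamma$ depending only on $\gamma$ and $d$ (the behaviour near $\{s=t\}$ and $\{s=T\}$ is harmless and, extending by zero in $s$, produces genuine elements of $C^m(D_T)$). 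Then set
\[
\tilde f(s,y,u,w):=\sum_{i=1}^{N_0}\ \sum_{|\alpha|\le m}\ \psi_i(\zeta)\,\frac{(\zeta-\zeta_i)^\alpha}{\alpha!}\ \bigl(\partial_\zeta^\alpha f\bigr)(\zeta_i,w),
\]
which has the form \eqref{022709} after relabelling the pairs $(i,\alpha)$ by $j=1,\dots,N$, with $\varphi_{i,\alpha}(\zeta):=\psi_i(\zeta)(\zeta-\zeta_i)^\alpha/\alpha!\in C^m(D_T)$ and $\Phi_{i,\alpha}(w):=(\partial_\zeta^\alpha f)(\zeta_i,w)$.

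Two observations make this work. First, since the generators $D^k$ act only on $w$, we have $D^k\Phi_{i,\alpha}=(\partial_\zeta^\alpha D^k f)(\zeta_i,\cdot)$ for $|\alpha|\le m$, $|k|\le m_1$, and by hypothesis $|(\partial_\zeta^\alpha D^k f)(\zeta_i,w)|\le\max_{|k|\le m_1}{\rm esssup}_{w\in{\cal E}}\|D^k f(\cdot,w)\|_{C^m(\tilde D_M)}<+\infty$; hence each $\Phi_{i,\alpha}\in{\cal W}_{m_1,\infty}$. Second, the construction commutes with each $D^k$: $(D^k\tilde f)(\zeta,w)$ is precisely the same partition-of-unity/Taylor expression applied to $g:=D^k f$. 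Consequently \eqref{012709} reduces to the scalar statement: for every fixed $g=g(\zeta,w)$ with ${\rm esssup}_w\|g(\cdot,w)\|_{C^m(\tilde D_M)}<\infty$, with $\tilde g$ the analogous construction for $g$,
\[
{\rm esssup}_{w}\ \|g(\cdot,w)-\tilde g(\cdot,w)\|_{C^m(\tilde D_M)}\ \le\ C(m,d,M)\,\varpi_r(g),
\]
where $\varpi_r(g):={\rm esssup}_{w}\,\max_{|\nu|=m}\,\sup\{\,|\partial_\zeta^\nu g(\zeta,w)-\partial_\zeta^\nu g(\zeta',w)|:\ \zeta,\zeta'\in\tilde D_M,\ |\zeta-\zeta'|\le2r\,\}$, and then one takes the maximum over $|k|\le m_1$.

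The scalar bound is elementary. Fix $w$; let $P_i$ be the degree-$m$ Taylor polynomial of $g(\cdot,w)$ at $\zeta_i$, so $\tilde g(\zeta,w)=\sum_i\psi_i(\zeta)P_i(\zeta)$. The integral form of Taylor's remainder gives $|\partial^\gamma(g-P_i)(\zeta,w)|\le C\,|\zeta-\zeta_i|^{m-|\gamma|}\,\varpi_r(g)$ for all $|\gamma|\le m$ and $\zeta\in B(\zeta_i,2r)$. Expanding $\partial^\beta(g-\tilde g)=\partial^\beta\bigl(\sum_i\psi_i(g-P_i)\bigr)$, $|\beta|\le m$, by the Leibniz rule: the terms carrying no derivative on $\psi_i$ add up to $\sum_i\psi_i\,\partial^\beta(g-P_i)$, bounded by $C\varpi_r(g)$; in each remaining term one uses $\sum_i\partial^{\beta'}\psi_i\equiv0$ (for $\beta'\ne0$) to replace $\partial^{\beta-\beta'}P_i$ by $\partial^{\beta-\beta'}P_i-\partial^{\beta-\beta'}g$, of size $\le C\varpi_r(g)\,r^{\,m-|\beta|+|\beta'|}$ on $B(\zeta_i,2r)$, which is dominated by $\|\partial^{\beta'}\psi_i\|_\infty\le C r^{-|\beta'|}$ because $|\beta|\le m$. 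Only a bounded number of indices $i$ contribute at any $\zeta$, so summing yields $\|g-\tilde g\|_{C^m(\tilde D_M)}\le C(m,d,M)\,\varpi_r(g)$ with $C(m,d,M)$ independent of $g$ and of $w$.

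Combining, $\max_{|k|\le m_1}{\rm esssup}_{w}\|D^k f-D^k\tilde f\|_{C^m(\tilde D_M)}\le C(m,d,M)\max_{|k|\le m_1}\varpi_r(D^k f)$, and it remains to send $r\to0$. The main — and only non-routine — point is that $\max_{|k|\le m_1}\varpi_r(D^k f)\to0$ as $r\to0$, i.e. that the finitely many top-order spatial derivatives $\partial_\zeta^\nu D^k f$, $|\nu|=m$, $|k|\le m_1$, are equicontinuous in $\zeta$ uniformly over $w$ (equivalently, the families $\{D^k f(\cdot,w):w\in{\cal E}\}$ are precompact in $C^m(\tilde D_M)$). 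This is where one must use the joint regularity of $f$ available in our setting — for instance, that $f$ carries slightly more $(s,y,u)$-regularity than $C^m$, so that its order-$m$ spatial derivatives are uniformly Lipschitz in $(s,y,u)$ and hence equicontinuous uniformly in $w$ — rather than only bare $C^m$-boundedness. Granting this, one picks $r$ with $C(m,d,M)\max_{|k|\le m_1}\varpi_r(D^k f)<\delta$, which is \eqref{012709}.
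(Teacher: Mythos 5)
Your construction is the same core idea as the paper's: a smooth partition of unity in $\zeta=(s,y,u)$ with the $w$-dependence frozen at finitely many grid points, so that $\tilde f$ becomes a finite sum $\sum_j\varphi_j(\zeta)\Phi_j(w)$. The difference is that the paper's written argument takes only the zeroth-order "Taylor polynomial'' $\tilde f=\sum_{i,i'}\phi_{i,i'}(s,u)f(t_i^*,m_{i'}^*,w)$ and verifies \eqref{012709} only for $m=0$ (leaving the general case as an "easy verification''), whereas you attach the full degree-$m$ Taylor polynomial at each grid point and run the standard Whitney-type Leibniz estimate using $\sum_i\partial^{\beta'}\psi_i\equiv0$. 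This upgrade is not cosmetic: for $m\ge1$ the paper's piecewise-frozen construction does not approximate the $\zeta$-derivatives of $f$ in $C^m$, so your version is what is actually needed in the applications (where $m=d+2$). Both arguments ultimately hinge on the same unproved ingredient: the paper asserts that $\zeta\mapsto f(\zeta,\cdot)$ is uniformly continuous into $L^\infty(\pi)$, and you correspondingly need the order-$m$ derivatives $\partial_\zeta^\nu D^k f$ to be equicontinuous in $\zeta$ uniformly over $w$; neither follows from bare $C^m$-boundedness alone, and you are right to flag that one must invoke the extra joint regularity available for the specific $f$'s to which the lemma is applied (one derivative to spare, giving a uniform Lipschitz bound). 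So your proof is correct granting that point, and is in fact a more complete account of the approximation step than the paper's sketch.
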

\begin{proof}
To simplify the presentation, we assume that $f$ does not depend on $y$ and
$k=0$. Let us partition the rectangle $[t,T]\times [-M,M]$  
using the grid points 
\[
t=t_0<\ldots<t_n=T,~-M=m_0<\ldots<m_n=M,
~t_i-t_{i-1}=\frac{T-t}{n},~m_i-m_{i-1}=\farc{2M}n,~\!i=1,\ldots,n.
\]
Let
$\Delta_{i,i'}:=[t_{i-1},t_i]\times [m_{i'-1},m_{i'}]$ and 
$$
m_{i'}^*:=\frac{m_{i'}+m_{i'-1}}{2},\quad t_i^*:=\frac{t_i+t_{i-1}}{2},\quad i,i'=1,\ldots,n,
$$
and $\Delta^0_{i,i'}$ be an open neighborhood of
$\Delta_{i,i'}$ contained in  
$$
\left[t_{i-1}-\frac{T-t}{2n},t_i+\frac{T-t}{2n}\right]\times \left[m_{i'-1}-\frac{M}{n},m_{i'}+\frac{M}{n}\right].
$$ 
Let
$\phi_{i,i'}:\bbR^2\to[0,1]$ be a smooth partition of unity on
$[t,T]\times [-M,M]$ subordinated to the
open covering $\left(\Delta^0_{i,i'}\right)$  of $[t,T]\times
[-M,M]$. We may assume that $\phi_{i,i'}\equiv 1$ in some neighborhood
of $(t_i^*,m_{i'}^*)$.

 Since
the function $(s,u)\mapsto f(s,u,\cdot)$ is uniformly continuous from
$[t,T]\times [-M,M]$ to $L^\infty(\pi)$, we can choose $n$ sufficiently large so that
$|f(s,u)-f(s',u')|<\delta$ for $(s,u),(s',u')\in \Delta_{i,i'}^0$. Let
$$
\tilde f(s,y,u,w):=\sum_{i,i'}\phi_{i,i'}(s,u)f(t_i^*,m_{i'}^*,w),\quad (s,u,w)\in [t,T]\times [-M,M]\times{\cal E}.
$$
One can easily verify that then \eqref{012709} holds with $m=0$.
\end{proof}

We go back to the proof of Lemma~\ref{lm012509}. 
Choose arbitrary $\delta,\eta>0$ and choose $\tilde f$, of the form~\eqref{022709}, so that it 
satisfies \eqref{012709}, with  $\delta$, $f$ replaced by
$\delta\eta$ and $f_M$, respectively, and $m=d+2$.
Let $\Theta_j$ be the solution of 
\[
-{\cal L}\Theta_j=\Phi_j,
\]
with $\Phi_j$ constructed in Lemma~\ref{l.finiterank}, then
$
\tilde\Theta^{(0)}(s,y,u,w):=\sum_{j=1}^N\varphi_j(s,y,u)\Theta_j(w)
$
satisfies
$$
\sum_{\ell=0}^{d+2}\sum_{|k|=\ell}\sup_{(s,y,u)\in \tilde D_{M+2}}{\cal E}_L\left(\nabla^k\tilde\Theta^{(0)}(s,y,u)-\nabla^k\Theta_M(s,y,u)\right)\le
C\frac{(\delta\eta)^2}{\al_*}.
$$
The constant $C$ depends only on $d$. Thus, approximating $\Theta_j$, 
if needed, we can find  
$\tilde\Theta_j\in L^\infty(\pi)$ such that ${\cal D}\tilde\Theta_j\in
L^\infty(\pi,H)$, and 
\begin{equation}
\label{052709}
\sum_{\ell=0}^{d+2}\sum_{|k|=\ell}\sup_{(s,y,u)\in 
\tilde D_{M+2}}{\cal E}_L(\nabla^k\tilde\Theta(s,y,u)-\nabla^k\Theta_M(s,y,u))\le
C\frac{(\delta\eta)^2}{\al_*},
\end{equation}
with
\begin{equation}
\label{Phi}
\tilde\Theta(s,y,u,w):=\sum_{j=1}^N\varphi_j(s,y,u)\tilde\Theta_j(w).
\end{equation}
Define
\begin{equation}
\label{082709}
\tilde{\mathfrak
      S}^{t,x}_{\eps}(s,u):=\int_t^s
 \langle  \tau_{-X^{t,x}_\eps(\si)/\eps}
 {\cal D}\tilde\Theta_{\eps}\left(\si,u\right),dB^\eps_\si\rangle_H,
\end{equation}
where 
\[
\tilde{\Theta}_\eps(s,u)=\tilde{\Theta}(s,X_\eps^{t,x}(s),U_{\eps}^{t,x,u}(s),\eta^\eps_{t,x}(s)).
\]
By the Sobolev embedding, there exists a constant $C>0$ such that
\begin{align}
\label{042709}
&
\sup_{s\in[t,T],|u|\le M+2}|\tilde{\mathfrak
      S}^{t,x}_{\eps}(s,u)-{\mathfrak
      S}^{t,x}_{\eps,M,2}(s,u)|\\
&
\le C\int_{-M-2}^{M+2} \Big\{\sup_{s\in[t,T]}|\tilde{\mathfrak
      S}^{t,x}_{\eps}(s,u)-{\mathfrak
      S}^{t,x}_{\eps,M,2}(s,u)|+\sup_{s\in[t,T]}|\partial_u\tilde{\mathfrak
      S}^{t,x}_{\eps}(s,u)-\partial_u{\mathfrak
      S}^{t,x}_{\eps,M,2}(s,u)|\Big\}du.\nonumber
\end{align}
Applying expectation to both sides of \eqref{042709} and using Doob's
inequality, we obtain
\begin{align}
\label{042709a}
&
\bbE\Big[\sup_{s\in[t,T],|u|\le M+2}\left|\tilde{\mathfrak
      S}^{t,x}_{\eps}(s,u)-{\mathfrak
      S}^{t,x}_{\eps,M,2}(s,u)\right|\Big]\\
&
\le C\int_{-M-2}^{M+2}
du\, \bbE\Big\{\int_t^T\left[\left|\left({\cal D}\tilde\Theta_{\eps}-{\cal D}\Theta_{\eps,M}\right)(s,u)\right|_H^2+\left|\left(\partial_u {\cal D}\tilde\Theta_{\eps}-\partial_u {\cal D}\Theta_{\eps,M}\right)(s,u)\right|_H^2\right]ds\Big\}^{1/2}\nonumber\\
&
\le 2C(M+2)\bbE\Big\{\int_t^T\sup_{(y,v)\in K_{M+2}}\Big[\left|({\cal D}\tilde\Theta-{\cal D}\Theta_{M})(s,y,v,\eta^\eps_{t,x}(s))\right|_H^2\nonumber\\
&+\left|\left(\partial_v {\cal D}\tilde\Theta-\partial_v {\cal D}\Theta_{M}\right)(s,y,v,\eta^\eps_{t,x}(s))\right|_H^2\Big]ds\Big\}^{1/2}.\nonumber
\end{align}
Using again the Sobolev estimate, this time to estimate the supremum of  
\[
\sup_{(y,v)\in K_{M+2}}|({\cal D}\tilde\Theta-{\cal
      D}\Theta_{M})(s,y,v ,\eta^\eps_{t,x}(s))|_H^2,
\]
we conclude
that there exist constants $C,C'>0$ such that 
\[
\sup_{(y,v)\in 
      K_{M+2}}|({\cal D}\tilde\Theta-{\cal
      D}\Theta_{M})(s,y,v ,\eta^\eps_{t,x}(s))|_H^2
\le C'\sum_{k=0}^{d+1}\int_{K_{M+2}}|\nabla^k({\cal D}\tilde\Theta-{\cal
      D}\Theta_{M})(s,y,v ,\eta^\eps_{t,x}(s))|_H^2dydv. 
\]
A similar estimate holds for $ |\partial_v({\cal D}\tilde\Theta-{\cal
      D}\Theta_{M})(s,y,v ,\eta^\eps_{t,x}(s))|_H^2$, leading to 
\begin{align}
\label{042709b}
&
\bbE\Big[\sup_{s\in[t,T],|u|\le M+2}|\tilde{\mathfrak
      S}^{t,x}_{\eps}(s,u)-{\mathfrak
      S}^{t,x}_{\eps,M,2}(s,u)|\Big]\\
&
\le 2C(M+2)\Big\{\sum_{\ell=0}^{d+1}\sum_{|k|=\ell}\int_{\cal
    E}d\pi\Big[\int_{\tilde D_{M+2}}|\nabla^k({\cal D}\tilde\Theta-{\cal D}\Theta_{M})(s,y,v)|_H^2dsdydv\Big]\Big\}^{1/2},\nonumber
\end{align}
with a constant $C>0$ depending only on $M$ and $d$.
By virtue of \eqref{052709}, we conclude that
\begin{align}
\label{042709c}
&
\bbE\Big[\sup_{s\in[t,T],|u|\le M+2}|\tilde{\mathfrak
      S}^{t,x}_{\eps}(s,u)-{\mathfrak
      S}^{t,x}_{\eps,M,2}(s,u)|\Big]
\le C\delta\eta \al_*^{-1/2}.
\end{align}
It follows from the Chebyshev inequality that there exists 
$C>0$ depending only on $M$, $d$ and $\al_*$ such that for
any $\delta,\eta>0$ we can find $\tilde\Theta$ of the form
\eqref{Phi}
such that
\begin{align}
\label{042709d}
\limsup_{\eps\to0}\bbP\Big[\sup_{s\in[t,T],|u|\le M+2}|\tilde{\mathfrak
      S}^{t,x}_{\eps}(s,u)-{\mathfrak
      S}^{t,x}_{\eps,M,2}(s,u)|>\eta\Big]
\le C\delta.
\end{align}
The above, in particular, implies   \eqref{042509}, if we 
prove that for any  $\tilde{\mathfrak
      S}^{t,x}_{\eps}(s,u)$ of the form \eqref{082709} 
and any $\eta>0$  we have
\begin{equation}
\label{092709}
\lim_{\delta\to0}\limsup_{\eps\to0}\bbP[\tilde Z_{\eta,\delta,\eps}]=0,
\end{equation}
with
$$
\tilde Z_{\eta,\delta,\eps}:=\Big[\mathop{\sup_{t\le
      s<s'\le T,|u|\le M+2}}_{s'-s<\delta}\left|\tilde{\mathfrak
      S}^{t,x}_{\eps}(s',u)-\tilde{\mathfrak
      S}^{t,x}_{\eps}(s,u)\right|>\eta\Big].
$$
To this end,  
we invoke the Sobolev inequality, as in
\eqref{042709} and \eqref{042709a}. This, together with
Burkolder-Davis-Gundy inequality, implies
\begin{align*}
\bbE\Big[\sup_{|u|\le M+2}[\tilde{\mathfrak
      S}^{t,x}_{\eps}(s',u)-\tilde{\mathfrak
      S}^{t,x}_{\eps}(s,u)]^4\Big]
\le C\int_{-M-2}^{M+2}
du\bbE\Big\{\int_s^{s'}\left(|\mathcal{D}\tilde\Theta_{\eps}(\si,u)|^2_H+|\partial_u\mathcal{D}\tilde\Theta_{\eps}(\si,u)|^2_H\right)d\si\Big\}^{2},
\end{align*}
for any $s'>s$. Since $N$ is finite, we also have
\begin{equation}
\label{bound}
{\rm esssup}_{w\in{\cal E}}\sup_{(s,y,u)\in\tilde D_{M+2}}\big(
|\mathcal{D}\tilde\Theta(s,y,u)|_H+|\partial_u\mathcal{D}\tilde\Theta(s,y,u)|_H\big)<+\infty,
\end{equation}
whence
\begin{align*}
A_*:=\bbE\Big[\int_t^T\int_t^T(s-s')^{-5/2}\sup_{|u|\le M+2}\big[\tilde{\mathfrak
      S}^{t,x}_{\eps}(s',u)-\tilde{\mathfrak
      S}^{t,x}_{\eps}(s,u)\big]^4dsds'\Big]<+\infty.
\end{align*}
Let $\rho>0$ be arbitrary. By virtue of Chebyshev inequality we obtain that 
$
\bbP\left[Z_{\rho,\eps}\right]<\rho$ for all $\eps\in(0,1],
$
with
$$
Z_{\rho,\eps}:=\left[\int_t^T\int_t^T(s-s')^{-5/2}\sup_{|u|\le M+2}\left[\tilde{\mathfrak
      S}^{t,x}_{\eps}(s',u)-\tilde{\mathfrak
      S}^{t,x}_{\eps}(s,u)\right]^4dsds'>\frac{A_*}{\rho}\right].
$$
The Garcia-Rodemich-Rumsey estimate,
see Theorem 2.1.3 of \cite{stroock-varadhan},   implies
that given $\rho>0$ there exists $\delta_0>0$ such that
$$
\mathop{\sup_{t\le
      s<s'\le T,|u|\le M+2}}_{s'-s<\delta}\left|\tilde{\mathfrak
      S}^{t,x}_{\eps}(s',u)-\tilde{\mathfrak
      S}^{t,x}_{\eps}(s,u)\right|\le
  40\sqrt{2}\left(\frac{A_*}{\rho}\right)^{1/4}\delta^{1/8}<\eta,\quad\eps\in(0,1], \,\delta\in(0,\delta_0)
$$
 on the event $Z_{\rho.\eps}^c$.
Hence, for any $\rho>0$ there exists $\delta_0>0$ such that 
$
\tilde Z_{\eta,\delta,\eps}\subset Z_{\rho,\eps}
$
for all $\eps\in(0,1]$ and $\delta\in(0,\delta_0)$, which yields
$$
\limsup_{\delta\to0}\limsup_{\eps\to0}\bbP\left[\tilde
  Z_{\eta,\delta,\eps}\right]\le \rho.
$$
This in turn implies \eqref{092709}, as $\rho>0$ can be made
arbitrarily small. This ends the proof of  \eqref{042509a}
and thus that of Lemma~\ref{lm012509}.\qed

\subsubsection*{Proof of Lemma \ref{lm022509}}

Arguing as in the proof of \eqref{042509}, we can show  that 
\begin{equation}
\label{012809a}
\limsup_{\eps\to 0}\bbE\Big[\sup_{|u|\le M+2,\,s\in[t,T]}\big\{|
      \Theta'_{\eps,M|u}(s,u)|+| \partial_s\Theta'_{\eps,M|u}(s,u)|
+| \nabla_x\Theta'_{\eps,M|u}(s,u)|\big\}\Big]<+\infty.
\end{equation}
Given $L> 1$, define the event 
$$
E_{L,\eps}:=\Big[\sup_{|u|\le M+2,\,s\in[t,T]}\left\{|
      \Theta'_{\eps,M|u}(s,u)|+| \partial_s\Theta'_{\eps,M|u}(s,u)|
+| \nabla_x\Theta'_{\eps,M|u}(s,u)|\right\}>L\Big].
$$
It follows from (\ref{012809a}) that for an arbitrary $\delta>0$, 
there exists $L>1$ so that
\begin{equation}
\label{EMN}
\bbP[E_{L,\eps}]<\delta,\quad \hbox{ for all $\eps\in(0,(2L)^{-1})$}.
\end{equation}
Next, on the event $E_{L,\eps}^c$, let us define 
\[
\tilde \xi^{t,x}_{s,\eps,M}(u) :=\Theta_{1,\eps,M}(s)\xi^{t,x}_{s,\eps,M}(u), \    \  \Theta_{1,\eps,M}(s):=1+\eps \Theta'_{\eps,M|u}(s).
\]  
Recall that 
\[
\xi_{s,\eps,M}^{t,x}(u)=1+\frac{1}{\eps}\int_t^s f'_{\eps,M|u}(\sigma)\xi^{t,x}_{\sigma,\eps,M}(u)d\sigma,
\]
using the It\^o formula and the fact that $-\mathcal{L} \Theta_{M|u}'= f_{M|u}'$,
we can write
\begin{align}
\label{011906e}
&
\tilde \xi^{t,x}_{s,\eps,M}(u) =1+\eps \Theta'_{\eps,M|u}\left(t,x,u\right)+\int_t^s
\tilde \xi^{t,x}_{\si,\eps,M}(u) \tilde \ga_\eps(d\si)
+\int_t^s\tilde \xi^{t,x}_{\si,\eps,M}(u)\tilde\al_\eps(\si)d\si,
\end{align}
with
\begin{align*}
&\tilde\ga_\eps(d\si):=\Theta_{1,\eps,M}^{-1}(\si)
\langle  \tau_{-X^{t,x}_\eps(\si)/\eps}{\cal D}\Theta'_{\eps,M|u}(\si),dB^\eps_\si\rangle_H,\\
 &
 \tilde\al_\eps(\si):=\Theta_{1,\eps,M}^{-1}(\si)\left\{ \eps\partial_{\si}\Theta'_{\eps,M|u}(\si) 
+
\nabla_x\Theta_{\eps,M|u}'\left(\si\right)\cdot
v(\eta^\eps_{t,x}(\si))+ {\Theta''_{\eps,M|u}\left(\si\right)} {f_{\eps,M}(\sigma)}+ {\Theta'_{\eps,M|u}\left(\si\right)f_{\eps,M|u}'(\sigma)}\right\}.
\end{align*}
The above allow us to write
\begin{align}
\label{011906f}
\tilde \xi^{t,x}_{s,\eps,M}(u) =(1+\eps
  \Theta'_{\eps,M|u}(t,x,u))\exp\big\{\tilde {\cal Z}_\eps(s,u)\big\},
\end{align}
with 
\begin{align}
\label{020307c}
\tilde
{\cal Z}_\eps(s,u):=\int_t^s\big\{\tilde \al_\eps(\si)-\frac{1}{2}\langle\tilde
  \ga_\eps\rangle_\si\big\}d\si+\int_t^s\tilde \ga_\eps(d\si),
\end{align}
and
$$
\langle\tilde
  \ga_\eps\rangle_\si:=\Theta_{1,\eps,M}^{-2}(\si)
\langle A{\cal D}\Theta'_{\eps,M|u}\left(\si\right),{\cal D}\Theta'_{\eps,M|u}\left(\si\right)\rangle_H.
$$
Using the Sobolev embedding argument, as in \eqref{042709a} -- \eqref{042709b}, we see that
\begin{equation}
\label{012809}
\limsup_{\eps\to 0}\int_t^T\bbE\Big[
\sup_{|u|\le M+2}\langle A{\cal D}\Theta'_{\eps,M|u}(\si),{\cal D}\Theta'_{\eps,M|u}(\si)\rangle_H\Big]d\si<+\infty.
\end{equation}
Combining \eqref{012809a} and \eqref{012809}, we
conclude that for any
$\delta>0$ there exists $L>1$ such that
$$
\limsup_{\eps\to0}\bbP\Big[\sup_{|u|\le M+2,\,s\in[t,T]}|\tilde{\cal
Z}_\eps(s,u)|>L, \,E_{L,\eps}^c\Big]<\delta.
$$
This together with \eqref{EMN} implies \eqref{011609d}. The proof of
Lemma~\ref{lm022509} is complete. \qed

\subsection{The weak convergence of $u_{\eps,M}(t,x)$}

The goal in this section is to show that not only 
the ``forward'' processes $({\mathfrak S}_{\eps,M}^{t,x}(\cdot),
X_{\eps}^{t,x}(\cdot))$ converge as~$\eps\to0$ but also 
the ``inverse'' processes, i.e., $u_{\eps,M}(t,x)$ and $u_{M}(t,x)$ given by
\eqref{010911M} and \eqref{010911MM}, are close in law. Given $t\in[0,T]$, define 
\[
{\mathfrak X}:=C([t,T]\times \bbR;\bbR)\times
C([t,T];\bbR^d).
\]
\begin{proposition}
\label{prop013009bis}
The random elements $({\mathfrak
    S}_{\eps,M}^{t,x}(\cdot),X_{\eps}^{t,x}(\cdot),u_{\eps,M}(t,x))$
converge, as $\eps\to 0$, in law over~$C([t,T]\times \bbR;\bbR)\times
C([t,T];\bbR^d)\times\bbR$, 
to $({\mathfrak
    S}_{M}^{t,x}(\cdot),X^{t,x}(\cdot), u_{M}(t,x))$.
\end{proposition}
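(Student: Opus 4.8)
The plan is to exhibit $u_{\eps,M}(t,x)$ as the value of one fixed, almost surely continuous functional of the forward process $\bigl({\mathfrak S}^{t,x}_{\eps,M}(\cdot),X^{t,x}_\eps(\cdot)\bigr)$, and then to quote Proposition~\ref{prop012509} together with the mapping theorem for almost surely continuous functionals. Let $\mathcal G\subset{\mathfrak X}$ be the set of pairs $(g,\xi)$ such that $u\mapsto g(T,u)$ is a homeomorphism of $\bbR$ onto $\bbR$ (equivalently: continuous, strictly increasing, with $g(T,u)\to\pm\infty$ as $u\to\pm\infty$); $\mathcal G$ is Borel, strict monotonicity being a countable intersection of open conditions evaluated at $s=T$ and the limiting behaviour a Borel condition. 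On $\mathcal G$, set
\[
\Psi(g,\xi):=\Bigl(g,\ \xi,\ [g(T,\cdot)]^{-1}\bigl(u_0(\xi(T))\bigr)\Bigr)\in{\mathfrak X}\times\bbR,
\]
and extend $\Psi$ to ${\mathfrak X}$ by declaring its last coordinate to be $0$ off $\mathcal G$. By the uniqueness in \eqref{010911M} and \eqref{010911MM}, and since ${\mathfrak s}^{t,x}_{T,\eps,M}={\mathfrak S}^{t,x}_{\eps,M}(T,\cdot)$ and ${\mathfrak s}^{t,x}_{T,M}={\mathfrak S}^{t,x}_{M}(T,\cdot)$ are a.s. bijections of $\bbR$ (see below), we have a.s.
\[
\bigl({\mathfrak S}^{t,x}_{\eps,M}(\cdot),X^{t,x}_\eps(\cdot),u_{\eps,M}(t,x)\bigr)=\Psi\bigl({\mathfrak S}^{t,x}_{\eps,M}(\cdot),X^{t,x}_\eps(\cdot)\bigr),
\]
and likewise with $\eps$ removed. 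Hence it suffices to show that $\Psi$ is Borel and is continuous at $\bigl({\mathfrak S}^{t,x}_{M}(\cdot),X^{t,x}(\cdot)\bigr)$ almost surely, whereupon the proposition follows from Proposition~\ref{prop012509} and the mapping theorem.

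Next I would record the two structural facts. First, both ${\mathfrak s}^{t,x}_{T,\eps,M}$ and the limiting ${\mathfrak s}^{t,x}_{T,M}$ are strictly increasing: $\xi^{t,x}_{s,\eps,M}(u)>0$ a.s. because it solves the linear equation \eqref{011906m} and is therefore an exponential, while the derivative process $\partial_u U^{t,x,u}_M(s)$ of the limiting SDE is likewise an exponential, by the computation in the proof of Proposition~\ref{prop010507} carried out with $b_M,\tilde c_{M,j}$ in place of $b,\tilde c_j$. Second, since $\phi_M$—and therefore $f^{(M)},\Theta_M,b_M,\tilde c_{M,j}$—vanishes for $|u|>M+2$, the flows ${\mathfrak s}^{t,x}_{T,\eps,M}$ and ${\mathfrak s}^{t,x}_{T,M}$ coincide with the identity on $\{|u|\ge M+2\}$; in particular they are onto $\bbR$. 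Together these give that $\bigl({\mathfrak S}^{t,x}_{M}(\cdot),X^{t,x}(\cdot)\bigr)\in\mathcal G$ a.s., and that $\Psi$ restricted to $\mathcal G$ is a genuine functional.

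The core of the argument is the continuity of inversion, which I would isolate as: if $g_n\to g$ uniformly on compact subsets of $\bbR$, all $g_n,g$ continuous strictly increasing bijections of $\bbR$, and $z_n\to z$ in $\bbR$, then $g_n^{-1}(z_n)\to g^{-1}(z)$. This is elementary—with $y:=g^{-1}(z)$ and $\delta>0$ the inequalities $g(y-\delta)<z<g(y+\delta)$ are strict, so for $n$ large $g_n(y-\delta)<z_n<g_n(y+\delta)$ by the local uniform convergence at the points $y\pm\delta$ together with $z_n\to z$, whence $y-\delta<g_n^{-1}(z_n)<y+\delta$ by monotonicity of $g_n$, so $g_n^{-1}(z_n)\to y$. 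Applying this with $g_n=g^{(n)}(T,\cdot)$, $g=g^{(\infty)}(T,\cdot)$ and $z_n=u_0(\xi^{(n)}(T))\to u_0(\xi^{(\infty)}(T))$ (continuity of $u_0$), for any $(g^{(n)},\xi^{(n)})\to(g^{(\infty)},\xi^{(\infty)})$ in ${\mathfrak X}$ with $(g^{(\infty)},\xi^{(\infty)})\in\mathcal G$, shows that $\Psi$ is continuous on $\mathcal G$ (the first two coordinates converge trivially); in particular $\Psi$ is Borel. The proposition then follows as indicated; alternatively, one may apply the Skorokhod representation theorem—legitimate since ${\mathfrak X}$ is Polish—and run the inversion argument pathwise.

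The main obstacle is bookkeeping rather than analysis: confirming that the limiting flow ${\mathfrak s}^{t,x}_{T,M}$ is really a bijection of $\bbR$ (so that $[\,\cdot\,]^{-1}$ and its continuity make sense there), handling the measurability of $\Psi$ on all of ${\mathfrak X}$, and, in the details, verifying that $b_M,\tilde c_{M,j}$ inherit enough regularity in $(s,y,u)$ and the stated vanishing from $\phi_M$. All the hard, quantitative work—tightness of the forward process and identification of its limit—has already been carried out in Proposition~\ref{prop012509}, so no new stochastic estimates are required.
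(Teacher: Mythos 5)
Your proof is correct and follows essentially the same route as the paper: both exhibit $u_{\eps,M}(t,x)$ as the image of the forward process under the inversion functional $(S,X)\mapsto S^{-1}(T,u_0(X(T)))$, defined on a Borel subset of ${\mathfrak X}$ (your $\mathcal G$, the paper's ${\cal CM}_+(M_*)$) carrying all the relevant laws, and conclude from Proposition~\ref{prop012509} via the continuous mapping theorem. Your version merely spells out the elementary continuity-of-inversion lemma and the Skorokhod workaround that the paper leaves implicit.
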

\proof
Given any $N>1$, denote by 
${\cal CM}_+(N)$ the $G_{\delta}$ subset of ${\mathfrak X}$ 
that consists of all $(S(\cdot),x(\cdot))\in {\mathfrak X}$ such that $S$ is a continuous function
$S:[t,T]\times \bbR\to\bbR$,  strictly
increasing in the second variable, and $S(s,u)\equiv u$ for all
$s\in[t,T]$, $|u|\ge N$. 
Also, for a   ${\mathfrak X}$-valued random element $({\mathfrak S},X)$, 
denote by  ${\cal L}({\mathfrak S},X)$
its law.

Recall that $u_0$, the terminal condition of \eqref{advec-11-semi2}, belongs to $C_0^\infty(\bbR^d)$, so there exists $K>1$ such that the
range of $u_0(\cdot)$  is contained in $[-K,K]$. Then, we have 
\[
{\mathfrak s}_{T,\eps,M}^{t,x}(u)\equiv u,
\hbox{ for $|u|\ge M_*:=\max[M+2,K]$,}
\] 
and, the laws ${\cal L}_\eps:={\cal L}({\mathfrak
     S}_{\eps,M}^{t,x}(\cdot),X_\eps^{t,x}(\cdot))$ and ${\cal L}:={\cal L}({\mathfrak
    S}_{M}^{t,x}(\cdot),X^{t,x}(\cdot))$
are  supported in  ${\cal CM}_+(M_*)$.

Let
${\cal H}:{\mathfrak X}\to\bbR$ be given by
$$
{\cal H}(S,X):=S^{-1}(T,u_0(X(T))),\quad (S,X)\in {\cal CM}_+(M_*).
$$
Here $S^{-1}(T,\cdot)$ is the inverse of
$S$ 
in the second variable.
Outside of $ {\cal CM}_+(M_*)$, we can define ${\cal H}$ arbitrarily,
for instance, as a  constant. The mapping is
measurable, bounded and continuous on~${\cal CM}_+(M_*)$.

 {We know from Proposition~\ref{prop012509} that
$({\mathfrak
    S}_{\eps_n,M}^{t,x}(\cdot),X_{\eps_n}^{t,x}(\cdot))$
converge in law to $({\mathfrak
    S}_{M}^{t,x}(\cdot),X^{t,x}(\cdot))$, for any~$\eps_n\to 0$.  
We also have
\begin{equation}\label{uM}
u_{\eps_n,M}(t,x)={\cal H}({\mathfrak
    S}_{\eps_n,M}^{t,x}(\cdot),X_{\eps_n}^{t,x}(\cdot))\quad\mbox{and}\quad u_{M}(t,x)={\cal H}({\mathfrak
    S}_{M}^{t,x}(\cdot),X^{t,x}(\cdot)).
\end{equation}
By the continuous mapping theorem, see Theorem 2.7, p. 21 of \cite{billingsley},
$u_{\eps_n,M}(t,x)$ converges in law to $u_{M}(t,x)$, as~$n\to+\infty$. }\qed

\subsection{Proof of part (i) of Theorem~\ref{thm011110}}
\label{sec8.1.2}

Let us denote by ${\mathfrak Q}_{\eps}$, ${\mathfrak Q}_{\eps,M}$, 
${\mathfrak Q}_M$,
${\mathfrak Q}$ the respective  laws of the random elements   
\[
\hbox{${\mathfrak Y}_{\eps}:=\left({\mathfrak
    S}_{\eps}^{t,x}(\cdot),X_{\eps}^{t,x}(\cdot)\right)$,  ${\mathfrak Y}_{\eps,M}:=({\mathfrak
    S}_{\eps,M}^{t,x}(\cdot),X_{\eps}^{t,x}(\cdot))$,  ${\mathfrak Y}_{M}:=({\mathfrak
    S}_{M}^{t,x}(\cdot),X^{t,x}(\cdot))$,   ${\mathfrak Y}:=({\mathfrak
    S}^{t,x}(\cdot),X^{t,x}(\cdot))$},
\]
over ${\mathfrak X}$. By Proposition~\ref{prop012509}, we have
 ${\mathfrak Q}_{\eps,M}\Longrightarrow{\mathfrak Q}_M$, as $\eps\to 0$ for
 each $M>1$. A standard argument based on local uniform convergence of coefficients
 $b_M$ to $b$ and $\tilde c_M$ to $\tilde{c}$, see e.g. Section 9.2.6, pp.~528-529
of \cite{JS}, implies that also ${\mathfrak Q}_{M}\Longrightarrow{\mathfrak Q}$, as $M\to+\infty$.
 

Given $N,M>1$, recall that
\[
K_M=\{(y,u):\,|y|< M,\,|u|< M\},
\]
we define  
$T_{M,N}:{\mathfrak X}\to[0,T+1]$ by
$$
T_{M,N}({\mathfrak S},X):=\inf\left\{s\in[t,T]:\,(X(s),{\mathfrak S}(s,u))\not\in
  K_M\quad\mbox{for some }|u|\le N\right\}.
$$
We adopt the convention that $T_{M,N}({\mathfrak S},X):=T+1$, if the set, over which
the infimum is taken, is empty.

Let $\left({\cal F}_{N,s}\right)_{N>1,s\in[t,T]}$ be the family
of $\si$-algebras 
 generated by $\left({\mathfrak S}(\si,u),X(\si)\right)$, 
 with~$|u|\le N$, and~$\si\in[t,s]$. Note that
$$
{\mathfrak Q}_{\eps}[A,T_{M,N}>T]={\mathfrak
  Q}_{\eps,M}[A,T_{M,N}>T]\quad\mbox{and}\quad {\mathfrak Q}_{M}[A,T_{M,N}>T]={\mathfrak
  Q}[A,T_{M,N}>T]
$$
for any $A\in {\cal F}_{N,T}$, $M,N>1$, $\eps>0$.

The following  estimate on the random time $T_{M,N}$ is crucial in removing the
truncation. 
\begin{proposition}
\label{prop013009}
For any $\rho>0$ and $N>1$, there exists $M>1$ such that
\begin{equation}
\label{013009}
\sup_{M'\geq M}{\mathfrak
  Q}[T_{M',N}\le T]<\rho.
\end{equation}
\end{proposition}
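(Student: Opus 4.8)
The plan is to show that, under $\mathfrak Q$, the random variable
\[
Z_N:=\sup_{s\in[t,T]}|X^{t,x}(s)|\ \vee\ \sup_{s\in[t,T],\,|u|\le N}|U^{t,x,u}(s)|
\]
is almost surely finite, and then to pick $M$ so large that $\mathfrak Q[Z_N\ge M]<\rho$. Two elementary reductions make this sufficient. First, the events $\{T_{M',N}\le T\}$ are non-increasing in $M'$, since $K_{M'}\subset K_{M''}$ for $M'\le M''$; hence $\sup_{M'\ge M}\mathfrak Q[T_{M',N}\le T]=\mathfrak Q[T_{M,N}\le T]$, so only $M'=M$ must be treated. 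Second, if $(X^{t,x}(s),{\mathfrak S}^{t,x}(s,u))=(X^{t,x}(s),U^{t,x,u}(s))$ lies outside $K_M$ for some $s\in[t,T]$ and some $|u|\le N$, then $|X^{t,x}(s)|\ge M$ or $|U^{t,x,u}(s)|\ge M$, so that $\{T_{M,N}\le T\}\subset\{Z_N\ge M\}$. It therefore remains only to prove $Z_N<\infty$ $\mathfrak Q$-almost surely.

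The first supremum defining $Z_N$ is finite a.s. because, under $\mathfrak Q$, $X^{t,x}(\cdot)$ is a Gaussian process with continuous sample paths (the second line of \eqref{020307}) and $[t,T]$ is compact. For the second supremum I invoke the monotonicity built into Proposition~\ref{prop010507}: for every $s\in[t,T]$ the map $u\mapsto U^{t,x,u}(s)={\mathfrak s}_s^{t,x}(u)$ is a strictly increasing diffeomorphism of $\bbR$, because its derivative $\xi_s^{t,x}(u)$ equals $\exp\{Z(s)\}$ with $Z$ as in \eqref{020307b}, hence is strictly positive a.s. Consequently, using the sandwich $U^{t,x,-N}(s)\le U^{t,x,u}(s)\le U^{t,x,N}(s)$ valid for $|u|\le N$,
\[
\sup_{s\in[t,T],\,|u|\le N}|U^{t,x,u}(s)|\ \le\ \sup_{s\in[t,T]}|U^{t,x,N}(s)|\ \vee\ \sup_{s\in[t,T]}|U^{t,x,-N}(s)|,
\]
and the right-hand side is a.s. finite because $s\mapsto U^{t,x,\pm N}(s)$ have continuous paths on the compact interval $[t,T]$. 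Hence $Z_N<\infty$ a.s.; choosing $M>1$ with $\mathfrak Q[Z_N\ge M]<\rho$ then gives $\sup_{M'\ge M}\mathfrak Q[T_{M',N}\le T]=\mathfrak Q[T_{M,N}\le T]\le\mathfrak Q[Z_N\ge M]<\rho$, which is \eqref{013009}.

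The only genuinely non-routine ingredient is the monotonicity of $u\mapsto U^{t,x,u}(s)$, and that has already been established in Proposition~\ref{prop010507}; so this proposition is essentially a by-product of the structure of the limiting dynamics \eqref{020307}. If one also wanted an explicit dependence of $M$ on $\rho$ and $N$, one can use in addition that the coefficients $b$ and $\tilde c_0,\dots,\tilde c_d$ are bounded on $D_T$: this follows from the representation $\Theta=\int_0^{\infty}Q_\sigma f\,d\sigma$ (legitimate since $f$ is mean-zero in this section), the spectral-gap bound $\|\Theta(s,y,u,\cdot)\|_{L^2(\pi)}\le\alpha_*^{-1}\|f(s,y,u,\cdot)\|_{L^2(\pi)}$ together with its analogues for the $(s,y,u)$-derivatives of $\Theta$ --- all uniformly bounded by \eqref{H10} --- and the Cauchy--Schwarz inequality (recall $v\in L^2(\pi)$ and that the $\tilde\chi_j$ are orthonormal for ${\cal E}_L$). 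Then $\bbE\big[\sup_{s\in[t,T]}|U^{t,x,\pm N}(s)|^2\big]\le C(1+N^2)$ and Chebyshev gives $\mathfrak Q[Z_N\ge M]\le C(1+N^2)/M^2$; but for \eqref{013009} as stated the qualitative argument above is all that is needed.
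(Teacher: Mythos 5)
Your proof is correct and follows essentially the same route as the paper: the key step in both is the monotonicity of $u\mapsto U^{t,x,u}(s)$ (positivity of $\xi_s^{t,x}$ from Proposition~\ref{prop010507}), which sandwiches $U^{t,x,u}$ between $U^{t,x,\pm N}$ and reduces the claim to tail bounds on $\sup_s|X^{t,x}(s)|$ and $\sup_s|U^{t,x,\pm N}(s)|$. The only difference is that you conclude by the soft argument that a continuous process on $[t,T]$ has an a.s.\ finite supremum (handling the $\sup_{M'\ge M}$ via monotonicity of the events in $M'$, which the paper leaves implicit), whereas the paper writes explicit Gaussian tail estimates using the boundedness of $b$ — the quantitative version you sketch in your closing remark.
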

\begin{proof}

Since $
\xi_{s}^{t,x}(u)=\partial_uU^{t,x,u}(s)>0$, 
we have 
\[
\hbox{$U^{t,x,N}(s)\ge U^{t,x,u}(s)\ge U^{t,x,-N}(s)$ for $|u|\le N$}.
\]
It suffices to show
that for any $\rho>0$  and $N>1$, there exists $M>1$ such that
\begin{equation}
\label{033009}
\bbP\Big[\sup_{s\in[t,T]}|X^{t,x}(s)|\ge M\Big]+
\bbP\Big[\sup_{s\in[t,T]}|U^{t,x,N}(s)|\ge M\Big]+\bbP\Big[\sup_{s\in[t,T]}|U^{t,x,-N}(s)|\ge M\Big]<\rho.
\end{equation}
Since $(X^{t,x}(s))_{s\in[t,T]}$ is a Brownian motion,
there exist  
$C_1,C_2>0$ such that 
$$
\bbP\Big[\sup_{s\in[t,T]}|X^{t,x}(s)|\ge
  M\Big]\le C_1\exp\Big\{-C_2\frac{M^2}{T-t}\Big\}<\frac{\rho}{3},
$$
provided that $M>1$ is sufficiently large. Similarly, we deduce from \eqref{020307} that
\begin{align*}
\bbP\Big[\sup_{s\in[t,T]}|U^{t,x,N}(s)|\ge M\Big]
\le C_1\exp\Big\{-C_2\frac{(M-N-\|b\|_\infty
  (T-t))^2}{T-t}\Big\}<\frac{\rho}{3},
\end{align*}
for
 $M-N$ sufficiently large. A similar estimate holds for
 $\bbP\Big[\sup_{s\in[t,T]}|U^{t,x,-N}(s)|\ge M\Big]$, which completes
 the proof of the proposition.
\end{proof}
\bigskip

Now we can  finish the proof of part (i) of Theorem
\ref{thm011110}.
Fix any $\rho>0$, $N>1$ and~$F\in C_b({\mathfrak X})$. Since $\{T_{M',N}\le T\}$ is a closed
subset of ${\mathfrak X}$ for any~$N,M>1$, we have
\begin{equation}
\label{023009}
\limsup_{M'\to+\infty}{\mathfrak
  Q}_{M'}[T_{M,N}\le T]\le {\mathfrak
  Q}[T_{M,N}\le T].
\end{equation}
Let $M>0$ be chosen as in the statement of Proposition
\ref{prop013009}. We can write then, for any $M'\ge M$
\begin{align*}
&
\Big|\int_{\mathfrak X}Fd {\mathfrak Q}_{\eps}-\int_{\mathfrak X}Fd {\mathfrak
    Q}\Big|\le \Big|\int_{T_{M,N}>T}Fd {\mathfrak Q}_{\eps}-\int_{T_{M,N}>T}Fd {\mathfrak
    Q}\Big|+\|F\|_{\infty}({\mathfrak Q}_{\eps}[{T_{M,N}\le T}]+{\mathfrak
  Q}[{T_{M,N}\le T}])\\
&
\le {\Big|\int_{T_{M,N}>T}Fd {\mathfrak Q}_{\eps,M'}-\int_{T_{M,N}>T}Fd {\mathfrak
    Q}_{M'}\Big|}+\|F\|_{\infty}({{\mathfrak Q}_{\eps,M'}[{T_{M,N}\le
  T}]}+\rho)\\
&
\le \Big|\int_{\mathfrak X}Fd {\mathfrak Q}_{\eps,M'}-\int_{\mathfrak X}Fd {\mathfrak
    Q}_{M'}\Big|+\|F\|_{\infty}(2{\mathfrak Q}_{\eps,M'}[{T_{M,N}\le T}]+{\mathfrak
  Q}_{M'}[{T_{M,N}\le T}]+\rho)
\end{align*}
Passing to the limit, first as $\eps\to 0$, 
and then $M'\to+\infty$, using \eqref{013009},  \eqref{023009},
and the convergence~${\mathfrak Q}_{\eps,M'}\Longrightarrow{\mathfrak Q}_{M'}$, 
we deduce that
\begin{align*}
\limsup_{\eps\to 0}\Big|\int_{\mathfrak X}Fd {\mathfrak Q}_{\eps}-\int_{\mathfrak X}Fd {\mathfrak
    Q}\Big|
\le4\rho\|F\|_{\infty}.
\end{align*}
Since $\rho>0$ can be  chosen arbitrarily small, we conclude that
$$
\lim_{\eps\to0}\Big|\int_{\mathfrak X}Fd {\mathfrak Q}_{\eps}-\int_{\mathfrak X}Fd {\mathfrak
    Q}\Big|=0,
$$
which concludes the proof of part (i) of Theorem \ref{thm011110}.


\subsection{Proof of part (ii) of Theorem \ref{thm011110}}

By the tightness of the laws of $\left({\frak S}_\eps^{t,x}(\cdot),X_\eps^{t,x}(\cdot)\right)$, 
for any $\rho>0$, there exists $N>1$ such that 
$$
\bbP[A_{N}]<\frac{\rho}{3},\quad \bbP[A_{N,\eps}]<\frac{\rho}{3},\quad \eps\in(0,1],
$$  
where
{
$$
A_{N,\eps}:=\left[{\frak s}_{T,\eps}^{t,x}(N)\leq \|u_0\|_\infty\quad\mbox{or}\quad {\frak s}_{T,\eps}^{t,x}(-N)\geq -\|u_0\|_\infty\right],
$$
}
and $A_N$ is defined analogously with ${\frak s}_{T,\eps}^{t,x}$ replaced by ${\frak s}_{T}^{t,x}$.

According to Proposition \ref{prop013009}, we can find $M>1$ such that $\bbP[B_{N,M}]<\rho/3$, with
$$
B_{N,M}:=\left[\sup_{(s,u)\in[t,T]\times[-N,N]}|U^{t,x,u}(s)|\ge M,\sup_{s\in[t,T]}|X^{t,x}(s)|\ge M\,\right].
$$
Using part (i) of Theorem \ref{thm011110}, we conclude that $\bbP[B_{N,M,\eps}]<\rho/3$, $\eps\in(0,1]$,
where $B_{N,M,\eps}$ is defined analogously for $\left({\frak S}_\eps^{t,x}(\cdot),X_\eps^{t,x}(\cdot)\right)$.
Thanks to equality \eqref{uM} and the fact that 
\[
{\frak s}_T^{t,x}(u(t,x))=u_0(X^{t,x}(T)), \   \  {\frak s}_{T,\eps}^{t,x}(u_\eps(t,x))=u_0(X^{t,x}_\eps(T)),
\]
we conclude that
$$
u_{M'}(t,x)=u(t,x),\quad u_{\eps,M'}(t,x)=u_\eps(t,x) ,\quad M'\ge M
$$ outside $A_N\cup B_{N,M}\cup A_{N,\eps}\cup B_{N,M,\eps}$. Using the already proved  convergence in law of $ u_{\eps,M'}(t,x)$ to $u_{M'}(t,x)$, as $\eps\to 0$,
we conclude from the above that $u_\eps(t,x)$ converges in law to $u(t,x)$, $\eps\to 0$ by the argument  presented in Section \ref{sec8.1.2}.

 {The convergence of the multi-point statistics
follows  by essentially the same argument as in the proof of
Theorem~\ref{thm-semilin}. For $N$ distict points
$x_1,\ldots, x_N\in\bbR^d$, $u_1,\ldots,u_N\in\bbR$,  the respective
processes 
$$
\left((U_\eps^{t,x_1,u_1}(s),X_\eps^{t,x_1}(s)),\ldots,
  (U_\eps^{t,x_N,u_N}(s),X_\eps^{t,x_N}(s))\right)_{s\ge t} $$
 converge
in distribution to
$$
\left((U_1^{t,x_1,u_1}(s),X_1^{t,x_1}(s)),\ldots,
  (U_N^{t,x_N,u_N}(s),X_N^{t,x_N}(s))\right)_{s\ge t} ,
$$ where
$\left((U_j^{t,x_j,u_j}(s),X_j^{t,x_j}(s))\right)_{s\ge t}$,
$j=1,\ldots,N$ are independent copies of solutions of \eqref{020307}.
This implies that the respective ${\mathfrak
    s}_{T}^{t,x_j}(\cdot)$, $j=1,\ldots,N$ are independent and, as a
  result, allows us to infer that $\mathscr{U}^{(j)}(t,x_j)$
  determined by the corresponding equations \eqref{011001} are also independent.}\qed

\bibliographystyle{amsalpha}

\begin{thebibliography}{A}


\bibitem{AM}  M. Avellaneda and A. Majda, {\em Mathematical models
    with exact renormalization for turbulent transport},
  Comm. Math. Phys. 131 (1990),381-429.

\bibitem{as} Armstrong S. N., Souganidis, P. E., {\em Stochastic
  homogenization of Hamilton–Jacobi and degenerate Bellman equations
  in unbounded environments},  J. Math. Pures Appl. 97 (2012) 460–504



\bibitem{billingsley} Billingsley, P.: Convergence of Probability
   Measures (2nd edition), Wiley, New York (1999).

\bibitem{bp} R. Bouc and E. Pardoux, {\em Asymptotic analysis of P. D. E. s
  with wide-band noise disturbances, and expansion of the moments},
  Stochastic Analysis and Applications 2 (1984), 369-422.
   
\bibitem{cf} Carmona, R. A., Foque J.-P. {\em Diffusion-approximation for
  the advection-diffusion of a passive scalar by a space-time gaussian
  velocity field}, Progress in Probability, Vol. 36, (1995),
Birkh\"auser Verlag Basel/Switzerland.


\bibitem{cs} Caffarelli, L. A.,  E. Souganidis P. E., {\em Rates of convergence for the homogenization
of fully nonlinear uniformly elliptic pde in random media}, Invent. math (2010) 180: 301-360.


  

 \bibitem{carmona-xu} Carmona, R. A. and Xu, L. (1997) {\em Homogenization
for Time Dependent 2-D Incompressible Gaussian Flows} Ann. of Appl. Probab. {\bf 7}, 265-279.


\bibitem{delarue} Delarue F., {\em Auxiliary SDEs for homogenization
    of quasilinear PDEs with periodic co- efficients.} The Annals of
  Prob., 2004, vol 32, No 3B, 2305-2361.

\bibitem{rhodes2} Delarue, F.; Rhodes, R. {\em Stochastic homogenization of quasilinear PDEs with a spatial degeneracy.} Asymptot. Anal. 61 (2009), no. 2, 61–90.

\bibitem{ethier-kurtz} Ethier, S. N., Kurtz, T. G.
{\em Markov processes.
Characterization and convergence.} Wiley Series in Probability and Mathematical Statistics: Probability and Mathematical Statistics. John Wiley \& Sons, Inc., New York, 1986. 

\bibitem{fk}
 Fannjiang, A., Komorowski, T., {\em  Turbulent Diffusion in
Markovian Flows,} Ann. of Appl. Prob. {\bf 9}, 591-610, (1999).


 \bibitem{fako} Fannjiang, A., Komorowski, T., {\em The Fractional Brownian
 Motions in a Limit of Turbulent Transport.} Ann. of Appl. Prob. {\bf
10},  1100-1120 (2000).

\bibitem{JS} Jacod, J.,  Shiryaev, A.,
{\em Limit theorems for stochastic processes.}
Second edition. Grundlehren der Mathematischen Wissenschaften, 288. Springer-Verlag, Berlin, 2003.


\bibitem{janson} S. Janson, {\em Gaussian Hilbert spaces}. Cambridge
Univ. Press (1997).

\bibitem{kom}
Komorowski, T., {\em Diffusion Approximation for the Convection
Diffusion Equation with Random Drift.}  Prob. Theory Rel. Fields
{\bf 121},  525-550 (2001).





\bibitem{KLO} T. Komorowski, C. Landim, S. Olla, {\em Fluctuations in Markov Processes
Fluctuations in Markov Processes. Time Symmetry and Martingale Approximation},
Springer Ser.: {\em Grundlehren der mathematischen Wissenschaften},
Vol. {\bf 345}, (2012).

\bibitem{kola} Komorowski, T.,  Olla, S., {\em
 On
the Superdiffusive Behavior of Passive Tracer  with a Gaussian
Drift.}  Journ. Stat. Phys. {\bf 108}, 647-668, (2002).

\bibitem{kps} Komorowski, T., Peszat, Sz., Szarek, T.,  {\em Passive tracer in a   flow corresponding to a two dimensional stochastic Navier Stokes equations},
  Nonlinearity {\bf 26},  pp 1999-2026 (2013).



\bibitem{koryz} Komorowski, T., Ryzhik, L., \emph{ Passive tracer in a slowly decorrelating random flow with a large
mean},  Nonlinearity, {\bf 20}, 1215-1239, (2007)




\bibitem{koralov} Koralov, Leonid {\em Transport by time dependent stationary flow.} Comm. Math. Phys. 199 (1999), no. 3, 649--681.

\bibitem{k}  Kosygina, E. {\em Homogenization of stochastic Hamilton-Jacobi equations: brief review of methods and applications. Stochastic analysis and partial differential equations}, 189–204, Contemp. Math., 429, Amer. Math. Soc., Providence, RI, 2007. 

\bibitem{k-v} Kosygina, E.,
Varadhan, S. R. S., {\em Homogenization of Hamilton-Jacobi-Bellman Equations with Respect to Time-Space Shifts
in a Stationary Ergodic Medium}, Comm. on Pure and Applied Mathematics, Vol. LXI, 0816–0847 (2008)

\bibitem{kramer-majda} Majda, A. J., Kramer P. R.  {\em Simplified models for turbulent diffusion: Theory, numerical modelling, and physical phenomena}, Physics Reports 314 (1999) 237-574.

\bibitem{kunita} H. Kunita.
 {\em Stochastic Flows and Stochastic Differential Equations},
 Cambridge Univ. Press 1990.

\bibitem{LPV} P.-L. Lions, G.C. Papanicolaou, S.R.S. Varadhan, {\em Homogenization of Hamilton–Jacobi equations}, unpublished, preprint, (1987).

\bibitem{nualart} D. Nualart {\em The Malliavin calculus and related
    topics}. Springer (2006).

\bibitem{pardoux} Pardoux, E., {\em Homogenization of linear and semilinear second order parabolic PDEs with periodic coefficients : a probabilistic approach.} J. Funct. Anal., 167, 1999, 498-520.

\bibitem{revuz-yor} Revuz, D., Yor, M., {\em Continuous martingales
    and Brownian motion}, Springer-Verlag, 3-d edition, (1999).

 \bibitem{rhodes1} Sow, A. B.; Rhodes, R.; Pardoux, E. {\em Homogenization of periodic semilinear parabolic degenerate PDE-s}. Ann. Inst. H. Poincare, Anal. Non-Lineaire 26 (2009), no. 3, 979-998. 

\bibitem{s} Souganidis, P. E. 
{\em Stochastic homogenization of Hamilton-Jacobi equations and some applications}. 
Asymptot. Anal. 20 (1999), no. 1, 1–11.

\bibitem{sreenivasan} Sreenivasan K. R.,  Schumacher, J., {\em Lagrangian views on turbulent mixing of passive scalars}.  Phil. Trans. R. Soc. A (2010) 368, 1561-1577.




\bibitem{stroock-varadhan}
 Stroock, Daniel W.; Varadhan, S. R. Srinivasa Multidimensional
 diffusion processes. Grundlehren der Mathematischen Wissenschaften,
 233. Springer-Verlag, Berlin-New York, 1979.

\bibitem{tr} Rezakhanlou, F.,  Tarver J. E.,  {\em Homogenization for Stochastic Hamilton-Jacobi Equations}, Arch. Rational Mech. Anal. 151 (2000) 277-309. 

\bibitem{warhaft} Z. Warhaft, {\em Passive Scalars in Turbulent Flows},  Ann. Rev. of Fluid Mechanics, Vol. 32:203-240





\end{thebibliography}

\end{document}